%
%
%
\documentclass[11pt]{amsart}
\usepackage{amssymb,mathrsfs,graphicx,enumerate,color}
\usepackage{colortbl}
\definecolor{black}{rgb}{0.0, 0.0, 0.0}
\definecolor{red}{rgb}{1.0, 0.5, 0.5}

\topmargin-0.1in \textwidth6.in \textheight8.5in \oddsidemargin0in
\evensidemargin0in
\title[From Navier-Stokes to BV solutions of barotropic Euler]{From Navier-Stokes to BV solutions of the barotropic Euler equations}

\author[Chen]{Geng Chen}
\address[Geng Chen]{ Department of mathematics, \newline
University of Kansas, Lawrence, KS, 66045, USA.}
\email{gengchen.edu}

\author[Kang]{Moon-Jin Kang}
\address[Moon-Jin Kang]
{ Department of Mathematical Sciences, \newline
Korea Advanced Institute of
Science and Technology \\
 Daejeon 34141, Korea}
 \email{moonjinkang@kaist.ac.kr}

\author[Vasseur]{Alexis F. Vasseur}
\address[Alexis F. Vasseur]{\newline Department of Mathematics, \newline The University of Texas at Austin, Austin, TX 78712, USA}
\email{vasseur@math.utexas.edu}

\newtheorem{theorem}{Theorem}[section]
\newtheorem{lemma}{Lemma}[section]

\newtheorem{proposition}{Proposition}[section]
\newtheorem{remark}{Remark}[section]

\newcommand{\bbr}{\mathbb R}

\numberwithin{figure}{section}

\newcommand{\beq}{\begin{equation}}
\newcommand{\eeq}{\end{equation}}
\newcommand{\bsp}{\begin{split}}
\newcommand{\esp}{\end{split}}

\newcommand{\Dt}{{\partial_t}}







\newcommand{\lam}{\lambda}



\newcommand{\RR}{{\mathbb R}}




\def\eps{\varepsilon }

\newcommand\adots{\mathinner{\mkern2mu\raise1pt\hbox{.}
\mkern3mu\raise4pt\hbox{.}\mkern1mu\raise7pt\hbox{.}}}

\newtheorem{theo}{Theorem}[section]

\newtheorem{lem}[theo]{Lemma}


%

\def\charf {\mbox{{\text 1}\kern-.30em {\text l}}}



\def\lam{\lambda}  




\newcommand{\Div}{{\mathrm{div \!\! \ }}}

\newcommand{\deltao} {\delta_1}

\newcommand{\ds} {\delta_*}

\newcommand{\s}{\sigma}

\newcommand{\rn}{{\rho_\nu}}

\newcommand{\uv}{\underline{v}}
\newcommand{\uu}{\underline{U}}

\newcommand{\weakto}{\rightharpoonup}

\newcommand{\ba}{\overset{\raisebox{0pt}[0pt][0pt]{\text{\raisebox{-.5ex}{\scriptsize$\leftharpoonup$}}}}}
\newcommand{\fa}{\overset{\raisebox{0pt}[0pt][0pt]{\text{\raisebox{-.5ex}{\scriptsize$\rightharpoonup$}}}}}

\begin{document}
\bibliographystyle{plain}

\date{\today}

\subjclass[2010]{76N15, 35B35, 76N06} \keywords{}

\thanks{\textbf{Acknowledgment.}  G. Chen is partially supported by the NSF grants 
DMS-2306258 and DMS-2008504. M.-J. Kang was partially supported by the National Research Foundation of Korea  (NRF-2019R1C1C1009355).
A. F. Vasseur is partially supported by the NSF grants 2306852 and 2219434.
}

\begin{abstract}
In the realm of mathematical fluid dynamics, a formidable challenge lies in establishing  inviscid limits from the Navier-Stokes equations to the Euler equations, wherein physically admissible solutions can be discerned. The pursuit of solving this intricate problem, particularly concerning singular solutions, persists in both compressible and incompressible scenarios.

This article focuses on small $BV$ solutions to the barotropic Euler equation in one spatial dimension. Our investigation demonstrates that these solutions are inviscid limits for solutions to the associated compressible Navier-Stokes equation. Moreover, we extend our findings by establishing the well-posedness of such solutions within the broader class of inviscid limits of Navier-Stokes equations with locally bounded energy initial values.

\end{abstract}
\maketitle \centerline{\date}

\tableofcontents

\section{Introduction}
\setcounter{equation}{0}

The Euler equation is the first partial differential equation ever written down in fluid dynamics more than 250 years ago \cite{Euler}. It can be written as:
\begin{align}
\begin{aligned}\label{eq:Euler}
\left\{ \begin{array}{ll}
\partial_t \rho+\Div (\rho u)=0, \qquad t>0, \ x\in \RR^N, \\[0.3cm]
\partial_t (\rho u)+\Div(\rho u\otimes u)+\nabla p=0, \qquad t>0, \ x\in \RR^N,\\[0.3cm]
(\rho(0,x),\rho u(0,x))=(\rho^0,\rho^0 u^0) \qquad  x\in \RR^N,
 \end{array} \right.
\end{aligned}
\end{align}
where the unknown quantities are the density of the gas $\rho:\RR^+\times\RR^N\to \RR$ and  its velocity $u:\RR^+\times\RR^N\to \RR^N$. The barotropic case corresponds to pressure functionals of the form:
$$
p(\rho)=\rho^\gamma,
$$
for adiabatic coefficients $\gamma>1$. This system of equations is well-posed on short times  in the functional space of Lipschitz functions (see, for instance, \cite{Dafermos2}). However, it is well-known that  such solutions can blow up in finite time \cite{Sideris}. When considering possible discontinuous solutions, the Euler system has to be endowed with the entropy inequality:
\begin{equation}\label{eq:Entropy}
\Dt \eta(\rho,\rho u)+\Div(q(\rho,\rho u))\leq0, \qquad t>0, \ x\in \RR^N,
\end{equation}
where the entropy of the system (in this case, the physical energy) and its flux are given as:
\begin{align}
\begin{aligned}\label{defentpair}
&\eta(\rho,\rho u)=\rho \frac{|u|^2}{2}+\frac{\rho^\gamma}{\gamma-1},\\
&q(\rho,\rho u)=(\eta(\rho,\rho u) +\rho^\gamma)u.
\end{aligned}
\end{align}
Solutions of \eqref{eq:Euler} verifying \eqref{eq:Entropy} are called entropic solutions. It is easy to show that Lipschitz solutions to \eqref{eq:Euler} verifiy \eqref{eq:Entropy} as an equality.
However, for $N>1$, this condition \eqref{eq:Entropy} is not enough to ensure the uniqueness of non-Lipschitz physical solutions. Using convex integration techniques, Chiodaroli, De Lellis, and Kreml showed in \cite{CDK} that planar shock discontinuities are not unique in the class of bounded entropic solutions verifying (\eqref{eq:Euler} \eqref{eq:Entropy}). De Lellis and Kwon showed in the multi-D periodic case that the non-uniqueness anomaly holds even in the class of continuous weak entropic solutions \cite{DeLellisKwon}. 
\vskip0.3cm
Another classical attempt to select admissible solutions is through the vanishing viscosity criterion (see \cite{Dafermos2}). For $\nu>0$ fixed, consider the Navier-Stokes equation:
\begin{align}
\begin{aligned}\label{eq:Navier-Stokes}
\left\{ \begin{array}{ll}
\partial_t \rho^\nu+\Div (\rho^\nu u^\nu)=0, \qquad t>0, \ x\in \RR^N, \\[0.3cm]
\partial_t \rho^\nu u^\nu+\Div(\rho^\nu u^\nu\otimes u^\nu)+\nabla p(\rho^\nu) =\nu \Div(\bar\mu(\rho^\nu) Du^\nu), \qquad t>0, \ x\in \RR^N,\\[0.3cm]
(\rho^\nu(0,x),\rho^\nu u^\nu(0,x))=(\rho_0^\nu,\rho_0^\nu u_0^\nu) \qquad  x\in \RR^N,
 \end{array} \right.
\end{aligned}
\end{align}
where $Du=(\nabla u+\nabla^t u)/2$ and $\bar\mu$ is the viscosity functional which may depend on $\rho$. We say that $U=(\rho,\rho u)$ is a vanishing viscosity solution to the Euler equation \eqref{eq:Euler} if there exists a sequence of initial value $U^\nu_0=(\rho_0^\nu, \rho_0^\nu u_0^\nu)$ converging to $U^0=(\rho^0,\rho^0u^0)$ such that, for the double limit (on the initial value and the equation) $\nu\to 0$, the solution to \eqref{eq:Navier-Stokes} converges, up to a subsequence, to $U$.
In most case, solutions to  \eqref{eq:Navier-Stokes} can be constructed verifying the energy inequality (\cite{Lions2,FNP,BJ}):
\begin{equation}\label{eq:EntropyNavier-Stokes}
\Dt \eta(\rho^\nu,\rho^\nu u^\nu)+\Div(q(\rho^\nu,\rho^\nu u^\nu))+\nu\bar\mu(\rho^\nu) |Du^\nu|^2\leq0, \qquad t>0, \ x\in \RR^N.
\end{equation}
Especially this justifies the notion of entropic solutions \eqref{eq:Entropy} at the limit $\nu=0$. 
\vskip0.3cm
In 1979, Dafermos and DiPerna \cite{Dafermos4, DiPerna} introduced the Weak/Strong principle. They showed that Lipschitz solutions (strong solutions) are unique and $L^2$-stable (for $L^2$ perturbations on the initial values) in the larger class of bounded entropic solutions (weak solutions) of \eqref{eq:Euler} \eqref{eq:Entropy}. This Weak/Strong principle, based on the relative entropy method,  can be easily extended to the class of vanishing viscosity solutions as weak solutions. It shows that Lipschitz solutions to \eqref{eq:Euler} are stable with respect to initial values perturbation and viscous perturbation on the system itself. Especially it shows that Lipschitz solutions are vanishing viscosity solutions to Euler \eqref{eq:Euler}. 
\vskip0.3cm 
The situation is completely different when considering discontinuous patterns instead of Lipschitz solutions. For $N>1$, it is not even known if the notion of vanishing viscosity limit differs from the notion of entropic solutions. Note that in the simpler incompressible case, it was shown in \cite{BVNavier} that the non-uniqueness pathology exists at the level of the Navier-Stokes equation and persists via viscosity limit. However, the solutions constructed at the Navier-Stokes equation are not Leray-Hopf and do not verify the Energy equality related to \eqref{eq:EntropyNavier-Stokes}. 
\vskip0.3cm
Surprisingly, the vanishing viscosity  limit problem is already extremely difficult for the mono-dimensional case, even in the well-studied framework of small BV solutions for  hyperbolic conservation laws. In this framework, Glimm showed in  \cite{Glimm} the existence of global solutions to $n\times n$ conservation laws in one dimension (including \eqref{eq:Euler}-\eqref{eq:Entropy} in dimension one) for any initial value small enough in $BV(\RR)$. Later, the small BV existence was proved using the front tracking scheme \cite{Bressan,Dafermos2,Holden_Book}. The $L^1$ stability of the front tracking scheme was proved in  Bressan, Liu and Yang \cite{Bressan2} and Bressan, Crasta and Piccoli \cite{Bressan1} see also Liu and Yang \cite{Liu-Yang} and Bressan and Colombo \cite{Bressan3} in the context of $2\times 2$ systems as \eqref{eq:Euler}.  The uniqueness of these solutions was established by Bressan and Goatin under the Tame Oscillation Condition \cite{UniquenessGoatin}. It improved the previous theorem by Bressan and LeFloch \cite{UniquenessLefloch}. Uniqueness was also known to prevail when the Tame Oscillation Condition is replaced by the Bounded Variation Condition along space-like curves, see Bressan and Lewicka  \cite{UniquenessLewicka}.  These technical extra conditions were first removed in the case of $2\times 2$ systems in \cite{CKV20} where the uniqueness result was extended to a weak/BV principle for a family of weak entropic solutions with strong traces. Note that the technical conditions have been recently removed in the general case, but without the weak/strong principle in \cite{bressan2023unique, bressan2023remark}. In this general context, Bianchini and Bressan were able to show that these solutions are vanishing viscosity limits for artificial viscosities in the celebrated paper \cite{BB} (see also \cite{tongest, Feiminest} for convergence estimates).  However, up to now, the result could not be extended to physical systems as the Navier-Stokes equation. The main difficulty is to obtain $BV$ estimates uniform with respect to the viscosity (see \cite{LiuYu} for estimates at viscosity fixed).
\vskip0.3cm
In 2010, using the compensated compactness method \cite{Tartar},  Chen and Perepelitsa showed in \cite{CP} the existence of vanishing viscosity solutions to Euler \eqref{eq:Euler} in one-space dimension with $\bar\mu=1$. This convergence was obtained for merely energy bounded initial values. However, in the case where  initial values are small in $BV$, it was not known until now whether the vanishing viscosity solutions were small $BV$ and whether they were  the unique solutions in the class of inviscid limits from Navier-Stokes. 
\vskip0.3cm
In this paper, we are showing the following two fundamental statements about vanishing viscosity limits and weak/BV stability in 1D:
\vskip0.3cm
{\it Any small BV solutions to \eqref{eq:Euler} is a vanishing viscosity solution, that is, can be constructed as double limit of solutions to the Navier-Stokes equation \eqref{eq:Navier-Stokes}.}

\vskip0.3cm
{\it Small BV solutions are stable via initial  perturbations among vanishing viscosity solutions.}
\vskip0.3cm
Note that the second statement does not need extra conditions as the strong traces property or a priori $L^\infty$ bound  as in \cite{CKV20}.
These two results are the later development of the $a$-contraction theory  up to shift first coined in \cite{KVARMA}.   It is based on the relative entropy method, and is a natural extension of the the work of Dafermos and DiPerna \cite{Dafermos4, DiPerna}.

\section{Precise statement of the results}
\setcounter{equation}{0}
From now on, we fix the dimension $N=1$, and so consider the system in the form
\begin{align}
\begin{aligned}\label{1NS}
\left\{ \begin{array}{ll}
       \rho^\nu_t + (\rho^\nu u^\nu)_x = 0\\
       (\rho^\nu u^\nu)_t +(\rho^\nu (u^\nu)^2)_x + p(\rho^\nu)_x = \nu \Big( \bar\mu(\rho^\nu) u^\nu_x\Big)_x . \end{array} \right.
\end{aligned}
\end{align}
 We will consider the viscosity functionals satisfying the following property:
\begin{align}
\left\{
\begin{aligned} \label{hyp:mu}
&\hspace{0.5cm} \mbox{Assume  $\gamma>1$, and that there exists $0<C_1<C_2$ and  $\alpha>0$ such that }\\
&\hspace{0.5cm}\mbox{$\gamma-1\leq \alpha\leq \gamma$ and }\displaystyle{C_1(1+\rho^\alpha)\leq \frac{\bar\mu(\rho)}{\gamma}\leq C_2(1+\rho^\alpha)} \quad\forall \rho>0.\\
&\hspace{0.5cm} \mbox{If $\gamma\leq 5/3$, we fix the value $\alpha=\gamma-1$.} 
\end{aligned}
\right.
\end{align} 

 This hypothesis allows very general viscosity functionals. It requires, however, a polynomial growth for large densities. Note that this growth is consistent with the case of the shallow water equation, which corresponds to  $\gamma=2$, with a growth of the viscosity functional as $\alpha=1$.
\vskip0.3cm
As discussed before, the theory is based on the relative entropy. 
For $U=(\rho,\rho u), \bar{U}=(\bar\rho, \bar\rho\bar u)$ with $\bar\rho>0$, we define the relative entropy of $U$ with respect to $\bar U$ as
$$
\eta(U|\bar U)=\rho\frac{|u-\bar u|^2}{2}+\frac{p(\rho|\bar\rho)}{\gamma-1}, \qquad  p(\rho|\bar\rho)=\rho^\gamma-\bar\rho^\gamma-\gamma\bar\rho^{\gamma-1}(\rho-\bar\rho)\geq0.
$$

For any fixed state $U_*=(\rho_*, \rho_*u_*)\in \RR^+\times\RR$ defining the limit state on the left and on the right, we consider the following set of Euler initial values:
\begin{equation}\label{initialEuler}
E^0:=\left \{  U^0=(\rho^0,\rho^0u^0)\in (L^1_{\mathrm{loc}} \cap L^\infty)(\RR) ~ \bigg| ~   \inf(\rho^0)>0, \ \  \int_\RR \left( \rho^0 |u^0 -u_*|+|\rho^0 - \rho_*| \right) \,dx< \infty           \right\}.
\end{equation}
Notice  that for any $U^0\in E^0$,
\beq\label{einiq}
 \int_\RR \eta(U^0(x)|U_*)\,dx< \infty.
\eeq

As in \cite{CP}, we consider slightly regularized initial values at the Navier-Stokes level. For any  Euler initial value \eqref{initialEuler}, we say that $U_0^\nu:=(\rho_0^\nu, \rho_0^\nu u_0^\nu)$ is an adapted family of initial values for the Navier-Stokes equation (associated with $U^0$) if  $U_0^\nu$ is smooth and satisfies: 
\begin{equation}\label{initialNS}
\begin{array}{l}
 \displaystyle{\lim_{\nu\to 0} \left(\int_\RR|U_0^\nu-U^0|\,dx+\int_\RR\eta(U_0^\nu|U^0) dx +\nu^2 \int_\RR|\partial_x\phi(\rho_0^\nu)|^2\,dx\right)=0, }
\end{array}
\end{equation}
where we define $\phi$ as $\phi'(\rho)=\frac{\bar\mu(\rho)}{\rho^{3/2}}$. We will prove the following lemma in Section \ref{sec:proof}.
\begin{lem}\label{lem_initialval}
For any $U^0=(\rho^0,\rho^0u^0)\in E^0$ defined in \eqref{initialEuler}, there exists an adapted family of initial values for the Navier-Stokes equation $U^\nu_0$, that is, verifying \eqref{initialNS}. 
\end{lem}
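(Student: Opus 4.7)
The plan is to construct $U_0^\nu=(\rho_0^\nu,\rho_0^\nu u_0^\nu)$ by plain mollification at a scale $\eps=\eps(\nu)\to 0$ that is calibrated to balance all three quantities in \eqref{initialNS}. Fix a nonnegative symmetric mollifier $\psi\in C^\infty_c(\RR)$ with $\int\psi=1$, set $\psi_\eps(x)=\eps^{-1}\psi(x/\eps)$, and define $\rho_0^\nu:=\rho_*+(\rho^0-\rho_*)\ast\psi_\eps$, $m_0^\nu:=\rho_* u_*+(\rho^0 u^0-\rho_* u_*)\ast\psi_\eps$, and $u_0^\nu:=m_0^\nu/\rho_0^\nu$. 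The nonnegativity of $\psi_\eps$ together with $\inf\rho^0>0$ preserves the uniform bounds $\inf\rho^0\le\rho_0^\nu\le\|\rho^0\|_{L^\infty}$ for every $\nu$, so $m_0^\nu$ is uniformly bounded in $L^\infty$, $u_0^\nu$ is smooth and uniformly bounded, and $U_0^\nu$ is an admissible smooth initial datum for \eqref{1NS}.

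The first two limits in \eqref{initialNS} reduce to standard mollification estimates. Indeed, \eqref{initialEuler} together with $\inf\rho^0>0$ forces both $\rho^0-\rho_*$ and $\rho^0 u^0-\rho_* u_*$ to lie in $L^1\cap L^\infty\subset L^2$, so $\rho_0^\nu-\rho^0$ and $m_0^\nu-\rho^0 u^0$ tend to zero in every $L^p$ with $p<\infty$. The $L^1$ convergence gives $\int|U_0^\nu-U^0|\,dx\to 0$ directly. For the relative entropy, both $\rho_0^\nu$ and $\rho^0$ stay in a fixed compact subinterval of $(0,\infty)$ on which $p(\cdot|\cdot)$ is comparable (by Taylor expansion, since $\gamma>1$) to the square of the $\rho$-difference. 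Combining this with the identity $u_0^\nu-u^0=\bigl((m_0^\nu-\rho^0 u^0)\rho^0-\rho^0 u^0(\rho_0^\nu-\rho^0)\bigr)/(\rho_0^\nu\rho^0)$ and the uniform lower bound on the denominator yields $\int_{\RR}\eta(U_0^\nu|U^0)\,dx\le C\bigl(\|\rho_0^\nu-\rho^0\|_{L^2}^2+\|m_0^\nu-\rho^0 u^0\|_{L^2}^2\bigr)\to 0$.

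The decisive step, which dictates the admissible rate of $\eps(\nu)$, is the viscous quantity $\nu^2\int|\partial_x\phi(\rho_0^\nu)|^2\,dx$. Under \eqref{hyp:mu}, the function $\phi'(\rho)=\bar\mu(\rho)/\rho^{3/2}$ is bounded on $[\inf\rho^0,\|\rho^0\|_{L^\infty}]$ by a constant $C_\phi$ depending only on $\rho^0$, so $|\partial_x\phi(\rho_0^\nu)|\le C_\phi|\partial_x\rho_0^\nu|$ almost everywhere. Since $\partial_x\rho_0^\nu=(\rho^0-\rho_*)\ast\partial_x\psi_\eps$, Young's inequality produces $\|\partial_x\rho_0^\nu\|_{L^2}\le\|\rho^0-\rho_*\|_{L^2}\|\partial_x\psi_\eps\|_{L^1}=O(\eps^{-1})$, whence $\nu^2\int|\partial_x\phi(\rho_0^\nu)|^2\,dx=O(\nu^2/\eps(\nu)^2)$. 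Choosing any $\eps(\nu)\to 0$ with $\nu/\eps(\nu)\to 0$ (for instance $\eps(\nu)=\sqrt{\nu}$) closes all three limits simultaneously. The only genuine constraint is this calibration: a more aggressive choice $\eps(\nu)\lesssim\nu$ would leave an $O(1)$ residual in the viscous term, so the mollification scale must be strictly slower than $\nu$; everything else is routine approximation.
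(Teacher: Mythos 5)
Your proof is correct and follows essentially the same route as the paper: mollification at a scale $\eps(\nu)$ with $\nu/\eps(\nu)\to 0$ (the paper takes $\eps=\sqrt\nu$), the observation that $E^0$ places $\rho^0-\rho_*$ and the momentum perturbation in $L^1\cap L^\infty\subset L^2$, and the Young-inequality bound $\|\partial_x\rho_0^\nu\|_{L^2}=O(\eps^{-1})$ to kill the viscous term. The only cosmetic difference is that you mollify the momentum and divide, whereas the paper mollifies $\rho^0$ and $u^0$ separately; both yield the same estimates.
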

This lemma shows that for any Euler initial values in $E^0$, there exists an associated adapted family of initial values for the Navier-Stokes equation. 
\vskip0.3cm
 Our first result is as follows. 
 
 \begin{theorem}\label{th1}
 Assume \eqref{hyp:mu}.
 For any $U_*=(\rho_*, \rho_*u_*)\in \RR^+\times\RR$, there exists $\varepsilon>0$ such that the following is true. Let $U^0\in E^0$ with $\|U^0\|_{BV(\RR)}\leq \varepsilon$, 
 and $U_0^\nu$ be any adapted family of initial values satisfying \eqref{initialNS}. Consider the solution $U^\nu$ to the Navier-Stokes equations \eqref{1NS} with initial value $U_0^\nu$. Then, for some $q>1$, $U^\nu$ converges in $L^q_{\mathrm{loc}}(\RR^+\times\RR)$  to the small $BV$ solution to \eqref{eq:Euler} associated to the initial value $U^0$, as $\nu\rightarrow0$. 
  \end{theorem}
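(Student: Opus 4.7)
My plan is to combine the front-tracking construction of the small $BV$ Euler solution with the $a$-contraction and relative entropy method up to shifts, applied directly at the Navier-Stokes level. Since $\|U^0\|_{BV}\le\varepsilon$, standard small $BV$ theory gives a unique entropy solution $\bar U$ of \eqref{eq:Euler} with datum $U^0$, obtainable as the $\delta\to 0$ limit of front-tracking approximations $\bar U_\delta$ whose waves consist of finitely many shock fronts and discretized rarefactions separated by constant states, and whose Glimm interaction potential is uniformly of size $\varepsilon$.

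First I would freeze such a $\bar U_\delta$ and, along each shock front $x=y_i(t)$, replace the discontinuity by a viscous shock profile of \eqref{1NS} connecting the two adjacent constant states at the scale $\nu$. Away from shock layers, the reference state is $\bar U_\delta$ itself, with a mild smoothing across rarefaction fans. This yields a globally smooth approximate Navier-Stokes state $\tilde U^\nu_\delta$ that solves \eqref{1NS} exactly inside each shock layer, up to an error supported near wave interactions and near rarefaction zones whose size can be controlled by the Glimm functional of $\bar U_\delta$, by $\delta$, and by $\nu$.

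Next I would attach a shift $X_i(t)$ to each shock and a piecewise-constant weight $a(t,x)$ that jumps across each shock, in the manner of the one-shock $a$-contraction analysis of \cite{KVARMA}. The central object is then
\begin{equation*}
\mathcal{E}^\nu(t):=\int_\RR a(t,x)\,\eta\!\left(U^\nu(t,x)\,\Big|\,\tilde U^\nu_\delta\bigl(t,\Phi_X(t,x)\bigr)\right)dx,
\end{equation*}
where $\Phi_X$ encodes all the shifts. Differentiating $\mathcal{E}^\nu$ in time, using \eqref{eq:EntropyNavier-Stokes} against the evolution of $\tilde U^\nu_\delta$, produces good dissipation terms (from the viscosity and from the convexity structure inside each viscous shock profile), bad quadratic terms that are absorbed by tuning $\dot X_i$ as in the single-shock theory, and leftover errors concentrated near interactions and rarefactions. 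Summing over fronts and using the smallness of the Glimm functional closes a Gronwall inequality of the form
\begin{equation*}
\mathcal{E}^\nu(t)\le C\bigl(\mathcal{E}^\nu(0)+\mathrm{err}(\delta,\nu)\bigr),
\end{equation*}
where $\mathrm{err}(\delta,\nu)\to 0$ as $\nu\to 0$ and then $\delta\to 0$. The assumption \eqref{initialNS}, together with Lemma \ref{lem_initialval}, guarantees $\mathcal{E}^\nu(0)\to 0$, and the shifted reference $\tilde U^\nu_\delta(\cdot,\Phi_X)$ converges back to $\bar U$ in $L^1_{\mathrm{loc}}$. Combining these statements yields the announced convergence $U^\nu\to\bar U$ in $L^1_{\mathrm{loc}}(\RR^+\times\RR)$.

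The main obstacle, and the heart of the argument, is the simultaneous management of all shocks: each shock requires its own shift and contributes its own bad boundary term in the relative entropy identity, yet the weight $a$ must be globally well-defined and the cross terms generated by interactions of distant fronts must be absorbed by a Glimm-type interaction potential \emph{uniformly in $\nu$}. Handling rarefaction layers and the matching between viscous shock profiles and the unperturbed constant states without losing smallness in $\nu$, particularly in the degenerate density-dependent viscosity regime \eqref{hyp:mu}, is where the bulk of the technical work lies.
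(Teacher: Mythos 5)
Your outline reproduces the paper's core machinery (modified front tracking, viscous shock profiles, weights $a$, shifts, weighted relative entropy, Gronwall), but the final step — \emph{identifying the limit} — contains a genuine gap that the actual proof has to work hard to circumvent. You assert that the shifted reference $\tilde U^\nu_\delta(\cdot,\Phi_X)$ "converges back to $\bar U$ in $L^1_{\mathrm{loc}}$." This is precisely what fails: the shifts $X_i(t)$ are driven by the wild Navier-Stokes solution $U^\nu$, which admits no $L^\infty$ bound uniform in $\nu$, so the shifts are not uniformly controlled and the shifted, $\nu$-dependent reference $\bar U_\nu$ may oscillate in time; one cannot conclude it converges to the front-tracking solution $\bar U$, nor even that it converges strongly at all. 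Moreover, the relative-entropy control is obtained in Lagrangian variables for the BD effective velocity $h=u-\nu\frac{\bar\mu_L(v)}{v}v_x$, and after returning to Eulerian variables it only yields \emph{weak} convergence of $U^\nu-\bar U_\nu$ to $0$ (Proposition \ref{main_prop}), again because of the uncontrolled small/large density values. So your Gronwall inequality does not by itself give $U^\nu\to\bar U$ in $L^1_{\mathrm{loc}}$.

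The paper closes this gap with two additional ingredients that your proposal omits entirely. First, a compensated-compactness argument in the spirit of Chen--Perepelitsa (Proposition \ref{prop:cc}), which uses the extra dissipation bound \eqref{eq_G} from Theorem \ref{thm:uniform} to show that $U^\nu$ converges strongly (along a subsequence) to \emph{some} entropic solution $U$ of Euler. Second, the uniform small-$BV$ bound \eqref{BVest} on $\bar U_\nu$ plus weak compactness identifies $U$ as a small-$BV$ function, and then the uniqueness theorem for small-$BV$ entropic solutions (Chen--Krupa--Vasseur / Bressan--De Lellis) forces $U=\bar U$ and upgrades subsequential convergence to convergence of the whole family. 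Without the compensated-compactness step and the appeal to uniqueness, your argument cannot identify the limit with the $BV$ solution associated to $U^0$, and the claimed strong $L^1_{\mathrm{loc}}$ convergence is unsupported.
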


Note that thanks to the hypotheses \eqref{initialNS}  and  \eqref{initialEuler} on the initial value, and the condition \eqref{hyp:mu} on the viscosity,  the existence (and uniqueness) of the solution  to the Navier-Stokes system \eqref{eq:Navier-Stokes} follows from  \cite{MV-sima}. More precisely,    
 \cite[Theorem 2.1]{MV-sima} ensures that this solution verifies:
for any given $T>0$, there exist $\kappa^\nu_2(T)>\kappa^\nu_1(T)>0$ such that
\begin{equation}\label{hyp2}
\begin{array}{l}
\rho^\nu -\rho_* \in L^\infty(0,T;H^1(\bbr)), \quad \kappa^\nu_1(T) \le \rho^\nu(t,x) \le \kappa^\nu_2(T),\quad \forall x\in\bbr, t\le T, \\
u^\nu -u_* \in L^\infty(0,T;H^1(\bbr)) \cap  L^2(0,T;H^2(\bbr)).
\end{array}
\end{equation}
\vskip0.3cm
For any $U^0$ in \eqref{initialEuler}, and any $U^\nu$ solution to the Navier-Stokes equation \eqref{1NS}  with initial values verifying \eqref{initialNS}, the entropy inequality  \eqref{eq:EntropyNavier-Stokes} provides that 
\beq\label{invdef}
\limsup_{t>0, \nu>0}\int_{\RR}\eta(U^\nu(t,x)|U_*)\,dx\leq \int_\RR \eta(U^0(x)|U_*)\,dx.  
\eeq
Notice that by the definition of the entropy, $U^\nu$ is uniformly bounded in $L_{\mathrm{loc}}^q(\RR^+\times \RR)$ for $1<q<\min\{\gamma,2\}$. So, there exists $U\in L_{\mathrm{loc}}^q(\RR^+\times \RR)$ and a subsequence $\nu\to0$ such that $U^\nu$ converges at least weakly  in $L_{\mathrm{loc}}^q(\RR^+\times \RR)$ to $U$. We call any such $U$ an {\it inviscid limit} of Navier-Stokes associated to the initial value $U^0$. 
\vskip0.3cm
Our second theorem is the following.
\begin{theorem}\label{th2}
 Assume \eqref{hyp:mu}.
 For any $U_*=(\rho_*, \rho_*u_*)\in \RR^+\times\RR$, there exists $\varepsilon>0$ such that the following is true. Let $U^0\in E^0$ with $\|U^0\|_{BV(\RR)}\leq \varepsilon$, and $U^0_n$ a sequence of initial values in 
 $E^0$ such that 
 $$
\lim_{n\to\infty} \left(\int_\RR\eta(U^0_n(x)|U^0(x))\,dx+\int_\RR|U^0_n(x)-U^0(x)|\,dx\right)=0.
 $$
 Then, for any $U_n$ inviscid limit of Navier-Stokes associated to the initial value $U^0_n$, $U_n$ converges  strongly in $[L_{\mathrm{loc}}^q(\RR^+\times\RR)]^2$, as $n\to\infty$, to the small $BV$ solution $U$ associated to the initial value $U^0$.
\end{theorem}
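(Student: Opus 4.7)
The plan is to apply the $a$-contraction up to shift methodology at the level of the Navier-Stokes equations \eqref{1NS} and then pass to the inviscid limit on \emph{both} sides of the comparison simultaneously. Fix the small BV solution $U$ associated to $U^0$. By Theorem \ref{th1} and Lemma \ref{lem_initialval}, there exist adapted initial values $U^\nu_0$ for $U^0$ and corresponding Navier-Stokes solutions $U^\nu$ to \eqref{1NS} with $U^\nu\to U$ strongly in $L^1_{\mathrm{loc}}(\RR^+\times\RR)$. For each $n$, by definition $U_n$ is the weak $L^1_{\mathrm{loc}}$-limit, along some $\nu_k^n\to0$, of Navier-Stokes solutions $U^{\nu,n}$ with adapted initial data $U^{\nu,n}_0$ associated to $U^0_n$. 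The idea is to compare $U^{\nu,n}$ and $U^\nu$ at the same viscosity $\nu$, then let $\nu\to 0$ and finally $n\to\infty$.

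The core technical input is a weighted relative entropy inequality of $a$-contraction type: provided $\|U^0\|_{BV(\RR)}\leq \varepsilon$ is small, there exist a Lipschitz shift $X^\nu_n(t)$ and a weight $a^\nu(t,x)$ bounded above and below uniformly in $\nu$ and $n$ such that
\begin{equation*}
\int_\RR a^\nu(t,x)\,\eta\!\left(U^{\nu,n}(t,x+X^\nu_n(t))\,\big|\,U^\nu(t,x)\right)dx\ \leq\ C\!\int_\RR \eta\!\left(U^{\nu,n}_0\,\big|\,U^\nu_0\right)dx+o_\nu(1),
\end{equation*}
with a viscous remainder $o_\nu(1)\to 0$ as $\nu\to 0$. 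This is the Navier-Stokes analogue of the $BV$ weak/strong principle of \cite{CKV20} and will constitute the main technical theorem of the paper.

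Granting this, I first send $\nu\to 0$ with $n$ fixed. Using the adapted conditions \eqref{initialNS} together with a quasi-triangle estimate for $\eta(\cdot|\cdot)$, one verifies that $\int_\RR \eta(U^{\nu,n}_0|U^\nu_0)\,dx\to \int_\RR \eta(U^0_n|U^0)\,dx$. On the left, $U^\nu\to U$ strongly in $L^1_{\mathrm{loc}}$ while $U^{\nu,n}\rightharpoonup U_n$ only weakly; since $\eta(\cdot\,|\,V)$ is convex, lower semicontinuity gives, via a diagonal extraction, a limit shift $X_n(t)$ (locally Lipschitz) and weight $a(t,x)$ still bounded below for which
\begin{equation*}
\int_\RR a(t,x)\,\eta\!\left(U_n(t,x+X_n(t))\,\big|\,U(t,x)\right)dx\ \leq\ C\!\int_\RR \eta(U^0_n\,|\,U^0)\,dx.
\end{equation*}
Letting $n\to\infty$ kills the right-hand side, so $U_n(t,\cdot+X_n(t))\to U(t,\cdot)$ in $L^1_{\mathrm{loc}}$ by strict convexity of $\eta(\cdot|U)$ near $U$. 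Because the shifts are constructed to track individual shocks of $U$ and their total variation in $t$ is controlled by the initial relative entropy and $L^1$ distance, both of which vanish by hypothesis, one concludes that $X_n\to 0$ and upgrades the convergence to the shift-free strong $L^1_{\mathrm{loc}}$-convergence stated in the theorem.

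The main obstacle is the Navier-Stokes contraction inequality itself. Unlike earlier $a$-contraction results handling a single shock or simple Riemann pattern, here the reference $U^\nu$ approximates a \emph{full} small BV profile, so the weight $a^\nu$ and the shifts must be assembled from a front-tracking decomposition of $U$ and must absorb all wave interactions via a Glimm-type interaction potential adapted to the viscous setting. Equally delicate is the uniform control of the viscous error $o_\nu(1)$: this is where the BD-type control $\nu^2\int|\partial_x\phi(\rho^\nu_0)|^2\,dx$ in \eqref{initialNS} and the polynomial viscosity growth \eqref{hyp:mu} enter, preventing degeneracy near vacuum and allowing passage to the inviscid limit.
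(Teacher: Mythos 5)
Your argument hinges on a postulated $a$-contraction inequality comparing two \emph{exact} Navier--Stokes solutions at the same viscosity: the wild solution $U^{\nu,n}$ against the solution $U^\nu$ emanating from adapted data for $U^0$. This is the gap. Neither the paper nor the $a$-contraction literature provides such a ``weak--weak'' stability estimate, and the method cannot produce it: the weight $a^\nu$ and the shifts are built from the explicit viscous shock profiles of the \emph{reference}, and the key nonlinear Poincar\'e estimate is performed around each such layer. For this reason the reference object in the paper's main technical result (Theorem \ref{thm:uniform}) is not the Navier--Stokes solution $U^\nu$ but a constructed function $\overline U_\nu$ assembled from viscous shocks, smoothed rarefactions and pseudo-shocks via a modified front-tracking algorithm, recomputed at each interaction time and itself depending on $U^{\nu,n}$ through the shifts. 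Replacing $\overline U_\nu$ by the exact solution $U^\nu$ is not a cosmetic change; it is a substantially stronger statement that you would have to prove from scratch, and there is no indication it is true. A secondary weakness is the final step: you need the shifts $X_n$ to vanish to obtain the shift-free convergence of the theorem, and the claim that their variation is ``controlled by the initial relative entropy'' is asserted rather than derived; in the multi-wave setting the shifts are wild (the paper explicitly loses uniform control of them, which is why it avoids any argument requiring $X_n\to0$).

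The route the paper actually takes is much lighter once Theorem \ref{th1} is available: a diagonal argument. Using the quasi-triangle identity of Lemma \ref{lem:tri} one checks that, for $\nu$ small enough depending on $n$, the adapted family $U^{\nu}_{n,0}$ for $U^0_n$ is also an adapted family for $U^0$; one then picks $\nu_n\to0$ so that in addition $\|U^{\nu_n}_n-U_n\|_{W^{-1,1}_{\mathrm{loc}}}\to0$. Theorem \ref{th1} applied to $U^{\nu_n}_n$ gives $U^{\nu_n}_n\to U$ strongly, hence $U_n\rightharpoonup U$ weakly; since $U$ is an entropy solution and the $U_n$ are consistent approximations in the sense of \cite{FH}, the convergence upgrades to strong convergence in $L^1_{\mathrm{loc}}$, and uniqueness of the limit gives convergence of the whole sequence. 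I would encourage you to rework your proof along these lines rather than trying to establish the two-solution contraction inequality.
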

Note that only $U$ is in $BV$. The perturbations $U_n$ does not need to be $BV$. Therefore, this theorem implies a Weak/BV principle (in the spirit of the weak/strong principle of Dafermos and DiPerna): If $U^0$ is small in $BV$, then any inviscid limit of Navier-Stokes associated to the initial value $U^0$ is indeed the $BV$ solution to Euler.  
\vskip0.3cm

Theorem \ref{th1} shows that, in dimension 1,  any small $BV$ solution to \eqref{eq:Euler} is a vanishing viscosity solution. Theorem \ref{th2} shows that these solutions are stable via initial perturbations among vanishing viscosity solutions.

\section{The $a$-contraction theory with shifts for Euler}
\setcounter{equation}{0}

We will focus in this section on the results of the $a$-contraction with shifts that can be applied to the  barotropic Euler and Navier-Stokes equation. Note, however, that the theory has already been developed in several directions for extremal shocks of general systems \cite{KVARMA}, stability of general solutions for $2\times2$ systems \cite{CKV20}, the general Riemann problem for the Full Euler system and its associated Navier-Stokes-Fourier system \cite{SV-16dcds,KVW3}. A multi-D stability result for contact discontinuities in this context was proved in \cite{KVW2}. The inviscid limits results Theorem \ref{th1} and Theorem \ref{th2} are based on previous work on weak/BV stability at the level of the Euler equation itself. However, notice that previous results are usually weaker than Theorem \ref{th2}. A priori uniform boundedness and strong traces properties are needed on the weak solutions in \cite{CKV20}, whereas these conditional assumptions can  now be dropped using the notion of vanishing viscosity solution. For a general review of recent results using the $a$-contraction theory, see \cite{VReview}.  We first describe the ideas working directly on the hyperbolic equation. We will then explain the extra difficulties encountered while working on the inviscid limits. 

\subsection{The relative entropy method}

For a fixed $U_*\in \RR^+\times \RR$, $U\to \eta(U|U_*)$ is itself an entropy associated to the flux of entropy $q(U;U_*)$, and it holds from \eqref{eq:Euler} and \eqref{eq:Entropy} that any entropic solution verifies in the sense of distribution:
\begin{equation}\label{relative}
\partial_t \eta(U|U_*)+\partial_x q(U;U_*)\leq 0.
\end{equation}
The strict convexity of $\eta$ showed that, as long as $U, U_*$ have values in a compact set $\mathcal{C}$ away from the vacuum, 
\begin{equation}\label{eq:L2}
\frac{1}{C} |U-U_*|^2\leq \eta(U|U_*)\leq C |U-U_*|^2,
\end{equation}
where the constant $C$ depends only on $\mathcal{C}$. Integrating the entropy inequality with respect to $x$ provides the $L^2$ stability (and contraction with respect to the relative entropy), of  bounded entropic weak solutions with respect to the constant state:
$$
\int_{\RR} |U(t,x)-U_*|^2\,dx\leq C^2\int_{\RR}|U^0(x)-U_*|^2\,dx.
$$
The relative entropy method introduced by DiPerna and Dafermos \cite{Dafermos4, DiPerna} consists in modulating the constant $U_*$ with respect to a Lipschitz solution $V$. Using  the finite speed of propagation, the method provides the uniqueness and  $L^2$ stability of the Lipschitz solutions in a wider class of bounded entropic weak solutions. The method is also very efficient for asymptotic analysis when the approximated models are entropic such as the Navier-Stokes equation,  and as long as the limit solution is Lipschitz. The idea is that Lipschitz solutions are stable with respect to $L^2$ perturbations on the initial values and with respect to entropic perturbations on the equation itself. The situation is more complicated when considering discontinuous pattern $V$. DiPerna already pushed the method in \cite{DiPerna} to obtain the uniqueness of shocks. The method was extended in \cite{CHY} to Riemann solutions. The aim to the $a$-contraction with shifts is to extend this method to a $L^2$ based theory for the stability and asymptotic analysis of general discontinuous solutions.  

\subsection{A Kruzkov-like theory for systems}  For scalar conservation laws with flux $U\to f(U)\in \RR$, Kruzkov developed a well-posedness theory for bounded solutions \cite{K1} based on the family of entropies:
$$
\eta_K(U|U_*)=|U-U_*|.
$$
Any weak entropic solutions $U$ verifies the entropy inequalities \eqref{relative} for the entropies $U\to \eta_K(\cdot,U_*)$, and the associated entropy flux functional $U\to q_K(U;U_*)=\mathrm{sgn}(U-U_*)(f(U)-f(U_*))$. By modulating the constant via the doubling variables method, he showed that  two such weak entropic solutions $U,V$ still verify:
$$
\partial_t \eta_K(U|V)+\partial_x q_K(U;V)\leq 0,
$$
in the sense of distribution. This provides a $L^1$ contraction for the solutions
\begin{equation}\label{contraction_scalar}
\frac{d}{dt}\int_{\RR}\eta_K(U(t,x)|V(t,x))\,dx\leq 0,
\end{equation}
from which the well-posedness follows easily.
This theory does not generally extend to systems because the set of convex entropies is more restrictive, and does not include the Kruzkov entropies. However, from one convex entropy, we can still derive the associated Kruzkov-like family of relative entropies verifying \eqref{relative}. In general, rarefactions preserve  the contraction property measured  by the relative entropy. However, this is not true for general solutions anymore. This is because the relative entropy method is basically based on the norm $L^2$, while the Kruzkov entropies are based on $L^1$. For shocks, because of the Rankine-Hugoniot condition, $L^2$ perturbations of order $\varepsilon$ can induce a shift that produces an error at time $t$ of order $\sqrt{t\varepsilon}$. Because of that, shocks cannot have a  $L^2$ contraction  property. However, it was shown in \cite{KVARMA}, that a similar property holds after weighting (via a weight $a>0$), and factoring out this shift. This is the basis of the $a$-contraction theory with shift. More precisely, it has been shown that for any shock $(U_L,U_R,\sigma_{LR})$, (that is, discontinuous wave $S(t,x)=U_L {\bf 1}_{\{x<\sigma_{LR} t\}}+U_R {\bf 1}_{\{x>\sigma_{LR}t\}}$ as an entropic solution to \eqref{eq:Euler}) there exists a weight $a>0$ such that the following is true. For any bounded weak entropic solution verifying the strong trace property, there exists a Lipschitz  function  $t\to h(t)$ (called shift)  such that for all $t>0$:
\begin{equation}\label{acontraction}
\frac{d}{dt}\left\{a\int_{-\infty}^{\sigma_{LR}t+h(t)}\eta(U(t,x)|U_L)\,dx+\int_{\sigma_{LR}t+h(t)}^{\infty}\eta(U(t,x)|U_R)\,dx\right\} \leq 0.
\end{equation}
For the scalar case (see  Leger \cite{Leger}), we can take $a=1$. We then have a contraction similar to the $L^1$ theory \eqref{contraction_scalar}, but for the $L^2$ based relative entropy, and up to a shift:
$$
\frac{d}{dt}\int_{\RR} \eta(U(t,x)|S(t,x+h(t)))\,dx\leq 0.
$$
However, for the Euler equation, the inequality is not true without the weight (see \cite{Serre-Vasseur}). 
Note that, at this stage, these inequalities are valid only for elementary waves: shocks and rarefactions.

\subsection{Weak/BV stability principle for Euler} Conservation laws have finite speed of propagation. Thanks to this principle, if the contraction property is true for elementary waves one can hope  to extend it to   Riemann problems, and from here to general small $BV$ solutions. Because of the shifts, it is however not clear how to extend the contraction property to general  solutions. Such a program was first done for the scalar case in \cite{Krupa-V}. In \cite{CKV20}, the method was applied to weak/BV principle for $2\times 2$ systems.  Instead of looking for a pseudo-distance between the weak solutions $U_n$ and the specific small $BV$ solution $V$, we use the relative entropy to control the $L^2$ distance from $U_n$ to the set of small $BV$ functions. More precisely, given $U_*\in \RR^+\times\RR$ let $\eps>0$ such that the small $BV$ solution $V$ is in 
$\mathcal{S}=\{U\in L^\infty(\RR) \ | \ \|U-U_*\|_{L^\infty(\RR)}+\|U\|_{BV(\RR)}\leq \eps \}$ (more precisely, $\eps$ is the maximum of such norm that can be obtained via the front tracking method). The method of $a$-contraction with shift was used to show that for all time $t>0$:
\begin{equation}\label{strong/weak-contraction}
\inf_{\{W\in \mathcal{S}\}}\int_{\RR} |U_n(t,x)-W(x)|^2\,dx\leq C\int_\RR |U_n^0(x)-V^0(x)|^2\,dx.
\end{equation}
We can match this result with the small $BV$  uniqueness theory of Bressan \cite{Bressan} to obtain the weak/BV stability result. Indeed, consider a sequence $U^0_n$ of initial values converging in $L^2$ to the initial value $V^0$ of the small BV solution $V$. The inequality \eqref{strong/weak-contraction} implies that the limit $U$ of $U_n$ is in $\mathcal{S}$ for all time. Assuming that the $U_n$ are uniformly bounded, we can show that the convergence is strong and so $U$ is a small $BV$ solution to Euler with initial value $V^0$. By the uniqueness theorem in this class of solutions, we have $U=V$. Note that, for this program, it is not needed for the sequence $U_n$ to be with values in $\mathcal{S}$.
\vskip0.3cm

To obtain the control \eqref{strong/weak-contraction}, the idea is to construct, via a modified front tracking method, a function $\bar{U}_\delta$ keeping values in $\mathcal{S}$ for all time $t>0$, and a weight function $a$ such that for all $t,x$, $1/2\leq a(t,x)\leq 2$, and such that 
\begin{equation}\label{deltacontraction}
\int_{\RR} a(t,x) \eta(U_n(t,x)|\bar{U}_\delta(t,x))\,dx
\end{equation}
is non-increasing in time (up to error terms of order $\delta$, the small parameter associated to the front tracking approximation). The front tracking method is a numerical approximation method which involves solely the evolution of fronts. It proves itself to be a powerful  tool to study the well-posedeness of 1-D conservation laws (see for instance \cite{Bressan}). The approximated solutions are made piecewise constant by an ad-hoc treatment of the rarefactions. To fit the relative entropy framework, the front tracking method is modified by introducing the shifts on each front in order to keep the decrease of the quantity via the $a$-contraction theory. The piecewise constant weight function $a$ has to be recomputed after the interaction of two fronts. Because the front does not verify the Rankine-Hugoniot condition (due to the artificial shifts), the function $\bar{U}_\delta$ may not be  a meaningful approximation of the Euler equation anymore. However, the general theory of the front tracking method based on the evolution of the interaction potential is still valid, and insures that $\bar{U}_\delta$ stays in $\mathcal{S}$ for all time $t$.  Note also that the function $\bar{U}_\delta$ actually depends on the wild weak solutions $U_n$ via the shifts. The function $\bar U_\delta$ can be seen as a $\delta$-approximation of a sort of $L^2$ projection of $U_n$ onto the set $\mathcal{S}$ of small $BV$ functions.

\subsection{Towards the inviscid limit} The Navier-Stokes equation is compatible with the entropy, and so with the relative entropy method. It is then natural to run the same methodology in order to obtain, up to error terms of order $\nu$ and $\delta$, a similar control of  \eqref{deltacontraction} for the solution $U^\nu$ to the Navier-Stokes equation. However, the task is extremely difficult. This is due to three main factors: 1) Navier-Stokes has a destabilization effect on shocks, with the formation of viscous layers (viscous shocks). 2) The Navier-Stokes equation is not local anymore: there are interactions between the simple waves. 3) There is no uniform $L^\infty$ bound on the solutions with respect to the viscosity. This last condition is due to the fact that only one entropy  (the physical energy) is compatible with the Navier-Stokes equation. Therefore, the invariant regions known for the Euler equation which imply $L^\infty$ bounds on the Euler solutions, are not preserved at the level of the Navier-Stokes equation. This is a major difference with the artificial viscosity situation \cite{BB}. As in the work \cite{CP}, we have to deal with  possible points of high velocity or high density.

\section{Main idea of the proof}
\setcounter{equation}{0}

Following \cite{Kang-V-NS17,KV-Inven}, the relative entropy method is performed on the Navier-Stokes equation in its Lagrangian form. The mass Lagrangian variables are the specific volume $v=1/\rho$ and the velocity $u$ in the mass variables:
$y(t,x)=\int_0^x\rho(t,z)\,dz$. The Navier-Stokes system in the $(t,y)$ variables has the form (where we keep the variable $x$ for $y$ without confusion):
\begin{align}
\begin{aligned}\label{NS-1}
\left\{ \begin{array}{ll}
       v^\nu_t - u^\nu_x = 0\\
   u^\nu_t+p_L(v^\nu)_x =\nu \Big(  \frac{\bar\mu_L(v^\nu)}{v^\nu} u^\nu_x\Big)_{x} , \end{array} \right.
\end{aligned}
\end{align}
where $p_L(v)=v^{-\gamma}, \gamma>1$, $\bar\mu_L(v)=\bar\mu(1/v)$ (the subscript $L$ is for Lagrange). The  initial data $(v_0^\nu,u_0^\nu)$ verifies
\[
\lim_{|x|\to\infty}(v_0^\nu,u_0^\nu)=(v_*,u_*) ,
\]
and so has infinite mass. Moreover it has no vacuum.  This is preserved by the Navier-Stokes equation, and therefore the system \eqref{NS-1} is equivalent to the system \eqref{eq:Navier-Stokes} in dimension one. The Lagrangian setting is better suited to study the stability of the shock layer which involves a subtle balance between the hyperbolic terms (from Euler) and the parabolic ones (from the viscosity). Note that we need a powerful stability result on these shock layers.
Shock layers are objects at the scale $\nu$. We need them, however, to be stable with respect to perturbations of order $\delta$, at the level of the front tracking approximation. This is a stability property {\it uniform} with respect to the viscosity (see \cite{Kang-V-NS17}). To obtain this result,   
 we consider  the  effective velocity  $h := u - \nu \frac{\bar \mu_L(v)}{v} v_x $ corresponding to the Bresch and Desjardins entropy \cite{BD-cmp03}. The equation on $(v^\nu,h^\nu)$ is then 
\begin{align}
\begin{aligned}\label{NS}
\left\{ \begin{array}{ll}
       v^\nu_t - h^\nu_x = -\nu \Big( \bar\mu_L(v^\nu)(v^\nu)^\gamma [p_L(v^\nu)]_x\Big)_{x}\\
       h^\nu_t+p_L(v^\nu)_x =0. \end{array} \right.
\end{aligned}
\end{align}

We actually run the relative entropy method on this system. In the Lagrange variables, the relative entropy is 
\begin{equation}\label{etaL}
 \eta^L(U|\bar U)=\eta^L((v,h)|(\bar v,\bar h)) :=\frac{|h- \bar h |^2}{2} + Q(v|\bar v),
\end{equation}
where $Q(v)=v^{-\gamma+1}/(\gamma-1)$.
Note that the nonlinear terms of the Euler equation is the pressure and is a function of $v$ only. The advantage of  System \eqref{NS} compared to \eqref{NS-1} is that 
the diffusion is in $v$ rather than $u$, giving control on the main nonlinear term. After a maximization in the variable $h$, the stability relies on a nonlinear Poincar\'e estimate in the $v$ variable (see Section \ref{sec:shock}. This derivation was first  derived in \cite{Kang-V-NS17}. We rely in this paper on the version proved in  \cite{KV-2shock}).
\vskip0.3cm
Because we do not have a uniform $L^\infty$ bound on $U^\nu=(v^\nu,h^\nu)$ with respect to the viscosity, the control of arbitrarily small or big values of $v^\nu$ is not trivial. This is the reason why the condition on the viscosity \eqref{hyp:mu} for big values of density is needed. Under this assumption, a viscous version of the result \eqref{acontraction} via the $a$-contraction with shift  can be obtained for the viscous layer (see \cite{KV-Inven}): Let $U_L,U_R, \sigma_{LR}$  be a shock for the Euler equation, and consider $\tilde{U}^\nu$ the associated viscous layer, that is the solution to system \eqref{NS} with end states $U_L, U_R$. Then there exists a weight function $x\to a(x/\nu)$ (with end values associated to the number $a>0$) such that the following is true. For any solution $U^\nu$ of the system \eqref{NS}, there exists a shift $t\to X(t)$ such that 
$$
\frac{d}{dt}\left\{\int_{-\infty}^{+\infty}a((x-X(t))/\nu)\eta^L(U^\nu(t,x)|\tilde{U}^\nu)\,dx\right\} \leq 0.
$$
Notice that the use of the effective velocity imposes the condition on the initial values \eqref{initialNS}.
\vskip0.3cm
The aim is now  to extend this stability result of \cite{KV-Inven}, valid for a single shock, to a stability result for a general solution, building up from the general weak/BV principle for Euler described above.

\subsection{Construction of approximate solutions} The function $U^\nu$ is an exact solution to Navier-Stokes equation \eqref{NS}. What we call here ``approximate solution"  can be seen as a sort of projection of $U^\nu$ on the space of functions with small $BV$ norms. The basic idea is to construct these ``approximate solutions" $\bar U_{\nu,\delta}$ as a combination of shifted simple waves $\tilde U^\nu$. As for the classical front tracking method, by construction, these simple waves can interact only  a finite amount of times. At each of these times,   the waves are recomputed at the Euler level  following the front tracking method, and then replaced by their viscous counterparts. This construction is described in Section \ref{sec:appsoln}. These simple waves are then shifted between times of interaction, following the a-contraction principle to keep the combined ``approximate solutions" $\bar U_{\nu,\delta}$ close (in the sense of relative entropy) to the exact solutions $U^\nu$ to Navier-Stokes.  The whole theory is based on the necessity to shift the shock waves in $\bar{U}_{\nu,\delta}$ because of the incompatibility of the Rankine-Hugoniot condition and the $L^2$ norm.  These shifts are defined in Subsection \ref{subsec-shock}. An additional difficulty is that we also need to shift the rarefaction fronts. (These shifts are defined in  Subsection \ref{subsec-rar}). The purpose of these shifts is  to control the  worst part of the errors due to the rarefaction discretization at the level of the  front tracking method. The difficulty is due to the lack of (uniform in $\nu$) $L^\infty$ bound on $U^\nu$. The rarefaction discretization may produce $L^1$ error that we mitigate with the construction of the shifts (see difficulty 1. below). 

Let us fix the main coefficients involved as follows. The global  $BV$ strength is $\eps>0$. The whole result is under the assumption that this number is small, but it will be fixed for the whole paper. The strength of viscosity $\nu>0$ is the smallest coefficients converging to 0. There are two coefficients associated to the front tracking method.  The  jump strength of the approximation rarefaction, denoted by $\delta$, converges to 0 the slowest. We keep the standard notation and denote the cut-off on the interaction strength used to choose the Riemann solver as $\rn$ (depending on $\nu$). It should not be confused with the density $\rho^\nu$. (The construction of the approximate solution using the coefficient $\rho_\nu$ is done in the Lagrange frame on $\bar{U}_{\nu,\delta}=(\bar{v}_{\nu,\delta}, \bar{h}_{\nu,\delta})$, reducing the chance of confusion with the density).
This coefficient $\rho_\nu$ is also used to define the smallest possible size of physical waves (together with the biggest possible size of the pseudo-shocks). This coefficient has to converges to zero together with $\nu$, but at a lower rate. We fix the dependence as $\rn=\nu^{1/3}$. The construction faces several severe difficulties:

\begin{itemize}
\item[1.] We do not have uniform $L^\infty$ bound with respect to the viscosity on the wild Navier-Stokes solution $U^\nu$. As a consequence, we lose the uniform finite speed of propagation on the approximation   $\bar{U}_{\nu,\delta}$ (because we do not have uniform bounds on the shifts). Another difficulty is that for all $t$ the relative entropy $\eta(U^\nu(t,x)|\tilde{U}^\nu)$ is bounded only in $L^1$. Many errors in the $a$-contraction estimates can only be  controlled by carefully keeping track of signs (see above the issue for rarefactions for instance). Especially, to control errors due to pseudo-shocks, it is crucial that the velocity of pseudo-shocks are increasing in the pseudo-shock layer.
\item[2.] Long range interactions at the level of Navier-Stokes. The superposition of viscous waves is not an exact solution to Navier-Stokes due to their non-linear interactions. It is therefore important to control the spread of these interactions. The transition zone of a shock wave of strength $|\sigma|$ has width $\nu/ |\sigma|$. It is actually large for small shocks. It is then imperative to have a sharp control on the size of the shock by below, not only on the interaction strength. This requires  new Riemann solvers  at the level of the Front tracking method. 
\end{itemize}

The standard front tracking algorithm (see \cite{Bressan}) involves two Riemann solvers. The first accurate solver only discretizes the rarefaction to ensure that solutions stay piece-wise constant. It uses a simplified solver when the product of the strength of incoming waves $|\sigma_1\sigma_2|$ is smaller than the threshold $\rn$. This is to avoid the production of infinitely many waves in finite time. The idea is to make the physical waves either cross each other (if they are of different families) or combine (if they are of the same family). The error produces the so-called pseudo-shocks, also named non-physical waves, that can be controlled. An important feature is to make all the pseudo-shocks travel at a same particular speed, to avoid any collision between them, and such that they interact with any physical wave at most once (counting the order of generation). The traditional way is to use a fixed speed faster than any physical wave. However, because of the loss of uniform control on the speed of propagation, we fix the speed 0 for the pseudo-shocks, using that, in the Lagrange frame, no physical wave travel at that speed  (even after shifts).  

In our context, we need three more crucial properties on the front tracking algorithm. We need to ensure that the strength of shocks is bounded below proportionally to $\rn$. We  need that  the velocity $h$ is increasing in $x$ in the pseudo-shocks. Finally we need a precise control on the total number of interactions as a power of $log (1/\nu)$. To do this, we introduce the three families of  Riemann solvers: we called them the accurate, simplified and adjusted solvers. Note that even the accurate and simplified solvers are modified from their traditional definitions. A major reason is that  our pseudo-shocks need to have some monotonicity property  ($h$ has to increase in $x$). 
We also need to consider   adjusted solvers for so-called  overtaking interactions between rarefaction and shock waves (interaction between the  same family of waves). The construction insures a $O(\rn)$ lower bound on outgoing shock(s) by slightly adjusting the outgoing waves.
Moreover,  we take advantage of these modifications and of  the barotropic regime to control the total number of interactions on a lifespan $T$ by a power of $log (1/\nu)$.

Finally, we need to resolve the  interactions between incoming waves before they get too close to each other, when a certain distance between them is reached. We slightly decrease  this distance after each interaction to avoid freezing everything together. It is denoted by $d_j$ and is of order $ \rn^{1/4}$ (see \eqref{defdj}). 

Ultimately, we can consider the  coefficient $\delta$ to converge (very slowly) to 0 as a function of $\nu$. For this reason, the approximation function $\bar U_{\nu,\delta}=\bar U_\nu$ can be denoted as a function of $\nu$ only.


\subsection{Construction of the weight function and $a$-contraction estimates}
In order to obtain an approximate $a$-contraction, we need to construct a weight function $a(t,x)$ associated to the simple shock waves of $\bar U_\nu$. Its construction is described in Subsection \ref{subsec-weight}. Similarly to the construction of $\bar U_\nu$ itself,  the construction of $a(x,t)$ is based on the definition as in \cite{CKV20} for the Euler system, and first defined as a discontinuous function. After each wave interaction, the weight coefficients are recomputed based on the simple shock waves present in $\bar U_\nu$.  Following \cite{Kang-V-NS17}, the weight functions are then smoothed out (spread) at the scale $\nu$ as for the simple viscous shock waves. Finally,  they are shifted along the shocks shifts between the times of wave interactions. 
\vskip0.3cm
With both the approximate solution and the weight function constructed, it remains to check that the construction provides a $a$-contraction up to the small quantities $\nu$ and $\delta$. This computation is described from Section \ref{sec:wrel} to Section \ref{sec:pfthm}. The basic relative entropy computation is performed in Section \ref{sec:wrel}. Section \ref{sec:rare} deals with the errors due to the approximations of the rarefaction waves of $\bar U_\nu$. Section \ref{sec:para} localizes the viscous dissipation functional, so it can be used for the $a$-contraction estimates associated to each simple wave of  $\bar U_\nu$. Section \ref{sec:shock} deals with the $a$-contraction computation associated to the simple shock waves of $\bar U_\nu$. Finally, Section \ref{sec:pfthm}  insures that there is no growth of relative entropy at the interaction times because of the rebuilding of the weights.
Especially, we show, in  Subsection \ref{subsec-time}, the decay of the weight function $a(x,t)$ at each time when some waves interaction happens, i.e. $$\Delta a(x,t)=a(x,t+)-a(x,t-)\leq 0$$ for any $(x,t)$. Because of the spread of the waves, this can be achieved only by 
repositioning the outgoing waves strategically. 
This is because we need to show $\Delta a(x,t)\leq 0$ at any $x$, including the region between two incoming and outgoing waves.
The decay of $a(x,t)$ on these regions of size $\nu$ is much more complicated. To make this happens, we need to adjust the position of outgoing waves, as in Figures \ref{Fig1}, \ref{Fig2} and \ref{pic_a_0}: each outgoing wave is slightly shifted  along its main direction. 

Among all cases of pairwise interactions, the worst case  scenario in showing the decay of  $a(x,t)$ is the overtaking interaction between a rarefaction wave and a shock wave. We take advantage that in this case, the total variation function $L(t)$ decays of more that twice  the strength of the smaller incoming wave. This ensures a net drop of the  non-increasing functional  $L(t)+\kappa Q(t)$ (where $Q(t)$ is the Glimm potential). Adding  $L(t)+\kappa Q(t)$ in the weight function $a(x,t)$ ensures the decrease of $a(x,t)$ in this scenario for the accurate solvers. Some subtle estimates, however,  are still  needed for the adjusted and simplified solvers.

\subsection{Uniform estimate from the $a$-contraction with shifts} We can now state our main technical theorem based on the $a$-contraction with shifts in the Lagrange frame. First, notice from \eqref{hyp:mu} that
\beq\label{assmu}
C_1(1+v^{-\alpha}) \le \mu_L(v) \le C_2 (1+v^{-\alpha}).
\eeq
Therefore, 
 we can decompose the viscous coefficient $\mu_L(v)$ as follows:
\beq\label{mudecom}
\mu_L(v) = \mu_1(v) + \mu_2(v),
\eeq
where $\mu_1$ and $\mu_2$ satisfy
\beq\label{mu1}
C^{-1} v^{-\alpha} \le \mu_1(v) \le C v^{-\alpha},\quad \forall v>0,
\eeq
and
\beq\label{mu2}
0\le \mu_2(v) = \mu_2(v) \mathbf{1}_{v>2v_*} (v) \le C.
\eeq
Indeed, to decompose as above, set
\[
\mu_2(v) := \big( \mu(v) - C_1 v^{-\alpha} \big) \phi(v),\quad \mu_1(v):= \mu(v) - \mu_2(v)
\]
where a cutoff function $\phi$ is Lipschitz such that $\phi(v)=1$ for $v\ge 3v_*$; $\phi(v)=0$ for $v\le 2v_*$. Then, the assumption \eqref{assmu} implies the desired estimate.\\
Our main technical theorem is then the following. Its proof will stretch  from Section \ref{sec:wrel} to Section \ref{sec:pfthm}.

\begin{theorem}\label{thm:uniform}
Assume that the first two lines of \eqref{hyp:mu} hold true.  Then, for any  $U_*:=(v_*,h_*)$ with $v_*>0$, there exists $ C,\eps_0>0,$ such that  the following holds true.
\vskip0.3cm
\noindent Let  $U^0=(v^0,h^0)\in L^\infty(\RR)$ such that   $\int_\RR \eta^L(U^0(x)|U_*)\,dx<\infty$, and $\|U^0\|_{BV(\RR)}\leq \eps$ for some $0<\eps\leq \eps_0$. 
 Consider  $U_0^\nu:=(v_0^\nu,h_0^\nu)$ with 
\beq\label{inihuv}
\lim_{\nu\to 0} \int_\RR \eta^L(U_0^\nu(x)|U^0(x))\,dx =0.
\eeq 
Denote  $U^\nu:=(v^\nu,h^\nu)$ the solution to \eqref{NS} with initial data $U_0^\nu$. 
\vskip0.3cm
\noindent Then, for any  $\nu$ small enough,
 there exists  a  function $\overline U_{\nu}=(\bar v_{\nu}, \bar h_{\nu})\in L^\infty(\RR^+\times\RR)$  such that 
\begin{align}\label{BVbarU}
\begin{aligned}
\sup_{t>0} \left(\|\overline U_\nu(\cdot,t)\|_{BV(\RR)}+\|\overline U_\nu(\cdot,t)-U_*\|_{L^\infty(\RR)}\right)\leq C\eps,
\end{aligned}
\end{align}
and  for all $T>0$,
\[
\lim_{\nu\to0}\sup_{t\in[0,T]}\int_{\bbr} \eta^L(U^\nu(x,t) | \overline U_{\nu} (x,t) ) dx=0.
\]
Moreover,  $(\overline U_{\nu})_x$ can be written as a finite sum $V^{PS}_{\nu}+\sum_{i=1}^{N_{\nu}} V_{i,\nu}$ with:
$$
\lim_{\nu\to 0}\|V^{PS}_{\nu}\|_{L^\infty(\RR^+\times\RR)}=0,
$$
and such that 
\begin{align*}
\begin{aligned}
G_\nu(t)&:= \nu  \int_{\bbr} \mu_1(v^\nu) (v^\nu)^{\gamma} |(p_L(v^\nu)-p_L(\bar v_{\nu}))_x|^2 dy + \nu \int_{\bbr} \mu_2(v^\nu) (v^\nu)^{\gamma} |\partial_x p_L(v^\nu) |^2 dy \\
&\quad +  \sum_{i}  \int_{\bbr} |V_{i,\nu}| p_L(v^\nu |\bar v_{\nu}) dx
\end{aligned}
\end{align*}
verifies for all $T>0$:
\begin{equation}\label{eq_G}
\lim_{\nu\to0}\int_0^T G_\nu(t) dt=0.
\end{equation}
\end{theorem}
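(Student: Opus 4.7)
\emph{Outline.} The plan is to build, for each small $\nu$, an approximate solution $\overline U_\nu=(\bar v_\nu,\bar h_\nu)$ assembled from shifted viscous simple waves together with a weight $a(t,x)$ that is piecewise constant away from the viscous shock layers and smoothed on scale $\nu$ inside them, so that the weighted relative entropy $t\mapsto\int a(t,x)\eta^L(U^\nu(t,x)|\overline U_\nu(t,x))\,dx$ is almost non-increasing, with an error tending to zero with $\nu$ once the front-tracking parameter $\delta=\delta(\nu)$ is chosen to converge to $0$ slowly enough. Since $a$ is bounded above and below by positive constants, this yields both $\sup_t\int\eta^L(U^\nu|\overline U_\nu)\,dx\to 0$ and \eqref{eq_G}, while the bound \eqref{BVbarU} is inherited from the classical Glimm functional controlling the underlying front-tracking construction.

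\emph{Construction of $\overline U_\nu$.} Starting from $U^0$ I run a modified front tracking at the Euler level with three Riemann solvers (accurate, simplified, and an adjusted one for overtaking interactions) exhibiting the following non-classical features: pseudo-shocks are assigned the speed $0$ in the Lagrangian frame (no physical wave has that speed, even after shifts, so pseudo-shocks never collide with one another); each physical shock produced has strength bounded below by a constant multiple of $\rn=\nu^{1/3}$, which is essential because a viscous shock of strength $|\sigma|$ has transition zone of width $\nu/|\sigma|$; the velocity $h$ is monotone across each pseudo-shock; and the total number of interactions on $[0,T]$ is controlled by a power of $\log(1/\nu)$. Each simple wave of the resulting piecewise-constant Euler approximation is then replaced by its viscous counterpart at scale $\nu$, giving $\overline U_\nu$; between interaction times each wave is rigidly translated by its own shift, the shock shifts coming from the $a$-contraction selection of \cite{Kang-V-NS17,KV-Inven} and the rarefaction shifts designed to absorb discretization errors which, absent a uniform $L^\infty$ bound on $U^\nu$, cannot be controlled by crude estimates. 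The weight $a(t,x)$ is built from the shock weights of \cite{CKV}, augmented with a contribution $L(t)+\kappa Q(t)$ (Glimm potential), smoothed on scale $\nu$ in the fashion of \cite{Kang-V-NS17}, and translated along with the shocks.

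\emph{The $a$-contraction estimate.} On each open interval between interactions I compute $\frac{d}{dt}\int a\,\eta^L(U^\nu|\overline U_\nu)\,dx$ after the Bresch--Desjardins change to the effective velocity (which is exactly what makes system \eqref{NS} have its diffusion in $v$ and motivates the condition on the initial values in \eqref{initialNS}). The output decomposes into: (a) a localized shock bad term for each viscous shock, controlled by the nonlinear Poincar\'e estimate of \cite{Kang-V-NS17,KV-2shock} by absorbing a portion of $\nu\int\mu_1(v^\nu)(v^\nu)^\gamma|(p_L(v^\nu)-p_L(\bar v_\nu))_x|^2$; (b) rarefaction errors, controlled by the rarefaction shifts and by the large-density dissipation $\nu\int\mu_2(v^\nu)(v^\nu)^\gamma|\partial_x p_L(v^\nu)|^2$ coming from the splitting \eqref{mudecom}; (c) pseudo-shock errors, tamed using the monotonicity of $h$ across them; and (d) cross terms between distinct waves, small because neighbouring waves are kept at mutual distance of order $\rn^{1/4}$. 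Summing yields $\frac{d}{dt}\int a\,\eta^L\,dx\le -c\,G_\nu(t)+o(1)$ on each $(t_k,t_{k+1})$. At each interaction time $t_k$ I establish the pointwise inequality $\Delta a(x,t_k)\le 0$; on the $\nu$-strip between the incoming and outgoing waves this is delicate and is achieved by a slight repositioning of the outgoing waves along their main direction, with the overtaking rarefaction-shock case saved by the net drop of $L(t)+\kappa Q(t)$.

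\emph{Main obstacle and conclusion.} The technical crux is controlling the interaction times on the $\nu$-strip together with the absence of a uniform-in-$\nu$ $L^\infty$ bound on $U^\nu$: errors are naturally only in $L^1$ and their signs must be tracked throughout. This forces the splitting \eqref{mudecom}--\eqref{mu2} of $\mu_L$ (to treat moderate and very large densities separately), the monotonicity of $h$ across pseudo-shocks, and the coupling between outgoing-wave repositioning and the $L+\kappa Q$ term in the weight. With these ingredients in place, the bound on the number of interactions as a power of $\log(1/\nu)$ lets me choose $\delta(\nu)\to 0$ slowly enough that both the integrated between-interaction error and the sum of interaction-time contributions vanish with $\nu$; passing to the limit then yields \eqref{eq_G} and the uniform convergence of the unweighted relative entropy. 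The decomposition of $(\overline U_\nu)_x$ is finally read off from the list of its viscous waves: $V^{PS}_\nu$ is the pseudo-shock part (uniformly small in $L^\infty$ since individual pseudo-shock strengths tend to zero as the scheme converges) and $\sum_i V_{i,\nu}$ collects the physical shock contributions against which the $L^1$ term in $G_\nu$ is compared.
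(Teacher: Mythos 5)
Your proposal follows essentially the same route as the paper: the modified front tracking with accurate/simplified/adjusted solvers, zero-speed pseudo-shocks with monotone $h$, the $\rho_\nu=\nu^{1/3}$ lower bound on shock strengths, the weight built from the shock profiles plus $\frac{1}{\delta_*}(\bar L+\kappa \bar Q)$, the between-interaction $a$-contraction via the nonlinear Poincar\'e estimate and the $\mu_1/\mu_2$ splitting, the pointwise decay $\Delta a\le 0$ at interaction times via repositioning of outgoing waves, and the final Gronwall argument exploiting the $\big(\log(1/\nu)\big)^3$ bound on the number of interactions to choose $\delta(\nu)$. This is a faithful outline of the paper's argument (the paper's Proposition \ref{prop:main} retains a Gronwall term $C\int a\,\eta^L\,dx$ and a $\delta^{1/4}\overline{\mathcal{D}}$ term rather than a bare $o(1)$, but the structure you describe is the one actually used).
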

In the decomposition of $(\overline U_{\nu})_x$, the $V_{i,\nu}$ correspond to the derivatives in $x$ of the physical waves of $\overline U_{\nu}$ (shocks and rarefactions), while $V^{PS}_{\nu}$ is the derivative in $x$ of the combination of all the pseudo-shocks (see Sections \ref{sub:conapp} and \ref{sec:pfthm}). The extra estimate \eqref{eq_G} will be useful for the proof of Proposition \ref{prop:cc}.
\vskip0.3cm
Theorem \ref{thm:uniform} shows that if the initial value $U^\nu_0$ converges to a small BV function, then, up to a subsequence,  the associated solution $U^\nu$ to  \eqref{NS} will also converge (possibly only weakly) to a small BV function. Indeed, the theorem shows that $U^\nu-\overline{U}_\nu$ converge (strongly) to 0. But because of the uniform bound \eqref{BVbarU}, up to a subsequence, $\overline{U}_\nu$ converges (possibly only weakly) to a function verifying the same bound.

 
\subsection{Stability for the system \eqref{1NS} in the Eulerian frame} We will recast Theorem \ref{thm:uniform} in the Eulerian frame to obtain the following proposition.
\begin{proposition}\label{main_prop}
Assume that the first two lines of \eqref{hyp:mu} hold true.  Then, for any  $U_*:=(\rho_*,\rho_*u_*)$ with $\rho_*>0$, there exists $C, \eps_0>0$ such that  the following holds true.
\vskip0.3cm\noindent
Let  $U^0=(\rho^0,\rho^0u^0)\in E^0$ with $\|U^0\|_{BV(\RR)}\leq \eps<\eps_0$, and let $U^\nu_0$ be an adapted family of initial values satisfying \eqref{initialNS}. Consider the solution $U^\nu$ to the Navier -Stokes equation \eqref{1NS} with initial value $U^\nu_0$. 
\vskip0.3cm\noindent 
Then, there exists a family of functions $\bar U_\nu$ such that 
\begin{equation}\label{BVest}
\sup_{t>0} \left(\|\bar U_\nu(\cdot,t)\|_{BV(\RR)}+\|\bar U_\nu(\cdot,t)-U_*\|_{L^\infty(\RR)}\right)\leq C\eps,
\end{equation}
and 
$(U^\nu-\bar U_\nu)$ converges weakly to 0 in the sense of distribution.

\end{proposition}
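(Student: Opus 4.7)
The plan is to derive Proposition \ref{main_prop} from the Lagrangian-frame Theorem \ref{thm:uniform} by transporting the setting through the mass change of variables already used in \eqref{1NS}--\eqref{NS}. Let $U^\nu=(\rho^\nu,\rho^\nu u^\nu)$ be the Navier-Stokes solution furnished by \eqref{hyp2}, and set $v^\nu=1/\rho^\nu$, $y(t,x)=\int_0^x\rho^\nu(t,z)\,dz$, and $h^\nu=u^\nu-\nu\bar\mu_L(v^\nu)(v^\nu)^{-1}\partial_y v^\nu$; then $(v^\nu,h^\nu)$ solves \eqref{NS}. Applying the same transformation at $t=0$ to $U^0$ and $U^\nu_0$ produces a Lagrangian profile $(v^0,u^0)$ and an approximating pair $(v_0^\nu,h_0^\nu)$. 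Since $U^0\in E^0$ and $\|U^0\|_{BV(\RR)}\le\varepsilon$ force $\rho^0$ to lie in a fixed neighborhood of $\rho_*$ bounded away from $0$ and $\infty$, this change of variable is bi-Lipschitz, so the Eulerian BV and $L^\infty$ bounds transfer to $\|(v^0,u^0)\|_{BV(\RR)}\le C\varepsilon$.

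I next verify the initial-data hypothesis \eqref{inihuv}. A direct computation using $dy=\rho\,dx$ and $\phi'(\rho)=\bar\mu(\rho)\rho^{-3/2}$ yields the identity $\int_\RR|u_0^\nu-h_0^\nu|^2\,dy=\nu^2\int_\RR|\partial_x\phi(\rho_0^\nu)|^2\,dx$, so the third term in \eqref{initialNS} precisely encodes $h_0^\nu-u_0^\nu\to 0$ in $L^2(dy)$. The first two terms of \eqref{initialNS}, together with comparability of $\eta$ and $\eta^L$ after the change of variable, give $\int_\RR\eta^L((v_0^\nu,u_0^\nu)|(v^0,u^0))\,dy\to 0$. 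Combined, these yield \eqref{inihuv} for $(v_0^\nu,h_0^\nu)$ against the limit $(v^0,h^0):=(v^0,u^0)$, and Theorem \ref{thm:uniform} produces $\overline U_\nu=(\bar v_\nu,\bar h_\nu)$ satisfying \eqref{BVbarU}, the decomposition $(\overline U_\nu)_x=V^{PS}_\nu+\sum_i V_{i,\nu}$, the dissipation estimate \eqref{eq_G}, and $\sup_{[0,T]}\int\eta^L(U^\nu|\overline U_\nu)\,dy\to 0$.

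I would then return to Eulerian variables. Set $\bar\rho_\nu=1/\bar v_\nu$, and on each explicit constituent of $\overline U_\nu$ (viscous shock layer, smoothed rarefaction, pseudo-shock) define $\bar u_\nu=\bar h_\nu+\nu\bar\mu_L(\bar v_\nu)(\bar v_\nu)^{-1}\partial_y\bar v_\nu$; then pull $(\bar\rho_\nu,\bar\rho_\nu\bar u_\nu)$ back through the $y\mapsto x$ map associated with $\bar\rho_\nu$. The $L^\infty$ part of \eqref{BVbarU} for $\varepsilon$ small keeps $\bar v_\nu$ in a fixed neighborhood of $v_*$, making the pull-back uniformly bi-Lipschitz, which immediately gives the BV/$L^\infty$ bound \eqref{BVest}.

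The last and most delicate step is the weak convergence $U^\nu-\bar U_\nu\rightharpoonup 0$ in the distributional sense. For the density one writes $\rho^\nu-\bar\rho_\nu=(\bar v_\nu-v^\nu)/(v^\nu\bar v_\nu)$: the uniform-in-time $L^1(dy)$ convergence of the pressure part $Q(v^\nu|\bar v_\nu)$ extracted from $\int\eta^L(U^\nu|\overline U_\nu)dy\to 0$, combined with a truncation of $v^\nu$ based on the uniform energy bound \eqref{invdef}, delivers the required distributional convergence after returning to the Eulerian frame. The main obstacle is the momentum $\rho^\nu u^\nu-\bar\rho_\nu\bar u_\nu$, since Theorem \ref{thm:uniform} controls $h^\nu$, not $u^\nu$. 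Using $u^\nu-h^\nu=\nu\bar\mu_L(v^\nu)(v^\nu)^{-1}\partial_y v^\nu$ together with the decomposition \eqref{mudecom}--\eqref{mu2} of $\mu_L$ and the pressure identity $\partial_y p_L(v)=-\gamma v^{-\gamma-1}\partial_y v$, one splits the integration domain into $\{v^\nu\le 2v_*\}$ (handled by the $\mu_1$ piece) and $\{v^\nu>2v_*\}$ (handled by the $\mu_2$ piece); in each range the hypothesis \eqref{hyp:mu}, specifically the bound $\alpha\le\gamma$ on the viscosity exponent, makes the pointwise ratio of $(u^\nu-h^\nu)^2$ to the integrand of $G_\nu$ bounded up to a factor $\nu$ times a nonnegative power of $v^\nu$, so that \eqref{eq_G} forces $\int_0^T\int|u^\nu-h^\nu|^2\,dy\,dt\to 0$. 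The analogous estimate for $\bar u_\nu-\bar h_\nu$ follows from the explicit structure of $\overline U_\nu$: the physical wave pieces $V_{i,\nu}$ are viscous-shock-scaled and contribute at most $O(\sqrt\nu)$ in $L^2(dy\,dt)$, while the contribution of $V^{PS}_\nu$ is controlled by $\|V^{PS}_\nu\|_\infty\to 0$. Combining these with the $L^2(dy)$ convergence of $h^\nu-\bar h_\nu$ and the bi-Lipschitz pull-back to Eulerian coordinates delivers the weak convergence of $\rho^\nu u^\nu-\bar\rho_\nu\bar u_\nu$ and completes the sketch.
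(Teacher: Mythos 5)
Your overall architecture (pass to the Lagrangian frame via the mass variable, verify \eqref{inihuv} through Lemma \ref{lem_initialval}-type computations, invoke Theorem \ref{thm:uniform}, transfer the BV bound back via $\int|\partial_xF|\,dx=\int|\partial_yF|\,dy$) is the same as the paper's, and your identity $\int_\RR|u_0^\nu-h_0^\nu|^2\,dy=\nu^2\int_\RR|\partial_x\phi(\rho_0^\nu)|^2\,dx$ is exactly how the paper checks the initial-data hypothesis. The gap is in the momentum step. You claim that \eqref{eq_G} forces $\int_0^T\int|u^\nu-h^\nu|^2\,dy\,dt\to 0$. But $u^\nu-h^\nu=-\nu\,\mu_L(v^\nu)(v^\nu)^\gamma\,\partial_yp_L(v^\nu)$, so $|u^\nu-h^\nu|^2=\big(\nu\,\mu_L(v^\nu)(v^\nu)^\gamma\big)\cdot\big(\nu\,\mu_L(v^\nu)(v^\nu)^\gamma|\partial_yp_L(v^\nu)|^2\big)$, and on the set $\{v^\nu>2v_*\}$ (where the $\mu_2$ part of $G_\nu$ lives) the prefactor $\nu\,\mu_L(v^\nu)(v^\nu)^\gamma\sim\nu\,(v^\nu)^{\gamma}$ is \emph{unbounded}, because there is no uniform-in-$\nu$ lower bound on the density. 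On the complementary set the $\mu_1$ part of $G_\nu$ only controls $|(p_L(v^\nu)-p_L(\bar v_\nu))_y|^2$, not $|\partial_yp_L(v^\nu)|^2$, and the difference carries the $O(1/\nu)$ derivatives of the shock profiles. So the "pointwise ratio" argument does not close, and indeed the paper explicitly warns that strong convergence is lost precisely because of the BD entropy combined with the lack of control on extreme densities.

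What the paper does instead is weaker and sufficient: it writes $\rho^\nu u^\nu-\bar\rho_\nu\bar u_\nu=(\rho^\nu-\bar\rho_\nu)\bar u_\nu+\rho^\nu(h^\nu-\bar u_\nu)-\nu\sqrt{\rho^\nu}\,\partial_x\phi(\rho^\nu)$ with $\bar u_\nu$ simply the Eulerian transport of $\bar h_\nu$ (no BD correction on the approximate solution), handles the second term by Cauchy--Schwarz from $\int|h^\nu-\bar h_\nu|^2dy\to0$, and treats the third term by splitting at a density level $\kappa$: the low-density part has $L^2$ norm $\leq C\sqrt\kappa$ uniformly in $\nu$ by Lemma \ref{lem:derho}, while the high-density part is the exact derivative $\partial_x\psi_\kappa(\rho^\nu)$ of a function of size $O(\nu C_\kappa)$ in $L^1_{\mathrm{loc}}$, hence vanishes in $W^{-1,1}_{\mathrm{loc}}$. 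This yields only distributional convergence, which is all Proposition \ref{main_prop} asserts. You should replace your strong-$L^2$ claim by such a weak argument. A secondary point: the pull-back of $\bar U_\nu$ to Eulerian coordinates must be done through the Lagrangian map generated by $\rho^\nu$ (the same one used for $U^\nu$), not the one generated by $\bar\rho_\nu$; otherwise the two functions live on mismatched coordinates and the difference $U^\nu-\bar U_\nu$ acquires uncontrolled errors.
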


This proposition shows how the weighted relative entropy, coupled with the front tracking with shifts,  allows the construction of an approximate solution $\bar{U}_{\nu}$ which has the two properties: 
\begin{itemize}
\item Its BV norm is uniformly bounded in time and $\nu$.
\item $U^\nu-\bar{U}_{\nu}$ converges weakly to 0. 
\end{itemize}
Note that we lose the strong convergence because of the combination of the  BD entropy, and the lack of control on extremely small or large values of density.
Nevertheless, this ensures that, up to subsequences, any (possibly weak) limit $U_\infty$ of $U^\nu$ is uniformly of small $BV$. It remains to show that these limits are, actually,  entropic solutions to the Euler equation. This can be obtained via strong convergence. 
However, because we do not control well the shifts, we do not control the  possible time oscillations of the functions $\bar{U}_{\nu}$. This prevents to obtain strong convergence directly on the functions 
$\bar{U}_{\nu}$.
Instead, following Chen and Perepelitsa \cite{CP}, we use the compensated compactness on $U^\nu$ to show that the limit $U_\infty$ is an entropic solution to the Euler equation.

\subsection{Inviscid limit via compensated compactness} We will prove in Section \ref{sec:13} the following proposition. 

\begin{proposition}\label{prop:cc}
Assume that \eqref{hyp:mu} holds true. 
Let  $(\rho_*,\rho_* u_*)\in \RR^+\times \RR$ with $\rho_*>0$. Consider $U^0\in E^0$. Assume that $U_0^\nu=(\rho_0^\nu, \rho^\nu_0u_0^\nu)$ is an adapted family of initial values for Navier-Stokes, that is, verifies \eqref{initialNS}.
Then, up to a subsequence,
\[
\rho^\nu \to \rho,\quad \rho^\nu u^\nu \to \rho u \quad \mbox{in } L^q_{loc} (\bbr_+\times\bbr), \quad\mbox{for some } q>1,
\]
where $(\rho,u)$ is solution to  the Euler system \eqref{eq:Euler} and verifies the entropy inequality  \eqref{eq:Entropy} \eqref{defentpair}. 
\end{proposition}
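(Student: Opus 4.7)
The plan is to adapt the compensated compactness framework of Chen and Perepelitsa \cite{CP} to the present setting, with the key additional input being the dissipation estimate \eqref{eq_G} from Theorem \ref{thm:uniform}, which handles the density-dependent viscosity allowed by \eqref{hyp:mu}. The overall strategy is: obtain uniform energy bounds, derive higher integrability, prove $H^{-1}_{loc}$ compactness of entropy production for a rich family of entropies, extract a Young measure that must reduce to a Dirac mass, and then pass to the limit in the equations.

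Concretely, the energy inequality \eqref{eq:EntropyNavier-Stokes} combined with the adapted initial condition \eqref{initialNS} yields the uniform bound
\[
\sup_{t\in[0,T]}\int_\RR \eta(U^\nu(t,x)\,|\,U_*)\,dx + \nu\int_0^T\!\!\int_\RR \bar\mu(\rho^\nu)|u^\nu_x|^2\,dx\,dt \le C(T),
\]
so that, by strict convexity of $\eta$, $(\rho^\nu-\rho_*,\rho^\nu u^\nu-\rho_* u_*)$ is precompact weakly in $L^\infty(0,T;L^1_{loc})$ with some limit $(\rho,m)$. Following \cite{CP}, testing the momentum equation against carefully chosen cutoff multipliers delivers higher integrability estimates
\[
\int_0^T\!\!\int_K \bigl(\rho^\nu|u^\nu|^3 + (\rho^\nu)^{\gamma+\theta}\bigr)\,dx\,dt \le C(K,T),\qquad \theta>0,
\]
on any compact $K\subset\RR$; the polynomial growth prescribed by \eqref{hyp:mu} is exactly what is needed to close these estimates.

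For any weak entropy pair $(\eta_\psi,q_\psi)$ of the $\gamma$-law system generated by $\psi\in C^2_c(\RR)$, a direct computation on \eqref{1NS} gives
\[
\partial_t\eta_\psi(U^\nu)+\partial_x q_\psi(U^\nu) = \nu\,\partial_x\!\bigl(\bar\mu(\rho^\nu)\,\partial_m\eta_\psi(U^\nu)\,u^\nu_x\bigr) - \nu\,\bar\mu(\rho^\nu)(u^\nu_x)^2\,\partial^2_{mm}\eta_\psi(U^\nu),
\]
where $m=\rho u$. The first term is precompact in $W^{-1,q}_{loc}$ for some $q\in(1,2)$ by the dissipation bound, and the second is bounded in $L^1_{loc}$. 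Murat's lemma then gives $H^{-1}_{loc}$-compactness of the left-hand side, and the DiPerna--Chen reduction theorem for the $\gamma$-law system forces the limiting Young measure to be a.e.\ a Dirac mass. This yields the strong convergence $\rho^\nu\to\rho$ and $\rho^\nu u^\nu\to\rho u$ in $L^1_{loc}$; passing to the limit in \eqref{1NS} and \eqref{eq:EntropyNavier-Stokes} produces the entropic Euler solution \eqref{eq:Euler}--\eqref{eq:Entropy}.

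The principal difficulty is the density-dependent viscosity $\bar\mu(\rho)\sim 1+\rho^\alpha$: in regions where $\rho^\nu$ is very small or very large, the bare energy dissipation is not enough to control the entropy flux in the Sobolev scale required for Murat's lemma. It is here that \eqref{eq_G} from Theorem \ref{thm:uniform} is essential, and the main technical work will be to translate its Lagrangian form back to the Eulerian variables. In particular, the contribution $\nu\int\mu_2(v^\nu)(v^\nu)^\gamma|\partial_x p_L(v^\nu)|^2\,dy$ supplies the missing equicontinuity control in the near-vacuum regime $v^\nu>2v_*$, while the $\mu_1$ contribution, combined with the a.e.\ closeness of $\bar v_\nu$ to a BV state, controls the large-density regime. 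Once this translation is carried out, the compensated compactness argument goes through and the proposition follows.
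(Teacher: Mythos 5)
Your overall strategy matches the paper's (Chen--Perepelitsa compensated compactness, supplemented by the dissipation bound \eqref{eq_G}), but there are two genuine gaps. First, your entropy production identity is wrong: for a weak entropy pair $(\eta^\psi,q^\psi)$ viewed as a function of $(\rho,m)$, the viscous term produces \emph{three} contributions,
\[
\partial_t\eta^\psi+\partial_x q^\psi=\nu\big(\eta^\psi_m\bar\mu(\rho^\nu) u^\nu_x\big)_x-\nu\,\eta^\psi_{mu}\,\bar\mu(\rho^\nu)|u^\nu_x|^2-\nu\,\eta^\psi_{m\rho}\,\bar\mu(\rho^\nu)\,\rho^\nu_x u^\nu_x ,
\]
and you have dropped the cross term $\nu\,\eta^\psi_{m\rho}\,\bar\mu\,\rho^\nu_x u^\nu_x$. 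Since $|\eta^\psi_{m\rho}|\le C\rho^{(\gamma-3)/2}$, controlling it requires the a priori density-gradient bound $\nu\int_0^T\!\!\int\bar\mu(\rho^\nu)(\rho^\nu)^{\gamma-3}|\rho^\nu_x|^2\,dx\,dt\le C$, which is exactly the Bresch--Desjardins estimate of Lemma \ref{lem:derho}; it is the reason the adapted initial data \eqref{initialNS} must control $\nu^2\int|\partial_x\phi(\rho_0^\nu)|^2$. Without this term and its estimate the Murat-lemma step does not close. (Conversely, \eqref{eq_G} is \emph{not} what rescues the $H^{-1}_{loc}$-compactness step, contrary to what you assert: that step uses only the energy, the BD estimate, and the higher integrability, the latter requiring the case split $\alpha=\gamma-1$ for $\gamma\le 5/3$ versus $\gamma>5/3$ built into \eqref{hyp:mu}.)

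Second, you do not address how the limit inherits the entropy inequality for the \emph{physical} entropy \eqref{defentpair}, which is not generated by a compactly supported $\psi$, so none of the weak-entropy machinery applies to it directly; the paper flags this as the hardest part. The argument there is to work with the truncated generators $\psi_M$ (with $\psi_M''\ge0$ so that $\eta^M_{mu}\ge0$ and the quadratic dissipation has a sign), show the remaining cross term $\nu\,\eta^M_{m\rho}\bar\mu\rho^\nu_x u^\nu_x$ vanishes as $\nu\to0$ --- using the cancellation $\int_{-1}^1 s(1-s^2)^\lambda ds=0$ to gain a factor $1/M$ on the moderate-density region $\rho^\nu\in(\rho_*/2,2\rho_*)$, and using precisely \eqref{eq_G} (transferred to Lagrangian variables, with $p(\bar v_\nu)_y$ handled via the wave decomposition of $\bar U_\nu$) on the extreme-density region --- and only then let $M\to\infty$ using the uniform $L^1_{loc}$ bounds on $\rho|u|^3+\rho^{\gamma+\theta}$. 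This is where \eqref{eq_G} actually enters, and it is the step your proposal leaves essentially unproved.
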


The core of the proof is similar to  \cite{CP}. 
Because our viscosity functional is not constant, we are giving the details in Section \ref{sec:13}.  To pass  to the limit, it is important to obtain new a priori estimates. In Lemma \ref{lem:high1}, depending on the values of $\gamma, \alpha$, we obtain extra integrability on $\rho$. Our hypotheses on the viscosity functional requires to consider only the case $\alpha=\gamma-1$ for values $\gamma\leq 5/3$.
As in the case of  Chen and Perepelitsa in \cite{CP}, the most challenging part of the proof is to obtain the convergence on the flux of entropy and the decay of the entropy dissipation in Section \ref{sub:lim}.
For this part, our proof is different from the one of Chen and Perepelitsa in \cite{CP}. 
Instead, we  use crucially our  uniform bound \eqref{eq_G} obtained in Theorem \ref{thm:uniform}. This allows us to use an entropy pair $(\eta^M, q^M)$ corresponding to a truncated density function $\psi_M$, and to pass into the limit when $M$ goes to infinity (see Section \ref{sub:lim}). 

\subsection{Conclusion of the sketched proof}\label{sub:concl} We have shown that the  inviscid limits have small BV norm (thanks to Proposition \ref{main_prop}) and that they are entropic solutions to Euler (thanks to Proposition \ref{prop:cc}).
We can use the uniqueness theorem of Chen, Krupa and Vasseur \cite{CKV20} (see also Bressan and De Lellis \cite{bressan2023remark})  to ensure that this limit is the unique small BV solution to Euler with the corresponding initial value. From the uniqueness, the whole sequence converges. 
Note that it is important to use  uniqueness theorems for small BV solutions  without extra conditions as small BV along space-like curves as these conditions may not be verified  by $U_\infty$ due, again, to the wild shifts.
\vskip0.3cm
The rest of the paper is structured as follows. In Section \ref{sec:proof}, we first prove Lemma \ref{lem_initialval}.  Then we prove that the main theorems  \ref{th1} and \ref{th2} are consequences of Propositions \ref{main_prop} and  \ref{prop:cc}. Sections  \ref{sec:wrel} to Section \ref{sec:pfthm} are dedicated to the proof of the main $a$-contraction Theorem \ref{thm:uniform}.  We are working only in the Lagrangian frame in this part of the paper. To simplify the notations, the subscripts $L$ (which are used for Lagrangian, as $p_L, \eta^L, \mu_L$) will be dropped in these sections.  Proposition \ref{main_prop} is proved in Section \ref{sec:main}, and 
 Proposition \ref{prop:cc} is proved in   Section \ref{sec:13}. Both proofs use Theorem \ref{thm:uniform}. Finally, we provide a short introduction to wave interactions for the p-system in the appendix.



\section{Proof of the main Theorems} \label{sec:proof}

We prove in this section Theorems \ref{th1} and \ref{th2}, using Proposition \ref{main_prop}  and Proposition \ref{prop:cc}. We will follow the sketch described in Subsection \ref{sub:concl}. We begin with the construction of the adapted family of initial data.

\subsection{Construction for the adapted family of initial data} We first prove Lemma \ref{lem_initialval}.

For a given datum $U^0$ satisfying \eqref{initialEuler}, 
consider its mollification as follows. \\
Using $\psi_{\nu}(x):=\frac{1}{\sqrt\nu}\psi_1\big(\frac{x}{\sqrt\nu}\big)$ where $\psi_1$ is a smooth mollifier with supp $\psi_1=[-1,1]$, define
\[
\rho_0^{\nu}=\rho^0*\psi_{\nu},\quad u_0^{\nu}=u^0*\psi_{\nu}.
\]
Note that $\rho^0$ is bounded by above and by below, so $u^0$ is bounded above, $\rho^\nu_0$ is uniformly bounded (both by above and below), and 
\begin{eqnarray*}
&&|\rho^\nu_0-\rho^0|\leq |(\rho^0-\rho_*)-(\rho^0-\rho_*)*\psi_\nu|,\\
&&|\rho^\nu_0 u^\nu_0-\rho^0u^0|\leq C(|u^0-u^0*\psi_\nu|+ |\rho^0-\rho^0*\psi_\nu|)\\
&&\qquad\qquad = C(|(u^0-u_*)-(u^0-u_*)*\psi_\nu|+ |(\rho^0-\rho_*)-(\rho^0-\rho_*)*\psi_\nu|),\\
&&\eta(U^\nu_0|U^0)\leq C\left(|u^0-u^0*\psi_\nu|^2+|\rho^0-\rho^0*\psi_\nu|^2\right)\\
&&\qquad\qquad = C(|(u^0-u_*)-(u^0-u_*)*\psi_\nu|^2+ |(\rho^0-\rho_*)-(\rho^0-\rho_*)*\psi_\nu|^2).
\end{eqnarray*}
However, since $(\rho^0,\rho^0 u^0)$ is in $E^0$, both $\rho^0-\rho_*$ and $u^0-u_*$ are bounded in $L^1$ and $L^2$. 
Thus:
\[
\int_\RR|U_0^\nu-U^0|\,dx+\int_\bbr  \eta\big(U_0^{\nu}| U^0 \big) dx \to 0.
\]

Now, since 
\[
|\partial_x\phi(\rho_0^\nu)| \le |\phi'(\rho_0^\nu)| \left| \rho_0^{\nu}* \partial_x\psi_\nu \right| =  |\phi'(\rho_0^\nu)| \left| (\rho_0^{\nu}-\rho_*)* \partial_x\psi_\nu \right| \]
we have
$$
\|\partial_x\phi(\rho_0^\nu)\|_{L^2}\leq C\|\rho_0^\nu-\rho_*\|_{L^2}\|\partial_x\psi_\nu\|_{L^1}\leq   \frac{C}{\nu}  \int_\bbr \big|\psi_1' \big(\frac{y}{\sqrt\nu}\big)\big|dy \le  \frac{C}{\sqrt\nu}.
$$
Therefore,
\[
\nu^2 \int_\bbr  |\partial_x\phi(\rho_0^\nu)|^2 dx \to 0,
\]
and so, we have \eqref{initialNS}.\\

\subsection{Proof of Theorem \ref{th1}}
We now prove Theorem \ref{th1} by using Proposition \ref{main_prop}  and Proposition \ref{prop:cc}.\\

We first use Proposition \ref{prop:cc}  to have
\beq\label{ruc}
\rho^\nu \to \rho,\quad \rho^\nu u^\nu \to \rho u \quad \mbox{in  $L^q_{\mathrm{loc}} ((0,T)\times\bbr)$ as $ \nu\to 0$}.
\eeq
where $U=(\rho,\rho u)$ is an entropy solution to the Euler system. From Proposition \ref{main_prop}, there exists $\bar U_\nu$ uniformly small BV, such that  $U^\nu-\bar U_\nu$ converges weakly to 0.
By weak compactness, there exists a function $\bar U=(\bar\rho, \bar \rho \bar u)$ such that, up to a subsequence, $\bar U_\nu$ converges weakly to $\bar U$. Thanks to the uniform bound \eqref{BVest},
the limit $\bar U$ still verifies the small BV condition \eqref{BVest}. And by the uniqueness of the limit, $(\rho, \rho u)=\bar U$. 
We  use the uniqueness theorem of Chen, Krupa and Vasseur \cite{CKV20} (see also Bressan and De Lellis \cite{bressan2023remark})  to ensure that this limit is the unique small BV solution to Euler with the corresponding initial value. From the uniqueness, the whole sequence converges.





\subsection{Proof of Theorem \ref{th2}}
For any $U_n$ inviscid limit of Navier-Stokes associated to the initial value $U^0_n$, there exists a sequence of solutions $U^\nu_n$ to the Navier-Stokes equations such that
\[
U^\nu_n \weakto U_n \quad \mbox{in $L^q_{\mathrm{loc}}((0,T)\times\bbr)$ as $ \nu\to 0$},\quad \mbox{for some } q>1.
\]
which implies, for $n$ fixed, that
\[
\|U^{\nu}_n- U_n\|_{W^{-1,q}_{\mathrm{loc}}((0,T)\times\bbr)} \to 0 \quad \mbox{as $ \nu\to 0$}.
\]
We want to find a sequence $\nu_n$ converging to 0 for which we can apply Theorem  \ref{th1}. \\
To ensure that $U^{\nu_n}_{n,0}$ (for such a sequence $\nu_n$) is an adapted family of initial values associated with $U^0$, we observe that 
letting $U_{n,0}^\nu$ be an adapted family of initial values associated with $U^0_n$, for $n$ fixed, it holds from  \eqref{initialNS} that for $\nu$ small enough,
$$
\int_\RR |U_{n,0}^\nu -U^0|\,dx\leq  \int_\RR |U_{n,0}^\nu -U^0_n|\,dx+\int_\RR |U^0_n-U^0|\,dx\leq \frac{1}{n} +\int_\RR |U^0_n-U^0|\,dx.
$$
Similarly, by taking $\nu$ small enough,
\begin{eqnarray*}
&&\int_\RR \eta(U_{n,0}^\nu|U^0)\,dx=\int_\RR \eta(U_{n,0}^\nu|U^0_n)\,dx+\int_\RR \eta(U^0_n|U^0)\,dx+\int_\RR \big(\eta'(U_n^0)-\eta'(U^0)\big) (U_{n,0}^\nu-U^0_n)\,dx\\
&&\qquad \leq \int_\RR \eta(U_{n,0}^\nu|U^0_n)\,dx+\int_\RR \eta(U^0_n|U^0)\,dx+C_n\int|U_{n,0}^\nu-U^0_n|\,dx\\
&&\qquad \leq \int_\RR  \eta(U^0_n|U^0)\,dx+\frac{1}{n}.
\end{eqnarray*}
Therefore, there exists a sequence $\nu_n$ converging fast enough to 0 as $n\to \infty$ such that $U^{\nu_n}_{n,0}$ is an adapted family of initial values for $U^0$, and  
\[
\|U^{\nu_n}_n- U_n\|_{W^{-1,q}_{\mathrm{loc}}((0,T)\times\bbr)} \to 0 \quad \mbox{as $ n\to \infty$}.
\]
Then, applying Theorem \ref{th1} to $U^{\nu_n}_n$,   we have
\[
U^{\nu_n}_n  \to U \quad \mbox{in $L^q_{\mathrm{loc}}((0,T)\times\bbr)$ as $n \to \infty$},
\]
where $U$ is the small $BV$ solution associated to the initial value $U^0$.\\
Therefore,
\[
U_n = (U_n-U^{\nu_n}_n ) +U^{\nu_n}_n \weakto U\quad \mbox{in $L^q_{\mathrm{loc}}((0,T)\times\bbr)$ as $n \to \infty$}.
\]
We claim that this convergence should be strong. Suppose not. Then, there exists a compact set $K\subset (0,T)\times\bbr$ such that 
\[
\liminf_{n\to\infty} \int_K |U_n|^q >  \int_K |U|^q.
\]
But, since $U^{\nu_n}_n$ converges to $U_n$ weakly in $L^q_{\mathrm{loc}}$, 
\[
\liminf_{n\to\infty} \int_K |U^{\nu_n}_n|^q > \int_K |U|^q.
\]
This implies that $U^{\nu_n}_n$ does not converges to $U$ strongly in $L^q_{\mathrm{loc}}$, which is a contradiction.
Hence we have the desired result.

\section{Construction of the approximate solutions} \label{sec:appsoln}

\subsection{Modified front tracking algorithm\label{subsection_4.2}}
We will define an approximate solution through a modification of the  front tracking algorithm for the Euler equation. In a second step, we will replace the discontinuous fronts with viscous fronts, and move the fronts with ad-hoc shifts. Therefore the front tracking algorithm is designed to re-engineer the simple wave patterns of the approximate solution right after  wave interactions.   Some modifications to the front tracking algorithm would be related to the estimates for  the numbers of waves and interactions, and for the lower bound of the strength of physical waves, and for the upper bound of the strength of non-physical waves, which depend on the viscous strength $\nu$. 

First, for some technical reason, we need to construct an approximation solution such that the strength of all physical waves has a positive lower bound as in Lemma \ref{lemma_key2}.  \\
That is why we consider, as in Appendix \ref{app:algorithm}, a pair of Riemann invariants $s$ and $r$ as
\[
s=h+z,\quad r=h-z
\]
where 
\[
z=\frac{2\sqrt \gamma}{\gamma-1}v^{-\frac{\gamma-1}{2}}.
\]
We define the (signed) size of shock/rarefaction, in the first family as
\[
\sigma=r_R-r_L,
\]
and in the second family as
\[
\sigma=s_R-s_L,
\]
where $L$ and $R$ denote the left and right states of the wave, respectively. So, $\sigma<0$ for shocks and $\sigma>0$ for rarefactions by Figure \ref{riem_coords}.

For pseudo-shock, we define the size by $\sigma=r_R-r_L$ if $|r_R-r_L|\geq|s_R-s_L|$, otherwise
$\sigma=s_R-s_L$. So the strength of wave is
\[
|\sigma|=\max\{|r_R-r_L|,\ |s_R-s_L|\}.
\]
We will sometimes call pseudo-shock ``non-physical shock''. We will simply call physical (viscous) shock ``shock''.

Our front tracking algorithm is adapted from the one used in Bressan's book \cite{Bressan}. However, in order to make this algorithm work well in our analysis at the level of Navier-Stokes, we need to make a lot of necessary adaptions on the algorithm. We can roughly summarize our adaptions as follows. 
\begin{itemize}
\item For each approximate solution, we need a low bound on the strength of any shock and rarefaction jump to control the high frequency oscillation. In order to get that,
we introduce some new Riemann solvers other than the accurate and simplified solvers, and change the simplified solvers.
\item We do not want to give the non-physical shock a speed faster than all physical fronts. Instead, we give all non-physical shocks a zero speed in the approximate solution, between the first and second families.
 \item We need to control the number of waves and wave interactions. So some adjustments of the classical algorithm are added. Here we take advantage of the system with two variables, where less new waves are produced than system with more than two variables.
\end{itemize}

For the (typical) Riemann problem with two constant states $U_L=(r_L,s_L)$ and $U_R=(r_R,s_R)$ sufficiently close, a Riemann solution with at most three constant states, connected by shocks or rarefaction fans, can always be found, see the appendix or \cite{Young, sm}. More precisely, there exist $C^2$ curves  $s\mapsto T_i(s)(U_-)$ of $i$-family where $i=1,2$, parametrized by arc-length $s$, such that
\begin{align}\label{RP_curves}
U_R=T_2(s_2)\circ T_1(s_1)(U_L),
\end{align}
for some arc-lengths $s_1$ and $s_2$. We define $U_0:= U_L$ and  $U_2:= U_R$.  Then \eqref{RP_curves} can be also written as
\begin{align}
U_1:= T_1(s_1)(U_0),\\
U_2:= T_2(s_2)\circ T_1(s_1)(U_0).
\end{align}
Here, as in  \cite{Bressan}, we use the convention that $s_i$ is negative (resp. positive) as the arc-length for the i-shock (resp. i-rarefaction) curve connecting the states $U_{i-1}$ to $U_i$. 
Notice that $|\sigma_i|$ the strength of the i-wave is equivalent to the strength of the associated arc-length. For example, if a 2-shock generated by $T_2(s_2)$ has size $\s_1$, then $|\s_1|$ is equivalent to $|s_2|$.
Notice that the strength and the arc-length of a rarefaction are the same by the definition of size and the fact that each rarefaction curve is parallel to either $r$ axis or $s$ axis. 

For two positive parameters $\rho_\nu$ and $\delta$ small enough, we will introduce a modified front tracking algorithm composed of three types of Riemann solvers:
{\emph{Accurate solver, Simplified solver and Adjusted solver.}}

As in Section \ref{sub:conapp}, we will consider that at each time we have at most one collision, which will involve only two wavefronts. Suppose that at some time $t>0$ there is a collision between two waves from the $i_\alpha$th and $i_\beta$th families. Denote the sizes of the two waves by $\sigma_\alpha$ and $\sigma_\beta$, respectively. The Riemann problem generated by this interaction is solved as follows. 
\begin{itemize}
\item For shock-rarefaction overtaking interactions, if $|\sigma_\alpha\sigma_\beta|> \rho_\nu$ and any of the outgoing wave has strength less than $\rho_\nu$, use adjusted solver (C).
\item For shock-rarefaction overtaking interactions with $|\sigma_\alpha\sigma_\beta|\leq\rho_\nu$, or shock-shock overtaking interactions with the strength of reflected rarefaction  in the exact interaction less than $\rho_\nu$ or for any interaction including one non-physical shock, we use the simplified solver (B).
\item For all other cases, we use the accurate solver (A).
\end{itemize}
The solution of any Riemann solver contains finitely many constant states, connected by rays starting from the interaction point on the $(x,t)$-plane, called wave fronts. Later, we will prove that for each the approximate solution $\psi_{\mathcal F}$, there will be only finitely many wave fronts, so there will be finitely many wave interactions. 

For convenience, we use the notation of $\fa R$, $\fa S$ and $\ba R$, $\ba S$, first used in \cite{ChenJenssen}, to denote the $2$-rarefaction, $2$-shock and $1$-rarefaction, $1$-shock.

Now to define all types of Riemann solvers, we always assume that at a positive time $\bar{t}$, there is only one interaction at the point $\bar{x}$ between two waves with strengths $|\sigma_1|,\ |\sigma_2|$. Let $U_L$, $U_R$ be the constant states that generate the interaction, then we will solve this Riemann problem using three types of solvers. 

In the mean time of defining Riemann solvers, we also prove some important estimates on total variation and Glimm potential. We denote the total variation of $\psi_{\mathcal F}$ as 
\[
L(t)=\sum |\sigma_i|=\hbox{TV}(\psi_{\mathcal F})(t),
\]
namely the sum of the strengths of all jump discontinuities that cross the $t$-time line, including all physical and non-physical fronts. Here all non-physical fronts have zero speed. Clearly, $L(t)$ stays constant along time intervals between consecutive collisions of fronts and changes only across points of wave interaction.

A $j$-wave $\alpha_j$ and an $i$-wave $\alpha_i$, with the former crossing the $t$-time line to the left of the latter, are called {\it approaching} except when $j<i$ or $i=j=1.5$, where for convenience we set $i$ or $j$ equals $1.5$ if any of them is a non-physical shock.
We then define the Glimm potential for wave interactions as
\[
Q(t)=\sum_{\alpha_j,\alpha_i:\hbox{approaching waves}}|\sigma_{\alpha_i}| |\sigma_{\alpha_j}|,
\]
where the summation runs over all pairs of approaching waves. Here we note we consider $\ba R$-$\ba R$ (and $\fa R$-$\fa R$) as approaching wave to gain some technical advantage for some adjusted solvers, although they won\rq{}t really interact.

We always assume $L(0)\leq \eps$ with $\eps$ small enough, so $Q(0)\leq O(\eps^2)$ is much smaller.\\
The following two lemmas are crucially used throughout the paper.
\vskip0.1cm

\begin{lemma}\label{lemma_key1}
We use three types of Riemann solvers to solve the Riemann problem generated by the interaction between two waves with strength $|\sigma_i|$ and $|\sigma_j|$ at time $t$. Denote 
\[\Delta L=L(t+)-L(t-),\qquad 
\Delta Q=Q(t+)-Q(t-).
\]
There exist $C, \eps_0>0$ such that for all $\eps<\eps_0$, the following holds: for all $|\sigma_i|$ and $|\sigma_j|$ with $|\sigma_i|, |\sigma_j|<\eps$,
\[
\Delta Q\leq -\frac{3}{4}|\sigma_i| |\sigma_j| ;
\]
and
$$
\Delta L\leq C|\sigma_i| |\sigma_j|.
$$
Therefore, there exists $\kappa_0>0$ such that for all $\kappa>\kappa_0$,
$$
\Delta L+\kappa \Delta Q(t)\leq -\frac{\kappa}{2}|\sigma_i| |\sigma_j|.
$$
\end{lemma}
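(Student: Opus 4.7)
The plan is to verify the classical Glimm interaction estimates adapted to the three modified Riemann solvers (accurate, simplified, adjusted). First I would establish a baseline for the accurate solver: for two approaching incoming waves of strengths $|\sigma_i|, |\sigma_j|$, the outgoing wave strengths $|\sigma'_k|$ satisfy
\begin{equation*}
\sum_k |\sigma'_k| \leq |\sigma_i| + |\sigma_j| + C_0|\sigma_i\sigma_j|.
\end{equation*}
This is the standard $p$-system interaction bound: in a different-family (transmission) interaction the two outgoing waves differ from the incoming ones by $O(|\sigma_i\sigma_j|)$, while in a same-family interaction the outgoing wave in that family has strength $\sigma_i+\sigma_j+O(|\sigma_i\sigma_j|)$ and the reflected wave in the other family has strength $O(|\sigma_i\sigma_j|)$. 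Both follow by Taylor expanding $T_2(s_2)\circ T_1(s_1)$ along the Riemann-invariant coordinates of Appendix~\ref{app:algorithm}.

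From this baseline, $\Delta L \leq C|\sigma_i\sigma_j|$ for the accurate solver is immediate since $\Delta L = \sum_k|\sigma'_k| - |\sigma_i| - |\sigma_j|$. For $\Delta Q$, the disappearance of the incoming approaching pair contributes $-|\sigma_i\sigma_j|$; each outgoing wave has strength at most $|\sigma_i|+|\sigma_j|+O(|\sigma_i\sigma_j|)$ and is approaching at most $L(t-) \leq C\eps$ worth of other waves. Standard bookkeeping then yields a growth contribution of at most $CL(t-)|\sigma_i\sigma_j| \leq C\eps|\sigma_i\sigma_j|$, so
\begin{equation*}
\Delta Q \leq -(1 - C\eps)|\sigma_i\sigma_j| \leq -\tfrac{3}{4}|\sigma_i\sigma_j|
\end{equation*}
once $\eps_0$ is small enough.

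Next I would treat the simplified and adjusted solvers as small perturbations of the accurate one. The adjusted solver, used only when $|\sigma_i\sigma_j| > \rho_\nu$, redistributes outgoing strengths by amounts $O(\rho_\nu) \leq C|\sigma_i\sigma_j|$, so the bounds survive. The simplified solver, used when $|\sigma_i\sigma_j| \leq \rho_\nu$ or one incoming wave is a pseudo-shock, produces outgoing physical waves that are shifts of the corresponding incoming ones by $O(|\sigma_i\sigma_j|)$ together with a stationary pseudo-shock of strength $O(|\sigma_i\sigma_j|)$; since pseudo-shocks are assigned the $1.5$-family, the enlarged ``approaching'' definition still yields a new interaction budget bounded by $C\eps|\sigma_i\sigma_j|$. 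In every case $\Delta L \leq C|\sigma_i\sigma_j|$ and $\Delta Q \leq -\tfrac{3}{4}|\sigma_i\sigma_j|$; the combined bound then follows by choosing $\kappa_0 = 4C$ so that
\begin{equation*}
\Delta L + \kappa \Delta Q \leq C|\sigma_i\sigma_j| - \tfrac{3\kappa}{4}|\sigma_i\sigma_j| \leq -\tfrac{\kappa}{2}|\sigma_i\sigma_j| \quad \text{for all } \kappa \geq \kappa_0.
\end{equation*}

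The main obstacle is the case analysis for the adjusted solvers applied to shock-rarefaction overtaking interactions, where outgoing physical shocks are repositioned by hand to enforce the $\rho_\nu$ lower bound on their strength. One must verify that the repositioning inflates $L$ by no more than $C|\sigma_i\sigma_j|$ and that the full $-|\sigma_i\sigma_j|$ drop in $Q$ still materializes. The saving grace, which also underlies the weight-function decrease in Subsection~\ref{subsec-time}, is that for these overtaking interactions the exact Riemann solver already decreases $L$ by more than twice the strength of the smaller incoming wave, leaving ample slack to absorb the $O(\rho_\nu)$ adjustments without violating either bound.
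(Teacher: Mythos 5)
Your overall architecture (standard Glimm estimates for the accurate solver, then control of the modified solvers as corrections) matches the paper, and your treatment of the accurate and adjusted solvers is essentially right: the adjusted solver is indeed invoked only when $|\sigma_i\sigma_j|>\rho_\nu$, so its $O(\rho_\nu)$ redistribution is $O(|\sigma_i\sigma_j|)$. The gap is in the simplified-solver step. You claim that solver outputs ``physical waves that are shifts of the corresponding incoming ones by $O(|\sigma_i\sigma_j|)$ together with a pseudo-shock of strength $O(|\sigma_i\sigma_j|)$.'' That is false for the shock--rarefaction simplified solvers of case (iii-2) in Section \ref{subsection_4.2}. There the pseudo-shock absorbs a genuine chunk of the physical waves: its strength is bounded only by $C\rho_\nu$ (and by the strength of the incoming rarefaction), while in the regime where this solver is used one only knows $\rho_\nu^2\le|\sigma_i\sigma_j|\le\rho_\nu$, so the ratio of pseudo-shock strength to $|\sigma_i\sigma_j|$ can be as large as $1/\rho_\nu$. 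Likewise the outgoing shock differs from the incoming shock by roughly the full strength of the absorbed rarefaction (or by the forced lower bound $\rho_\nu$), not by $O(|\sigma_i\sigma_j|)$. A perturbation budget of size $C\eps|\sigma_i\sigma_j|$ therefore does not close either estimate for these interactions.

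The conclusion still holds, but by the mechanism the paper actually uses: termwise domination rather than perturbation. In every subcase of (iii-2) the construction guarantees $|\sigma'|\le|\sigma_1|$ and $|\sigma''|\le|\sigma_2|$ (see \eqref{sigma_est_simp1}, \eqref{2.15}, \eqref{key_est}), so $\Delta L\le 0$ outright; and because the pseudo-shock sits in the fictitious $1.5$-family, every wave approaching it also approached the incoming wave it dominates, so no third-party term of $Q$ increases and $\Delta Q\le-|\sigma_i\sigma_j|$ follows from the removal of the incoming pair alone. You would need to replace your perturbation argument by this monotonicity argument. Two smaller omissions: for the accurate $\ba S\ba S$ solver the reflected wave is a fan of rarefaction fronts, and since same-family rarefaction pairs are counted as approaching in this paper's $Q$, their mutual potential must also be bounded (it is $O(\eps^2|\sigma_i\sigma_j|^2)$, hence harmless); and in case (iii-3) the outgoing shock is deliberately lengthened to keep $h$ nondecreasing across the pseudo-shock, which is precisely where the positive constant in $\Delta L\le C|\sigma_i\sigma_j|$ is needed.
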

Using the decay of $L+\kappa Q$ and $Q$, it is easy to conclude that when $\eps$ is small enough, $L(t)\leq 2\eps$ and $Q(t)\leq O(\eps^2)$.\\

\begin{lemma}\label{lemma_key2}
For each Riemann solver, assume the strength of each incoming shock and rarefaction is larger than $\rho_\nu$. Then, the following holds: 
\begin{itemize}
\item[1.] There exists $C>0$ such that the strength of each rarefaction is less than $C\delta$.
\item[2.] The strength of each outgoing shock and rarefaction is larger than $\rho_\nu$.
\item[3.] For any non-physical shock, $h$ is not decreasing from the left state to the right state.
\item[4.] There exists $C>0$ such that the strength of each non-physical shock is less than $C\rho_\nu$. 
\end{itemize}
\end{lemma}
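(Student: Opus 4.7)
The plan is to verify (1) through (4) by a case analysis over the three Riemann solvers (accurate (A), simplified (B), adjusted (C)) introduced in Subsection \ref{subsection_4.2}, using the standard wave interaction estimates for the $p$-system recalled in the appendix. Recall that when two waves of strengths $\sigma_\alpha,\sigma_\beta$ interact, the exact Riemann solution consists of transmitted waves of strengths $\sigma_\alpha+O(|\sigma_\alpha\sigma_\beta|)$ and $\sigma_\beta+O(|\sigma_\alpha\sigma_\beta|)$ in their natural families, plus at most one reflected wave of size $O(|\sigma_\alpha\sigma_\beta|)$. I will use this throughout.

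For (1), the uniform $C\delta$ bound on rarefaction fronts is built into the construction. The accurate solver is the only one that introduces rarefactions from an exact fan, and by definition each fan is split into small jumps of arc-length at most $\delta$. The adjusted and simplified solvers only transmit, combine or slightly resize existing rarefaction fronts by an $O(|\sigma_\alpha\sigma_\beta|)=O(\eps^2)$ correction; since rarefaction fronts are inductively of size at most $\delta$, the $C\delta$ bound propagates.

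For (2), the solver-selection rule plays the central role. The accurate solver is used only when its outputs already have strength $\geq\rho_\nu$. The adjusted solver (C) is designed precisely to guarantee this lower bound: in a shock–rarefaction overtaking interaction where the exact resolution would produce an outgoing physical wave of strength below $\rho_\nu$, we slightly perturb the outgoing wave sizes so that each is either absent or of strength at least $\rho_\nu$, at the cost of an $O(\rho_\nu)$ modification, which is admissible because $|\sigma_\alpha\sigma_\beta|>\rho_\nu$ for (C). In the simplified solver (B), the outgoing physical waves are the incoming ones transmitted with their original strengths, hence also $\geq\rho_\nu$ by the hypothesis on incoming waves; the error is absorbed into a non-physical shock. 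For (4), only (B) creates non-physical shocks, and the interaction estimates bound the residual jump by a constant multiple of $|\sigma_\alpha\sigma_\beta|$, which is at most $C\rho_\nu$ either directly by the selection condition $|\sigma_\alpha\sigma_\beta|\leq\rho_\nu$, or, in the shock–shock overtaking case, because the reflected rarefaction strength (of order $|\sigma_\alpha\sigma_\beta|$) is itself bounded by $\rho_\nu$ by the selection rule.

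The main obstacle is (3): the effective velocity $h$ must be non-decreasing across a zero-speed non-physical shock. The end states of the pseudo-shock are determined by the transmitted outgoing waves, so the jump $(\Delta r,\Delta s)$ across it is an $O(|\sigma_\alpha\sigma_\beta|)$ quantity whose signs depend on the configuration. Proceeding case by case through the incoming wave combinations ($\ba R,\ba S,\fa R,\fa S$, and interactions involving a non-physical shock), I will use the second-order expansions of the wave curves $T_1,T_2$ near a common state to compute $\Delta r,\Delta s$ explicitly. In cases where one of these signs is unfavorable, we do not transmit the incoming waves verbatim but transfer a small $O(\rho_\nu)$ correction from one outgoing physical wave into the pseudo-shock, choosing the transfer so that $\Delta h=\tfrac{1}{2}(\Delta r+\Delta s)\geq 0$. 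Since this correction is of the same order as the pseudo-shock itself, (2) and (4) are preserved; combined with (1) and the bounds above, this yields (3) and completes the proof.
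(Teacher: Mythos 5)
Your overall strategy is the one the paper follows: a case analysis over the accurate, adjusted and simplified solvers, using the quadratic interaction estimates of Lemma \ref{prop1}, with the lower bound $\rho_\nu$ enforced by construction and the monotonicity of $h$ across pseudo-shocks enforced by choosing the direction of the pseudo-shock jump in the $(r,s)$-plane. Two steps, however, are asserted where they actually need the specific design choices that the paper makes, and as written they would not go through.

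First, for statement 3 you propose to ``transfer a small $O(\rho_\nu)$ correction from one outgoing physical wave into the pseudo-shock'' and claim that (2) is preserved because the correction has the same order as the pseudo-shock. This is exactly where the two requirements collide: an outgoing physical wave may have strength only barely above $\rho_\nu$ (indeed the simplified solver deliberately pins the outgoing shock at strength exactly $\rho_\nu$ in Case 1), so shaving $O(\rho_\nu)$ off it can violate the lower bound. The paper resolves this by always adjusting in the enlarging direction --- the pseudo-shock segment is fixed to have slope $\pm1$ or to point in the positive $z$ direction, and the outgoing physical wave is \emph{extended} along its wave curve to meet it (see the $\ba S$--pseudo-shock interaction, where the outgoing shock is lengthened so the two non-physical segments stay parallel) --- or, when even the natural outgoing rarefaction would fall below $\rho_\nu$, by dropping the physical wave entirely and connecting $U_L$ to $U_R$ by a single pseudo-shock of strength $\le 2\rho_\nu$. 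Your sketch contains neither mechanism, and without one the claim that (2) and (3) hold simultaneously is unsupported. Relatedly, your description of the simplified solver as ``transmitting the incoming waves with their original strengths'' does not match any construction that also satisfies (3): the intermediate state $b$ must be chosen on the outgoing wave curve so that the residual jump to $R$ has the prescribed sign of $\Delta h$, which changes the outgoing strengths at order $O(|\sigma_\alpha\sigma_\beta|)$.

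Second, for statement 4 you only bound the strength of a pseudo-shock at the moment of its creation. Pseudo-shocks persist and interact with physical waves (case (iii-3)), and at each such interaction their strength grows by $O(|\sigma_1\sigma_2|)$; since a given pseudo-shock can meet many physical fronts over its lifetime, the bound $C\rho_\nu$ is not automatic from the creation estimate. Controlling the cumulative growth requires the interaction-potential/generation-order argument (the analogue of $(7.68)_{1\text{--}5}$ and $(7.75)$ in Bressan's book, adapted here via Lemma \ref{lemma_key5}), together with the fact that pseudo-shocks never merge. This bookkeeping is missing from your proof, and (4) does not hold without it. Also note that the adjusted solver, not only the simplified one, creates pseudo-shocks.
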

In what follows, we will prove the above two lemmas for each of the three Riemann solvers: 
{\emph{Accurate solver, Adjusted solver and Simplified solver}} in order. \\
The proof of the first statement of Lemma \ref{lemma_key2} on rarefaction wave is very similar to Part 5 of Chapter 7.3 in \cite{Bressan}. This statement is correct, basically because rarefaction waves in the same family never merge. 
So we omit the proof of statement 1, and refer the reader to \cite{Bressan}. 

Similarly, the strength of non-physical shock might increase after the interaction, but  we can show as in \cite{Bressan} that its strength is still less than  $C_1\rho_\nu$ for some constant $C_1>C$, where non-physical waves never merge too, by assuming that the strength of each incoming non-physical shock  $C\rho_\nu$. \\


Next, we will construct three types of Riemann solvers and prove statements 2, 3, 4.

\vskip0.2cm

\paragraph{\bf (i) Accurate solver}
\vskip0.1cm

The accurate solver (sometimes called exact solver) is only for interaction between physical waves.

We let 
\begin{align}\label{control_rarefaction_932020}
q_i:= \left \lceil{|s_i|/\delta}\right \rceil,
\end{align}
where $\left \lceil{s}\right \rceil$ denotes the smallest integer number greater than $s$. \\
If $s_1>0$,
\begin{align}\label{raredef1}
U_{1,l}:= T_1(ls_1/q_1)(U_0),\qquad x_{1,l}(t):= \bar{x}+(t-\bar{t})\lambda_1(U_{1,l}),\qquad l=1,\ldots,q_1.
\end{align}
If $s_2>0$, then
\begin{align}\label{raredef2}
U_{2,l}:= T_2((l-1)s_2/q_2)(U_{1}),\qquad x_{2,l}(t):= \bar{x}+(t-\bar{t})\lambda_2(U_{2,l})\quad l=1,\ldots,q_2.
\end{align}
On the other hand, if $s_i<0$, we define $q_i:= 1$ and 
\begin{align}
U_{i,l}:= U_i ,\hspace{.7in} x_{i,l}(t):= \bar{x}+(t-\bar{t})\tilde\lam_i ,
\end{align}
where $\tilde\lam_i$ is the Rankine-Hugoniot speed.

Then, for the left state $U_L=(r_L,s_L)$ and the right state $U_R=(r_R,s_R)$ sufficiently close, we define an accurate solver as follows:
\begin{align}\label{accurate_RP_sol}
v_a(t,x):=
\begin{cases}
U_-, &\mbox{ if } x < x_{1,1}(t),\\
U_+ ,&\mbox{ if } x>x_{2,q_2}(t),\\
U_i,&\mbox{ if } x_{i,q_i}(t)<x<x_{i+1,1}(t),\\
U_{i,l},&\mbox{ if } x_{i,l}(t)<x<x_{i,l+1}(t)\hspace{.3in}(l=1,\ldots,q_i-1).
\end{cases}
\end{align}


\vskip0.1cm

For future use, we consider the following {\bf adjustment}: (especially, it is used in the proof of Lemma \ref{lem:num})

\vskip0.1cm
\emph{In each interaction, if there are more than one outgoing 1-rarefactions (or 2-rarefactions), and the wave strength $\sigma$ (arc-length) of the last rarefaction is less than $\delta$, then we average wave strengths for the last two rarefactions, such that each of them is larger than $\frac{\delta}{2}$. }

\vskip0.1cm

Similar as in \cite{Bressan}, we adopt the following {\bf provision:} (especially, it is used in the proof for the statement 1 of Lemma \ref{lemma_key2} as in \cite{Bressan}  )

\vskip0.1cm
\emph{In the accurate solver, rarefaction fronts of the same family of one of the incoming fronts are never partitioned (even if their wave strength $\sigma$ is larger than $\delta$).}

\vskip0.1cm

For any pairwise interaction considered in the accurate solver in the front tracking scheme, one has the following estimates (see Lemma 7.2 in \cite{Bressan} or \cite{BCZ2}).

\begin{lemma}\label{prop1} We consider the Euler equations \eqref{eq:Euler}.
Let $\sigma_1, \sigma_2$ be  sizes for two interacting physical wave-fronts, and let 
$\sigma', \sigma''$ be the  sizes of the outgoing $1$-wave and $2$-wave, respectively, for accurate solver  connecting the left and right states.

\begin{itemize}
\item If  wave with size $\sigma_1$ is a 1-wave and wave with size $\sigma_2$ is a 2-wave, then
\beq\label{299}
|\sigma'-\sigma_1|+|\sigma''-\sigma_2|~\leq~C\,|\sigma_1\sigma_2|(|\sigma_1|+|\sigma_2|).\eeq
\item If both waves with strengths $\sigma_1$ and $\sigma_2$ belong to the $1$-st family, then 
\beq\label{21}
|\sigma'-(\sigma_1+\sigma_2)|+|\sigma''|~\leq~C\,|\sigma_1\sigma_2|(|\sigma_1|+|\sigma_2|).\eeq
\item If both waves with sizes $\sigma_1$ and $\sigma_2$ belong to the $2$-nd family, then 
\beq\label{22}
|\sigma\rq{}\rq{}-(\sigma_1+\sigma_2)|+|\sigma'|~\leq~C\,|\sigma_1\sigma_2|(|\sigma_1|+|\sigma_2|).\eeq
\end{itemize}
This result does not depend on the choice of wave strength.
\end{lemma}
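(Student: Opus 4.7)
This is a classical pairwise Glimm-type interaction estimate for the $p$-system, and my plan closely follows the scheme of Lemma 7.2 in \cite{Bressan}; the only thing to verify is that the extra structural features of the $p$-system give the cubic (rather than quadratic) remainder in all three cases. I would work in Riemann invariant coordinates $(r,s)$, in which 1-rarefaction curves are exactly $\{s=\mathrm{const}\}$ and 2-rarefaction curves are $\{r=\mathrm{const}\}$, and I would recall from the appendix the standard expansion of the Hugoniot locus in these coordinates: the shock and rarefaction curves of the same family through a base state $U$ agree to second order, so $T_i^{sh}(\sigma)(U)=T_i^{rar}(\sigma)(U)+O(|\sigma|^3)$ in $(r,s)$.

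First I would set up the interaction map. Given the incoming data, the outgoing state is $U_+=T_{i_2}(\sigma_2)\circ T_{i_1}(\sigma_1)(U_-)$, and the accurate solver produces outgoing 1- and 2-waves of sizes $\sigma',\sigma''$ determined by $U_+=T_2(\sigma'')\circ T_1(\sigma')(U_-)$. By the implicit function theorem, $(\sigma',\sigma'')$ is a smooth function of $(\sigma_1,\sigma_2)$ (and of $U_-$, smoothly and uniformly on a compact set around $U_*$). In each of the three cases there is a natural ``trivial'' reference solution: $(\sigma_1,\sigma_2)$ itself in case of different families, $(\sigma_1+\sigma_2,0)$ in the 1-1 case, and $(0,\sigma_1+\sigma_2)$ in the 2-2 case. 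In every case the residual map $F(\sigma_1,\sigma_2):=(\sigma'-\sigma_1^{ref},\sigma''-\sigma_2^{ref})$ vanishes whenever $\sigma_1=0$ or $\sigma_2=0$, because then only one incoming wave is present and the Riemann solver reproduces it exactly. By two applications of Hadamard's lemma one therefore has $F(\sigma_1,\sigma_2)=\sigma_1\sigma_2\, G(\sigma_1,\sigma_2)$ for a smooth $G$, which already yields the standard quadratic Glimm estimate.

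The content of the lemma is that $G(0,0)=0$, i.e. the mixed Hessian $\partial^2_{\sigma_1\sigma_2}F$ vanishes at the origin, which upgrades the bound to cubic. For the different-family case (the identity \eqref{299}), this follows from the fact that if all four waves were rarefactions the two flows $T_1(\cdot)$ and $T_2(\cdot)$ would commute exactly in $(r,s)$ (being translations along orthogonal axes), so the full $(\sigma_1,\sigma_2)$ Taylor expansion would reduce to the identity; the deviation arises only from shock corrections, and by the second-order agreement of the shock and rarefaction curves recalled above, each shock insertion contributes only cubic or higher order terms in its own strength. For the same-family cases \eqref{21}--\eqref{22}, I would expand both $T_i(s_1)\circ T_i(s_2)(U_-)$ and $T_i(s_1+s_2)(U_-)$ in Taylor series along the wave curve: on the rarefaction piece they agree exactly (since a rarefaction curve is an integral curve of an eigenvector field, so composing two rarefaction pieces of the same family reparametrizes along the same curve), and the shock/rarefaction discrepancy again contributes only $O(|s_1 s_2|(|s_1|+|s_2|))$. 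The reflected small wave in the other family is then the projection of this cubic remainder onto the transverse wave curve, which inherits the same bound.

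The main obstacle will be organizing the Taylor expansions so that the cubic cancellations are transparent and independent of the particular parametrization of the wave strength (``size'' versus ``arc length''): any two $C^2$ parametrizations that agree to first order at the base state are related by a diffeomorphism of the form $\sigma\mapsto \sigma+O(\sigma^2)$, so a cubic estimate in one parametrization transfers to a cubic estimate in any other, which is the final sentence of the lemma. Once the expansion bookkeeping is fixed, no new ingredient beyond the standard expansions recalled in Appendix \ref{app:algorithm} is needed, and I would cite Lemma 7.2 of \cite{Bressan} (or the $2\times 2$-system computations in \cite{BCZ2}) for the explicit coefficients.
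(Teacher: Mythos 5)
The paper does not actually prove this lemma: it is the classical Glimm interaction estimate and is simply quoted from Lemma 7.2 of \cite{Bressan} (see also \cite{BCZ2}). Your sketch is a faithful reconstruction of that standard proof --- vanishing of the residual map $F$ on the two coordinate axes, the Hadamard factorization $F=\sigma_1\sigma_2\,G$, and the identification of $G(0,0)$ with the mixed derivative $\partial^2_{\sigma_1\sigma_2}F(0,0)$, which vanishes because it only sees the second-order jets of the wave curves, and those coincide with the jets of the rarefaction curves, for which the interaction is exact in Riemann-invariant coordinates. Up to the usual regularity bookkeeping (the composite wave curves and hence $F$ are only $C^{2}$ with Lipschitz second derivatives, not smooth, so $G$ is Lipschitz rather than smooth --- which is all you need), your route to the three displayed estimates is exactly the intended one.

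One step would fail as written, however: the closing claim that any two $C^2$ parametrizations agreeing to first order are related by $\sigma\mapsto\sigma+O(\sigma^2)$ and that a cubic estimate therefore transfers automatically. That transfer is correct for \eqref{299}, where the residual is a plain difference and a Lipschitz reparametrization preserves the bound, but it is false for the same-family estimates \eqref{21}--\eqref{22}: if $\tilde\sigma=\phi(\sigma)=\sigma+a\sigma^2+O(\sigma^3)$, then
\[
\phi(\sigma_1+\sigma_2)-\phi(\sigma_1)-\phi(\sigma_2)=2a\,\sigma_1\sigma_2+O\big((|\sigma_1|+|\sigma_2|)^3\big),
\]
so $\tilde\sigma'-(\tilde\sigma_1+\tilde\sigma_2)$ picks up a genuinely quadratic term $2a\sigma_1\sigma_2$ and the cubic bound is destroyed unless $a=0$. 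The parametrization-independence asserted in the lemma concerns only the specific strengths in play (signed Riemann-invariant jumps versus arc length), and for the $p$-system these do agree to higher than second order along the shock curves because $|\Delta s|=O(|\Delta r|^3)$ along a $1$-shock (cf.\ \eqref{betaeta1}), so the offending coefficient $a$ vanishes and the transfer goes through; but this must be checked from the cubic contact of the wave curves --- it does not follow from first-order agreement alone.
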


\vskip0.1cm

Clearly, waves in head-on interactions do not change type. For overtaking interactions:
$\ba S\ba S$ produces $\ba S$ and $\fa R$; $\ba S\ba R$ or $\ba R\ba S$ produces $\ba S$ or $\ba R$ and $\fa S$. Interaction of forward waves are symmetric. The proof of these properties is classical. See the appendix for more details on pairwise interactions and also \cite{ChenJenssen,sm,Young}.

For accurate solvers, Lemma \ref{lemma_key1} can be directly proved by Lemma \ref{prop1} using some standard method as in \cite{Bressan}. One exception we need to address is the shock-shock overtaking interaction, where multiple rarefaction jumps might be reflected. Since we add $\ba R$-$\ba R$ and $\fa R$-$\fa R$ pairs in the potential $Q$, we need to calculate the impact of this change to $Q$. Let\rq{}s only consider 
$\ba S\ba S$ interaction with incoming strengths $\sigma\rq{}$ and $\sigma\rq{}\rq{}$. Then the total strength of outgoing rarefactions is less than $C_0\varepsilon |\sigma\rq{}\sigma\rq{}\rq{}|$ by Lemma \ref{prop1}. So the interaction potential between these multiple reflected rarefactions is at most in the order of $\varepsilon^2 |\sigma\rq{}\sigma\rq{}\rq{}|^2$ which is very small, and can be well controlled by $O(1)|\sigma_i||\sigma_j|\,L(\bar t-)$. It is clear that the result in Lemma \ref{lemma_key1} still holds.

To prove Lemma \ref{lemma_key2} for accurate solvers, we need to prove the following lemma by considering the accurate interactions case by case. 

\begin{lemma}\label{lemma_key3}
The accurate solver satisfies the following property.

For head-on interactions, waves do not change type after the interaction.
\begin{itemize}
\item[1.] For $\fa R\ba R$ interaction, each wave keeps its strength after interaction.
\item[2.] For $\fa S\ba R$, the strength of shock does not change, the strength of rarefaction increases.
\item[3.] For $\fa S\ba S$, the strengths of shocks in both families increase.
\end{itemize}

For overtaking interactions,
\begin{itemize}
\item[4.] For $\ba S\ba S$ and $\fa S\fa S$ interactions, the reflected wave is rarefaction. And the outgoing shock strength is the sum of two incoming ones.
\end{itemize}
\end{lemma}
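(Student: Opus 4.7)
The plan is to carry out the four cases by working in the Riemann coordinates $(r,s)$, in which the wave geometry for the $p$-system is especially transparent: the $1$-rarefaction curves are horizontal rays ($s$ constant, $r$ increasing) and the $2$-rarefaction curves are vertical rays ($r$ constant, $s$ increasing), whereas the $1$-shock (resp.\ $2$-shock) curves are nearly horizontal (resp.\ nearly vertical) but bend in a monotone direction fixed by the cubic Taylor coefficient of the Hugoniot locus. Genuine nonlinearity of the $p$-system guarantees that the shock and rarefaction curves of the same family share tangent direction and curvature to second order, so the deviation from the straight rarefaction direction is cubic in the wave strength, with a definite sign read off from the explicit parametrization recalled in the appendix.

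For the head-on $\fa R\ba R$ case, both intermediate states $U_M$ (before the collision) and $U'$ (after) lie at opposite corners of the rectangle with vertices $U_L$ and $U_R$ in the $(r,s)$ plane. Because rarefaction curves are axis-aligned straight segments, the reconstructed $1$- and $2$-rarefactions have exactly the same strengths $r_R-r_L$ and $s_R-s_L$ as the incoming waves, proving $(1)$. In the $\fa S\ba R$ case, the incoming $\fa S$ drops $s$ by $|\sigma_{\fa S}|$ while shifting $r$ by an $O(|\sigma_{\fa S}|^3)$ correction, and the incoming $\ba R$ then moves horizontally; solving the post-interaction Riemann problem forces the outgoing $\fa S$ to reproduce the same vertical drop in $s$ (hence keep its strength to leading order), while the outgoing $\ba R$ must absorb the horizontal displacement produced by the shock's curvature on top of $|\sigma_{\ba R}|$. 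Verifying that the curvature offset has the correct sign is what gives the strict increase in rarefaction strength, establishing $(2)$. Case $(3)$ is the same calculation run with a second shock in place of the rarefaction: the composition of two bending shock curves in the two families forces both outgoing shocks to pass further along their respective shock curves, so both strengths strictly increase.

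For the overtaking $\ba S\ba S$ case (the $\fa S\fa S$ case being symmetric), the pre-interaction state $U_R$ lies on the concatenation of two $1$-shock curves from $U_L$. Since the $1$-shock curve is not closed under composition for the $p$-system, $U_R$ is displaced from the single $1$-shock curve through $U_L$ by an amount of order $|\sigma_1\sigma_2|$. The sign of this displacement, obtained from the cubic Taylor expansion of the $1$-shock curve, is exactly the one that places $U_R$ on the rarefaction side of the $2$-wave curve through the reconstructed middle state, forcing the reflected $2$-wave to be a rarefaction rather than a shock. The outgoing $1$-shock strength then equals $|\sigma_1|+|\sigma_2|$ up to the interaction error already recorded in Lemma \ref{prop1}, which is the sense in which the claim should be read.

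The main obstacle is sign tracking of the cubic curvature coefficient of the shock curves. Every qualitative claim in the lemma -- strict increase of shock strength in $(2)$ and $(3)$, and the rarefactive character of the reflected wave in $(4)$ -- reduces to showing that a single structure constant built from $p(v)=v^{-\gamma}$ has the expected sign uniformly in a neighborhood of $U_*$. Once this sign is pinned down in one reference case, the remaining cases follow by the symmetry between the two wave families and by the classical interaction estimates of Lemma \ref{prop1}.
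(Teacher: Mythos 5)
Your overall route coincides with the paper's: the paper handles item 1 by observing that rarefaction curves of a given family are parallel (axis-aligned) straight lines in the $(r,s)$-plane, treats item 4 as elementary, and for items 2 and 3 refers to the Taylor expansion of the shock curves sketched in the appendix (the auxiliary functions $\ba\phi,\fa\phi$ and the cubic expansions $H(b)=kb^3+o(b^3)$, $K(b)=kb^3+o(b^3)$) together with the cited references. So your plan --- straight rarefaction curves plus sign-tracking of the cubic coefficient of the Hugoniot loci --- is exactly the intended argument.

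There is, however, one concrete shortfall: in items 2 and 4 you prove strictly less than what the lemma asserts, and the exact versions are what the paper uses downstream. In item 2 the forward shock strength is preserved \emph{exactly}, not merely ``to leading order'': the strength of a $\fa S$ is by definition the jump $|s_R-s_L|$ of the Riemann invariant $s$, and the adjacent $\ba R$'s (incoming on its right, outgoing on its left) lie on lines $s=\mathrm{const}$, so the $s$-jump carried by the shock is literally the same number before and after the interaction; the cubic curvature of the shock curve enters only in showing that the $\ba R$ strength (an $r$-jump) strictly increases, via the dependence of the cubic correction on the base point. Likewise in item 4, once the reflected wave is known to be a $\fa R$ (which does require your curvature-sign argument), that $\fa R$ preserves $r$, so $r_{M'}=r_R$ and the outgoing $\ba S$ strength $|r_{M'}-r_L|$ equals $|r_M-r_L|+|r_R-r_M|=|\sigma_1|+|\sigma_2|$ \emph{exactly}; there is no $O(|\sigma_1\sigma_2|)$ slack, and reading the claim ``up to the interaction error of Lemma \ref{prop1}'' is a misreading of the statement. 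These exact identities are not cosmetic: the lower bound $\ge\rho_\nu$ on outgoing physical wave strengths in Lemma \ref{lemma_key2} and the wave-counting and weight-decay arguments rely on them. The ingredients to repair this are already in your own write-up (axis-aligned rarefaction curves plus the Riemann-invariant definition of strength); you need only combine them instead of retreating to the generic interaction estimates.
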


The proof for the claim on $\fa R\ba R$ interaction is trivial, since rarefaction curves in the same family are alway parallel. The claim on $\ba S\ba S$ and $\fa S\fa S$ interactions can be easily proved. The proof of cases 2 and 3 can be found in \cite{BCZ1,BCZ2}, where we give a schetch on the idea how to prove cases 2 and 3 in the Appendix. One can also find other proofs in \cite{Young,sm,ChenJenssen}.
Since all waves in the accurate solvers are physical, we prove Lemma \ref{lemma_key2} for all accurate solvers. \\

\vskip0.1cm

\paragraph{\bf (ii) Adjusted solver}

\vskip0.1cm

In this part, we consider any shock-rarefaction overtaking interaction generating a wave whose strength is less than $\rho_\nu$. Here we only consider the case when $|\sigma_1\sigma_2|> \rho_\nu$, otherwise, we will use the simplified solver. The goal of this solver is to make sure the physical wave strength is always larger than $\rho_\nu$.

For the adjusted solver and simplified solver, we introduce the \emph{non-physical shock} (also known as \emph{pseudoshock} or non-physical wave), which is a jump discontinuity connecting two constant states (denoted by $U_L$ and $U_R$ from left to right), and traveling with zero velocity. For the future use, we need $h_R\geq h_L$ for any non-physical wave. This will be verified for all outgoing non-physical shocks in all Riemann solvers. Later after taking limit $\rho_\nu\rightarrow 0$, non-physical waves will disappear.

We only consider backward interactions. The forward interaction will be treated symmetrically.

\vskip0.1cm

For $\ba S\ba R$ interaction, we consider two cases: Case 1 when the outgoing 1-wave is a shock; Case 2, when the outgoing 1-wave is a rarefaction. By  Figure \ref{Simp6} and \ref{Simp7}, it is easy to see that the reflected wave with strength less than $\varepsilon|\sigma_1\sigma_2|$ is always a shock in the accurate solution, see \cite{ChenJenssen}.

If the strength of one of the outgoing waves is less than $\rho_\nu$, then by adding a non-physical wave whose strength is at most $\rho_\nu$, we can make all outgoing waves have strength larger than $\rho_\nu$. See in Figure \ref{Simp6} and \ref{Simp7}, where $a'$ is the middle state in the accurate solver.

  More precisely, we choose the slope of $bc$ as $1$ in Figure \ref{Simp6} and $-1$ in Figure  \ref{Simp7}. So it is clear that $h_c\geq h_b$ always hold. Since we adjust one  (or two) outgoing wave whose strength is less than $\rho_\nu$ to $\rho_\nu$, the additional strength of each outgoing wave comparing to the exact solution in the accurate solver is at most $k_0\rho_\nu<k_0|\sigma_1\sigma_2|$ for some constant $k_0$, i.e. 
\beq\label{G_add}
\max(|r_{a'}-r_b|, |s_{a'}-s_c|)\leq k_0\rho_\nu<k_0|\sigma_1\sigma_2|.
\eeq 
 Clearly, by \eqref{G_add}, the strength of outgoing non-physical wave,  i.e. $|s_b-s_c|=|r_b-r_c|$, is less than $C\rho_\nu$ for some constant $C$.\\
Observe that \eqref{21} and $\s_1<0<\s_2$ with $\s'<0$ for Case 1;  $\s'>0$ for Case 2 imply that \\
 for Case 1,
\[
\Delta L= |\s'|+ |\s''|- (|\s_1|+ |\s_2|) \le -2\s_2 + C|\s_1||\s_2| \le -\s_2 <0 ;
\]
and for Case 2,
\[
\Delta L \le 2\s_1 + C|\s_1||\s_2| \le \s_1 <0.
\]

In Case 2, if $r_b-r_L>r_R-r_a$, then move point $b$ to the left such that $r_b-r_L=r_R-r_a$, so the total strength of the backward rarefaction is less than $\delta$, where $\delta\gg \rho_\nu$. In summary, $L+\kappa Q$ always decays for sufficiently large $\kappa$, and Lemma  \ref{lemma_key1} and \ref{lemma_key2} both hold.

\vskip0.1cm
For $\ba R\ba S$ interaction with $|\sigma_1\sigma_2|> \rho_\nu$ which generates a wave whose strength is less than $\rho_\nu$, the adjusted interactions can be defined similarly as in  Figure \ref{Simp8} and \ref{Simp9}. Similarly, Lemma \ref{lemma_key1} and \ref{lemma_key2} hold. In Case 2, we choose point $b$ to be between points $L$ and $a$, so there is only one outgoing backward rarefaction whose strength is not larger than $\delta$.

\begin{figure}
	\centering
	\includegraphics[scale=.5]
	{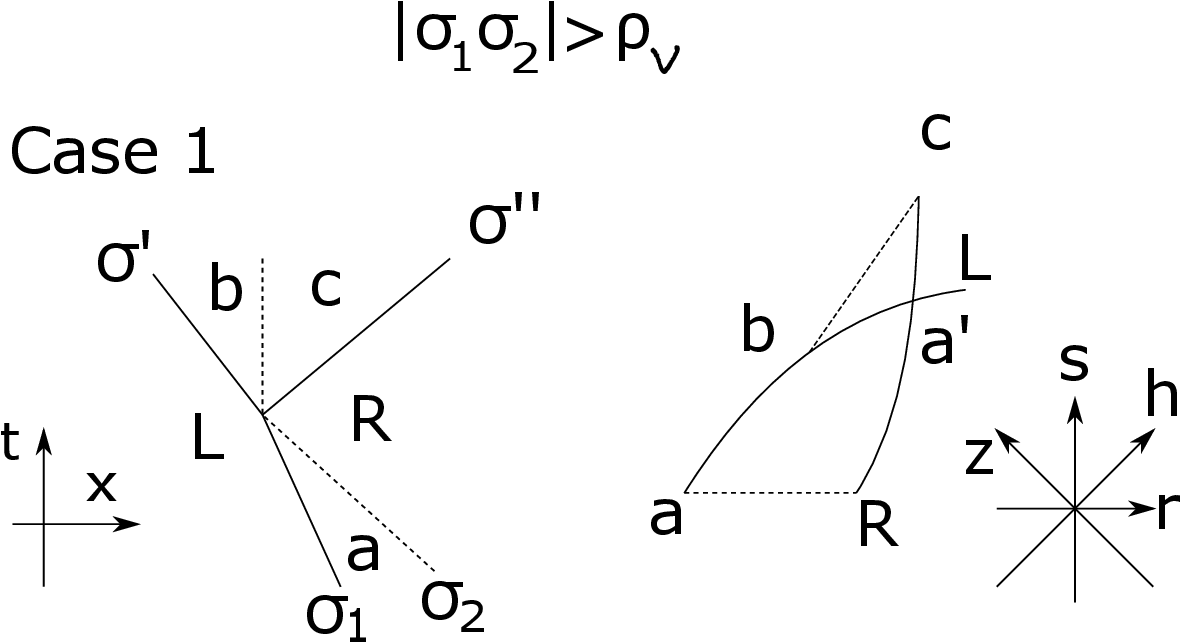}
	\caption{Adjusted solver: Rarefaction overtakes shock: Case 1. Solid lines for shocks, dot lines for other waves.  }\label{Simp6}
\end{figure} 

\begin{figure}
	\centering
	\includegraphics[scale=.5]
	{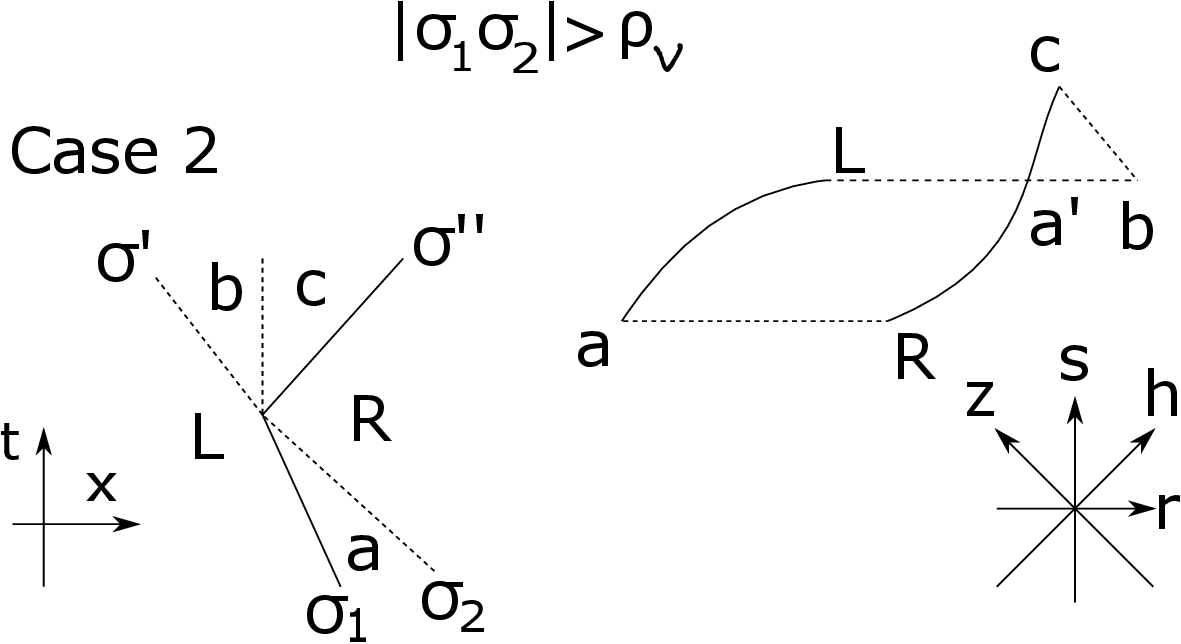}
	\caption{Adjusted solver: Rarefaction overtakes shock: Case 2. 
	First choose $bc$ in the positive $z$ direction, then move $b$ to the left if needed.} \label{Simp7}
\end{figure}

\begin{figure}
	\centering
	\includegraphics[scale=.5]
	{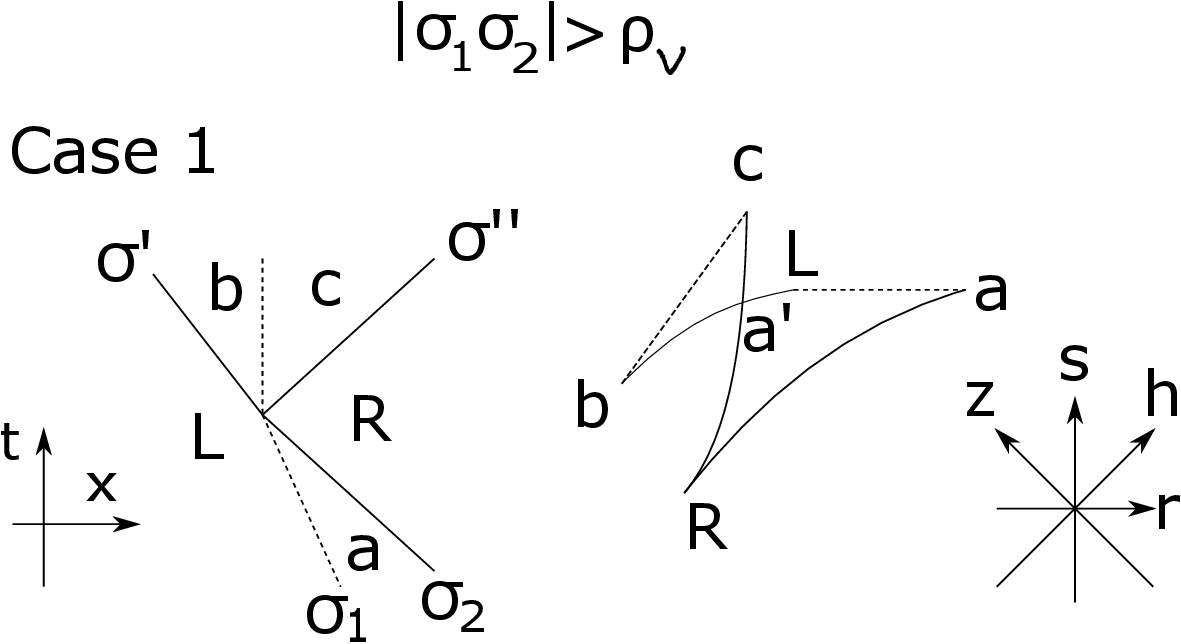}
	\caption{Adjusted solver: Shock overtakes Rarefaction: Case 1.} \label{Simp8}
\end{figure} 

\begin{figure}
	\centering
	\includegraphics[scale=.5]
	{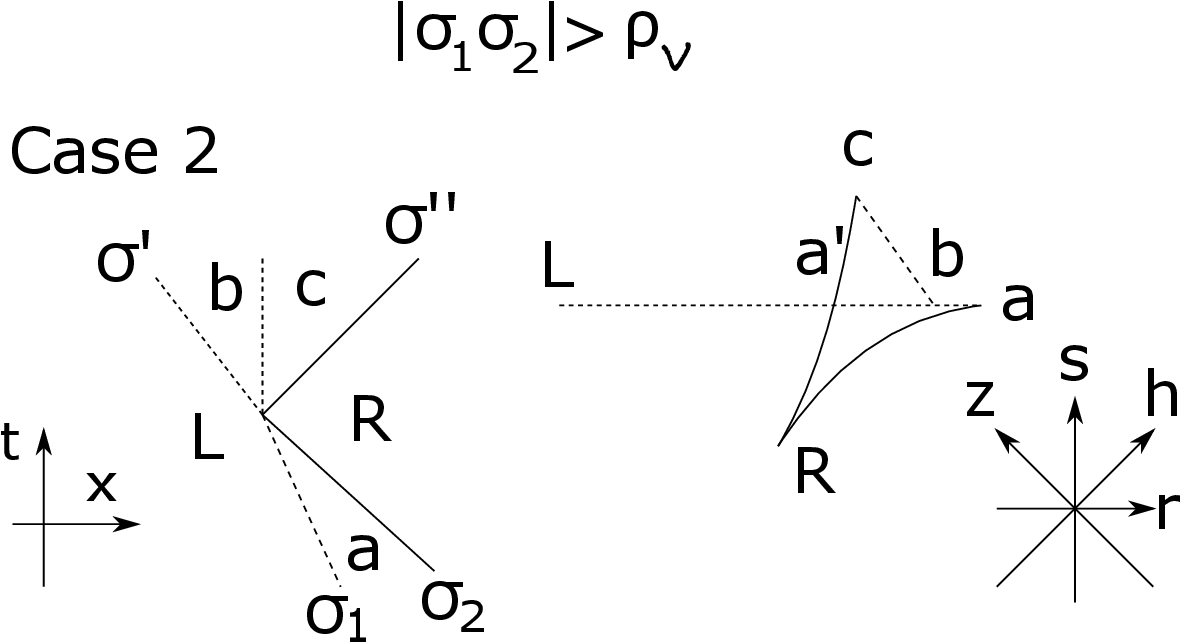}
	\caption{Adjusted solver: Shock overtakes Rarefaction: Case 2. 
	First choose $bc$ in the positive $z$ direction, then move $b$ to the left if needed.}\label{Simp9}
\end{figure}

\vskip0.3cm

\paragraph{\bf (iii) Simplified solver}

\vskip0.1cm

Finally, to control the total number of wave interactions, we introduce simplified solvers in the following cases:
\begin{itemize}
\item[(iii-1)] shock-shock overtaking interactions when the strength of reflected rarefaction in the exact interaction is less than $\rho_\nu$,
\item[(iii-2)]  shock-rarefaction overtaking interactions with $|\sigma_\alpha\sigma_\beta|\leq\rho_\nu$, 
\item[(iii-3)] any interaction including one non-physical wave.
\end{itemize} 
Here $\sigma_1$ and $\sigma_2$ denote the strengths of two incoming waves. For the shock-shock overtaking interaction, if $|\sigma_\alpha\sigma_\beta|\leq\rho_\nu$, then the strength of reflected rarefaction might be less than $\rho_\nu$ with $\eps$ small by Lemma \ref{prop1}, then the simplified solver will be used.
We do not use simplified solver for head-on interactions, since these interactions do not add wave numbers or generate reflected waves. We note that our simplified solver is different from the simplified solver in the standard front tracking scheme as in \cite{Bressan}.

Now, we define the simplified solver case by case. 

\begin{figure}
	\centering
	\includegraphics[scale=.5]
	{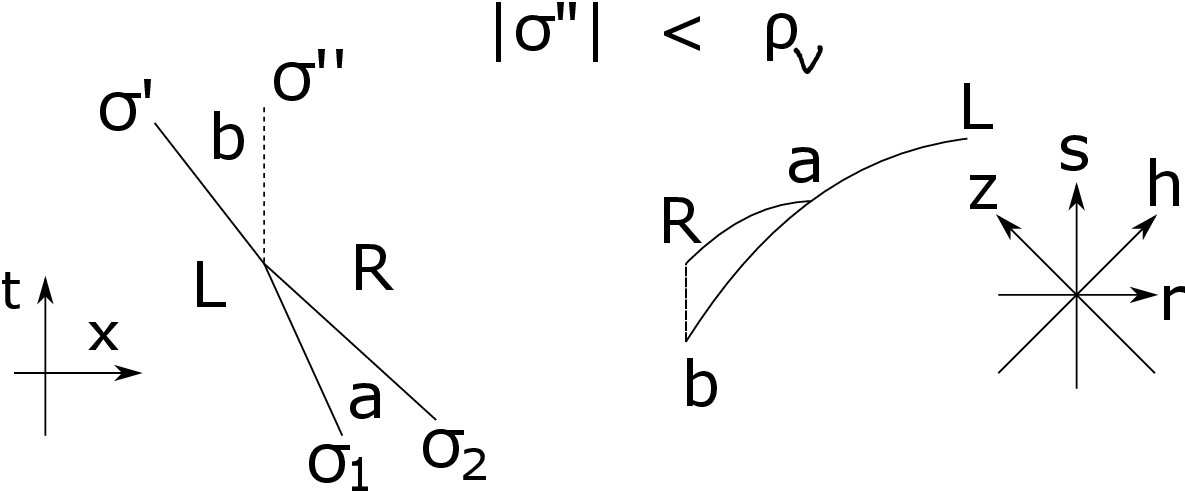}
	\caption{Simplified solver: Shock overtakes Shock}\label{Simp1}
\end{figure}

\vskip0.2cm
\paragraph{\bf (iii-1)} For the shock-shock overtaking interaction, when the strength of reflected rarefaction is less than $\rho_\nu$, change the reflected rarefaction into a zero speed non-physical shock with the same strength, and keep the same outgoing shock, as in Figure \ref{Simp1}. Lemma \ref{lemma_key1} and \ref{lemma_key2} clearly hold as the accurate solver case. It is easy to check that 
\[\sigma'=\sigma_1+\sigma_2,\]
i.e. $r_L-r_b=r_L-r_a+r_a-r_R$ since $r_b=r_R$.

\vskip0.2cm
\paragraph{\bf  (iii-2)} For $\ba S\ba R$ interaction with $|\sigma_\alpha\sigma_\beta|\leq\rho_\nu$, there are two cases. Use Figure \ref{Simp2} and \ref{Simp3} as the reference for Case 1 and 2, respectively. The $\fa R\fa S$ simplified solver can be defined symmetrically.

More precisely,  on the $(r,s)$ plane, draw a line in the positive $z$ direction, starting from the point $R$. If this line intersects with the shock curve from $a$ to $L$ except $L$ as in Figure \ref{Simp2}, where the intersection is named $a'$, then this case is called Case 1. Otherwise, we call it Case 2, as in Figure \ref{Simp3}.

\begin{figure}
	\centering
		\includegraphics
	[scale=.5]
	{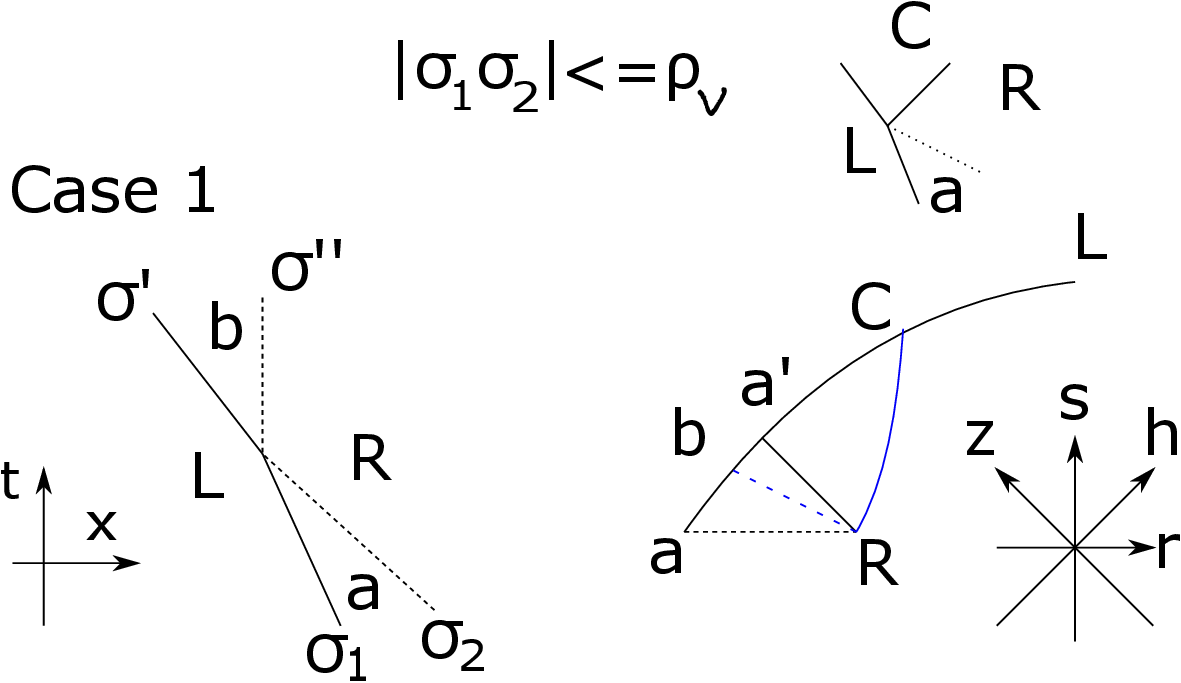}
	\caption{Simplified solver: Rarefaction overtakes Shock. Case 1 when shock dominates. The exact solution of this $\ba S\ba R$ interaction is in the right-up corner, with outgoing middle state $C$.}\label{Simp2}
\end{figure} 	

\begin{figure}
	\centering
		\includegraphics
	[scale=.5]
	{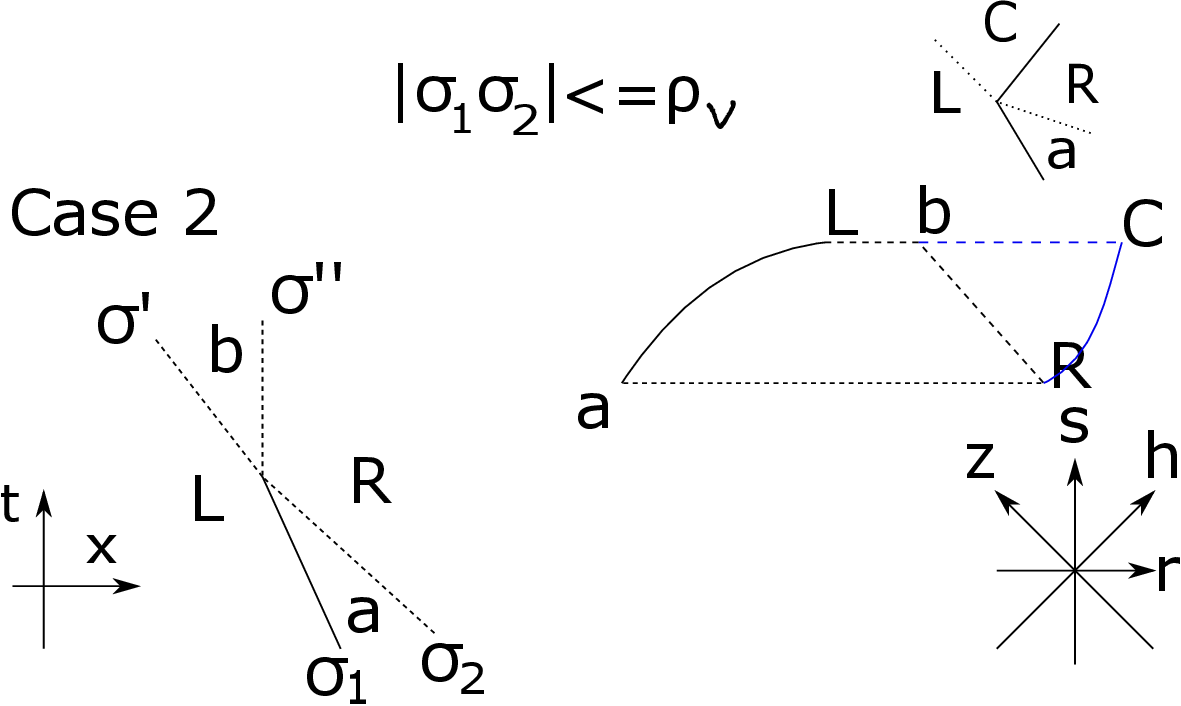}
	\caption{Simplified solver: Rarefaction overtakes Shock. Case 2 when rarefaction dominates.}\label{Simp3}
\end{figure}

\vskip0.1cm
In Case 1, we use Figure \ref{Simp2} as a reference. Recall we assume that the incoming strength $|\sigma_1|$ and $|\sigma_2|$ are always greater than or equal to $\rho_\nu$.

If the strength of shock from $L$ to $a\rq{}$ (in short, $L$-$a\rq{}$) is not less than $\rho_\nu$. Then we will choose $b=a\rq{}$. 
Then clearly $|\sigma\rq{}|<|\sigma_1|$, $|\sigma\rq{}\rq{}|<|\sigma_2|$.

Otherwise, choose state $b$ on the shock curve $L$-$a$ between $a$ and $a'$, to be the closest point to $a\rq{}$ on the $(r,s)$-plane, such that 
\beq\label{shock_strength1}
\rho_\nu\leq -\sigma'=|r_b-r_L|.
\eeq
Here we can always find such a point  between $a$ and $a'$, since the strength of incoming shock from $L$ to $a$ is not less than $\rho_\nu$. 
Then it is clear that \eqref{shock_strength1} is satisfied, and in fact, 
\[\rho_\nu= -\sigma'=|r_b-r_L|\leq \sigma_1.\] 
And clearly 
\beq\label{sigma_est_simp1}
|\sigma\rq{}\rq{}|=r_R-r_b=r_R-r_{a\rq{}}+r_{a\rq{}}-r_b<\sigma_2
\eeq 
by the definition of wave strengths of rarefaction and non-physical shock. 
Here  $r_{a\rq{}}-r_b$ is at most in the order of $\rho_\nu$ by our construction. We will show that $r_R-r_{a\rq{}}$ is also at most in the order of $\rho_\nu$, so does $|\sigma\rq{}\rq{}|$ by \eqref{sigma_est_simp1}.

Now we estimate $r_R-r_{a\rq{}}$. We denote the strength for the shock wave from $L$ to $a'$ as $-\hat \sigma$. Now we only consider the case when $0<-\hat \sigma<\rho_\nu$. Then we will show that 
\beq\label{case_special}
|\hat\sigma-(\sigma_1+\sigma_2)|=|r_{a’}-r_L-(r_R-r_L)|=|r_{a’}-r_R|=|s_{a’}-s_R|\leq 
O(\varepsilon|\sigma_1\sigma_2|)\leq \frac{1}{2}\rho_\nu,
\eeq
using  Lemma \ref{prop1} and the fact that $|s_{a’}-s_R|$ is in the same order of the strength of reflected forward outgoing wave in the exact  $\ba S\ba R$ interaction.
Let's only check when the outgoing backward wave in the exact $\ba S\ba R$  interaction is a shock, as in the up-right picture of Figure \ref{Simp2}, where
\[
|\hat\sigma-(\sigma_1+\sigma_2)|=|r_{a’}-r_R|=|s_{a’}-s_R|\leq |s_C-s_R|
=O(\varepsilon|\sigma_1\sigma_2|)\leq \frac{1}{2}\rho_\nu,
\]
where  $|s_C-s_R|$ is the strength of reflected forward outgoing wave in the exact  $\ba S\ba R$ interaction, so it is in the order of $O(\varepsilon|\sigma_1\sigma_2|)$ by  Lemma \ref{prop1}. The proof of \eqref{case_special} when  the outgoing backward wave in the exact $\ba S\ba R$  interaction is a rarefaction, is similar. We omit it here.
Therefore, Lemmas \ref{lemma_key1} and \ref{lemma_key2} hold for Case 1.

\vskip0.1cm

For Case 2, as in Figure \ref{Simp3}, we choose a state $b$ to be at the intersection point between a horizontal line starting from the point $L$ and a line starting from point $R$ in the positive $z$ direction. In this case, the point $b$ is always to the right of $L$, so the outgoing 1-wave is a rarefaction. \\
If the strength of rarefaction $L$-$b$ is less than $\rho_\nu$, i.e., $r_b-r_L<\rn$, then we use a non-physical shock to connect  directly from $L$ to $R$. In this case, the strength of non-physical shock is less than $2\rho_\nu$. Indeed, since $s_C-s_R$ as the strength of reflected forward shock in the exact solver is smaller than $O(\varepsilon|\sigma_1\sigma_2|)$ by Lemma \ref{prop1}, 
\beq\label{2.15}
r_b-r_L<\rho_\nu, \quad r_R-r_b=s_b-s_R=s_C-s_R\leq O(\eps|\sigma_1\sigma_2|)\leq \rho_\nu,
\eeq
which yields that the strength of non-physical shock$=r_R-r_L<2\rn$.\\
 In other cases, as in the second inequality of \eqref{2.15},
\[|\sigma''|=s_b-s_R<\rho_\nu.\]
And the outgoing backward rarefaction, if exists, always has a strength larger than $\rho_\nu$. \\
In summary, it holds from Lemma \ref{prop1} with $\sigma_1<\sigma_2, \s'$ and above estimates that
\[
-\sigma'+(\sigma_1+\sigma_2)=|\sigma''|=r_R-r_b\leq 2\rho_\nu,\qquad h_b \le h_R,\qquad |\sigma''|\leq 2\rho_\nu,
\]
and
\[
\Delta L= \s' + |\s''|+\s_1-\s_2 = 2\s_1 <0.
\]
Thus, Lemmas \ref{lemma_key1} and \ref{lemma_key2} hold.

\vskip0.2cm

The simplified $\ba R\ba S$ interaction can be defined similarly, as shown in Figure \ref{Simp4} and \ref{Simp5}. On the $(r,s)$ plane, draw a line in the positive $z$ direction, starting from the point $R$. If this line intersects with the backward shock curve starting from $L$, as in Figure \ref{Simp4}, where the intersection is named $a'$, then this case is called Case 1. Otherwise, we call it Case 2, as in Figure \ref{Simp5}.

In Case 1, use  Figure \ref{Simp4} as a reference. Recall we assume the incoming strength $|\sigma_1|$ and $|\sigma_2|$ are always greater than or equal to $\rho_\nu$.
If the strength of shock $L$-$a\rq{}$  is not less than $\rho_\nu$, then we choose $b=a\rq{}$. In this case,  we can show in the appendix that
\beq\label{key_est}
|\sigma\rq{}|=r_L-r_{a\rq{}}<r_a-r_R=|\sigma_2|\quad \hbox{and}\quad |\sigma\rq{}\rq{}|= r_R-r_{a\rq{}}<r_a-r_L=|\sigma_1|.
\eeq
Otherwise, choose the state $b$ on the shock curve $L$-$a\rq{}$, to be the closest point to $a\rq{}$ on the $(r,s)$-plane, such that 
\beq\label{shock_strength1_2}
\rho_\nu\leq -\sigma'=r_L-r_b.
\eeq
Thus, $\rn = |\s'| = r_L-r_b.$ Then, since $ |\sigma_2|\geq \rho_\nu$,
\[
|\sigma'|\leq |\sigma_2|,\quad \hbox{i.e.}\quad r_L-r_b\leq r_a-r_R,
\]
which yields $\quad r_a-r_L\geq r_R-r_b$, and so, $|\s''|\le |\s_1|$.
%
%

\vskip0.2cm
In Case 2, similar as before, if the rarefaction $L$-$b$ on Figure \ref{Simp5} is less than $\rho_\nu$, then directly use a non-physical shock to connect states $L$ and $R$. So the outgoing rarefaction always has a strength larger than $\rho_\nu$. The strength of non-physical shock is less than $2\rho_\nu$.

So for both cases, we can prove Lemma \ref{lemma_key1} and \ref{lemma_key2}, where we use the fact that all waves approaching the outgoing non-physical wave also approach the incoming $\ba R$.

The $\fa S\fa R$ simplified solver can be defined symmetrically.

\begin{figure}
	\centering
		\includegraphics
	[scale=.5]
	{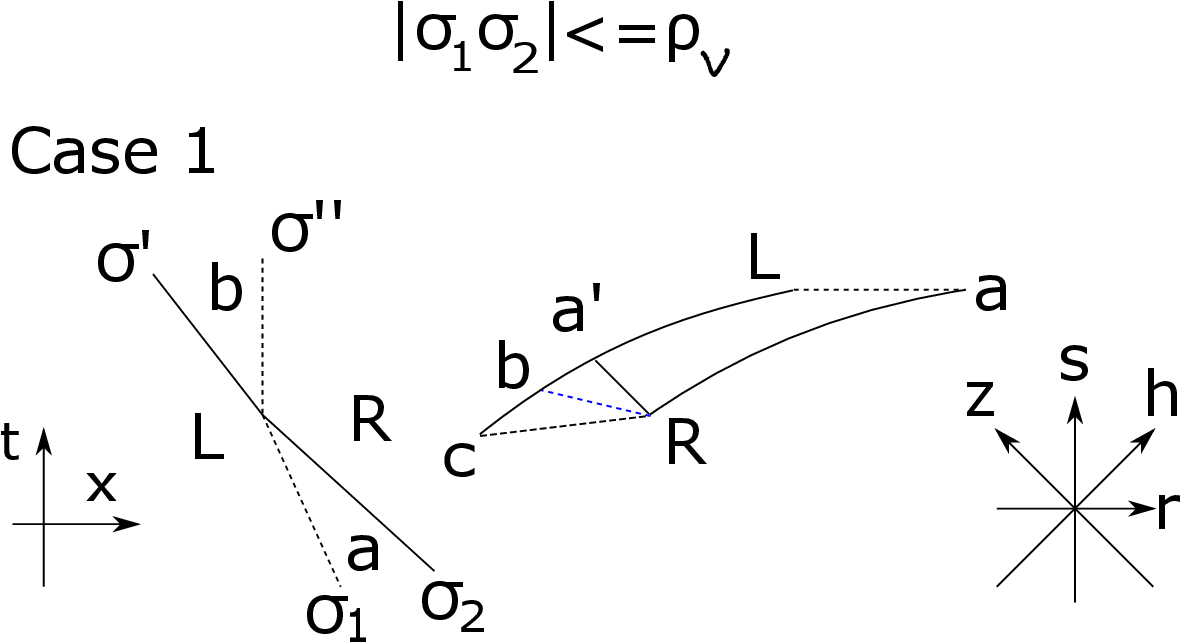}
	\caption{Simplified solver: Shock overtakes Rarefaction. Case 1 when shock dominates.}\label{Simp4}
\end{figure} 
\begin{figure}
	\centering
		\includegraphics
	[scale=.5]
	{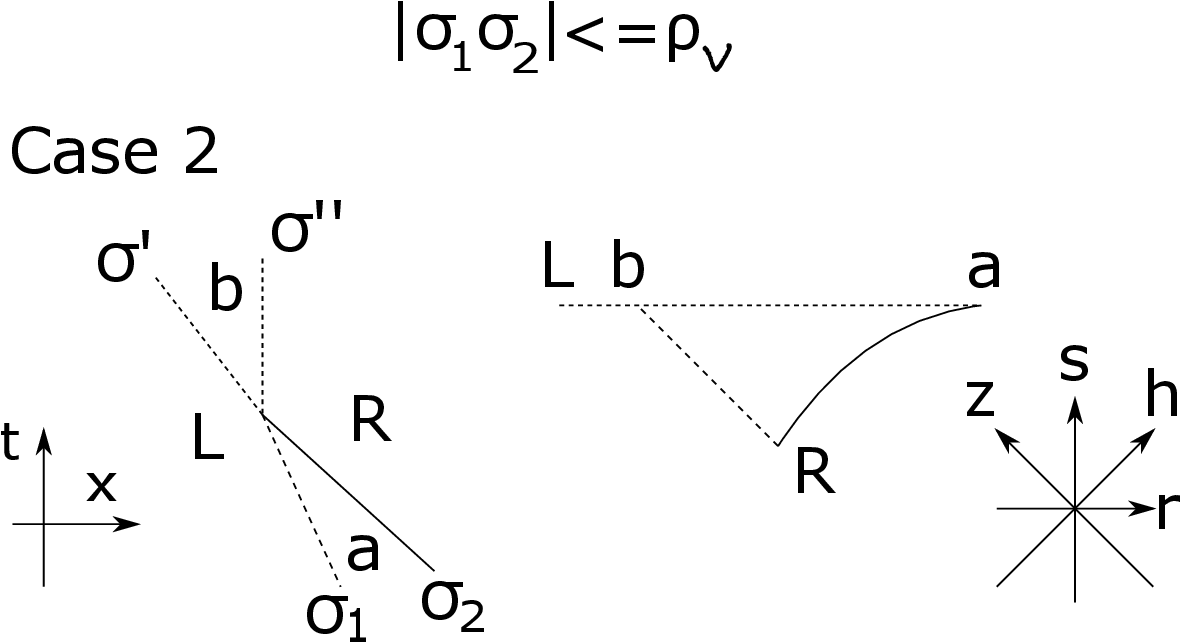}
	\caption{Simplified solver: Shock overtakes Rarefaction. Case 2 when rarefaction dominates.}\label{Simp5}
\end{figure}

\vskip0.3cm
\paragraph{\bf  (iii-3)}
\vskip0.1cm
For interaction between a non-physical shock and a rarefaction wave, we keep the strength of outgoing rarefaction, the slope of non-physical wave on the $(r,s)$-plane and its strength, see Figure \ref{Simp_non_2}. Since the strength of both waves do not change, Lemmas \ref{lemma_key1} and \ref{lemma_key2} clearly hold.

When a shock interacts with a non-physical wave, we first choose the outgoing shock strength to be the same as the incoming one. 
Then, by the similar proof as in \cite{Bressan}, the change of strength of non-physical wave is in the order of $|\sigma_1\sigma_2|$, and the slop of non-physical wave in the $(r,s)$-plane changes in the order of $\sigma_1$.   
Thus, if the slop of the incoming non-physical wave on the $(r,s)$-plane lies on a $\eps$-neighborhood of $-1$, then $h$ might decrease from the left to the right state for the outgoing non-physical wave.  In this case, we can always make the outgoing and incoming non-physical lines on the $(r,s)$-plane parallel by extending the outgoing shock curve, as in Figure \ref{Simp_non}, so that $h$ does not decrease from the $L$ to $b$. Since the strength of outgoing shock is not less than that of incoming shock, and thus it is bigger than $\rn$. Moreover, since  the increase of shock strength is in the order of $|\sigma_1\sigma_2|$, 
\[
\Delta L \le C|\sigma_1\sigma_2|.
\]
Therefore, Lemmas \ref{lemma_key1} and \ref{lemma_key2} hold.

\begin{figure}
	\centering
		\includegraphics
	[scale=.5]
	{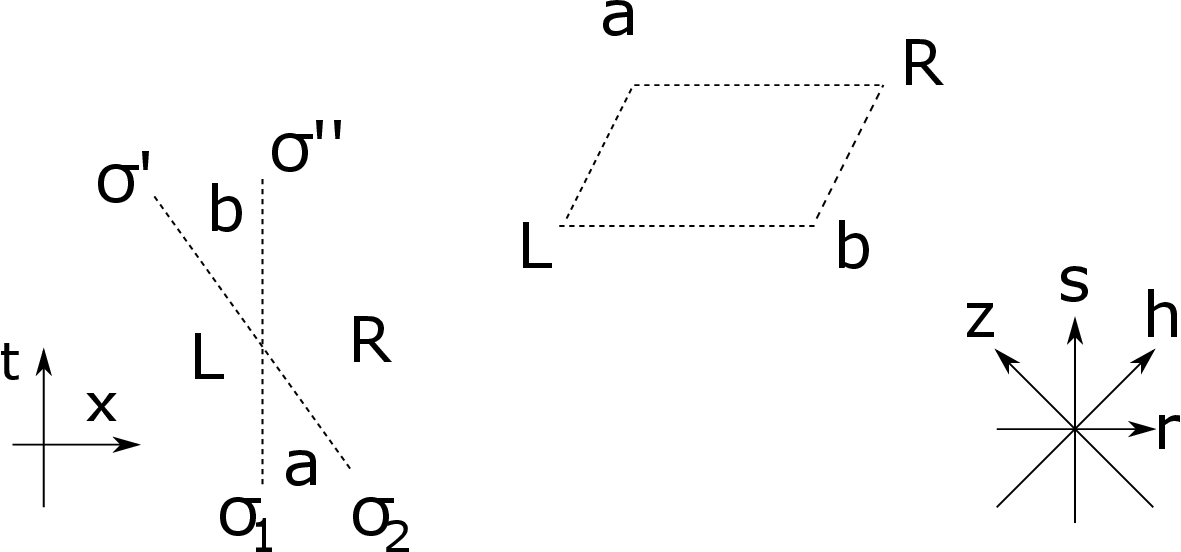}
	\caption{Simplified solver: rarefaction and non-physical wave. }\label{Simp_non_2}
\end{figure} 

\begin{figure}
	\centering
		\includegraphics
	[scale=.5]
	{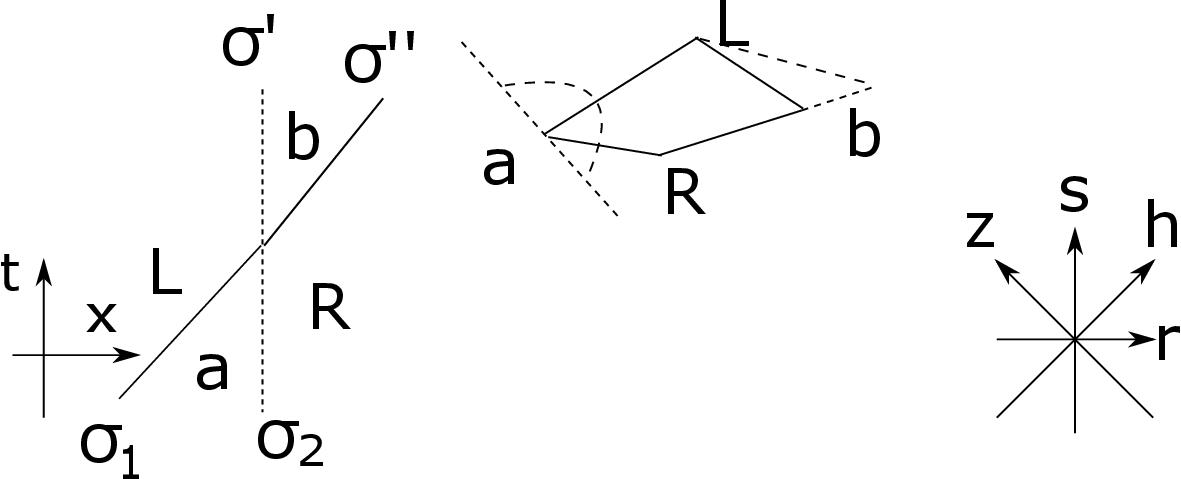}
	\caption{Simplified solver: shock and non-physical wave. The slope of curve $a$-$R$ might vary. We may need to adjust the slop of $L$-$b$ when the slope of curve $a$-$R$ is very close to $-1$, by slightly increasing the outgoing shock strengh.} \label{Simp_non}
\end{figure} 

\vskip0.1cm


\subsection{Construction of three kinds of waves}\label{sub:waves}
Given $\delta, \nu>0$, we will use the modified front tracking algorithm to construct an approximate solution $\overline U_{\nu,\delta}=(\bar v_{\nu,\delta}, \bar h_{\nu,\delta})$ composed of viscous shocks, rarefaction waves and smooth pseudo-shocks.
For that, we here introduce the three kinds of smooth waves.
We still use a parameter $\s$ to denote the sizes of waves, as introduced in Section \ref{subsection_4.2}.    \\
{\bf 1) Viscous shocks:} 
Consider 
\[
\tilde U(x-\tilde\lambda t;U_-,U_+,\s,k):=\big(\tilde v(x-\tilde\lambda t;v_-,v_+,\s,k),\tilde h(x-\tilde\lambda t;u_-,u_+,\s,k) \big)
\]
which is a $k$-viscous shock of Rankine-Hugoniot speed $\tilde\lam$ and size $\s<0$, as a traveling wave solution to \eqref{NS} with $\nu=1$:
\begin{align}
\begin{aligned}\label{sen}
\left\{ \begin{array}{ll}
       -\tilde\lambda \tilde v_x -\tilde h_x = - \Big( \mu(\tilde v) \tilde v^\gamma  p(\tilde v)_x\Big)_{x}\\
       -\tilde\lambda \tilde h_x + p(\tilde v)_x =0, \end{array} \right.
\quad \lim_{x\to-\infty}\tilde U(x)=U_- ,
\quad  \lim_{x\to \infty}\tilde U(x)=  U_+,
\end{aligned}
\end{align}
Then, we will consider $\tilde U^\nu(x-\tilde\lambda t;U_-,U_+,\s,k):=\tilde U(\frac{x-\tilde\lambda t}{\nu};U_-,U_+,\s,k)$ that is a $k$-viscous shock solution to \eqref{NS}. \\
{\bf 2) Viscous rarefactions:}
Define  $U^{R,\nu} (x;U_-,U_+,\s,k) = (v^{R,\nu}, h^{R,\nu})(x;U_-,U_+,\s,k)$ a smooth rarefaction as a smooth profile by 
\beq\label{defsmrare}
U^{R,\nu}(x;U_-,U_+,\s,k) = {\mathbf{R}}_{U_-}^k\Big(\s\phi \left(\s \frac{x}{\nu}\right) \Big) , \quad \s>0,
\eeq
where $\phi(x):=\frac12\int_{-\infty}^x e^{-|s|} ds $ (note: $\phi(+\infty)=1$), and $ {\mathbf{R}}_{U_-}^k$ is the rarefaction curve of $k$-family passing through $U_-={\mathbf{R}}_{U_-}^k(0)$, 
defined as in Appendix \ref{app:algorithm}. 
This wave connects $U_-$ to $U_+$. More precisely, $\lim_{x\to-\infty}U^{R,\nu}(x)=U_-$ and $ \lim_{x\to \infty} U^{R,\nu}(x)= {\mathbf{R}}_{U_-}^k (\s)=:U_+$.
Notice that $\s$ is the size from $U_-$ to $U_+$, by the definitions of $\sigma$ and arc-length for rarefaction.
Notice also that $\frac{d}{d\s}{\mathbf{R}}^k_{U_-}(\s) = {\mathbf{r}}_{k}({\mathbf{R}}^k_{U_-}(\s))$ for the right eigenvector ${\mathbf{r}}_{k}$ of $k$-family. \\
{\bf 3) Smooth pseudo-shocks:}
Define a  pseudo-shock $U^{P,\nu}(x;U_-,\s) =(v^{P,\nu} , h^{P,\nu})(x;U_-,\s)$ as a smooth stationary profile as follows.
 Let $\phi_{P}$ be a smooth monotone function such that $\phi_{P}(x)=0$ for $x\le0$, and $\phi_{P}(x)=1$ for $x\ge1$. Then, for the left end state $U_- = (v_-,u_-)$, we define $v^{P,\nu}(x)=v_- + \s \phi_P\Big(\frac{x}{\sqrt{\rho_\nu}}\Big)$ and $\partial_x h^{P,\nu}(x) \ge 0$, where the constant $\rho_{\nu}$ is chosen so small such that 
 \begin{equation}\label{pssize}
 \rho_\nu := \nu^{1/3}
 \end{equation}as  a threshold. We will make all sizes $\s$ of pseudo-shock to be smaller than $O(\rho_\nu)$ as in Lemma \ref{lemma_key2}.

\subsection{Construction of approximate solutions} \label{sub:conapp}
For positive parameters $\nu, \rho_\nu(=\nu^{1/3})$ and $\delta$ small enough, we here construct the desired approximate solutions. 
Eventually, we will take $\delta$ to be a function of $\nu$ in the end of Section \ref{sec:pfthm}.

\subsubsection{Initial step}

At initial time $t_0:=0$, consider a sequence of step functions $U_m$ that converges to the initial function $U^0$ in $L^2$ as $m\to\infty$. 
Notice that $\|U_m\|_{BV(\bbr)} \le \|U^0\|_{BV(\bbr)}\le \eps$.
Especially, for the two parameters $\nu,\delta$, we consider a step function $U_{n_{\nu,\delta}}$ (as a subsequence $n_{\nu,\delta}\to\infty$ as $\nu,\delta\to 0$) that takes a value $U^0(\bar x_{i0})$ on $(\bar x_{i0},\bar x_{i+1,0})$ for a finite number of points 
$\bar x_{10}<\bar x_{20}<\bar x_{30}<\cdots$ such that 
\beq\label{ptzero}
|\bar x_{i0}-\bar x_{i-1,0}| > \frac{\rn^{1/5}}{\delta}, \quad \forall i.
\eeq
Notice that the lower bound $\frac{\rn^{1/5}}{\delta}$ vanishes as $\nu\to 0$ and then $\delta\to 0$.\\
We apply the accurate Riemann solver at each point $x_{i0}$ connecting $U^0(\bar x_{i-1,0})$ and $U^0(\bar x_{i0})$. Then, we translate those  accurate solvers along $t$ axis as $-\sqrt\rn/\delta$ scale such that the intersection points $x_{10}<x_{20}<\cdots$ of the line $t=0$ and the wave fronts of the accurate solvers satisfy:
\beq\label{zerodis}
|x_{i+1,0}-x_{i0}|\ge d_0:= \rn^{1/4},\quad \forall i
\eeq
and  
\[
 \sum_{i\ge1} |U_{i-1/2}^0-U_{i+1/2}^0| =L(0) <C\eps,
\] 
where $U_{i-1/2}^0$ denotes the left constant state at $x_{i0}$ where $U_{1/2}^0:=U_*$, that is, the two constants $U_{i-1/2}^0$ and $U_{i+1/2}^0$ are connected by a wave-front at $x_{i0}$.
Indeed, we can get \eqref{zerodis} by \eqref{ptzero} and the facts that the number of fronts is at most $O(\delta^{-1})$, and
the difference of speeds of consecutive rarefaction fronts is at most $O(\delta)$ due to the definition of rarefaction fronts \eqref{raredef1}-\eqref{raredef2}.

We now use the above wave-fronts to construct the approximate solution as a viscous counterpart composed of physical viscous waves with possible shifts as follows.\\
First, if $U_{i-1/2}^0$ and $U_{i+1/2}^0$ are connected by a shock front at $x_{i0}$, then we consider a viscous shock with shift $X^\nu_{i0}(t)$ as
\beq\label{vsn1}
\tilde U_{i0}^\nu (x,t):=\tilde U_{i0}^\nu \Big(x-x_{i0}-\tilde\lambda_{i0} t -X^\nu_{i0}(t) ;U_{i-1/2}^0,U_{i+1/2}^0, \s_{i0}, k_{i0}\Big),
\eeq
which is a $k_{i0}$-viscous shock of size $\s_{i0}<0$, started at $(x_{i0},t_0)$ and shifted by an absolutely continuous function $X^\nu_{i0}(t)$ satisfying $X^\nu_{i0}(0)=0$ to be defined later, and $\tilde\lam_{i0}$ is the Rankine-Hugoniot speed. 
Notice that $\tilde U^\nu_{i0}:=(\tilde v^\nu_{i0},\tilde h^\nu_{i0})$ satisfies (with $j:=0$)
\begin{align}
\begin{aligned}\label{seo1}
\left\{ \begin{array}{ll}
      -\tilde\lambda_{ij} (\tilde v^\nu_{ij})_x -(\tilde h^\nu_{ij})_x = -\nu \Big( \mu(\tilde v^\nu_{ij}) (\tilde v^\nu_{ij})^\gamma  p(\tilde v^\nu_{ij})_x\Big)_{x}\\
      -\tilde\lambda_{ij} (\tilde h^\nu_{ij})_x + p(\tilde v^\nu_{ij})_x =0 \end{array} \right.
\end{aligned}
\end{align}
and thus,
\begin{align}
\begin{aligned}\label{SE1}
\left\{ \begin{array}{ll}
      (\tilde v^\nu_{ij})_t +\dot X^\nu_{ij}(\tilde v^\nu_{ij})_x  -(\tilde h^\nu_{ij})_x = -\nu \Big( \mu(\tilde v^\nu_{ij}) (\tilde v^\nu_{ij})^\gamma  p(\tilde v^\nu_{ij})_x\Big)_{x}\\
       (\tilde h^\nu_{ij})_t +\dot X^\nu_{ij}(\tilde h^\nu_{ij})_x  + p(\tilde v^\nu_{ij})_x =0. \end{array} \right.
\end{aligned}
\end{align}
Similarly, if $U_{i-1/2}^0$ and $U_{i+1/2}^0$ are connected by a rarefaction front at $x_{i0}$, then we consider a viscous rarefaction with shift $\Lambda^\nu_{i0}(t)$ as
\beq\label{urdef1}
U^{R,\nu}_{i0} (x,t):=U^{R,\nu}\Big(x-x_{i0}-\Lambda^\nu_{i0}(t);U_{i-1/2}^0,U_{i+1/2}^0, \s_{i0}, k_{i0}\Big), 
\eeq
which is of $k_{i0}$-family and size $\s_{i0}>0$, started at $(x_{i0},t_0)$ and shifted by a Lipschitz function $\Lambda^\nu_{i0} (t)$  satisfying $\Lambda^\nu_{i0} (0) =0$ to be defined in \eqref{rode}. 
Notice that this viscous wave moves at velocity $(\Lambda^\nu_{i0})'(t)$ instead of $\lambda_{k_{i0}} (U_{i+1/2}^0)$ defined by \eqref{raredef1}-\eqref{raredef2}. 
But, by \eqref{lamest}, we have 
\beq\label{rarewsp}
 \lambda_{k_{i0}} (U_{i-1/2}^0) \le (\Lambda^\nu_{i0})'(t) \le  \lambda_{k_{i0}} (U_{i+1/2}^0).
\eeq
For a short time $t>0$, we define an approximate solution $\overline U_{\nu,\delta}=(\bar v_{\nu,\delta}, \bar h_{\nu,\delta})$ by
\begin{align}
\begin{aligned} \label{apprr0}
\overline U_{\nu,\delta} (x,t) = \int_{-\infty}^x \left(\sum_{i\in \mathcal{S}_0} \partial_x \tilde U_{i0}^\nu (y,t)  +  \sum_{i\in \mathcal{R}_0} \partial_x U^{R,\nu}_{i0} (y,t)  \right) dy +U_*.
\end{aligned}
\end{align}
where  $\mathcal{S}_0, \mathcal{R}_0$ denote the sets of locations of shocks, rarefactions, respectively, at $t=t_0+$.\\
This will persist until the next interaction time, as follows.

\subsubsection{The first Interaction}
Let $y_{i0}^\nu (t)$ denote the trajectories for the above physical waves as
\beq\label{ycurve0}
y_{i0}^\nu(t):=
\left\{ \begin{array}{ll}
       x_{i0}+\tilde\lambda_{i0} t+X_{i0}^\nu(t),\quad\mbox{if } x_{i0}\in \mathcal{S}_0,  \\
       x_{i0}+\Lambda_{i0}^{\nu} (t) ,\quad\mbox{if } x_{i0}\in \mathcal{R}_0 . \end{array} \right.
\eeq
Define the first interaction time by
\beq\label{deftj}
t_1 := \inf\{ t ~|~ t > 0,\, \min_{i} |y_{i-1,0}^\nu(t)-y_{i0}^\nu(t)| = d_1:= \rn^{1/4} - \rn^{1/3} \}.
\eeq
Notice from \eqref{zerodis} that $t_1>0$ by $d_1 <  d_0$ and the continuity of $y_{i0}^\nu(t)$.

In order to consider only one interaction at $t_1$, we define
\[
i_0:=\min\{ i~|~ |y_{i-1,0}^\nu(t_{1})-y_{i0}^\nu(t_{1})| = d_{1} \}.
\]
So, for the two adjacent indices $i_0$ and $i_0+1$, we consider the interaction at $t_{1}$ between two waves moving along trajectories $y_{i_0,0}^\nu$ and $y_{i_0+1,0}^\nu$, that is, no interaction of waves along $y_{i_0}^\nu$ for $i\notin\{i_0, i_0+1\}$. 

\subsubsection{Initial points after the first interaction}
For the two incoming waves of sizes $\s_1:=\s_{i_0}$ and $\s_2:=\s_{i_{i_0+1}}$ at the first interaction, we apply the modified front tracking algorithm composed of the three types of solvers as in Section \ref{subsection_4.2}. 
For the two outgoing waves of sizes $\s'$ and $\s''$, newly generated by the modified front tracking algorithm, we define the location for
the initial points $x_{i_0,1}, x_{i_0+1,1}, x_{i_0+2,1},...$ as follows, where we may have $O(1/\delta)$ initial points if one of the outgoing waves, as a reflected wave, is a bunch of rarefaction fronts defined as in \eqref{raredef1}-\eqref{raredef2}.  
First, if $\s_1$ and $\s'$ have negative as the same family, i.e., they are both shocks, then we slightly perturb the arrival point $y_{i_0,0}^\nu(t_1)$ such that
\beq\label{ssx1}
|x_{i_0,1}-y_{i_0,0}^\nu(t_1)| < 3\sqrt\nu. 
\eeq
Likewise, if $\s_2$ and $\s''$ have negative as the same family, then we consider $x_{i_0+1,1}$ such that
\beq\label{ssx2}
|x_{i_0+1,1}-y_{i_0+1,0}^\nu(t_1)| < 3\sqrt\nu. 
\eeq
This is necessary in the proof of Proposition \ref{prop:deca} (see Remark \ref{rem:conf}).  

If an outgoing wave of size $\s'$ is a bunch of rarefaction fronts, then we translate those fronts in the sense of Appendix \ref{app:mod} so that the distance of any two rarefaction fronts is bigger than $d_1$. See Remark \ref{rem:mod} for the necessity of this adjustment.

Now, let $x_{i,1}$ be the initial points for trajectories $y_{i,2}^\nu(t)$ where all points generated by the first interaction are defined as above, and the other points are the same as the terminal points $y_{i,1}^\nu(t_1)$. \\
Then, it holds from \eqref{deftj} and \eqref{ssx1}-\eqref{ssx2} that for any two points $x_{i,1}$ and $x_{k,1}$, 
\beq\label{assinicon0}
|x_{i,1}-x_{k,1}|\ge \rn^{1/4}-2\rn^{1/3}.
\eeq
Indeed, that is true by $\nu\ll\delta$ such that $\frac{\sqrt\rn}{\delta}\ll \rn^{1/3}$.\\



\subsubsection{Inductive steps}
At $t=t_j$ for $j\ge 1$, let $\{x_{ij}\}$ be the set of initial points satisfy the following two conditions:\\
(i) For all rarefaction fronts generated by the interaction at $t_j$, we use the same translation as in Appendix \ref{app:mod} to find the initial points generated by rarefaction fronts such that the distance between any two points of them is bigger than $d_j:= \rn^{1/4} - (2j-1) \rn^{1/3}$.\\  
(ii) For any two points $x_{i,j}, x_{k,j}$ except for initial points generated by rarefaction fronts as above,
\beq\label{assinicon}
|x_{i,j}- x_{k,j}|\ge  \rn^{1/4}-2j\rn^{1/3}.
\eeq
For each $i$, let
\[
U_{i-1/2}^j, \s_{ij}, k_{ij}, x_{ij},
\] 
respectively denote the left end state, and size of wave of $k_{ij}$-characteristic field, starting at $x_{ij}$, which is obtained by  the modified front tracking algorithm.   \\
As in the initial step, we construct an approximate solution composed of smooth waves as follows.\\
If $U_{i-1/2}^j$ and $U_{i+1/2}^j$ are connected by a shock front at $x_{ij}$, then we consider
\beq\label{vsn}
\tilde U_{ij}^\nu (x,t):=\tilde U^\nu \Big(x-x_{ij}-\tilde\lambda_{ij} (t-t_j)-X^\nu_{ij}(t-t_j) ;U_{i-1/2}^j,U_{i+1/2}^j, \s_{ij}, k_{ij}\Big),
\eeq
which is a $k_{ij}$-viscous shock of speed $\tilde\lam_{ij}$ and size $\s_{ij}<0$, starting at $(x_{ij},t_j)$, and shifted by an absolutely continuous function $t\mapsto X^\nu_{ij}$ satisfying $X^\nu_{ij}(0)=0$ that is defined later.\\ 
Notice that $\tilde U^\nu_{ij}:=(\tilde v^\nu_{ij},\tilde h^\nu_{ij})$ satisfies
\begin{align}
\begin{aligned}\label{seo}
\left\{ \begin{array}{ll}
      -\tilde\lambda_{ij} (\tilde v^\nu_{ij})_x -(\tilde h^\nu_{ij})_x = -\nu \Big( \mu(\tilde v^\nu_{ij}) (\tilde v^\nu_{ij})^\gamma  p(\tilde v^\nu_{ij})_x\Big)_{x}\\
      -\tilde\lambda_{ij} (\tilde h^\nu_{ij})_x + p(\tilde v^\nu_{ij})_x =0 \end{array} \right.
\end{aligned}
\end{align}
and thus,
\begin{align}
\begin{aligned}\label{SE}
\left\{ \begin{array}{ll}
      (\tilde v^\nu_{ij})_t +\dot X^\nu_{ij}(\tilde v^\nu_{ij})_x  -(\tilde h^\nu_{ij})_x = -\nu \Big( \mu(\tilde v^\nu_{ij}) (\tilde v^\nu_{ij})^\gamma  p(\tilde v^\nu_{ij})_x\Big)_{x}\\
       (\tilde h^\nu_{ij})_t +\dot X^\nu_{ij}(\tilde h^\nu_{ij})_x  + p(\tilde v^\nu_{ij})_x =0. \end{array} \right.
\end{aligned}
\end{align}
If $U_{i-1/2}^j$ and $U_{i+1/2}^j$ are connected by a rarefaction front at $x_{ij}$, then we consider
\beq\label{urdef}
U^{R,\nu}_{ij} (x,t):=U^{R,\nu}\Big(x-x_{ij}-\Lambda^\nu_{ij} (t-t_j);U_{i-1/2},U_{i+1/2}^j, \s_{ij}, k_{ij}\Big), 
\eeq
that is a rarefaction of $k_{ij}$-family, as a traveling wave starting at $(x_{ij},t_j)$, shifted by a Lipschitz function $t\mapsto \Lambda_{ij}^\nu$ (to be defined in \eqref{rode}).

If $U_{i-1/2}^j$ and $U_{i+1/2}^j$ are connected by a pseudo-shock at $x_{ij}$, then we consider
\[
U^{P,\nu}_{ij} (x):=U^{P,\nu}\Big(x-x_{ij};U_{i-1/2},U_{i+1/2}^j, \s_{ij}\Big),
\]
which is a smooth pseudo-shock of size $\s_{ij}$, located at $x_{ij}$.\\

Let $\mathcal{S}_j, \mathcal{R}_j, \mathcal{NP}_j$ be the sets of locations of shocks, rarefactions and pseudo-shocks, respectively, at $t=t_j+$. \\
For a short time $t>t_j$, we define an approximate solution $\overline U_{\nu,\delta}=(\bar v_{\nu,\delta}, \bar h_{\nu,\delta})$ by
\begin{align}
\begin{aligned} \label{apprr}
\overline U_{\nu,\delta} (x,t) = \int_{-\infty}^x \left(\sum_{i\in \mathcal{S}_j} \partial_x \tilde U_{ij}^\nu (y,t)  +  \sum_{i\in \mathcal{R}_j} \partial_x U^{R,\nu}_{ij} (y,t) +  \sum_{i\in \mathcal{NP}_j} \partial_x U^{P,\nu}_{ij} (y) \right) dy +U_*,
\end{aligned}
\end{align}
This persists until the next interaction time, which is determined as follows.\\
For simplicity, let $y_{ij}^\nu (t)$ denote the continuous trajectories for the above waves as
\beq\label{ycurve}
y_{ij}^\nu(t):=
\left\{ \begin{array}{ll}
       x_{ij}+\tilde\lambda_{ij} (t-t_j)+X_{ij}^\nu(t-t_j),\quad\mbox{if } x_{ij}\in \mathcal{S}_j,  \\
       x_{ij}+\Lambda_{ij}^{\nu} (t-t_j) ,\quad\mbox{if } x_{ij}\in \mathcal{R}_j, \\
       x_{ij} ,\quad\mbox{if } x_{ij}\in \mathcal{NP}_j . \end{array} \right.
\eeq

Since it holds from \eqref{lamest} that
\[
 \lambda_{k_{ij}} (U_{i-1/2}^j) \le (\Lambda^\nu_{ij})'(t-t_j) \le  \lambda_{k_{ij}} (U_{i+1/2}^j),
\]
we see that for all $x_{ij}, x_{kj}\in \mathcal{R}_j$ satisfying the above condition (i),
\beq\label{rareyij}
|y_{ij}^\nu(t)-y_{kj}^\nu(t)|\ge 2\sqrt\rn.
\eeq
This estimate will be crucial to control the wave interaction between any two rarefaction waves as in 5) of Lemma \ref{lem:waves}.

We now define the next interaction time by
\beq\label{deftjg}
t_{j+1}:= \inf\{ t ~|~ t > t_{j},\, \min_{i,k: \mbox{approaching waves}} |y_{ij}^\nu(t)-y_{kj}^\nu(t)| = d_{j+1}\},
\eeq
where \beq\label{defdj}
d_{j}:= \rn^{1/4} - (2j-1) \rn^{1/3}.
\eeq
Notice that $t_{j+1}>t_{j}$. Indeed, that is true because of $d_{j+1}= \rn^{1/4} - (2j+1) \rn^{1/3}<  \rn^{1/4}-2j\rn^{1/3}$ and the fact that 
rarefaction fronts of the same family (satisfying \eqref{rareyij}) do not approach, and the waves starting only from the initial points satisfying \eqref{assinicon} can interact.\\

As before, let
\[
i_0:=\min\{ i~|~ |y_{i-1,j}^\nu(t_{j+1})-y_{ij}^\nu(t_{j+1})| = d_{j+1} \}.
\]

So, we consider only one interaction at $t_{j+1}$ between two waves moving along trajectories $y_{i_0,j}^\nu$ and $y_{i_0+1,j}^\nu$, that is, no interaction of waves along $y_{ij}^\nu$ for $i\notin\{i_0, i_0+1\}$. 

\begin{remark} \label{rem:conf}
In the proof of Proposition \ref{prop:deca}, we need a special configuration for the initial points of the outgoing waves, as follows. First, we fix the center of each viscous wave moving along the associated trajectory, by making the average point of the end states of each wave starting at $x_{i}$ lie at that point $x_{i}$. 
As in Figures \ref{Fig1} and \ref{pic_a_0}, let $x_{L\pm}, x_{m\pm}, x_{M\pm}, x_{R\pm}$ be the boundary points of the transition zones of width $\sqrt\nu$ for  physical waves. In particular, we need to consider the special configuration as in Figure \ref{pic_a_0}  when the condition of the adjustment in the proof of Proposition \ref{prop:deca} is satisfied.  For other cases,  we simply identify the arrival points of incoming waves and the initial points of outgoing waves at the interaction time, that is, $x_{L-}=x_{L+}, x_{m-}=x_{m+}, x_{M-}=x_{M+}, x_{R-}=x_{R+}$. 
This configuration for initial points of outgoing waves does not affect the number of waves and interactions.
\end{remark}

\begin{remark} \label{rem:mod}
When a bunch of reflected rarefaction fronts interacts with another approaching wave after the $j$-th interaction time $t_j$, in which case, the interaction between the approaching wave and the closest rarefaction front occurs, the adjustment as in Appendix \ref{app:mod} is essential to prevent successive interactions between the approaching wave and the remaining rarefaction fronts within a distance far smaller than $d_{j+1}$.
\end{remark}

\begin{remark} \label{rem:djtj}
By \eqref{deftjg} we first find that for all $i,k$ approaching waves,
\[
|y_{ij}^\nu(t)-y_{kj}^\nu(t)|\ge d_{j+1}= \rn^{1/4} - (2j+1) \rn^{1/3} \quad \forall t\in (t_j,t_{j+1}),
\]
which together with Lemma \ref{lem:num} below, and taking $\nu\ll \delta$ implies
\[
d_{j+1}\ge  \rn^{1/4}  - C\left(\frac{1}{\delta} \log \frac{1}{\rho_\nu}\right)^3 \rn^{1/3} \gg 2\sqrt\rn.
\]
This and \eqref{rareyij} imply that for any $j$,
\beq\label{distij}
|y_{ij}^\nu(t)-y_{i'j}^\nu(t)|\ge 2 \sqrt\rn , \quad\mbox{for any $i,i'$ with $i\neq i'$ and any } t\in (t_j,t_{j+1}).
\eeq
\end{remark}



\subsection{The number of waves and interactions}

We here estimate the total number of waves and wave interactions as follows.

\begin{lemma}\label{lem:num}
There exists $C>0$ such that for all $t>0$, the following holds:
\begin{itemize}
\item[i.]
The total number of physical waves is bounded by 
$$C\Big(\frac{1}{\delta}\ln \rho_\nu^{-1}\Big).$$
\item[ii.]
The total number of all waves is bounded by 
$$C\Big(\frac{1}{\delta}\ln \rho_\nu^{-1}\Big)^2.$$
\item[iii.] The total number of wave interactions is bounded by 
$$C\Big(\frac{1}{\delta}\ln \rho_\nu^{-1}\Big)^3.$$
\end{itemize}
\end{lemma}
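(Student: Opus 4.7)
The plan is to prove parts (i)--(iii) in sequence, using as principal inputs the decay of $L(t)+\kappa Q(t)\le C\eps$ from Lemma \ref{lemma_key1}, the strength bounds of Lemma \ref{lemma_key2}, and the case analysis of the three Riemann solvers described in Section \ref{subsection_4.2}.

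For (i), the rarefaction count is straightforward: each rarefaction front has strength $\ge \delta/2$ by Lemma \ref{lemma_key2}(1) together with the averaging adjustment in the accurate solver, while the total rarefaction strength is bounded by $L(t)\le 2\eps$; this yields at most $4\eps/\delta=O(1/\delta)$ rarefaction fronts at any time. Bounding the shocks is the delicate point, since the crude estimate ``each shock has strength $\ge \rho_\nu$'' gives only $O(\eps/\rho_\nu)$, much coarser than the claimed $O((1/\delta)\log\rho_\nu^{-1})$. The refined argument is to track shocks by generation: new shocks can be born only at overtaking shock--rarefaction interactions (by Lemma \ref{lemma_key3}), while overtaking shock--shock interactions strictly \emph{decrease} the shock count, and head-on interactions leave it unchanged. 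A dyadic decomposition of the admissible strength range $[\rho_\nu,\eps]$ produces $O(\log\rho_\nu^{-1})$ scales; at each scale, the number of shocks is controlled by the $Q$-potential consumed at the generating interactions, while the factor $1/\delta$ reflects the initial front population together with the triggering rarefaction population of size $O(1/\delta)$. Summing over dyadic scales then gives $O((1/\delta)\log\rho_\nu^{-1})$.

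For (ii), pseudo-shocks are created exclusively by the simplified solvers and all travel at zero Lagrangian speed, so two pseudo-shocks can never meet. Consequently a pseudo-shock disappears only when a physical front overtakes it, and each interaction with a physical wave either absorbs the pseudo-shock or at most splits off one new pseudo-shock. A book-keeping argument shows that the number of live pseudo-shocks at time $t$ is bounded by (physical waves)$\times$(pseudo-shocks encountered per physical wave during its lifetime); since each encounter consumes additional potential in $L+\kappa Q$, one obtains the bound $O((1/\delta)\log\rho_\nu^{-1})^2$ on the total wave count. For (iii), every interaction involves at least one physical wave, and any given pair of fronts can interact at most once (after which they either merge or separate by \eqref{deftjg}--\eqref{defdj}). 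Hence the total number of interactions is bounded by (physical waves)$\times$(total waves) $= O((1/\delta)\log\rho_\nu^{-1})^3$.

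The main obstacle is the shock count in part (i). The lower bound $\rho_\nu$ on shock strengths used in isolation yields only $O(\eps/\rho_\nu)$, which is too coarse by a factor of roughly $\rho_\nu\delta^{-1}\log\rho_\nu^{-1}$. The logarithmic refinement must be extracted by a generation-based, Glimm-type accounting that uses both the monotone decrease of $L+\kappa Q$ at each interaction (quantitatively given by Lemma \ref{lemma_key1}) and the structural rules of Lemma \ref{lemma_key3} describing precisely when shocks can be created; particular care is needed to handle the adjusted solver, where outgoing small waves are inflated to size $\rho_\nu$ (potentially creating an extra physical shock) and the simplified solvers, where reflected waves of size below $\rho_\nu$ are replaced by pseudo-shocks rather than new physical fronts.
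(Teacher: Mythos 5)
Your part (iii) matches the paper's argument exactly (each pair of fronts meets at most once and every interaction involves a physical wave), and your overall architecture — a logarithmic number of ``scales'' times an $O(1/\delta)$ count per scale — is the right shape. But the crux of part (i), the shock count, has a genuine gap. Your mechanism is ``at each dyadic strength scale, the number of shocks is controlled by the $Q$-potential consumed at the generating interactions.'' This cannot give $O(1/\delta)$ per scale: a reflected shock born at an $\ba S\ba R$ overtaking has strength $O(\eps|\sigma_1\sigma_2|)$ while the potential consumed is only $\sim|\sigma_1\sigma_2|$, so a shock born at strength $\sim 2^{-j}$ costs potential only $\sim 2^{-j}/\eps$; dividing $Q(0)=O(\eps^2)$ by this yields $O(\eps^3 2^{j})$ shocks at scale $j$, and summing to $2^{-j}\sim\rho_\nu$ returns you to the useless $O(1/\rho_\nu)$ bound. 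Moreover a shock's strength changes at every interaction, so it does not live at a fixed scale, and your dyadic classes are not stable under the evolution. The paper's actual mechanism is combinatorial, not potential-based: it assigns Bressan-type \emph{generation orders}, with the rule that at each overtaking interaction exactly one incoming wave is ``switched'' into the reflected wave of the next generation while the other ``continues'' with its order unchanged. Hence each wave produces at most one switched descendant in its entire lifetime, so the switched population per generation never exceeds the previous generation's size; the only other source of new fronts is the non-switched reflected rarefactions, which have birth strength $\ge\delta/2$ and whose total birth strength is bounded by the decay of $Q$, giving $O(\eps^2/\delta)$ of them ever. This yields $O(1/\delta)$ waves per generation, and the number of generations is capped at $O(\ln\rho_\nu^{-1})$ by combining the lower bound $|\s_i|\ge\rho_\nu$ of Lemma \ref{lemma_key2} with the estimate $(C\eps)^k$ on the strength of a generation-$k$ wave (Lemma \ref{lemma_key5}) — an ingredient your proposal never invokes and which is where the logarithm actually comes from.

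Two smaller issues. First, your rarefaction count in (i) is a fixed-time count ($O(1/\delta)$ fronts ``at any time''), whereas the lemma is used to bound the \emph{all-time} total number of fronts (needed for (ii) and (iii)); passing from per-time to all-time is exactly what the generation bookkeeping accomplishes, and also not every rarefaction front has strength $\ge\delta/2$ (reflected rarefactions designated as ``switched'' can have strength as small as $\rho_\nu$). Second, in (ii) your mechanism — pseudo-shocks splitting off new pseudo-shocks when physical waves cross them, controlled by $L+\kappa Q$ — is not how the construction works: the simplified solver (iii-3) for an interaction involving a pseudo-shock never creates a second pseudo-shock. The correct (and much shorter) argument is that a new pseudo-shock is born only at an interaction between two \emph{physical} waves, and since each such pair interacts at most once, the total pseudo-shock count is at most the square of the physical wave count from part (i).
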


\subsubsection{Proof of Lemma \ref{lem:num}}
We first need to define the meaning of new wave, and the order of generation. 
We adopt a similar concept on the order of generation as in the book of Bressan \cite{Bressan}. First, we say:
\begin{itemize}
\item All fronts generated by the accurate Riemann solvers at the initial time $t=0$ have generation order $k=1$.
\end{itemize}
Then we define the orders of outgoing waves after each Riemann solver as follows.
\begin{itemize}
\item[(1).] Order of wave in each family, for all head-on interactions of a $2$-wave and a $1$-wave (always using accurate solver) and all simplified solvers including a non-physical shock,  does not change.
\end{itemize}
For overtaking interactions of two $i$-waves with generation orders $k, k'$, for a fixed $i=1,2$, there might be outgoing waves in three families. For convenience, we regard a non-physical shock as a wave in the $1.5$-th family. Then we define
\begin{itemize}
\item[(2).] For simplified solvers of rarefaction-shock interactions (including $\ba S\ba R$, $\ba R\ba S$, $\fa S\fa R$ and $\fa R\fa S$): In Case 1 when the incoming shock dominates, the outgoing physical shock takes the same order of the incoming shock, the non-physical shock takes the same order of the incoming rarefaction; In Case 2, when the incoming rarefaction dominates, two outgoing waves both take the same order of the incoming rarefaction.
\item[(3).] For other overtaking interactions, the outgoing $i$-wave has order $\min(k,k')$, and the outgoing wave in other families has order  $\max(k,k')+1$.
\end{itemize}

Then we introduce an important lemma to estimate the strength for waves with generation order $k$.
\begin{lemma}\label{lemma_key5} There exists a constant $C$ such that for any positive integer $k$, the strength of any wave with generation order $k$ is less than $(C\varepsilon)^k$.
\end{lemma}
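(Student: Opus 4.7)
The plan is to proceed by induction on the generation order $k$. For the base case $k=1$, Lemma \ref{lemma_key1} yields that $L(t)+\kappa Q(t)$ is non-increasing for $\kappa$ sufficiently large, so $L(t)\le L(0)+\kappa Q(0)\le C_0\varepsilon$ for some universal constant $C_0$; since the strength of any single wave is bounded by $L(t)$, choosing $C\ge C_0$ yields the claim.

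For the inductive step, I would assume the bound for all orders less than $k\ge 2$ and pick any wave $w$ of order $k$ at some time $t^*$. Inspecting the three ordering conventions (1)--(3), only rule (3) can produce an outgoing wave whose order strictly exceeds both incoming orders, and only as the reflected (cross-family) wave. Consequently, tracing $w$ back through its interaction history (at each intervening interaction $w$ is either a transmitted continuation under rules (1)--(2) or the main outgoing wave of a rule-(3) interaction, both of which inherit their order from an already-present order-$k$ wave), one arrives at the earliest time $t_c$ at which order $k$ first appeared on this lineage: at $t_c$, $w$ is the rule-(3) reflected wave of an overtaking interaction between two waves of orders $k_1,k_2\le k-1$ satisfying $\max(k_1,k_2)+1=k$.

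At this creation time, Lemma \ref{prop1} (estimates \eqref{21}--\eqref{22}) together with the inductive hypothesis $|\sigma_i|\le(C\varepsilon)^{k_i}$ gives
\[
|\sigma_w(t_c)|\;\le\;C_1|\sigma_1||\sigma_2|(|\sigma_1|+|\sigma_2|)\;\le\;2C_1(C\varepsilon)^{k_1+k_2+\min(k_1,k_2)},
\]
and since $k_1+k_2+\min(k_1,k_2)=\max(k_1,k_2)+2\min(k_1,k_2)\ge k+1$, this forces $|\sigma_w(t_c)|\le 2C_1(C\varepsilon)^{k+1}\le \tfrac12(C\varepsilon)^k$ once $\varepsilon$ is taken small relative to $C$.

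The hard part will be propagating this bound forward from $t_c$ to $t^*$, because a same-family overtaking of two order-$k$ waves (rule (3), main outgoing wave) produces an order-$k$ wave of strength $\approx|\sigma_1|+|\sigma_2|$ by Lemma \ref{prop1}, so a naive pointwise estimate would allow near-doubling at each such merge. My strategy is to strengthen the inductive assertion so as to control the \emph{aggregate} order-$k$ mass $T_k(t):=\sum_{\mathrm{ord}(w)=k}|\sigma_w(t)|$: same-order merges and the order-transmitting interactions of rules (1)--(2) and of rule (3) with unequal orders all conserve $T_k$ up to cubic corrections, so the only net source of order-$k$ mass is rule-(3) reflections. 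Summing the creation estimate above over all such events and combining the Glimm-type bound $\sum_{\text{interactions}}|\sigma_1||\sigma_2|\lesssim Q(0)=O(\varepsilon^2)$ from Lemma \ref{lemma_key1} with the structural inequality $k_1+k_2\ge k$ forced by $\min(k_1,k_2)\ge 1$ would then give $T_k(t)\le(C\varepsilon)^k$. Since every individual order-$k$ wave strength is dominated by $T_k(t)$, this completes the inductive step.
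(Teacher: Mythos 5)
Your overall architecture is close in spirit to the argument the paper delegates to Bressan's book: you correctly identify that a new generation order appears only as the reflected wave of a rule-(3) overtaking, that its strength at creation is $O(|\sigma_1\sigma_2|)$ by Lemma \ref{prop1} (note that for the adjusted solvers the outgoing waves are perturbed by $O(\rho_\nu)<O(|\sigma_1\sigma_2|)$, so only the quadratic bound survives there — still enough, since $k_1+k_2\ge k$), and that the real difficulty is the forward aggregation. But your resolution of that difficulty does not close, for two reasons. First, the exact-order aggregate $T_k$ is \emph{not} conserved up to cubic errors by rule-(3) interactions with unequal orders: an overtaking between an order-$k$ and an order-$k'$ wave with $k'>k$ produces a same-family outgoing wave of order $\min(k,k')=k$ and strength $\approx|\sigma_1|+|\sigma_2|$, so $T_k$ gains the full first-order mass of the order-$k'$ wave. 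This is exactly why the standard argument works with the cumulative quantities $L_k:=\sum_{\mathrm{order}\ge k}|\sigma|$, which are monotone under such downward transfers of order.

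Second, and more seriously, your summation over creation events cannot produce the factor $(C\varepsilon)^k$. The global bound from Lemma \ref{lemma_key1} gives $\sum_{\text{interactions}}|\sigma_1||\sigma_2|\le \frac{4}{3}Q(0)=O(\varepsilon^2)$, and the only extra factor you can pull out uniformly per event is $|\sigma_1|+|\sigma_2|\le 2C\varepsilon$; the pointwise bound $(C\varepsilon)^{k_1+k_2}\le(C\varepsilon)^k$ is of no use when summed, because the number of interactions is of order $(\delta^{-1}\ln\rho_\nu^{-1})^3$ and is not uniformly bounded. So your scheme yields at best $O(\varepsilon^3)$ independently of $k$ — fine for $k\le 3$, but not $(C\varepsilon)^k$ for $k\ge4$. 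The missing ingredient is the order-restricted potential $Q_{k-1}$ (summed over approaching pairs at least one of which has order $\ge k-1$), together with the bound $Q_{k-1}\le L_{k-1}\cdot L\le (C\varepsilon)^{k-1}\cdot C\varepsilon$ and a simultaneous induction in $k$ showing that the total production of order-$\ge k$ strength is dominated by the total decrease of $Q_{k-1}$, whose total \emph{increase} must itself be controlled at level $k-1$. This is precisely the content of Bressan's estimates $(7.68)_{1-5}$ and $(7.75)$; the paper's proof consists of checking that those interaction estimates remain valid for the modified solvers (with the special convention that the non-physical shock in group (2) inherits the order of the incoming rarefaction, since it may be of first order in $\sigma_1,\sigma_2$) and then invoking Bressan's induction verbatim.
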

This lemma can be proved using the same method as in \cite[(7.75)]{Bressan}. In fact, we only have to show that the estimates in \cite[$(7.68)_{1-5}$]{Bressan} hold for our front tracking solutions, then the rest of the proof is entirely same. When we define the order of wave, we treat the interactions in group (2) separately, because the outgoing non-physical shock in this case may be in the first order, i.e. in the order of $\sigma_1$ or $\sigma_2$. So we consider the non-physical shock as an extension of the whole or a part of the incoming rarefaction, i.e. keep its order. It is not hard to check that the estimates in \cite[$(7.68)_{1-5}$]{Bressan} still hold. For interactions in other groups, we can use the fact that the strength of each reflected outgoing wave is at most in the second order, i.e. $O(|\sigma_1\sigma_2|)$, to show the estimates in \cite[$(7.68)_{1-5}$]{Bressan} as Bressan did in his book. We omit the details.

\vskip0.3cm

Next, we define the new wave and the continue wave. After each interaction, if an outgoing wave is in the same family and same order of an incoming wave, then these two waves are regarded as the same wave, i.e. the outgoing one is the continuation of the existing incoming wave. Other outgoing wave(s) are considered to be new waves. It is important to notice that any pair of different waves will interact at most {\emph{once}} in their lifetimes, because no wave will change its family (unless it changes to a new wave), and when two waves in the same family meet, there will be only one left after their interaction.\\

We are ready to prove Lemma \ref{lem:num} on the number of waves and interactions.

First, since the strength of each physical wave is larger than $C\rho_\nu$ by Lemma \ref{lemma_key2} and the strength of any wave of any order $k$ (generation) is less than $(C\varepsilon)^k$ in Lemma \ref{lemma_key5}, we find that $(C\eps)^k \ge \rn$, which implies that 
 \beq\label{maxog}
\mbox{ the maximum order of physical wave is less than } C\frac{ \ln \rho_\nu^{-1}}{\ln \eps^{-1}}.
\eeq
 We note that the order of non-physical wave will not impact the order of physical wave. The order of non-physical wave is at most one larger than the maximum order of physical waves.

Next, we count the number of physical waves in each generation.

Let $N^0_{\mathcal F}$ denote the number of wave fronts in the first generation, i.e., at $t=0+$.

After each interaction, a physical wave of new order will be created only by the reflected wave in the overtaking accurate or adjusted solvers.

We always consider the reflected outgoing rarefaction after a shock-shock overtaking interaction (as in Lemma \ref{lemma_key3}), as {\em switched} from one incoming shock with order 1 less, 
where another incoming shock is {\em continued} by the (same) wave in its own family with the same order. The other reflected outgoing rarefactions are called {\em non-switched}. For other overtaking interaction, there are at most two outgoing physical waves, including one extended wave and one switched reflected wave.

In summary, a physical wave in the new order is created either by a switched wave or a non-switched wave. Each physical wave can only generate one switched wave in the next order, in their whole life. 
So the total number of switched physical waves in each generation is not larger than $N^0_{\mathcal F}$.

Here, the wave strength $\sigma$ (arc-length) for each non-switched outgoing rarefaction is larger than $\delta/2$, when it is generated, by the adjustment for rarefaction as in Section \ref{subsection_4.2}. In addition, Lemma \ref{prop1} implies that the decay of $Q$ is larger than the total (all time) strength of reflected rarefactions  (for shock-shock interaction), by which the total strength of non-switched rarefactions when generated is less than $Q=O(\eps^2)$. Therefore, the total number of non-switched rarefactions is at most in the order of $C(\eps^2/\delta)$.

We have shown that 
\[
\hbox{the number of all physical waves in each generation is less than } C(N^0_{\mathcal F}+\eps^2/\delta).
\]
This with \eqref{maxog} implies that  the total (all time) number of all physical waves is less than 
$$C\left(\frac{ \ln \rho_\nu^{-1}}{\ln \eps^{-1}} \Big(N^0_{\mathcal F}+\eps^2/\delta\Big)\right),$$
which is less than $\frac{C}{\delta}\ln \rho_\nu^{-1}$ since $N^0_{\mathcal F} = O(1/\delta)$.

Since a new non-physical wave can be generated only by an interaction between two physical waves,  the total number of all waves is bounded by 
$$C\Big(\frac{1}{\delta}\ln \rho_\nu^{-1}\Big)^2.$$

Hence, the total number of wave interactions is bounded by 
$$C\Big(\frac{1}{\delta}\ln \rho_\nu^{-1}\Big)^3,$$
since interactions only happen between two physical waves or between one physical wave and one non-physical wave. \\

\begin{remark}\label{rem6.3}
To prove the convergence of the algorithm, we need the total strength of non-physical shocks converges to zero as $\rho_\nu\rightarrow 0$, i.e.,
\[
\rn\Big(\frac{1}{\delta}\ln \rho_\nu^{-1}\Big)^2 \to 0,\quad\hbox{as}\quad \rho_\nu\rightarrow 0.
\]
This holds by $\rho_\nu\ll \delta^3\ll 1$.
\end{remark}

Since the BV norm of the inviscid front tracking approximate solution is less than $C\varepsilon_0$, using the standard argument of the decay of Glimm potential, 
we obtain  \eqref{BVbarU}.

%

\section{Weighted relative entropy with shifts}\label{sec:wrel}
\setcounter{equation}{0}

Let $0=:t_0<t_1<t_2<\cdots$ denote a sequence of the interaction times.

We here obtain the desired estimates for a weighted relative entropy with shifts on a fixed time interval $(t_j,t_{j+1})$, $j\ge0$. \\

For notational simplicity, without confusion, we omit the dependence of the above solution and quantities on the parameters $\nu, \delta$ and the $j$-th interval, that is,
\beq\label{abbu}
U:=(v,h):= U^\nu .
\eeq
and
\begin{align}
\begin{aligned} \label{abb}
&\overline U:=\overline U_{\nu,\delta},\quad (\bar v, \bar h):=(\bar v_{\nu,\delta}, \bar h_{\nu,\delta}),\quad \tilde U_{i}:=\tilde U_{ij}^\nu,\quad U^R_{i}:=U^{R,\nu}_{ij},\quad U^P_{i}:=U^{P,\nu}_{ij} ,\\
&U_{i-1/2}:=U_{i-1/2}^j,\quad \s_i:=\s_{ij},\quad \tilde\lambda_i:=\tilde\lambda_{ij}, \quad X_i:=X^\nu_{ij},\quad \Lambda_{i}:= \Lambda^\nu_{ij},\quad k_i:=k_{ij}, \quad y_i := y_{ij}^\nu .
\end{aligned}
\end{align}

\subsection{Construction of weights}\label{subsec-weight}
First of all, for each shifted viscous shock $ \tilde U_{i} = ( \tilde v_{i}, \tilde h_{i})$ defined by \eqref{vsn}, we define the associated weight $a_i$ by
\[
a^\nu_i(x,t)= 1-\frac{p(\tilde v_i)-p(v^j_{i\pm 1/2})}{\delta_*}
\]
so that
\beq\label{dera}
(a^\nu_i)_x = -\frac{\partial_x p(\tilde v_i)}{\delta_*},
\eeq 
where $\delta_*$ is to be chosen sufficiently small (such that $\delta_*< \delta_1^{10}$ for the truncation size $\delta_1$ later), but depends only on the reference point $U_*$.\\
Notice that
\beq\label{avr}
|(a^\nu_i)_x| \sim \frac{|(\tilde v_i)_x|}{\delta_*} \sim  \frac{|(\tilde h_i)_x|}{\delta_*} .
\eeq
Since
\beq\label{sigmai}
|\s_i| \le L(0) \le \ds^2 \quad \mbox{for any shock sizes,} 
\eeq
we have
\[
\|(a^\nu_i)_x\|_{L^1} \le C\frac{|\s_i|}{\delta_*} \le  C\frac{L(0)}{\delta_*} < C\delta_*.
\]
Based on the above weights, we will construct a global weight function that is non-increasing in time.
For that, it would be easier to consider the following two functions on the total variation and the Glimm potential for the jump sizes of pressure:
\[
 \bar L(t) :=\sum_i |\bar\s_i|,
\]   
and
\[
\bar Q(t)=\sum_{j,i:\hbox{approaching waves}}|\bar\s_i| |\bar\s_j|,
\]
where each $\bar\s_i$ represents the jump size of pressure, more precisely,
\begin{align}
\begin{aligned}\label{barsi}
\bar\s_i := 
\left\{ \begin{array}{ll}
       (-1)^{k_i} \int_{-\infty}^\infty \partial_x  p(\tilde v_i) dx,\quad\mbox{if } i\in \mathcal{S}_j,  \\
      (-1)^{k_i} \int_{-\infty}^\infty \partial_x  p(v^R_i)  dx,\quad\mbox{if } i\in \mathcal{R}_j, \\
       (-1)^{k_i} \int_{-\infty}^\infty \partial_x p(v^P_i) dx,\quad\mbox{if } i\in \mathcal{NP}_j . \end{array} \right.
\end{aligned}
\end{align}
Note that $\bar\s_i<0$ for each shock and  $\bar\s_i>0$ for each rarefaction, and
$\bar\s_i$ is equivalent to $\s_i$ :
\[
C^{-1} |\s_i| \le |\bar\s_i| \le C|\s_i| .
\]

We now define the (global) weight $a^\nu$ by
\beq\label{defweight}
a^\nu(x,t) = 1+ \frac{1}{\ds} \Big( \bar L(t)+\kappa \bar Q(t) \Big) + \int_{-\infty}^x \sum_{i\in \mathcal{S}_j} \partial_x a_i^\nu(y,t) dy,
\eeq
where $\kappa>0$ is the same constant as before.\\
Equivalently, we have from \eqref{dera} that
\beq\label{defweight1}
a^\nu(x,t) = 1+ \frac{1}{\ds} \bigg[ \Big(\bar L(t)+\kappa \bar Q(t) \Big) + \int_{-\infty}^x \sum_{i\in \mathcal{S}_j}  \partial_x \Big(- p(\tilde v_i)\Big) dy \bigg],
\eeq
In Section \ref{sec:pfthm}, we will show that the weight $a^\nu$ is non-increasing in time, and satisfies  
\[
\|a^\nu(x,t)-1\|_{L^\infty(\bbr)} \ll1,\quad\forall t.
\] 
Again, without confusion, we omit the dependence of the above quantities on the parameter $\nu$, that is,
\beq\label{abba}
a := a^\nu,\quad a_i := a_i^\nu .
\eeq

\subsection{Relative entropy method}
First, we rewrite \eqref{NS} into the  viscous hyperbolic system of conservation laws:
\beq\label{system}
\partial_t U +\partial_x A(U)= { -\nu\partial_{x}\big(\mu(v)v^\gamma \partial_x p(v)\big) \choose 0},
\eeq
where 
\[
U:={v \choose h},\quad A(U):={-h \choose p(v)}.
\]
Consider the entropy $\eta(U):=\frac{h^2}{2}+Q(v)$ in the Lagrangian coordinate, where $Q(v)=\frac{v^{-\gamma+1}}{\gamma-1}$, i.e., $Q'(v)=-p(v)$.  From now on, we will drop the subscript $L$ in the notation since we are working only in the Lagrangian coordinates.  
We use the same notation for the entropy as \eqref{defentpair} in the Eulerian coordinate without confusion. \\
Consider the relative entropy functional defined by
\beq\label{defent}
\eta(U|V)=\eta(U)-\eta(V) -\nabla\eta(V) (U-V),
\eeq
and the relative flux defined by
\beq\label{defa}
A(U|V)=A(U)-A(V) -\nabla A(V) (U-V).
\eeq
Let $G(\cdot;\cdot)$ be the flux of the relative entropy defined by
\beq\label{defg}
G(U;V) = G(U)-G(V) -\nabla \eta(V) (A(U)-A(V)),
\eeq
where $G$ is the entropy flux of $\eta$, i.e., $\partial_{i}  G (U) = \sum_{k=1}^{2}\partial_{k} \eta(U) \partial_{i}  A_{k} (U),\quad 1\le i\le 2$.

The relative entropy method implies that for any functions $U, V$,
\[
- \big(\nabla\eta(U)-\nabla\eta(V) \big)\partial_x A(U) + \nabla^2\eta(V) (U-V) \partial_x A(V) = -\partial_x G(U;V) - \partial_x \nabla\eta(V) A(U|V).
\]
Thus, for a solution $U$ of \eqref{system} and any function $V(x,t)$, it holds that
\begin{align}
\begin{aligned} \label{etaform}
\partial_t \eta(U | V ) &= \big(\nabla\eta(U)-\nabla\eta(V) \big)\partial_t U - \nabla^2\eta(V) (U-V) \partial_t V\\
&= - \big(\nabla\eta(U)-\nabla\eta(V) \big)\partial_x A(U) + \nabla^2\eta(V) (U-V) \partial_x A(V) \\
&\quad - \nabla^2\eta(V) (U-V) \Big(\partial_t V +\partial_x A(V)\Big) +  \big(\nabla\eta(U)-\nabla\eta(V) \big) { -\nu\partial_{x}\big(\mu(v)v^\gamma \partial_x p(v)\big) \choose 0}\\
&=-\partial_x G(U;V) - \partial_x \nabla\eta(V) A(U|V)- \nabla^2\eta(V) (U-V) \Big(\partial_t V +\partial_x A(V)\Big)\\
&\quad+  \big(\nabla\eta(U)-\nabla\eta(V) \big) { -\nu\partial_{x}\big(\mu(v)v^\gamma \partial_x p(v)\big) \choose 0}.
\end{aligned}
\end{align}
Now, consider the approximate solution $V=\overline U$ as the second component.\\
First, we observe
\begin{align*}
\begin{aligned}
\partial_t \overline U + \partial_x A(\overline U) 
&= \sum_{i\in \mathcal{S}} \Big( \partial_t \tilde U_i + \nabla A(\overline U) \partial_x \tilde U_i \Big) \\
&\quad +  \sum_{i\in \mathcal{R}} \Big( \partial_t U^R_i + \nabla A(\overline U) \partial_x U^R_i \Big) +  \sum_{i\in \mathcal{NP}} \nabla A(\overline U)  \partial_x U^P_i .
\end{aligned}
\end{align*}
Since it holds from  \eqref{SE} that $ \tilde U_i$ satisfies 
\[
\partial_t  \tilde U_i +\dot X_i \partial_x \tilde U_i +\partial_x A(\tilde U_i)= { -\nu\partial_{x}\big(\mu(\tilde v_i )\tilde v_i ^\gamma \partial_x p(\tilde v_i )\big) \choose 0},
\]
and it holds from \eqref{defsmrare} and \eqref{urdef} that 
\[
\partial_t U^R_i = -\dot\Lambda_i \partial_x U^R_i,
\]
we have
\begin{align*}
\begin{aligned}
\partial_t \overline U + \partial_x A(\overline U) 
&=  \sum_{i\in \mathcal{S}} \left[ -\dot X_i \partial_x \tilde U_i + \big( \nabla A(\overline U) - \nabla A(\tilde U_i) \big) \partial_x \tilde U_i +{ -\nu\partial_{x}\big(\mu(\tilde v_i )\tilde v_i ^\gamma \partial_x p(\tilde v_i )\big) \choose 0} \right] \\
&\quad +  \sum_{i\in \mathcal{R}} \left[\Big( -\dot \Lambda_i  +  \nabla A(U^R_i)\Big)  \partial_x U^R_i  + \Big(\nabla A(\overline U) -  \nabla A(U^R_i)\Big)  \partial_x U^R_i \Big) \right]  \\
&\quad +  \sum_{i\in \mathcal{NP}}  \nabla A(\overline U) \partial_x U^P_i.
\end{aligned}
\end{align*}
Thus, substituting the above equality into \eqref{etaform}, and using 
\begin{align}
\begin{aligned} \label{formbase}
&\nabla\eta(U)={-p(v)\choose h},\quad \nabla^2\eta(U)={-p'(v)\quad 0\choose 0\quad\quad 1}, \quad \nabla A(U)={0\quad -1 \choose p'(v)\quad 0},\\
& A(U|\overline U)={0 \choose p(v|\bar v)}, \quad G(U;\overline U)=(p(v)-p(\bar v)) (h-\bar h) ,
\end{aligned}
\end{align}
we have
\begin{align*}
\begin{aligned}
\partial_t \eta(U | \overline U ) 
&= -\partial_x \Big((p(v)-p(\bar v)) (h-\bar h)\Big)   -  \sum_{i\in \mathcal{S}} \partial_x \tilde h_{i} p(v|\bar v) \underbrace{- \sum_{i\in \mathcal{NP}} \partial_x h^P_{i} p(v|\bar v) }_{=: I}
-\sum_{i\in \mathcal{R}}  \nabla^2\eta(\overline U)  \partial_x U^R_i  A(U|\overline U) \\
&\quad +\nabla^2\eta( \overline U) (U- \overline U)   \sum_{i\in \mathcal{S}} \dot X_i \partial_x \tilde U_i  - (h-\bar h)  \sum_{i\in \mathcal{S}} (p'(\bar v)-p'(\tilde v_i)) (\tilde v_i)_x \\
&\quad \underbrace{- p'(\bar v) (v-\bar v)  \sum_{i\in \mathcal{S}} \nu\partial_{x}\big(\mu(\tilde v_i )\tilde v_i ^\gamma \partial_x p(\tilde v_i )\big)}_{=:J} - (h-\bar h)  \sum_{i\in \mathcal{R}} (p'(\bar v)-p'(v^R_i)) (v^R_i)_x\\
&\quad  -\nabla^2\eta( \overline U) (U-\overline U) \sum_{i\in \mathcal{R}} \Big( -\dot \Lambda_i  +  \nabla A(U^R_i)\Big)  \partial_x U^R_i   \\
&\quad   - (h-\bar h)  \sum_{i\in \mathcal{NP}} p'(\bar v) \partial_x v^P_i 
+\nu (p(v)-p(\bar v) ) \partial_{x}\big(\mu(v ) v^\gamma \partial_x p(v )\big) .
\end{aligned}
\end{align*}
Notice that by the third property of Lemma \ref{lemma_key2},
\[
\partial_x h_i^P \ge 0.
\]
So, we have $I\le 0$.\\
For $J$, using the fact that (by \eqref{seo})
\beq\label{seq2}
\tilde\lambda_i \partial_x \tilde v_i +\partial_x \tilde h_i  = \nu\partial_{x}\big(\mu(\tilde v_i )\tilde v_i ^\gamma \partial_x p(\tilde v_i )\big) ,
\eeq
we have
\begin{align*}
\begin{aligned}
J &=- p'(\bar v) (v-\bar v)  \sum_{i\in \mathcal{S}} \big( \tilde\lambda_i \partial_x \tilde v_i +\partial_x \tilde h_i \big) \\
& = p(v|\bar v)  \sum_{i\in \mathcal{S}} \big( \tilde\lambda_i \partial_x \tilde v_i +\partial_x \tilde h_i \big) - (p(v)-p(\bar v))  \sum_{i\in \mathcal{S}} \big( \tilde\lambda_i \partial_x \tilde v_i +\partial_x \tilde h_i \big).
\end{aligned}
\end{align*}
Especially, for the last term above, we use $\tilde\lambda_i \partial_x \tilde v_i +\partial_x \tilde h_i =\nu\partial_x \big(\mu(\tilde v_i) \tilde v_i^{\gamma} \partial_x p(\tilde v_i) \big)$ by \eqref{seo}.
Thus, we have
\begin{align*}
\begin{aligned}
\partial_t \eta(U | \overline U ) 
&\le -\partial_x \Big((p(v)-p(\bar v)) (h-\bar h)\Big)  +  \sum_{i\in \mathcal{S}}  \tilde\lambda_i \partial_x \tilde v_i  p(v|\bar v) -\sum_{i\in \mathcal{R}}  \nabla^2\eta(\overline U)  \partial_x U^R_i  A(U|\overline U)  \\
&\quad +\nabla^2\eta(\overline U) (U- \bar U)   \sum_{i\in \mathcal{S}} \dot X_i \partial_x \tilde U_i  - (h-\bar h)  \sum_{i\in \mathcal{S}} (p'(\bar v)-p'(\tilde v_i)) (\tilde v_i)_x \\
&\quad - (h-\bar h)  \sum_{i\in \mathcal{R}} (p'(\bar v)-p'(v^R_i)) (v^R_i)_x\\
&\quad  -\nabla^2\eta(\overline U) (U- \overline U) \sum_{i\in \mathcal{R}} \Big( -\dot \Lambda_i  +  \nabla A(U^R_i)\Big)  \partial_x U^R_i   \\
&\quad   - (h-\bar h)  \sum_{i\in \mathcal{NP}} p'(\bar v) \partial_x v^P_i +\nu (p(v)-p(\bar v) ) \partial_{x}\Big(\mu(v ) v^\gamma \partial_x p(v ) - \sum_{i\in \mathcal{S}} \mu(\tilde v_i) \tilde v_i^{\gamma} \partial_x p(\tilde v_i)  \Big) .
\end{aligned}
\end{align*}

Therefore, we have
\begin{align}
\begin{aligned}\label{ineq-0}
&\frac{d}{dt}\int_{\bbr} a(x,t) \eta(U(x,t)|\overline U(x,t) ) dx \\
&\quad\le \sum_{i\in \mathcal{S}}\dot X_i(t) Y_i^\nu(U)  +\mathcal{H}_1^\nu(U)+\mathcal{H}_2^\nu(U)+\mathcal{H}_3^\nu(U) + \mathcal{P}^\nu(U) - \mathcal{J}^\nu_{good}(U),
\end{aligned}
\end{align}
where
\begin{align}
\begin{aligned}\label{ybg-first}
&Y^\nu_i(U):= -  \int_{\bbr} (a_i)_x  \eta(U|\overline U ) dx + \int_{\bbr} a \nabla^2\eta(\overline U) (U- \bar U)   \partial_x \tilde U_i dx ,\\
&\mathcal{H}_1^\nu(U):= \sum_{i\in \mathcal{S}} \bigg[  \int_\bbr (a_i)_x \big(p(v)-p(\bar v)\big) \big(h-\bar h \big) dx +  \tilde\lam_i \int_\bbr a (\tilde v_i)_x  p(v| \bar v) dx \bigg]\\
&\quad\quad\quad   - \sum_{i\in \mathcal{S}}\int_{\bbr}  a   (h-\bar h)  (p'(\bar v)-p'(\tilde v_i)) (\tilde v_i)_x  dx ,\\
&\mathcal{H}_2^\nu(U):=  -\sum_{i\in \mathcal{R}}  \int_{\bbr} a \nabla^2\eta(\overline U)  \partial_x U^R_i  A(U|\overline U) dx  - \sum_{i\in \mathcal{R}}   \int_{\bbr} a (h-\bar h) (p'(\bar v)-p'(v^R_i)) (v^R_i)_x dx  \\
&\quad\quad\quad  - \sum_{i\in \mathcal{R}}\int_{\bbr} a \nabla^2\eta(\overline U) (U- \overline U) \Big( -\dot \Lambda_i  +  \nabla A(U^R_i)\Big)  \partial_x U^R_i dx,\\
&\mathcal{H}_3^\nu(U):= -  \sum_{i\in \mathcal{NP}}\int_{\bbr} a (h-\bar h)  p'(\bar v) \partial_x v^P_i dx,\\
&\mathcal{P}^\nu(U):= \nu \int_{\bbr} a (p(v)-p(\bar v) )\partial_{x}\Big(\mu(v ) v^\gamma \partial_x p(v ) - \sum_{i\in \mathcal{S}} \mu(\tilde v_i) \tilde v_i^{\gamma} \partial_x p(\tilde v_i)  \Big) dx,\\
& \mathcal{J}^\nu_{good}(U):=  \sum_{i\in \mathcal{S}} \int_{\bbr} \tilde\lambda_i (a_i)_x \eta(U|\overline U ) dx  .
\end{aligned}
\end{align}

\begin{remark}\label{rem:0}
The hyperbolic part $\mathcal{H}_1(U)$ collects bad terms localized by derivatives of weights and shocks, whereas the hyperbolic part $\mathcal{H}_2(U)$ collects bad terms localized by derivatives of rarefactions. In Section \ref{sec:rare}, we will first estimate $\mathcal{H}_2(U)$ by defining a suitable shifts $\Lambda_i(t)$.
In Section \ref{sec:shock}, we will handle the shift part $\dot X_i(t) Y_i(U)$, the hyperbolic term $\mathcal{H}_1(U)$ and good term $ \mathcal{J}_{good}(U)$, which are localized by derivatives of weights and shocks. Those together with the diffusion term of perturbations will be estimated. The diffusion term will be extracted from the parabolic term $\mathcal{P}(U)$ by decomposing it in two different ways depending on the size of values of $v$ in Section \ref{sec:para}.
\end{remark}

\begin{proposition}\label{prop:main}
For all $ t\in (t_j,t_{j+1}) $,
\begin{align}
\begin{aligned}\label{ineq-mid0}
&\frac{d}{dt} \int_{\bbr} a^\nu \eta(U^\nu |\overline U_{\nu,\delta} ) dx + G(U^\nu)(t) \\
&\le  C \int_\bbr a^\nu \eta(U^\nu|\bar U_{\nu,\delta}) dx +C\delta^{1/4} \overline{\mathcal{D}}(t) + C \delta^{3/4}  +   C   \frac{\delta^2}{\delta_*}  +\mathcal{C} (\delta, \nu),
\end{aligned}
\end{align}
where
\begin{align*}
\begin{aligned}
G(U)(t)&:=\sqrt\ds\nu  \int_{\bbr} \mu_1(v) v^{\gamma} |(p(v)-p(\bar v))_x|^2 dx + \nu \int_\bbr  \mu_2(v) v^\gamma |\partial_x p(v) |^2 dx \\
 &\quad + \ds \sum_{i\in \mathcal{S}}  \int_{\bbr} |(\tilde U_i)_x| p(v|\bar v) dx +  \sum_{i\in \mathcal{R}}  \int_{\bbr} |(U^R_i)_x| p(v|\bar v) dx,
\end{aligned}
\end{align*}
\beq\label{odiff}
\overline{\mathcal{D}}(t) := \nu \int_\bbr \frac{\bar\mu(v)}{v} |u_x|^2 dx,
\eeq
and $\mathcal{C} (\delta, \nu)$ is the constant that vanishes when $\nu\to0$ for any fixed $\delta>0$. 
\end{proposition}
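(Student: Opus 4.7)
\vskip0.3cm
\noindent\textbf{Proof proposal.} The plan is to start from the pointwise-in-time inequality \eqref{ineq-0} and bound each of the six pieces
$\sum_{i\in\mathcal{S}}\dot X_i Y_i^\nu(U),\ \mathcal{H}_1^\nu(U),\ \mathcal{H}_2^\nu(U),\ \mathcal{H}_3^\nu(U),\ \mathcal{P}^\nu(U)$ and $-\mathcal{J}_{good}^\nu(U)$ separately, following the outline of Remark \ref{rem:0}. The key structural fact that will be used repeatedly is that, for any $t\in(t_j,t_{j+1})$, the physical fronts are separated by at least $2\sqrt{\rho_\nu}$ (see \eqref{distij}), while the transition zones of the viscous shocks and of the weight $a_i$ have width $O(\nu/|\s_i|)$ and the rarefaction profiles have width $O(\nu/|\s_i|)$. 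Since $\rho_\nu=\nu^{1/3}$, the product of $\partial_x\tilde U_i$ (or $\partial_x U^R_i$, $\partial_x a_i$) with $\partial_x\tilde U_{i'}$ (or $\partial_x U^R_{i'}$, $\partial_x a_{i'}$) for $i\neq i'$ contributes only an error of size $\mathcal{C}(\delta,\nu)$ that vanishes as $\nu\to 0$ for $\delta$ fixed. This localization argument reduces all cross-wave interactions to negligible remainders.

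For the rarefaction piece $\mathcal{H}_2^\nu(U)$, I would define the rarefaction shift $\Lambda_i(t)$ by an ODE of the form
\[
\dot\Lambda_i(t) = \lambda_{k_i}(U^R_i) + F_i(U,\overline U),
\]
with $F_i$ chosen so that the quadratic part $\nabla^2\eta(\overline U)(U-\overline U)\partial_x U^R_i$ pairs against $-\dot\Lambda_i + \nabla A(U^R_i)$ to cancel the worst-order term. After this cancellation, the remaining part of $\mathcal{H}_2^\nu$ is bounded (via Cauchy--Schwarz and the localization $\int|(U^R_i)_x|\,dx \lesssim |\s_i|$) by $C\delta^{3/4}+C\delta^{1/4}\overline{\mathcal{D}}(t)+$ good term absorbed in the rarefaction contribution of $G$; this is the content of Section \ref{sec:rare}. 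For the pseudo-shock term $\mathcal{H}_3^\nu(U)$, the total variation of $\bar v$ across pseudo-shocks is $O(\rho_\nu\cdot N_\nu)$ where $N_\nu=O((\frac1\delta\log\rho_\nu^{-1})^2)$ by Lemma \ref{lem:num}, and since each pseudo-shock has width $\sqrt{\rho_\nu}$, Cauchy--Schwarz in $h-\bar h$ against $\partial_x v^P_i$ gives a contribution $\leq \mathcal{C}(\delta,\nu)$.

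The parabolic term $\mathcal{P}^\nu(U)$ is the main workhorse: I would integrate by parts to obtain
\[
\mathcal{P}^\nu(U)= -\nu\!\int_\bbr a\,\mu(v)v^\gamma|\partial_x p(v)|^2 dx + \nu\!\sum_{i\in\mathcal S}\!\int_\bbr a\,\mu(\tilde v_i)\tilde v_i^\gamma \partial_x p(\tilde v_i)\,\partial_x p(v)\,dx + \text{lower order},
\]
then use the decomposition \eqref{mudecom}--\eqref{mu2} to split the quadratic diffusion into the $\mu_1$ and $\mu_2$ parts. The $\mu_1$ piece is combined with the shock terms to produce the main good term $\sqrt{\ds}\nu\mu_1(v)v^\gamma|(p(v)-p(\bar v))_x|^2$ (after completing the square, using $a\sim 1$), while the $\mu_2$ piece, supported on $\{v>2v_*\}$, is kept intact to produce the $\nu\mu_2(v)v^\gamma|\partial_x p(v)|^2$ good term. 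The localized pieces needed for the shock $a$-contraction are extracted from the $\mu_1$ piece by a cutoff near each shock, as in Section \ref{sec:para}.

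Finally, for each shock $i\in\mathcal S$ I would choose the Lipschitz shift $\dot X_i(t)$ by an $a$-contraction maximization argument as in \cite{Kang-V-NS17,KV-Inven}: $\dot X_i$ is selected so that $\dot X_i Y_i^\nu + (\text{localized part of }\mathcal H_1^\nu) - \mathcal J^\nu_{good} - (\text{localized }\mu_1\text{ diffusion})$ is controlled by $-\ds\int|(\tilde U_i)_x|p(v|\bar v)\,dx$ plus a Gronwall term $C\int a\eta(U|\overline U)\,dx$ and a remainder $C\delta^2/\ds$ coming from the smallness of $|\s_i|\leq\ds^2$ and the weight construction \eqref{defweight1}. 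This is the nonlinear Poincaré estimate in the Bresch--Desjardins variable $h$, and it is the genuine technical heart of the argument; its validity uniformly in $\nu$ hinges on the viscosity assumption \eqref{hyp:mu} and on keeping careful track of the pointwise sign of $(a_i)_x$ via \eqref{dera}. Summing over $i\in\mathcal S$, adding up the above contributions, and absorbing all cross-wave interaction terms into $\mathcal C(\delta,\nu)$ yields \eqref{ineq-mid0}. The main obstacle I anticipate is the shock $a$-contraction step, since the lack of uniform $L^\infty$ control on $v^\nu$ forces a delicate truncation at scale $\ds_1$ (with $\ds<\ds_1^{10}$) and a nonlinear Poincaré inequality that has to tolerate arbitrarily large or small values of $v$.
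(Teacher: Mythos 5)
Your overall architecture coincides with the paper's: the same six-term splitting of \eqref{ineq-0}, the same use of the front separation \eqref{distij} to reduce cross-wave products to $\mathcal{C}(\delta,\nu)$, the same $\mu_1/\mu_2$ decomposition \eqref{mudecom} of the diffusion into a localized part near $\bar v$ and a bounded part supported on $\{v>2v_*\}$, and the same normalization/truncation/nonlinear-Poincar\'e strategy for the shock block with shifts defined by an $a$-contraction maximization. So the route is the right one; the issue is one specific mechanism.

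The genuine gap is in your treatment of $\mathcal{H}_{23}$. You propose to choose $\dot\Lambda_i$ so that the pairing of $\nabla^2\eta(\overline U)(U-\overline U)$ against $(-\dot\Lambda_i+\nabla A(U^R_i))\partial_x U^R_i$ cancels the worst-order term. Exact cancellation would force $\dot\Lambda_i$ to be a $(U-\overline U)$-weighted average of the eigenvalue along the rarefaction profile; such a shift (i) need not stay in the interval $[\lambda_{k_i}(U_{i-1/2}),\lambda_{k_i}(U_{i+1/2})]$, a constraint that is essential for the separation of rarefaction trajectories \eqref{rareyij} and for the whole interaction-time bookkeeping, and (ii) is not obviously bounded or Lipschitz, since $U^\nu$ carries no uniform $L^\infty$ bound. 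The paper instead takes $\dot\Lambda_i=F_i((u)_R)$ as in \eqref{rode}, a bounded Lipschitz interpolation clamped between the two end eigenvalues (cf. \eqref{lamest}); nothing is cancelled, and the dangerous residual $J_1=\int_\bbr (u-(u)_R)[\cdots]\partial_x h^R_i\,dx$ is estimated by a weighted Poincar\'e/Cauchy--Schwarz argument against the physical dissipation $\overline{\mathcal{D}}$. This is the actual (and only) source of the term $C\delta^{1/4}\overline{\mathcal{D}}(t)$ on the right of \eqref{ineq-mid0}, and it is also why one must first split $h=u-\nu\mu(v)v^\gamma p(v)_x$ and integrate the $p(v)_x$ contribution by parts separately (the terms $J_b+J_s$, using the antiderivative $\Phi$ of $\gamma\mu_2(s)/s$ and the diffusion $D$). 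A shift ``chosen to cancel'' bypasses exactly the step that produces the $\overline{\mathcal{D}}$ term, so as written your rarefaction estimate would not close. A minor additional point: $\mathcal{H}_3^\nu$ is not $o_\nu(1)$ by itself; Cauchy--Schwarz leaves $\int_\bbr a|h-\bar h|^2\,dx$, which must be absorbed into the Gronwall term $C\int_\bbr a^\nu\eta(U^\nu|\overline U_{\nu,\delta})\,dx$ rather than into $\mathcal{C}(\delta,\nu)$.
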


The proof of the main Proposition \ref{prop:main} is presented in the following sections and eventually completed in Section \ref{sec:shock} .

\subsection{Useful estimates on relative functions}
We here present useful estimates on relative functions, as in \cite[Lemmas A.1, A.2, A.3]{KV-2shock} (see also \cite[Lemmas 2.2, 2.5]{Kang-V-NS17}).

\begin{lemma}\label{lem-pro}  
For given constants $\gamma>1$, and $v_*>0$, there exists constants $C, \eps_*>0$ such that  the following inequalities hold.\\
1)  For any $w\in (0,2v_*)$,
\begin{align*}
\begin{aligned}
& Q(v|w)\ge C^{-1} |v-w|^2,\quad \mbox{for all } 0<v\le 3v_*,\\
 & Q(v|w)\ge  C^{-1} |v-w|,\quad  \mbox{for all } v\ge 3v_*.
\end{aligned}
\end{align*}
2) If $0<w\leq u\leq v$ or $0<v\leq u\leq w$ then 
\[
Q(v|w)\geq Q(u|w),
\]
and for any $\eps>0$ there exists a constant $C>0$ such that if, in addition, 
$|w-v_*|\le \eps/2$ and $|w-u|>\eps$, we have
\[
Q(v|w)-Q(u|w)\geq C|u-v|. 
\]
3) For any $w>v_*/2$,
\[
|p(v)-p(w)| \le C |v-w|,\qquad \forall v\ge v_*/2,
\]
\[
p(v|w) \le C |v-w|^2,\qquad \forall v\ge v_*/2 ,
\]
\[
p(v|w)\leq C(|v-w|+|p(v)-p(w)|),\qquad \forall v>0.
\]
4) For any $0<\eps<\eps_*$, the following is true.\\
For any $(v, w)\in \bbr_+^2$  
satisfying $|p(v)-p(w)|<\eps$ and  $|p(w)-p(v_*)|<\eps$,
\begin{align*}
\begin{aligned}
p(v|w)&\le \bigg(\frac{\gamma+1}{2\gamma} \frac{1}{p(w)} + C\eps \bigg) |p(v)-p(w)|^2,
\end{aligned}
\end{align*}
\[
Q(v|w)\ge \frac{p(w)^{-\frac{1}{\gamma}-1}}{2\gamma}|p(v)-p(w)|^2 -\frac{1+\gamma}{3\gamma^2} p(w)^{-\frac{1}{\gamma}-2}(p(v)-
p(w))^3,
\]
\[
Q(v|w)\le \bigg( \frac{p(w)^{-\frac{1}{\gamma}-1}}{2\gamma} +C\eps  \bigg)|p(v)-p(w)|^2.
\]
For any $(v, w)\in \bbr_+^2$ such that  $|p(w)-p(v_*)|\leq \eps$,  and satisfying either $Q(v|w)<\eps$ or $|p(v)-p(w)|<\eps$,
\[
|p(v)-p(w)|^2 \le C Q(v|w).
\]
\end{lemma}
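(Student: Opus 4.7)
The plan is to derive all four parts from elementary Taylor expansions of the two smooth strictly convex functions $Q(v) = v^{-\gamma+1}/(\gamma-1)$ and $p(v) = v^{-\gamma}$, tracking carefully the dependence of the remainders on the range of $v$. Throughout I use the key identities $Q'(v) = -p(v)$ and $Q''(v) = \gamma v^{-\gamma-1}$, together with the integral form $Q(v|w) = \int_w^v Q''(s)(v-s)\,ds$ (and analogously for $p(v|w)$).

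For Part 1, the quadratic lower bound on $Q(v|w)$ for $0 < v \le 3v_*$ and $0 < w < 2v_*$ follows directly from the above integral representation, since $Q''(s) = \gamma s^{-\gamma-1}$ is bounded below on the compact range $s \in (0, 3v_*]$. For $v \ge 3v_*$, I would write $Q(v|w) = Q(v) - Q(w) + p(w)(v-w)$ and observe that $Q(v) \to 0$ as $v \to \infty$ while the linear term $p(w)(v-w)$ grows like $p(w)\,v$; since $w$ is bounded above, $p(w)$ is bounded below, which yields the linear lower bound.

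For Part 2, I would differentiate: $\partial_u Q(u|w) = Q'(u) - Q'(w) = p(w) - p(u)$, which is nonnegative for $u \ge w$ and nonpositive for $u \le w$. Hence in either of the two orderings, $Q(u|w)$ decreases monotonically as $u$ moves from $v$ toward $w$, giving $Q(v|w) \ge Q(u|w)$. When $|w - v_*| \le \eps/2$ and $|w - u| > \eps$, the integrand $|p(w) - p(s)|$ is bounded below by a positive constant $c_\eps$ uniformly along the segment from $u$ to $v$ (using that $p$ is strictly monotone and $w$ stays in a compact set near $v_*$), so $Q(v|w) - Q(u|w) = \bigl|\int_u^v (p(w)-p(s))\,ds\bigr| \ge c_\eps |v-u|$. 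Part 3 is handled by the same mechanism: on $[v_*/2, \infty)$ the function $p$ is globally Lipschitz with bounded second derivative, giving the first two inequalities; the last bound, valid for all $v>0$, follows immediately from the rearrangement $p(v|w) = (p(v)-p(w)) + \gamma w^{-\gamma-1}(v-w)$ using that $w$ stays in a compact set bounded away from $0$.

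The main obstacle is Part 4, where the explicit leading constants must be recovered. The right strategy is to change variables from $v$ to $P := p(v)$ via $v = P^{-1/\gamma}$, so that $v - w = P^{-1/\gamma} - W^{-1/\gamma}$ with $W := p(w)$. Expanding this in powers of $(P-W)$ via Taylor's formula around $W$ and substituting into the closed-form expressions for $p(v|w)$ and $Q(v|w)$ produces Taylor series in $(P-W)$ whose leading coefficients are computed exactly: the quadratic coefficient of $Q(v|w)$ is $\tfrac{1}{2\gamma} W^{-1/\gamma-1}$ and the cubic one is $-\tfrac{1+\gamma}{3\gamma^2} W^{-1/\gamma-2}$, which match the stated constants. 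The smallness hypotheses $|p(v)-p(w)| < \eps$ and $|p(w)-p(v_*)|<\eps$ allow me to bound the remainder uniformly by $C\eps |P-W|^2$ (since the higher-order derivatives of $P \mapsto P^{-1/\gamma}$ are uniformly bounded on a neighborhood of $p(v_*)$). The final inequality $|p(v)-p(w)|^2 \le C\,Q(v|w)$ is obtained by combining the quadratic lower bound from Part 4 in the small-regime with the coercivity from Parts 1--2 in the large regime, splitting the proof according to whether $v$ lies in a fixed compact set around $v_*$ or outside. The bulk of the work is careful bookkeeping of constants; no step is conceptually deep, and all ingredients mirror the computations in \cite[Appendix]{KV-2shock} which may be invoked.
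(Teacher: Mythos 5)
Your proposal is correct and follows essentially the same route as the paper, which in fact gives no proof of this lemma and simply refers to \cite[Lemmas A.1--A.3]{KV-2shock} and \cite[Lemmas 2.2, 2.5]{Kang-V-NS17}, whose arguments are exactly the Taylor/integral-representation computations you describe (and your leading coefficients in Part 4, $\frac{1}{2\gamma}p(w)^{-1/\gamma-1}$ and $-\frac{1+\gamma}{3\gamma^2}p(w)^{-1/\gamma-2}$, do check out). One small caution: in Part 1 for $v\ge 3v_*$ the decomposition $Q(v)-Q(w)+p(w)(v-w)$ with the remark that ``the linear term dominates'' is not by itself sufficient, since for $\gamma$ close to $1$ and $(v,w)$ near $(3v_*,2v_*)$ the discarded term $-Q(w)=-w^{1-\gamma}/(\gamma-1)$ can outweigh $p(w)(v-w)$; the clean fix is your own identity $Q(v|w)=\int_w^v(p(w)-p(s))\,ds$, whose integrand is nonnegative and bounded below by $p(2v_*)-p(2.5v_*)>0$ on $[2.5v_*,v]$, yielding the linear lower bound directly.
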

\begin{lemma}\cite[Lemma 4.1]{KVARMA} \label{lem:tri}
For any function $F$, its relative function $F(\cdot|\cdot)$ satisfies
\[
F(u|w)+F(w|v) = F(u|v) + (F'(w)-F'(v)) (w-u),\quad\forall u, v, w.
\]
\end{lemma}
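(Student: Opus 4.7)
The plan is to verify the identity by direct computation from the definition of the relative function. Recall from \eqref{defent} (applied componentwise to $F$) that for a scalar function $F$,
\[
F(a|b) := F(a) - F(b) - F'(b)(a-b).
\]
This is the only input needed; no convexity or smoothness beyond differentiability of $F$ is required, and in particular the identity is purely algebraic.

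First, I would write out the three relative quantities appearing in the claim:
\begin{align*}
F(u|w) &= F(u) - F(w) - F'(w)(u-w),\\
F(w|v) &= F(w) - F(v) - F'(v)(w-v),\\
F(u|v) &= F(u) - F(v) - F'(v)(u-v).
\end{align*}
Adding the first two gives
\[
F(u|w) + F(w|v) = F(u) - F(v) - F'(w)(u-w) - F'(v)(w-v),
\]
where the $\pm F(w)$ contributions cancel. Next, I would subtract $F(u|v)$ from this expression to obtain
\[
F(u|w) + F(w|v) - F(u|v) = F'(v)(u-v) - F'(w)(u-w) - F'(v)(w-v).
\]

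The final step is to simplify the right-hand side and match it with the claimed remainder $(F'(w)-F'(v))(w-u)$. Grouping the $F'(v)$ terms yields $F'(v)\bigl[(u-v) - (w-v)\bigr] = F'(v)(u-w)$, so the expression reduces to $\bigl(F'(v) - F'(w)\bigr)(u-w) = \bigl(F'(w) - F'(v)\bigr)(w-u)$, which is exactly what the lemma asserts. Since the entire argument is a one-line algebraic rearrangement, there is no real obstacle; the only thing to be careful about is sign-tracking in the final regrouping of the $F'(v)$ and $F'(w)$ terms. The identity holds for arbitrary $u,v,w$ in the domain of $F$ without any restriction.
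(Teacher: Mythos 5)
Your proof is correct: expanding the three relative quantities from the definition $F(a|b)=F(a)-F(b)-F'(b)(a-b)$ and regrouping gives exactly the claimed identity, and your sign bookkeeping checks out. The paper states this lemma without proof, citing \cite{KVARMA}; your direct algebraic verification is the standard (and essentially only) argument for it.
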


\subsection{Uniform bounds}
We present uniform bounds for entropy and dissipation. The following lemmas written in the Eulerian frame will be useful in the proofs of the two main Propositions \ref{main_prop} and \ref{prop:cc}. In particular, as in Remark \ref{rem:origin}, the uniform bound for diffusion written in Lagrangian frame will be used  in the proof of Theorem \ref{thm:uniform}.

Notice first that from \eqref{initialNS} and \eqref{initialEuler}, there exists a constant $C>0$ such that for any $\nu>0$:
\[
 \int_\RR \eta(U^\nu_0|U_*)\,dx\leq C.
\]
Indeed, this follows from Lemma \ref{lem:tri}:
$$
\eta(U^\nu_0|U^*)=\eta(U^\nu_0|U^0)+\eta(U^0|U_*)+(\eta'(U^0)-\eta'(U_*))\cdot(U^\nu_0-U^0),
$$
and \eqref{initialNS}, \eqref{initialEuler} with the fact that $\eta'(U^0)-\eta'(U_*)$ is bounded.

\begin{lemma}\label{lem:origin}
Given $(\rho_*,u_*)$, $\gamma>1$ and $\alpha$, assume that 
\beq\label{inif1}
\int_\bbr \eta\big((\rho_0^\nu,u_0^\nu)|(\rho_*,u_*)\big) dx \le C,
\eeq
for some constant $C$ independent of $\nu$.
Then, for all $ t>0$, 
\[
\int_\bbr \eta\big((\rho^\nu ,u^\nu)|(\rho_*,u_*)\big) dx +\nu \int_0^t \int_\bbr \bar\mu(\rho^\nu) |u^\nu_x|^2 dx ds \le C.
\]
\end{lemma}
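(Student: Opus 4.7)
\medskip

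The plan is to derive the claimed bound as the standard energy identity for \eqref{1NS} relative to the constant state $U_*=(\rho_*,\rho_*u_*)$. Since $U_*$ is itself a (trivial) solution of the Euler system, the relative entropy method from \eqref{etaform} applied in the Eulerian frame reduces to a clean cancellation.

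First I would compute $\partial_t \eta(U^\nu | U_*)$ for the smooth Navier-Stokes solution $U^\nu$ whose regularity is guaranteed by \eqref{hyp2}. Using the definition
\[
\eta(U|U_*)=\rho\frac{|u-u_*|^2}{2}+\frac{p(\rho|\rho_*)}{\gamma-1},
\]
and the identities $\partial_t \rho +\partial_x(\rho u)=0$ and $\partial_t(\rho u)+\partial_x(\rho u^2)+\partial_x p(\rho)=\nu(\bar\mu(\rho) u_x)_x$, a direct computation (or equivalently the relative entropy identity \eqref{etaform}, with the observation that $\partial_t V +\partial_x A(V)=0$ since $V=U_*$ is constant) gives
\[
\partial_t \eta(U^\nu | U_*) + \partial_x G(U^\nu;U_*) = \nu (u^\nu - u_*)\bigl(\bar\mu(\rho^\nu) u^\nu_x\bigr)_x.
\]
Integrating in $x$, the flux term $\partial_x G$ integrates to zero provided one controls the behavior at infinity, and an integration by parts on the viscous term converts the right-hand side to $-\nu\bar\mu(\rho^\nu)|u^\nu_x|^2$.

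The only genuine difficulty is justifying the vanishing of the boundary contributions at $|x|\to\infty$. Thanks to \eqref{hyp2}, for each fixed $\nu>0$ the solution satisfies $\rho^\nu-\rho_*\in L^\infty(0,T;H^1)$ and $u^\nu-u_*\in L^\infty(0,T;H^1)\cap L^2(0,T;H^2)$, so in particular $(\rho^\nu-\rho_*)(u^\nu-u_*)\to 0$, $(p(\rho^\nu)-p(\rho_*))(u^\nu-u_*)\to 0$, and $(u^\nu-u_*)\bar\mu(\rho^\nu)u^\nu_x\to 0$ as $|x|\to\infty$ (at least in an $L^1_t$ sense after a standard cut-off argument using a smooth cutoff $\chi_R(x)$ and letting $R\to\infty$). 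Integrating in time on $[0,t]$ then yields the equality
\[
\int_\bbr \eta(U^\nu(t)|U_*)\,dx + \nu \int_0^t\!\int_\bbr \bar\mu(\rho^\nu)|u^\nu_x|^2\,dx\,ds = \int_\bbr \eta(U_0^\nu | U_*)\,dx,
\]
and combining with the hypothesis \eqref{inif1} produces the claimed uniform bound.

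The main obstacle, as indicated above, is justifying the decay at infinity rigorously and thereby turning the formal energy identity into a genuine inequality. This is essentially done via a cut-off $\chi_R$ and passing $R\to\infty$, using the uniform (in $R$) control provided by the $H^1$-integrability from \eqref{hyp2} together with the finite initial relative entropy. Alternatively, one can invoke the energy inequality \eqref{eq:EntropyNavier-Stokes} satisfied by the solutions constructed in \cite{MV-sima}, integrate it against the test function $1$ (after justifying integrability through the cut-off argument), and then rewrite $\int\eta(U^\nu|U_*)\,dx = \int\eta(U^\nu)\,dx - \int\eta(U_*)\,dx -(\eta'(U_*),\int(U^\nu-U_*)\,dx)$ noting that the last term is preserved in time by the conservation laws (so it drops when taking the difference at time $t$ and time $0$), leaving exactly the claimed relative entropy inequality.
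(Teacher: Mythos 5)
Your proof is correct and is essentially the paper's argument: both apply the relative entropy identity \eqref{etaform} with the constant state $V=U_*$, use that the flux term integrates to zero, and integrate the viscous term by parts to produce the dissipation $-\nu\int\bar\mu(\rho^\nu)|u^\nu_x|^2\,dx$. The only difference is that the paper performs the computation in the Lagrangian frame and then converts the dissipation back via $\int_\bbr \frac{\bar\mu(v)}{v}|u_y|^2\,dy=\int_\bbr\bar\mu(\rho)|u_x|^2\,dx$, whereas you work directly in the Eulerian frame; this is a cosmetic distinction, and your extra care with the decay at infinity (via \eqref{hyp2} and a cut-off) is a justified refinement of the paper's formal integration.
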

\begin{proof}
For the simplicity of proof, we use the Lagrangian frame. That is, let $(t,y)$ denote the mass Lagrangian coordinate  where $y(t,x)=\int_0^x\rho(t,z)dz$, to distinguish it from the Eulerian coordinate $(t,x)$. So, we have
\beq\label{ccoord}
\frac{dy}{dx}=\rho.
\eeq
We recall the definition of the relative entropy in the Lagrangian coordinate (see \eqref{etaL}): 
\[
 \eta^L((v,u)|(v_*,u_*)) :=\frac{|u- u_* |^2}{2} + Q(v|v_*),
\]
where $\rho_*=1/v_*$.
Notice from \eqref{ccoord} that 
\beq\label{ccrel}
\int_\bbr \eta((\rho,u)(x)|(\rho_*,u_*)) dx= \int_\bbr \eta^L((v,u)(y)|(v_*,u_*)) dy.
\eeq
Then, we rewrite the Navier-Stokes system \eqref{NS-1} in the form:
\[
\partial_t U^\nu +\partial_y A(U^\nu)= { 0 \choose  \nu \Big(  \frac{\bar\mu(v^\nu)}{v^\nu} u^\nu_y \Big)_y },
\]
where 
\[
U^\nu:={v^\nu \choose u^\nu},\quad A(U^\nu):={-u^\nu \choose p(v^\nu)}.
\]
Then, it holds from the relative entropy method (especially form the equality \eqref{etaform} with the constant $V:=(v_*,u_*)$) that
\[
\partial_t \eta^L(U^\nu | V ) = -\partial_y  G(U^\nu;V) +  \big(\nabla\eta(U^\nu)-\nabla\eta(V) \big) { 0 \choose  \nu \Big(  \frac{\bar\mu(v^\nu)}{v^\nu} u^\nu_y \Big)_y },
\]
which implies
\[
\frac{d}{dt}\int_\bbr \eta^L\big((v^\nu,u^\nu)|(v_*,u_*)\big) dy = \nu \int_\bbr (u^\nu- u_*) \Big(  \frac{\bar\mu(v^\nu)}{v^\nu} u^\nu_y \Big)_y  = - \nu \int_\bbr  \frac{\bar\mu(v^\nu)}{v^\nu} \big|u^\nu_y \big|^2 dy. 
\]
Since it holds from \eqref{ccoord} that
\[
 \int_\bbr  \frac{\bar\mu(v)}{v} \big|u_y \big|^2 dy = \int_\bbr \bar\mu(\rho) |u_x|^2 dx,
\]
we have the desired result. 

\end{proof}

\begin{remark}\label{rem:origin}
The proof of Lemma \ref{lem:origin} implies that for all $ t>0$, 
\[
\int_0^t \overline{\mathcal{D}}(s)ds \le C
\]
where
\[
\overline{\mathcal{D}}(t) := \nu \int_\bbr \frac{\bar\mu(v)}{v} |u_y |^2 dy.
\]
This bound will be used in the proof of Theorem \ref{thm:uniform}.
\end{remark}

\begin{lemma}\label{lem:derho}
Given $(v_*,u_*)$, $\gamma>1$ and $\alpha$, assume that 
\beq\label{inif2}
\int_\bbr \frac{\rho_0^\nu |u_0^\nu- u_* |^2}{2} + \frac{p(\rho_0^\nu|\rho_*)}{\gamma-1} dx + \nu^2 \int_\bbr |\partial_x \phi(\rho_0^\nu) |^2  dx \le C,
\eeq
for some constant $C$ independent of $\nu$.
Then, for all $ t>0$, 
\[
\nu \int_0^t \int_\bbr \mu(\rho^\nu) (\rho^\nu)^{\gamma-3} |\rho^\nu_x|^2 dx ds \le C,
\]
and
\[
\nu^2 \int_\bbr |\partial_x \phi(\rho^\nu) |^2  dx \le C.
\]
\end{lemma}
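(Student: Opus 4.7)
The plan is to exploit the Bresch--Desjardins entropy structure already embedded in the Lagrangian reformulation \eqref{NS}. In the mass Lagrangian variable $y$, the effective velocity is $h=u-\nu\,\bar\mu_L(v) v^{-1} v_y$, so that $(v^\nu,h^\nu)$ solves the system \eqref{NS}. A direct computation on \eqref{NS} produces the key cancellation: multiplying the second equation by $(h-u_*)$ and the first equation by $-p(v)+p(v_*)=\partial_v Q(v|v_*)$, integrating in $y$ and integrating by parts, the two contributions $\pm\int h_y(p(v)-p(v_*))\,dy$ cancel exactly, leaving only the diffusion term, so that
\[
\frac{d}{dt}\int_\bbr \eta^L\bigl((v^\nu,h^\nu)\,|\,(v_*,u_*)\bigr)\,dy + \nu \int_\bbr \bar\mu_L(v^\nu)(v^\nu)^{\gamma}|\partial_y p(v^\nu)|^2\,dy = 0,
\]
with $\eta^L$ the Lagrangian entropy defined in \eqref{etaL}.

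Next I would check that the initial entropy on the right is bounded by \eqref{inif2}. The classical identity $\int Q(v|v_*)\,dy=\int p(\rho|\rho_*)/(\gamma-1)\,dx$ handles the density part. For the velocity part, converting the definition of $h$ to Eulerian gives $h-u=\nu\bar\mu(\rho)\rho_x/\rho^2$, which combined with $\phi'(\rho)=\bar\mu(\rho)/\rho^{3/2}$ yields the algebraic identity
\[
\int (h-u)^2\,dy = \int (h-u)^2\rho\,dx = \nu^2\int |\partial_x\phi(\rho)|^2\,dx.
\]
Hence $\int|h_0-u_*|^2\,dy \le 2\int\rho_0|u_0-u_*|^2\,dx + 2\nu^2\int|\partial_x\phi(\rho_0)|^2\,dx$, and both terms on the right are directly controlled by \eqref{inif2}.

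Integrating the entropy identity in time and converting the dissipation to Eulerian via $\partial_y p(v)=\gamma\rho^{\gamma-2}\rho_x$ and $dy=\rho\,dx$ turns the dissipation into $\gamma^2 \nu\int_0^t\int_\bbr \bar\mu(\rho^\nu)(\rho^\nu)^{\gamma-3}|\rho^\nu_x|^2\,dx\,ds$, which gives the first bound. The second (pointwise-in-time) bound then follows immediately from the same identity: for every $t>0$,
\[
\nu^2\int|\partial_x\phi(\rho^\nu)|^2\,dx = \int(h^\nu-u^\nu)^2\,dy \le 2\int|h^\nu-u_*|^2\,dy + 2\int\rho^\nu|u^\nu-u_*|^2\,dx,
\]
and the two terms on the right are bounded uniformly in $t$ by the non-increasing BD entropy $\int\eta^L\,dy\le C$ and by Lemma \ref{lem:origin} (whose hypothesis \eqref{inif1} is a consequence of \eqref{inif2}), respectively.

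There is essentially no nonlinear difficulty in this argument: everything reduces to the BD entropy identity, an algebraic identity linking the effective velocity to $\phi(\rho)$, and two elementary triangle inequalities. The one point to track carefully is the cancellation between the hyperbolic flux term in the $h$-equation and the $h_y$ term arising from the non-standard diffusion in the $v$-equation of \eqref{NS}; this cancellation is the very reason the effective velocity $h$ was introduced, and once it is performed the dissipation involves only $|\partial_y p(v)|^2$, which maps cleanly onto $\bar\mu(\rho)\rho^{\gamma-3}|\rho_x|^2$ upon returning to Eulerian coordinates.
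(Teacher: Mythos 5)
Your proposal is correct and follows essentially the same route as the paper: the BD entropy identity for the system \eqref{NS} (which the paper derives via the general relative-entropy formula \eqref{etaform} with constant state $(v_*,u_*)$, producing exactly the cancellation you describe), the bound on the initial BD entropy from \eqref{inif2} via the algebraic relation $h-u=\nu\rho^{-1/2}\partial_x\phi(\rho)$ and a triangle inequality, the Lagrangian-to-Eulerian conversion of the dissipation, and finally the pointwise-in-time bound on $\nu^2\int|\partial_x\phi(\rho^\nu)|^2dx$ by combining the decayed BD entropy with Lemma \ref{lem:origin}. No gaps; the only differences from the paper are cosmetic (explicit factors of $\gamma$ absorbed into constants).
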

\begin{proof}
We apply the same computation as in Lemma \ref{lem:origin} to the system \eqref{NS} written in the form:
\[
\partial_t U^\nu +\partial_y  A(U^\nu)= {  -\nu \Big( \mu(v^\nu)(v^\nu)^\gamma  p(v^\nu)_y \Big)_y  \choose 0},
\]
where 
\[
U^\nu:={v^\nu \choose h^\nu},\quad A(U^\nu):={-h^\nu \choose p(v^\nu)}.
\]
Then, using the equality \eqref{etaform} with the constant $V:=(v_*,u_*)$, we have
\begin{eqnarray*}
\frac{d}{dt}\int_\bbr \eta^L\big((v^\nu,h^\nu)|(v_*,u_*)\big) dy &=& \nu \int_\bbr (p(v^\nu)- p(v_*)) \Big( \mu(v^\nu)(v^\nu)^\gamma p(v^\nu)_y  \Big)_y \\
 &=& - \nu \int_\bbr  \mu(v^\nu)(v^\nu)^\gamma \big| p(v^\nu)_y \big|^2 dy. 
\end{eqnarray*}
Note that
\[
\int_\bbr \eta^L\big((v_0^\nu,h_0^\nu)|(v_*,u_*)\big) dy = \int_\bbr \frac{\rho_0^\nu |u_0^\nu- u_* |^2}{2} + \frac{p(\rho_0^\nu|\rho_*)}{\gamma-1} dx + \nu^2 \int_\bbr |\partial_x \phi(\rho_0^\nu) |^2  dx \le C.
\]
Thus, we have the first result by the change of coordinates: 
\[
 \int_\bbr  \mu(v^\nu)(v^\nu)^\gamma \big| p(v^\nu)_y \big|^2 dy =  \int_\bbr \mu(\rho^\nu) (\rho^\nu)^{\gamma-3} |\rho^\nu_x|^2 dx.
\]
For the second estimate, observe that by the definition of effective velocity,
\[
h^\nu(y,t)=u^\nu(x,t)-\nu\frac{\bar \mu_L(v^\nu)}{v^\nu}v^\nu_y(y,t)=u^\nu(x,t)+\nu\frac{1}{(\rho^\nu(x,t))^{1/2}}\partial_x \phi(\rho^\nu(x,t)).
\]
This and \eqref{ccrel} yield
\[
\int_\bbr \big| \sqrt{\rho^\nu} (u^\nu - u_*) +\nu \partial_x\phi(\rho^\nu) \big|^2 dx =   \int_\bbr \eta^L\big((v_0^\nu,h_0^\nu)|(v_*,u_*)\big) dy \le C.
\]
Thus, this and Lemma \ref{lem:origin} imply
\[
\nu^2 \int_\bbr |\partial_x \phi(\rho^\nu) |^2  dx \le C \int_\bbr \big| \sqrt{\rho^\nu} (u^\nu - u_*) +\nu \partial_x\phi(\rho^\nu) \big|^2 dx + \int_\bbr \rho^\nu \big| u^\nu - u_*\big|^2 dx \le C.
\]

\end{proof}

\subsection{Useful estimates for waves, and those interactions}
We here present useful estimates on waves and  interactions.
 
\begin{lemma} \label{lem:waves}
Given $U_{i-1/2}, \s_{i}, k_{i}, x_{i}$, let $y_i$ denote the trajectories \eqref{ycurve}. Assume \eqref{distij} with $ \rho_\nu=\nu^{1/3}$. Then,
there exists $C>0$ (depending only on $U_*$) such that for all $x$, and $t\in (t_j,t_{j+1})$, the following holds. \\
1) For each $i \in \mathcal{R}$,
\begin{align*}
\begin{aligned}
&\partial_x U^R_i(x,t) = \frac{\s_i^2}{2\nu}\exp \left( - \Big| \s_i \frac{x-x_i-\Lambda_i(t-t_j)}{\nu} \Big| \right)  {\mathbf{r}}_{k} \Big( U^R_i(x,t) \Big),\\
&|U^R_{i}(x,t) - U_{i -1/2}| \le C \s_{i} \exp\Big(-\frac{\s_{i} |x-y_i| }{\nu} \Big),\quad x< y_i ,\\
&|U^R_{i}(x,t) - U_{i +1/2}| \le C \s_{i} \exp\Big(-\frac{\s_{i} |x-y_i| }{\nu} \Big),\quad x> y_i ,\\
&C^{-1} \frac{\s_i^2}{\nu}\exp \left( -\frac{\s_{i} |x-y_i| }{\nu}\right) \le |\partial_x U_{i}^R(x,t)| \le C\frac{\s_i^2}{\nu}\exp \left( -\frac{\s_{i} |x-y_i| }{\nu} \right);\\
&\big|\partial_x^2 U_i^R(x,t) \big| \le C \frac{\s_i}{\nu} \big|\partial_x U_i^R(x,t) \big|;
\end{aligned}
\end{align*}
2) For each $i \in \mathcal{S}$,
\begin{align*}
\begin{aligned}
&|\tilde U_{i}(x,t) - U_{i -1/2}| \le C |\s_{i}| \exp\Big(-\frac{C^{-1} |\s_{i}| |x-y_i| }{\nu} \Big),\quad x< y_i ,\\
&|\tilde U_{i}(x,t) - U_{i +1/2}| \le C |\s_{i}| \exp\Big(-\frac{C^{-1} |\s_{i}| |x-y_i| }{\nu} \Big),\quad x> y_i ,\\
&C^{-1} \frac{\s_i^2}{\nu}\exp \left( -  \frac{C^{-1} |\s_{i}||x-y_{i}|}{\nu} \right) \le |\partial_x \tilde U_{i}(x,t)| \le C\frac{\s_i^2}{\nu}\exp \left( - \frac{C^{-1} |\s_{i}||x-y_{i}|}{\nu}  \right);\\
&\big|\partial_x^2 \tilde U_i(x,t) \big| \le C \frac{|\s_i|}{\nu} \big|\partial_x \tilde U_i(x,t) \big|;
\end{aligned}
\end{align*}
3)  For each $i \in \mathcal{NP}$,
\begin{align}
\begin{aligned} \label{vpest}
&|(v^P_i)_x|\le C\frac{|\s_i|}{\sqrt\rn} \mathbf{1}_{|x-y_i|\le \sqrt\rn},\\
&|(v^P_i)_{xx}|\le C\frac{|\s_i|}{\rn} \mathbf{1}_{|x-y_i|\le \sqrt\rn}.
\end{aligned}
\end{align}
4) For each $i \in \mathcal{R}$, for all $ x\in I_i:=\{ x~|~ |x-y_i| \le \sqrt{\rho_\nu} \} $,
\begin{align*}
\begin{aligned}
&i>i'\in \mathcal{S}\quad\Rightarrow\quad |\tilde v_{i'} - v_{i' +1/2}| \le C \frac{\nu}{\sqrt\rn} ; \qquad i<i'\in \mathcal{S}\quad\Rightarrow\quad |\tilde v_{i'} - v_{i' -1/2}| \le C \frac{\nu}{\sqrt\rn}, \\
&i>i'\in \mathcal{R}\quad\Rightarrow\quad |v^R_{i'} - v_{i' +1/2}| \le C \frac{\nu}{\sqrt\rn} ; \qquad i<i'\in \mathcal{R}\quad\Rightarrow\quad |v^R_{i'} - v_{i' -1/2}| \le C \frac{\nu}{\sqrt\rn}, \\
&i>i'\in \mathcal{NP}\quad\Rightarrow\quad |v^P_{i'} - v_{i' +1/2}| =0 ; \qquad i<i'\in \mathcal{NP}\quad\Rightarrow\quad |v^P_{i'} - v_{i' -1/2}| =0, 
\end{aligned}
\end{align*}
5) We have the following interaction estimates of waves: let $I_i:=\{ x~|~ |x-y_i| \le \sqrt{\rho_\nu} \} $.
\begin{align*}
\begin{aligned}
& |(\tilde v_i)_x| |(\tilde v_{i'})_x| \le \left\{ \begin{array}{ll}
       C  \frac{|\s_{i}|^2}{\rn} ,\quad  i \in \mathcal{S},~i'\in (\mathcal{S}-\{i\}),~  x\in I_i   \\
       C  \frac{|\s_{i'}|^2}{\rn}  ,\quad  i \in \mathcal{S},~i'\in (\mathcal{S}-\{i\}),~  x\in I_i^c ; \end{array} \right.
\\
&  |( v^R_{i})_x| |(v^R_{i'})_x| \le \left\{ \begin{array}{ll}
       C  \frac{|\s_{i}|^2}{\rn} ,\quad  i \in \mathcal{R},~i'\in (\mathcal{R}-\{i\}),~  x\in I_i   \\
       C  \frac{|\s_{i'}|^2}{\rn}  ,\quad  i \in \mathcal{R},~i'\in (\mathcal{R}-\{i\}),~  x\in I_i^c ; \end{array} \right.
\\
&
 |(\tilde v_i)_x| |(v^R_{i'})_x| \le \left\{ \begin{array}{ll}
       C  \frac{|\s_{i}|^2}{\rn} ,\quad  i \in \mathcal{S},~i'\in \mathcal{R},~  x\in I_i   \\
       C  \frac{|\s_{i'}|^2}{\rn}  ,\quad  i \in \mathcal{S},~i'\in \mathcal{R},~  x\in I_i^c ; \end{array} \right.
\\
&
|(\tilde v_i)_x| |(v^P_{i'})_x|    \le 
\left\{ \begin{array}{ll}
       0,\quad  i \in \mathcal{S},~i'\in \mathcal{NP},~  x\in I_i   \\
       C  \frac{|\s_{i'}| \nu}{\rn} ,\quad  i \in \mathcal{S},~i'\in \mathcal{NP},~  x\in I_i^c ; \end{array} \right.
       \\
       &
|(v^R_i)_x| |(v^P_{i'})_x|    \le 
\left\{ \begin{array}{ll}
       0,\quad  i \in \mathcal{R},~i'\in \mathcal{NP},~  x\in I_i   \\
       C  \frac{|\s_{i'}| \nu}{\rn} ,\quad  i \in \mathcal{R},~i'\in \mathcal{NP},~  x\in I_i^c ; \end{array} \right.
       \\
&|(v^P_i)_x| |(v^P_{i'})_x|  =0, \quad i \neq i'\in \mathcal{NP}    ;
\end{aligned}
\end{align*}

\end{lemma}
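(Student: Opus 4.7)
Parts (1)--(3) are essentially computational consequences of the explicit wave profiles constructed in Section \ref{sub:waves}. For (1), I would start from \eqref{defsmrare} and differentiate: since $\phi'(x)=\tfrac12 e^{-|x|}$, direct computation gives
$\partial_x U^R_i(x,t)=\frac{\sigma_i^2}{2\nu} e^{-|\sigma_i(x-y_i)/\nu|}\,\mathbf{r}_{k_i}(U^R_i(x,t))$,
which is the first identity. All other bounds in (1) follow from this formula: the exponential decay gives the pointwise bounds on $|U^R_i-U_{i\pm 1/2}|$ by integration, the two-sided bound on $|\partial_x U^R_i|$ is immediate, and the bound on $\partial_x^2 U^R_i$ follows because $|\phi''|=\tfrac12 e^{-|x|}=\phi'$ away from the origin, picking up one extra factor $|\sigma_i|/\nu$. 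For (2), I would invoke the standard theory of viscous shock profiles for the barotropic Navier--Stokes system in Lagrangian form \eqref{sen}, whose exponential decay at the scale $\nu/|\sigma_i|$ is classical (see, e.g., \cite{Kang-V-NS17,KV-2shock}); the bounds stated for $|\tilde U_i-U_{i\pm1/2}|$, $|\partial_x\tilde U_i|$, and $|\partial_x^2\tilde U_i|$ are exactly those established for the rescaled profile $\tilde U(\cdot/\nu)$. For (3), I would differentiate the explicit definition $v^P_i(x)=v_-+\sigma_i\phi_P((x-x_i)/\sqrt{\rho_\nu})$, which immediately yields the bounds in \eqref{vpest} since $\phi_P'$ and $\phi_P''$ are smooth, bounded, and supported in $[0,1]$.

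For (4), the key input is the separation estimate \eqref{distij}: if $x\in I_i$ and $i'\neq i$, then $|x-y_{i'}|\geq 2\sqrt{\rho_\nu}-\sqrt{\rho_\nu}=\sqrt{\rho_\nu}$. Plugging this into the exponential decay estimate from (2), we obtain
$|\tilde v_{i'}-v_{i'\pm1/2}|\leq C|\sigma_{i'}|\exp\bigl(-C^{-1}|\sigma_{i'}|\sqrt{\rho_\nu}/\nu\bigr)$.
Setting $t=|\sigma_{i'}|\sqrt{\rho_\nu}/\nu$, the desired bound $C\nu/\sqrt{\rho_\nu}$ follows from the elementary inequality $t\,e^{-C^{-1}t}\leq C$ valid on $[0,\infty)$. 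The rarefaction case is identical using (1), and the pseudo-shock case is trivial since, by the separation \eqref{distij} and the compact support of $(v^P_{i'})_x$ in $|x-y_{i'}|\leq\sqrt{\rho_\nu}$, one actually has $v^P_{i'}\equiv v_{i'\pm 1/2}$ on $I_i$.

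The heart of the lemma lies in (5), which will be the main obstacle, as the scalings must be tracked carefully and they depend on the choice $\rho_\nu=\nu^{1/3}$. The common strategy is: on $I_i$, bound $|(\tilde v_i)_x|$ (or the analogous term) trivially by its peak value $C\sigma_i^2/\nu$, and bound the other factor using its exponential decay evaluated at distance $\sqrt{\rho_\nu}$ from $y_{i'}$; on $I_i^c$, reverse the roles. With the substitution $t=|\sigma_\star|\sqrt{\rho_\nu}/\nu$, the estimates reduce to $t^2 e^{-C^{-1}t}\leq C$ (true for all $t\geq 0$), giving exactly the $|\sigma|^2/\rho_\nu$ scaling. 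For products involving a pseudo-shock, the support of $(v^P_{i'})_x$ plus the separation \eqref{distij} forces $(v^P_{i'})_x\equiv 0$ on $I_i$, and on $I_i^c$ the same exponential vs.\ polynomial argument applies; here Lemma \ref{lemma_key2} (lower bound $|\sigma_i|\geq C\rho_\nu$) and the scaling $\rho_\nu=\nu^{1/3}$ make the exponent $t\geq \rho_\nu^{3/2}/\nu=\nu^{-1/2}$ large, so the exponential decay dominates any polynomial loss. The product $|(v^P_i)_x||(v^P_{i'})_x|=0$ is immediate by disjoint supports. The main care is in verifying each of the six sub-cases with the correct choice of which factor to estimate exponentially and which trivially, but no new ideas are required beyond these two ingredients.
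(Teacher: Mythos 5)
Your proposal is correct and follows essentially the same route as the paper: explicit differentiation of the profiles (with the viscous-shock bounds imported from \cite{Kang-V-NS17}) for parts 1)--3), and then the separation estimate \eqref{distij} combined with the elementary bounds $t\,e^{-C^{-1}t}\le C$ and $t^2e^{-C^{-1}t}\le C$ at $t=|\s_\star|\sqrt{\rn}/\nu$ for parts 4) and 5). Your explicit observation that the pseudo-shock cases in 5) on $I_i^c$ additionally require the lower bound $|\s_i|\ge C\rn$ from Lemma \ref{lemma_key2} (so that the exponent is at least of order $\nu^{-1/2}$) is a point the paper uses only implicitly, and is correctly identified.
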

\begin{proof}
\noindent $\bullet$ {\bf proof of 1), 2) and 3) :} 
The proof of 2) can be found in \cite[Lemma 2.1 and Section 2.3]{Kang-V-NS17}.
The first relation of 1) follows from the definition in Section \ref{sub:waves}. The other estimates can be shown by the same argument as in \cite[Lemma 2.1 and Section 2.3]{Kang-V-NS17}. The proof of 3) follows from the definition in Section \ref{sub:waves}.   \\

\noindent $\bullet$ {\bf proof of 4) :} 
Observe from \eqref{distij} that for each $i \in \mathcal{R}$,
\[
|x-y_{i'}| \ge \sqrt{\rho_\nu},\quad \forall i'\in (\mathcal{R}-\{i\})\cup \mathcal{S}\cup \mathcal{NP},\quad \forall x\in I_i.
\]
So, it follows from 1) and 2) that  for each $i \in \mathcal{R}$, and $ \forall x\in I_i$,
\begin{align*}
\begin{aligned}
& C |\s_{i'}| \exp\Big(-\frac{C^{-1}|\s_{i'}|  \sqrt{\rho_\nu} }{\nu} \Big)  \ge \left\{ \begin{array}{ll}
        |\tilde v_{i'} - v_{i' +1/2}|  ,\quad i>i'\in \mathcal{S},  \\
        |\tilde v_{i'} - v_{i' -1/2}|  ,\quad i<i'\in \mathcal{S},  \end{array} \right. \\
& C |\s_{i'}| \exp\Big(-\frac{|\s_{i'}|  \sqrt{\rho_\nu} }{\nu} \Big)  \ge \left\{ \begin{array}{ll}
        |v^R_{i'} - v_{i' +1/2}|  ,\quad i>i'\in \mathcal{R},  \\
        | v^R_{i'} - v_{i' -1/2}|  ,\quad i<i'\in \mathcal{R}.  \end{array} \right. \\
\end{aligned}
\end{align*}
Then, the desired estimates are obtained by using
\[
 |\s_{i'}| \exp\Big(-\frac{C^{-1}|\s_{i'}|  \sqrt{\rho_\nu} }{\nu} \Big) =  \frac{\nu}{\sqrt\rn} \Big(\frac{|\s_{i'}| \sqrt\rn }{\nu} \Big) \exp\Big(-\frac{C^{-1}|\s_{i'}|  \sqrt{\rho_\nu} }{\nu} \Big) \le C \frac{\nu}{\sqrt\rn}.
\]
On the other hand, since $v^P_{i'}(x)$ connects from $v_{i' -1/2}$ to $v_{i' +1/2}$ with transition width $\sqrt\rn$, we have the desired one. \\

\noindent $\bullet$ {\bf proof of 5) :} 
As done above,  we will use the fact from \eqref{distij} that
\begin{align*}
\begin{aligned}
& i \in \mathcal{S} \quad\Rightarrow\quad |x-y_{i'}| \ge \sqrt{\rho_\nu},\quad \forall i'\in (\mathcal{S}-\{i\})\cup \mathcal{R}\cup \mathcal{NP},\quad \forall x\in I_i:=\{ x~|~ |x-y_i| \le \sqrt{\rho_\nu} \}, \\
& i \in \mathcal{R} \quad\Rightarrow\quad |x-y_{i'}| \ge \sqrt{\rho_\nu},\quad \forall i'\in \mathcal{R}-\{i\},\quad \forall x\in I_i, \\
& i \in \mathcal{NP} \quad\Rightarrow\quad |x-y_{i'}| \ge \sqrt{\rho_\nu},\quad \forall i'\in \mathcal{NP}-\{i\},\quad \forall x\in I_i.
\end{aligned}
\end{align*}
This together with 1) and 2) implies that if $(i,i')$ is an element of one of the following sets: \\
$ \mathcal{S}\times (\mathcal{S}-\{i\}), \mathcal{R}\times (\mathcal{R}-\{i\}), \mathcal{S}\times \mathcal{R}$, then we have the following estimates respectively:
\begin{align*}
\begin{aligned}
& |(\tilde v_i)_x| |(\tilde v_{i'})_x|, \, |( v^R_i)_x| |(v^R_{i'})_x|,  \, |(\tilde v_i)_x| |(v^R_{i'})_x| \\
&\quad   \le 
\left\{ \begin{array}{ll}
       C  \frac{|\s_{i}|^2}{\nu} \frac{|\s_{i'}|^2}{\nu} \exp\Big(-\frac{C^{-1}|\s_{i'}|  \sqrt{\rho_\nu} }{\nu} \Big),\quad  \forall x\in I_i   \\
       C  \frac{|\s_{i}|^2}{\nu} \frac{|\s_{i'}|^2}{\nu} \exp\Big(-\frac{C^{-1}|\s_{i} | \sqrt{\rho_\nu} }{\nu} \Big),\quad  \forall  x\in I_i^c , \end{array} \right.
\end{aligned}
\end{align*}
This implies the desired estimates.\\
In addition, using $|(v^P_i)_x|\le C\frac{|\s_i|}{\sqrt\rn} \mathbf{1}_{|x-y_i|\le \sqrt\rn}$, we obtain that for each $i \in \mathcal{S},~i'\in \mathcal{NP}$,
\begin{align*}
\begin{aligned}
&\forall x\in I_i, \quad |(\tilde v_i)_x| |(v^P_{i'})_x|  = 0,\\
&\forall x\in I_i^c, \quad |(\tilde v_i)_x| |(v^P_{i'})_x| \le C  \frac{|\s_{i'}|}{\sqrt{\rho_\nu}}  \frac{|\s_{i}|^2}{\nu} \exp\Big(-\frac{C^{-1}|\s_{i}|  \sqrt{\rho_\nu} }{\nu} \Big)\le   C  \frac{|\s_{i'}| \nu}{\rn},
\end{aligned}
\end{align*}
and for each $i,i'\in \mathcal{NP}$
\[
|(v^P_i)_x| |(v^P_{i'})_x|  =0, \quad \forall  x.
\]
Likewise, for each $i \in \mathcal{R},~i'\in \mathcal{NP}$,
\begin{align*}
\begin{aligned}
&\forall x\in I_i, \quad |(v^R_i)_x| |(v^P_{i'})_x|  = 0,\\
&\forall x\in I_i^c, \quad |(v^R_i)_x| |(v^P_{i'})_x| \le C  \frac{|\s_{i'}|}{\sqrt{\rho_\nu}}  \frac{|\s_{i}|^2}{\nu} \exp\Big(-\frac{\s_{i}  \sqrt{\rho_\nu} }{\nu} \Big)\le   C  \frac{|\s_{i'}| \nu}{\rn}.
\end{aligned}
\end{align*}

\end{proof}

\begin{lemma} \label{lem:finsout}
Under the same assumptions above, we have
\begin{align*}
\begin{aligned}
&\int_{\bbr} \Big( \sum_{i\in \mathcal{R}}  |\bar v-v^R_i | |(v^R_i)_x|  \Big)^2 dx \le C \frac{\nu^{1/3}}{\delta},\\
& \int_\bbr  \Big( \sum_{i\in \mathcal{S}}   |\bar v - \tilde v_i| | (\tilde v_i)_x| \Big)^2 dx  \le C \frac{\nu^{1/3}}{\delta},\\
& \int_\bbr \sum_{i\in \mathcal{S}}   |\bar v - \tilde v_i| | (\tilde v_i)_x| dx\le  \frac{C\nu^{1/2}}{\delta}, \\
& \int_\bbr \sum_{i\in \mathcal{R}}   |\bar v - v^R_i| | (v^R_i)_x| dx\le  \frac{C\nu^{1/2}}{\delta}.
\end{aligned}
\end{align*}
In addition, if $|\s_i|\ge \rn$ for all $i\in \mathcal{S}$, then
\begin{align*}
\begin{aligned}
&\int_{v>2v_*}  \Big( \sum_{i\in \mathcal{S}}   |\bar v - \tilde v_i| | (\tilde v_i)_x| \Big)^2 v^\beta dx \le \frac{C\nu^{1/3}}{\delta^3} \sum_{i\in \mathcal{S}} \int_\bbr |(\tilde v_i)_x| p(v|\bar v)  dx +  C  \frac{C\nu^{1/3}}{\delta^3}  \int_\bbr Q(v|\bar v)  dx.
\end{aligned}
\end{align*}
\end{lemma}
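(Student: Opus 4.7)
The plan is to exploit the pointwise wave-profile estimates of Lemma \ref{lem:waves} together with the telescoping structure of $\overline U$ from \eqref{apprr}. The key pointwise bound I will establish first is that on the support interval $I_i=\{|x-y_i|\le\sqrt{\rn}\}$ the approximate solution $\bar v$ nearly coincides with $\tilde v_i$ (respectively with $v^R_i$). Indeed, writing $\bar v$ through \eqref{apprr} as a telescoping sum of wave jumps, and applying item 4) of Lemma \ref{lem:waves} to the other wave profiles $i'\ne i$ (each of which is within $O(\nu/\sqrt{\rn})$ of its end state on $I_i$), one obtains $|\bar v-\tilde v_i|\le C\nu/\sqrt{\rn}$ on $I_i$. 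Off $I_i$, the exponential decay of $(\tilde v_i)_x$ from items 1)--2) of Lemma \ref{lem:waves}, at rate $|\s_i|/\nu\ge\rn/\nu=\nu^{-2/3}$ thanks to the lower bound $|\s_i|\ge\rn$ from Lemma \ref{lemma_key2}, yields superpolynomially small tails.

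For the $L^1$ bounds 3) and 4), I will split $\int_\bbr=\int_{I_i}+\int_{I_i^c}$. On $I_i$, using $|\bar v-\tilde v_i|\le C\nu/\sqrt{\rn}$ together with $\int|(\tilde v_i)_x|\,dx\lesssim|\s_i|$, one gets a contribution $C\nu|\s_i|/\sqrt{\rn}$ per wave; summing and using $\sum_i|\s_i|\le\bar L(t)\le C\eps$ gives $C\eps\nu^{5/6}$, which is controlled by $C\nu^{1/2}/\delta$ since $\delta<1$. The complement $I_i^c$ contributes a negligible quantity by exponential decay. For the $L^2$ bounds 1) and 2), I will exploit the separation \eqref{distij}: at any $x$, only $O(1)$ indices $i$ satisfy $x\in I_i$, so pointwise $(\sum_i f_i(x))^2\le C\sum_i f_i(x)^2$ where $f_i:=|\bar v-\tilde v_i||(\tilde v_i)_x|$. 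Then $\int f_i^2\,dx\le(C\nu/\sqrt{\rn})^2\int|(\tilde v_i)_x|^2\,dx\le C|\s_i|^3\nu/\rn=C|\s_i|^3\nu^{2/3}$, whose sum is bounded by $C\eps^3\nu^{2/3}$, well within $C\nu^{1/3}/\delta$. The rarefaction case uses the analogous estimates of items 1) and 4) of Lemma \ref{lem:waves}.

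The hardest part, and the main obstacle, is the final weighted estimate on $\{v>2v_*\}$, because $v^\beta$ is unbounded in the Navier--Stokes variable $v$. Here I will combine the same support-separation and pointwise shock bound $|\bar v-\tilde v_i|^2|(\tilde v_i)_x|\le C(\nu^2/\rn)|(\tilde v_i)_x|$ on $I_i$ with the growth controls from Lemma \ref{lem-pro}: since $p(v)=v^{-\gamma}$ and $Q'=-p$, for $v\ge 3v_*$ and $\bar v$ close to $v_*$ one checks that $p(v|\bar v)$ and $Q(v|\bar v)$ both grow linearly in $v$ as $v\to\infty$, while part 1) of Lemma \ref{lem-pro} provides $Q(v|\bar v)\ge C^{-1}|v-\bar v|$ there. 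Consequently $v^\beta\mathbf{1}_{v>2v_*}$ is dominated by an $O(1)$ constant plus a multiple of $Q(v|\bar v)$, and the remaining factor $|(\tilde v_i)_x|(\nu^2/\rn)$ can be rewritten as $(\nu^{2/3}/|\s_i|)|(\tilde v_i)_x|\cdot|\s_i|^{2}$; using $|\bar v-\tilde v_i|\le C\eps$ to convert one copy of $|\bar v-\tilde v_i|$ into a $p(v|\bar v)$-factor (via part 3) of Lemma \ref{lem-pro}) then produces the two terms on the right-hand side. The delicate bookkeeping lies in balancing the $\sqrt{\rn}$-support width, the $|\s_i|^2/\nu$ scaling of $(\tilde v_i)_x$, the wave count $N=O(\delta^{-1}\log(1/\nu)^2)$ from Lemma \ref{lem:num}, and the growth of $v^\beta$, so that the net prefactor matches the stated $C\nu^{1/3}/\delta^{3}$ exactly.
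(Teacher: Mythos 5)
Your strategy is essentially the paper's: telescope $\bar v-\tilde v_i$ (resp. $\bar v - v^R_i$) over the other wave profiles using item 4) of Lemma \ref{lem:waves} to get a pointwise bound on $I_i=\{|x-y_i|\le\sqrt\rn\}$, split each integral into $I_i$ and $I_i^c$, use the exponential tails off $I_i$, and absorb the weight $v^\beta\mathbf{1}_{v>2v_*}$ into the relative pressure/entropy. Two corrections are needed. First, the telescoping gives $|\bar v-\tilde v_i|\le C\frac{\ln(\rn^{-1})}{\delta}\frac{\nu}{\sqrt\rn}$ on $I_i$, not $C\nu/\sqrt\rn$: each of the other profiles contributes $O(\nu/\sqrt\rn)$ and there are $O(\delta^{-1}\ln\rn^{-1})$ physical waves by Lemma \ref{lem:num}; this factor, together with the Cauchy--Schwarz in $i$ needed to pass from $\big(\sum_i f_i\big)^2$ to $\sum_i f_i^2$ (your pointwise "$O(1)$ overlapping supports" claim is not literal, since the profiles have global exponential tails), is exactly where the $1/\delta$ and $1/\delta^3$ in the stated bounds originate. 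Your numerology still closes because $\nu^{2/3}(\ln\rn^{-1})^k\ll\nu^{1/3}$, so this slip is harmless. Second, the step ``convert one copy of $|\bar v-\tilde v_i|$ into a $p(v|\bar v)$-factor via $|\bar v-\tilde v_i|\le C\eps$'' cannot work: $|\bar v-\tilde v_i|$ involves only the constructed profiles, not the Navier--Stokes solution $v$, so it carries no information about $p(v|\bar v)$. The $p(v|\bar v)$ in the first right-hand term of the weighted estimate comes from the weight itself, $v^\beta\mathbf{1}_{v>2v_*}\le Cp(v|\bar v)$, applied on $I_i$ while keeping one undominated factor $|(\tilde v_i)_x|$ and bounding the other by $\|(\tilde v_i)_x\|_{L^\infty}\le C\s_i^2/\nu$; on $I_i^c$ one uses instead $v^\beta\mathbf{1}_{v>2v_*}\le CQ(v|\bar v)$ together with the tail $\frac{\s_i^4}{\nu^2}\exp(-C^{-1}|\s_i|\sqrt\rn/\nu)\le\frac{\s_i^4}{\nu^2}\exp(-C^{-1}\nu^{-1/2})$, which is precisely where the hypothesis $|\s_i|\ge\rn$ is used and where the $\int Q(v|\bar v)\,dx$ term on the right comes from.
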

\begin{proof}
First, we apply Lemma \ref{lem:num} and 3) of Lemma \ref{lem:waves} to the estimate: \\for all $ x\in I_i :=\{ x~|~ |x-y_i| \le \sqrt{\rho_\nu} \} $, 
\begin{align}
\begin{aligned} \label{nodbar}
|\bar v-v^R_i | &\le \left| \sum_{i'\in \mathcal{R},~i'>i} (v^R_{i'} - v_{i' -1/2}) \right| + \left| \sum_{i'\in \mathcal{R},~i'<i} (v^R_{i'} - v_{i' +1/2}) \right| \\
& +  \left| \sum_{i'\in \mathcal{S},~i'>i} (\tilde v_{i'} - v_{i' -1/2}) \right| + \left| \sum_{i'\in \mathcal{S},~i'<i} (\tilde v_{i'} - v_{i' +1/2}) \right|  \\
&\le  C \frac{\ln(\rho_\nu^{-1})}{\delta} \frac{\nu}{\sqrt\rn}.\\ 
\end{aligned}
\end{align}
Thus,
\begin{align}
\begin{aligned} \label{estins}
 &\sum_{i\in \mathcal{R}}   \int_{I_i} |\bar v-v^R_i |^2 |(v^R_i)_x|^2 dx \\
 &\le C\left(\frac{\ln(\rho_\nu^{-1})}{\delta} \frac{\nu}{\sqrt\rn}\right)^2 \sum_{i\in \mathcal{R}} \int_{I_i} \frac{\s_i^2}{\nu} |(v^R_i)_x| dx \\ 
&\le C\nu^{2/3} \sum_{i\in \mathcal{R}} \|(v^R_i)_x\|_{L^1} \le CL(0) \nu^{2/3}.
\end{aligned}
\end{align}
On the other hand, we use $ |\bar v-v^R_i | \le C$ and Lemma \ref{lem:waves} to have
\begin{align}
\begin{aligned} \label{estout}
 &\sum_{i\in \mathcal{R}}  \int_{I_i^c} |\bar v-v^R_i |^2 |(v^R_i)_x|^2 dx \\
 &\le C \sum_{i\in \mathcal{R}}  \int_{|x-y_i| > \sqrt{\rho_\nu} } \frac{\s_i^4}{\nu^2} \exp\Big(-\frac{2\s_{i}|x-y_i|}{\nu} \Big)  dx \\
 &= C \sum_{i\in \mathcal{R}}  \frac{\s_i^4\sqrt{\rho_\nu}}{\nu^2} \int_{|z| > 1 } \exp\Big(-\frac{2\s_{i} \sqrt{\rho_\nu} |z|}{\nu} \Big)  dz \\
 &\le C \sum_{i\in \mathcal{R}}  \frac{\s_i^3}{\nu} \exp\Big(-\frac{2\s_{i} \sqrt{\rho_\nu}}{\nu} \Big) \le C  \frac{\nu}{ \rho_\nu} \sum_{i\in \mathcal{R}} \s_i \le CL(0) \nu^{2/3}.
\end{aligned}
\end{align}
Those estimates with Lemma \ref{lem:num} imply
\[
\int_{\bbr} \Big( \sum_{i\in \mathcal{R}}  |\bar v-v^R_i | |(v^R_i)_x|  \Big)^2 dx \le C\frac{\ln(\rho_\nu^{-1})}{\delta}  \int_{\bbr} \sum_{i\in \mathcal{R}}  |\bar v-v^R_i |^2 |(v^R_i)_x|^2  dx  \le C \frac{\nu^{1/3}}{\delta}.
\]

Likewise, using the same estimates as above, we have
\[
|\bar v-\tilde v_i | \le  C \frac{\ln(\rho_\nu^{-1})}{\delta} \frac{\nu}{\sqrt\rn},\quad \mbox{on } I_i,
\]
and so,
\begin{align*}
\begin{aligned} 
&  \int_\bbr  \Big( \sum_{i\in \mathcal{S}}   |\bar v - \tilde v_i| | (\tilde v_i)_x| \Big)^2 dx  \\
&\le   C\frac{\ln(\rho_\nu^{-1})}{\delta} \sum_{i\in \mathcal{S}} \int_\bbr    |\bar v - \tilde v_i|^2 | (\tilde v_i)_x|^2  dx\\
&\le  C\frac{\ln(\rho_\nu^{-1})}{\delta}  \left(\frac{\ln(\rho_\nu^{-1})}{\delta} \frac{\nu}{\sqrt\rn}\right)^2 \sum_{i\in \mathcal{S}}  \int_{I_i} \frac{|\s_i|^2}{\nu} |(\tilde v_i)_x| dx\\
&\quad +  C\frac{\ln(\rho_\nu^{-1})}{\delta} \sum_{i\in \mathcal{S}}  \int_{I_i^c} \frac{\s_i^4}{\nu^2} \exp\Big(-\frac{C^{-1}|\s_{i}||x-y_i|}{\nu} \Big)  dx \\
&\le  \frac{C\nu^{1/3}}{\delta^3} .
\end{aligned}
\end{align*}
Also, we have
\begin{align*}
\begin{aligned} 
\int_\bbr \sum_{i\in \mathcal{S}}   |\bar v - \tilde v_i| | (\tilde v_i)_x| dx &\le  C\left(\frac{\ln(\rho_\nu^{-1})}{\delta} \frac{\nu}{\sqrt\rn}\right) \sum_{i\in \mathcal{S}}  \int_{I_i} |(\tilde v_i)_x| dx +  \sum_{i\in \mathcal{S}}  \int_{I_i^c} |(\tilde v_i)_x| dx \\
&\le  C\left(\frac{\ln(\rho_\nu^{-1})}{\delta} \frac{\nu}{\sqrt\rn}\right) \sum_{i\in \mathcal{S}} |\s_i| + \sum_{i\in \mathcal{S}}  \int_{I_i^c} \frac{\s_i^2}{\nu} \exp\Big(-\frac{C^{-1}|\s_{i}||x-y_i|}{\nu} \Big)  dx \\
&\le  \frac{C\nu^{1/2}}{\delta}.
\end{aligned}
\end{align*}
Likewise, we have
\[
\int_\bbr \sum_{i\in \mathcal{R}}   |\bar v - v^R_i| | (v^R_i)_x| dx\le  \frac{C\nu^{1/2}}{\delta}.
\]

For the last estimate, using $v^\beta \mathbf{1}_{v>2v_*} \le C Q(v|\bar v)$ or $v^\beta \mathbf{1}_{v>2v_*} \le C p(v|\bar v)$, we have
\begin{align*}
\begin{aligned} 
&\int_{v>2v_*}  \Big( \sum_{i\in \mathcal{S}}   |\bar v - \tilde v_i| | (\tilde v_i)_x| \Big)^2 v^\beta dx \le C\frac{\ln(\rho_\nu^{-1})}{\delta} \int_{v>2v_*} \sum_{i\in \mathcal{S}}   |\bar v - \tilde v_i|^2 | (\tilde v_i)_x|^2 v^\beta dx\\
&\le  C\frac{\ln(\rho_\nu^{-1})}{\delta}  \left(\frac{\ln(\rho_\nu^{-1})}{\delta} \frac{\nu}{\sqrt\rn}\right)^2 \sum_{i\in \mathcal{S}}  \int_{I_i} \frac{|\s_i|^2}{\nu} |(\tilde v_i)_x| p(v|\bar v)  dx\\
&\quad +  C\frac{\ln(\rho_\nu^{-1})}{\delta}\sum_{i\in \mathcal{S}}  \frac{\s_i^4}{\nu^2} \exp\Big(-\frac{C^{-1}|\s_{i}|\sqrt\rn}{\nu} \Big) \int_{I_i^c}  Q(v|\bar v)  dx \\
\end{aligned}
\end{align*}
Then, using $|\s_i|\ge \rn$ for all $i\in \mathcal{S}$, we have
\begin{align*}
\begin{aligned} 
&\int_{v>2v_*}  \Big( \sum_{i\in \mathcal{S}}   |\bar v - \tilde v_i| | (\tilde v_i)_x| \Big)^2 v^\beta dx \\
&\le   \frac{C\nu^{1/3}}{\delta^3} \sum_{i\in \mathcal{S}} \int_{I_i} |(\tilde v_i)_x| p(v|\bar v)  dx\\
&\quad +C\frac{\ln(\rho_\nu^{-1})}{\delta} \frac{1}{\nu^2} \exp\Big(-\frac{1}{C\nu^{1/3}} \Big) \int_\bbr  Q(v|\bar v)  dx  \\
&\le  \frac{C\nu^{1/3}}{\delta^3} \sum_{i\in \mathcal{S}} \int_{I_i} |(\tilde v_i)_x| p(v|\bar v)  dx +  C  \frac{C\nu^{1/3}}{\delta^3} \int_\bbr Q(v|\bar v)  dx.
\end{aligned}
\end{align*}

\end{proof}

\section{Estimates for main terms localized by derivatives of rarefactions}\label{sec:rare}
\setcounter{equation}{0}

We here handle the following functional of \eqref{ybg-first}:
\begin{align*}
\begin{aligned}
\mathcal{H}_2^\nu(U) &:=  -\sum_{i\in \mathcal{R}}  \int_{\bbr} a \nabla^2\eta(\overline U)  \partial_x U^R_i  A(U|\overline U) dx  - \sum_{i\in \mathcal{R}}   \int_{\bbr}a (h-\bar h) (p'(\bar v)-p'(v^R_i)) (v^R_i)_x dx  \\
&\quad  - \sum_{i\in \mathcal{R}}\int_{\bbr} a\nabla^2\eta(\overline U)   (U- \overline U) \cdot \Big( -\dot \Lambda_i I +  \nabla A(U^R_i)\Big)  \partial_x U^R_i dx,\\
&=: \mathcal{H}_{21} + \mathcal{H}_{22} +\mathcal{H}_{23}.
\end{aligned}
\end{align*}

\subsection{Estimates on $\mathcal{H}_{21}$ and $\mathcal{H}_{22}$}

We first show that $\mathcal{H}_{21}$ is non-positive as follows. \\
Using Lemma \ref{lem:waves} and \eqref{formbase}, we have
\[
\mathcal{H}_{21}= -\sum_{i\in \mathcal{R}}  \int_{\bbr} a  \frac{\s_i^2}{2\nu}\exp \left( - \Big| \s_i \frac{x-x_i-\Lambda_i(t-t_j)}{\nu} \Big| \right)  \Big[ {\mathbf{r}}_{k_i} \Big( U^R_i \Big)  \Big]_2 p(v|\bar v) dx ,
\]
where $\Big[ {\mathbf{r}}_{k_i} \Big( U^R_i \Big)  \Big]_2$ denotes the second component of $ {\mathbf{r}}_{k_i} \Big( U^R_i \Big) $, which is positive, more specifically, there exists a constant $C>0$ (depending only on $U_*$) such that
\[
C<\Big[ {\mathbf{r}}_{k_i} \Big( U^R_i \Big)  \Big]_2 < 2C.
\]
This with Lemma \ref{lem:waves} implies
\[
\mathcal{H}_{21}  \le   -C \sum_{i\in \mathcal{R}}  \int_{\bbr} |(U^R_i)_x| p(v|\bar v) dx.
\]
Set 
\beq\label{gir}
G_i^R:=  \int_{\bbr} |(U^R_i)_x| p(v|\bar v) dx.
\eeq

To estimate $\mathcal{H}_{22}$, we first have
\begin{align*}
\begin{aligned}
\mathcal{H}_{22} & \le   \int_{\bbr} |h-\bar h|  \sum_{i\in \mathcal{R}} |\bar v-v^R_i | |(v^R_i)_x| dx  \\
 & \le 2 \int_{\bbr} |h-\bar h|^2 dx +  2  \int_{\bbr} \Big( \sum_{i\in \mathcal{R}}  |\bar v-v^R_i | |(v^R_i)_x|  \Big)^2 dx. 
\end{aligned}
\end{align*}
To estimate the last term, we use Lemma \ref{lem:finsout}. 
Therefore,
\[
\mathcal{H}_{22} \le 2 \int_{\bbr} |h-\bar h|^2 dx + C \frac{\nu^{1/3}}{\delta}.
\]

\subsection{Estimates on $\mathcal{H}_{23}$ with construction of shifts for rarefactions}\label{subsec-rar}

For $\mathcal{H}_{23}$, by Lemma \ref{lem:waves} and the definition of rarefaction, we first observe
\begin{align*}
\begin{aligned}
\mathcal{H}_{23} &=  - \sum_{i\in \mathcal{R}} \int_{\bbr}  \frac{\s_i^2}{2\nu}\exp \left( - \Big| \s_i \frac{x-y_i}{\nu} \Big| \right) \nabla^2\eta(\overline U) (U- \overline U) \cdot \Big[ -\dot \Lambda_i  I  +  \nabla A\left(  U^R_i  \right)\Big] {\mathbf{r}}_{k_i} \Big(  U^R_i  \Big) dx \\
&=  - \sum_{i\in \mathcal{R}} \int_{\bbr}  \frac{\s_i^2}{2\nu}\exp \left( - \Big| \s_i \frac{x-y_i}{\nu} \Big| \right) \nabla^2\eta(\overline U) (U- \overline U) \cdot \Big[ -\dot \Lambda_i   +  \lam_{k_i} \left(  U^R_i  \right)\Big] {\mathbf{r}}_{k_i} \Big(  U^R_i  \Big) dx\\
&=  \sum_{i\in \mathcal{R}} \int_{\bbr} p'(\bar v) (v- \bar v)  \Big[ -\dot \Lambda_i   +  \lambda_{k_i} \left({\mathbf{r}}_{k_i} \Big( U^R_i \Big) \right)\Big] \partial_x v^R_i dx \\
&\quad - \sum_{i\in \mathcal{R}} \int_{\bbr}  (h- \bar h)  \Big[ -\dot \Lambda_i   +  \lambda_{k_i} \left({\mathbf{r}}_{k_i} \Big( U^R_i  \Big) \right)\Big] \partial_x h^R_i dx\\
&=: \mathcal{H}_{231}+\mathcal{H}_{232},
\end{aligned}
\end{align*}
where $\lam_{k_i}$ is the eigenvalue associated with ${\mathbf{r}}_{k_i}$.
First, $\mathcal{H}_{231}$ is easily estimated as
\begin{align*}
\begin{aligned}
\mathcal{H}_{231} &\le C  \sum_{i\in \mathcal{R}} |\s_i| \int_{v\le 2v_*}  |v- \bar v|  | \partial_x v^R_i| dx +C  \sum_{i\in \mathcal{R}} |\s_i| \int_{v> 2v_*}  |v- \bar v|  | \partial_x v^R_i| dx \\
&\le C \delta \sum_{i\in \mathcal{R}} \int_{v\le 2v_*}  |v- \bar v|^2  | \partial_x v^R_i| dx + C \sum_{i\in \mathcal{R}} |\s_i| \int_{v\le 2v_*}  | \partial_x v^R_i| dx \\
&\quad  +C\delta  \sum_{i\in \mathcal{R}} \int_{v> 2v_*}  p(v| \bar v)  | \partial_x v^R_i| dx \\
&\le C \delta \sum_{i\in \mathcal{R}} G_i^R + C \delta L(0).
\end{aligned}
\end{align*}
On the other hand, handling $\mathcal{H}_{232}$ is subtle. For that, we use $h = u- \nu\mu(v) v^\gamma p(v)_x$ and the average of $u$ as
\[
(u)_R : =  \frac{1}{\int_\bbr \partial_x h^R_i  dx}\int_\bbr u \, \partial_x h^R_i  dx.
\] 
That is,
\begin{align*}
\begin{aligned}
\mathcal{H}_{232} & = - \sum_{i\in \mathcal{R}} \int_{\bbr}  (u- (u)_R)  \Big[ -\dot \Lambda_i   +  \lambda_{k_i}  \left({\mathbf{r}}_{k_i}  \Big( U^R_i  \Big) \right)\Big] \partial_x h^R_i dx  \\
& - \sum_{i\in \mathcal{R}} \int_{\bbr}  (u)_R  \Big[ -\dot \Lambda_i   +  \lambda_{k_i}  \left({\mathbf{r}}_{k_i}  \Big( U^R_i  \Big) \right)\Big] \partial_x h^R_i dx \\
& +\nu \sum_{i\in \mathcal{R}} \int_{\bbr} \mu(v) v^\gamma p(v)_x  \Big[ -\dot \Lambda_i   +  \lambda_{k_i}  \left({\mathbf{r}}_{k_i}  \Big( U^R_i  \Big) \right)\Big] \partial_x h^R_i  dx \\
& + \sum_{i\in \mathcal{R}} \int_{\bbr}  \bar h  \Big[ -\dot \Lambda_i   +  \lambda_{k_i} \left({\mathbf{r}}_{k_i} \Big( U^R_i  \Big) \right)\Big] \partial_x h^R_i dx \\
&=:J_1+J_2+J_3 +J_4.
\end{aligned}
\end{align*}
Define the shift $\Lambda_i$ as a unique Lipschitz solution of the following ODE:
\beq\label{rode}
\dot \Lambda_i (t) = F_i \big((u)_R \big),\qquad \Lambda_i(0)=0,
\eeq
where $F_i$ is a bounded Lipschitz function defined as \\
\[
F_i(y) =\left\{ \begin{array}{ll}
      \lambda_{k_i}  (U_{i+1/2}) ,\quad\mbox{if $y\le -1$} \\
       \lambda_{k_i} (U_{i-1/2}) ,\quad\mbox{if $y\ge 1$} \\
       linear,\quad\mbox{otherwise} \end{array} \right.
\]
Notice that since $(u)_R$ depends on $\Lambda_i(t)$ as $\partial_x h^R_i= \partial_x h^R_i(y_i(t))$ (see \eqref{urdef}) where the trajectory $y_i(t)$ is a function of $\Lambda_i(t)$ by \eqref{ycurve}, the above ODE is nonlinear. In addition, \eqref{rode} has a unique Lipschitz solution $\Lambda_i$, by Lemma \ref{lem:waves} and the definition of the rarefaction $h^R_i$.\\
Since
\beq\label{lamest}
 \lambda_{k_i} (U_{i-1/2}) \le \dot \Lambda_i \le  \lambda_{k_i}  (U_{i+1/2}) ,
\eeq
and $| \lambda_{k_i}  (U_{i+1/2}) - \lambda_{k_i}  (U_{i-1/2}) |\le C\s_i$ and
\[
\dot\Lambda =
\left\{ \begin{array}{ll}
      \lambda_{k_i}  (U_{i+1/2}) ,\quad\mbox{if $(u)_R \le -1$} \\
       \lambda_{k_i} (U_{i-1/2}) ,\quad\mbox{if $(u)_R \ge 1$}, \end{array} \right.
\]
we have
\[
J_2 \le C \sum_{i\in \mathcal{R}} \s_i \mathbf{1}_{|(u)_R|\le 1} \int_{\bbr} | \partial_x h^R_i  | dx  \le  C\sum_{i\in \mathcal{R}} \s_i^2 \le C\delta L(t) \le C\delta L(0).
\]

To estimate $J_1$, observe that using
\begin{align*}
\begin{aligned}
u(x) - (u)_R &=  \frac{1}{\int_\bbr  ( h^R_i )_x dx} \int_\bbr \big(u(x)-u(y)\big) \, ( h^R_i )_x (y) dy \\
& =  \frac{1}{\int_\bbr  ( h^R_i )_x dx} \int_\bbr  \left( \int^x_y \partial_z u(z) dz  \right) ( h^R_i )_x (y) dy,
\end{aligned}
\end{align*}
the estimates \eqref{lamest} and $\int_\bbr |\partial_x h^R_i | dx \sim C\s_i$ yields
\begin{align*}
\begin{aligned}
J_1
& \le C  \sum_{i\in \mathcal{R}} \int_{\bbr}  | ( h^R_i )_x (x)|   \int_\bbr  \left| \int^x_{y_i} \partial_z u(z) dz  \right| |( h^R_i )_x (y)| dy
dx\\
  & + C  \sum_{i\in \mathcal{R}} \int_{\bbr}  | ( h^R_i )_x (x)|   \int_\bbr  \left| \int^{y_i}_y \partial_z u(z) dz  \right| |( h^R_i )_x (y)| dy
dx \\
&=: J_{11} + J_{12}  .
\end{aligned}
\end{align*}
Using $|( h^R_i )_x (z)|\le |( h^R_i )_x (x)|$ for all ${y_i}\le |z|\le |x|$, we have
\begin{align*}
\begin{aligned}
J_{11} & \le C \sum_{i\in \mathcal{R}} \int_{\bbr}  \sqrt{|( h^R_i )_x (x)|}   \int_\bbr \int_\bbr  \sqrt{|( h^R_i )_x (z)|} |\partial_z u(z)| dz  |( h^R_i )_x (y)| dy
dx\\
& \le C \sum_{i\in \mathcal{R}} \int_{\bbr}  \sqrt{|( h^R_i )_x (x)|}   \int_\bbr \| v^{-1/2} u_x \|_{L^2} \| v ( h^R_i )_x \|_{L^1}^{1/2} |( h^R_i )_x (y)| dy
dx\\
& \le C \sum_{i\in \mathcal{R}} \s_i  \| v^{-1/2} u_x \|_{L^2} \| v ( h^R_i )_x \|_{L^1}^{1/2} \int_{\bbr}  \sqrt{|( h^R_i )_x (x)|} dx.
\end{aligned}
\end{align*}
Then, using $ \int_{\bbr}  \sqrt{|( h^R_i )_x (x)|} dx \le C\sqrt\nu$,
\begin{align*}
\begin{aligned}
J_{11} 
& \le C\sum_{i\in \mathcal{R}} \s_i \sqrt{\nu}  \| v^{-1/2} u_x \|_{L^2}  \| v ( h^R_i )_x \|_{L^1}^{1/2} .
\end{aligned}
\end{align*}
Using $\s_i\le \delta$, $\sum_{i\in \mathcal{R}} |\s_i| \le L(0) \ll1$, and 
\[
\| v ( h^R_i )_x \|_{L^1} \le C \int_\bbr \Big( 1 + p(v|\bar v) \Big) | ( h^R_i )_x| dx \le C (\s_i + G^R_i),
\] 
we have
\begin{align*}
\begin{aligned}
J_{11} 
& \le C\sum_{i\in \mathcal{R}} \s_i \sqrt{\nu}  \| v^{-1/2} u_x \|_{L^2}  \sqrt{\s_i + G^R_i} \\
&\le C\sum_{i\in \mathcal{R}} \s_i^{5/4} \nu  \| v^{-1/2} u_x \|_{L^2}^2 + C\sum_{i\in \mathcal{R}} \s_i^{3/4} (\s_i + G^R_i) \\
&\le C\delta^{1/4} L(0) \nu  \| v^{-1/2} u_x \|_{L^2}^2 + C\delta^{3/4} L(0) + C\delta^{3/4} \sum_{i\in \mathcal{R}} G^R_i .
\end{aligned}
\end{align*}
To control the first term on r.h.s. above, we may use the diffusion term $\overline{\mathcal{D}}$ in \eqref{odiff} that is bounded after integrating in time by Remark \ref{rem:origin}. Especially, since the condition \eqref{assmu} and $\mu(v):=\frac{\bar\mu(v)}{\gamma}$ implies $\bar\mu(v)\ge C_1\gamma$,  and so
\[
\overline{\mathcal{D}} \ge C_1\nu\gamma  \int_\bbr \frac{1}{v} |u_x|^2 dx,
\]
we have
\[
J_{11}  \le C\delta^{1/4} \overline{\mathcal{D}}  + C\delta^{3/4}  + C\delta^{1/4} \sum_{i\in \mathcal{R}} G^R_i .
\]
Likewise, $J_{12}$ is estimated as above. Therefore,
\[
J_1\le C\delta^{1/4} \overline{\mathcal{D}}  + C\delta^{3/4}  + C\delta^{1/4} \sum_{i\in \mathcal{R}} G^R_i .
\]

To estimate $J_3$, we use the decompositions \eqref{mudecom} on $\mu=\mu_1+\mu_2$ to separate $J_3$ into two parts: $J_3 = J_b + J_s$, where
\begin{align*}
\begin{aligned}
J_b &:= \nu \sum_{i\in \mathcal{R}} \int_\bbr \mu_2(v) v^\gamma p(v)_x  \Big[ -\dot \Lambda_i   +  \lambda_{k_i}  \left({\mathbf{r}}_{k_i}  \Big( U^R_i  \Big) \right)\Big] \partial_x h^R_i  dx ,\\
J_s &:= \nu \sum_{i\in \mathcal{R}} \int_\bbr \mu_1(v) v^\gamma p(v)_x  \Big[ -\dot \Lambda_i   +  \lambda_{k_i}  \left({\mathbf{r}}_{k_i}  \Big( U^R_i  \Big) \right)\Big] \partial_x h^R_i  dx .
\end{aligned}
\end{align*}
To control $J_b$, we use a function 
\beq\label{phif}
\Phi(v):= \int^v_{2v_*} \frac{\gamma\mu_2(s)}{s} ds.
\eeq
Since $\Phi'(v) = - \mu_2(v) v^\gamma p'(v)$,
\begin{align*}
\begin{aligned}
J_b &= - \nu \sum_{i\in \mathcal{R}} \int_\bbr \partial_x \Phi(v)  \Big[ -\dot \Lambda_i   +  \lambda_{k_i}  \left({\mathbf{r}}_{k_i}  \Big( U^R_i  \Big) \right)\Big] \partial_x h^R_i  dx \\
&=  \nu \sum_{i\in \mathcal{R}} \int_\bbr  \Phi(v)\partial_x \left( \Big[ -\dot \Lambda_i   +  \lambda_{k_i}  \left({\mathbf{r}}_{k_i}  \Big( U^R_i  \Big) \right)\Big] \partial_x h^R_i \right) dx.
\end{aligned}
\end{align*}
Observe that $\mu_2(v) \le C \mathbf{1}_{v>2v_*} (v)$, and so
\beq\label{estphi}
 \Phi(v) =  \Phi(v)\mathbf{1}_{v>2v_*} (v) \le C \int^v_{2 v_*} \frac{1}{s} ds \le C |\ln v -\ln(2 v_*) | \le C p(v|\bar v). 
\eeq
In addition, since $|\Phi(\bar v)|\le C$ and (by Lemma \ref{lem:waves})
\[
\left| \partial_x \left( \Big[ -\dot \Lambda_i   +  \lambda_{k_i}  \left({\mathbf{r}}_{k_i}  \Big( U^R_i  \Big) \right)\Big] \partial_x h^R_i \right) \right| \le C \big( |(U^R_i)_{xx}| +|(U^R_i)_x|^2 \big)  \le C\frac{|\s_i|}{\nu} |(U^R_i)_x|,
\]
we have
\[
|J_b| \le C \sum_{i\in \mathcal{R}} |\s_i| \int_\bbr p(v|\bar v)  |(U^R_i)_x| dx \le C \delta  \sum_{i\in \mathcal{R}} G^R_i.
\]
To control $J_s$, we use the following diffusion to be derived from the parabolic part $\mathcal{P}^\nu$ in Section \ref{sec:shock}:
\beq\label{diffs}
D(U) : = \int_\bbr a \mu_1 (v) v^\gamma |(p(v)-p(\bar v))_x|^2 dx.
\eeq
Using \eqref{lamest}, we first have
\[
|J_s| \le C \nu \sum_{i\in \mathcal{R}} |\s_i| \int_{\bbr} \mu_1(v) v^\gamma |(p(v)_x|  |\partial_x h^R_i|  dx.
\]
 we have
\begin{align*}
\begin{aligned}
|J_s| &\le C \nu \sum_{i\in \mathcal{R}} |\s_i| \int_{\bbr} \mu_1(v) v^\gamma \Big( |(p(v)-p(\bar v))_x| + |p(\bar v)_x|\Big)  |\partial_x h^R_i|  dx \\
&\le  \nu \sum_{i\in \mathcal{R}} |\s_i| \int_{\bbr} a \mu_1 (v) v^\gamma |(p(v)-p(\bar v))_x|^2 dx + C \nu  \sum_{i\in \mathcal{R}} |\s_i|  \int_{\bbr} v^{\beta} |\partial_x h^R_i|^2  dx \\
&\quad + C \nu \sum_{i\in \mathcal{R}} |\s_i| \int_{\bbr} v^{\beta} |\bar v_x|  |\partial_x h^R_i|  dx \\
&\le L(0) \nu D + C \nu  \sum_{i\in \mathcal{R}} |\s_i|  \int_{\bbr} (1+ p(v|\bar v)) |\partial_x h^R_i|^2  dx \\
&\quad + C \nu \sum_{i\in \mathcal{R}} |\s_i| \int_{\bbr} (1+ p(v|\bar v)) |\bar v_x|  |\partial_x h^R_i|  dx.
\end{aligned}
\end{align*}
Using Lemma \ref{lem:waves} and
\begin{align*}
\begin{aligned}
& |\bar v_x| \le C \sum_{j\in \mathcal{S}} |(\tilde v_j)_x|+ \sum_{j\in \mathcal{R}} |(v^R_j)_x| + \sum_{j\in \mathcal{NP}} |(v^P_j)_x| \le C\frac{L(0)}{\nu}, \\
& \|\bar v_x\|_{L^1(\bbr)} \le  \sum_{j\in \mathcal{S}} \|(\tilde v_j)_x\|_{L^1(\bbr)}+ \sum_{j\in \mathcal{R}} \|(v^R_j)_x\|_{L^1(\bbr)} + \sum_{j\in \mathcal{NP}} \|(v^P_j)_x\|_{L^1(\bbr)} \le L(0) ,
\end{aligned}
\end{align*}
we have
\[
|J_s| \le  L(0) \nu D + C \delta^2 L(0) +C\delta^2  \sum_{i\in \mathcal{R}} G^R_i .
\]

Therefore, 
\[
J_3 \le C \delta  \sum_{i\in \mathcal{R}} G^R_i + L(0) \nu D + C \delta^2 L(0).
\]
Since $J_4 \le C\sum_{i\in \mathcal{R}} |\s_i|  \|\partial_x h^R_i\|_{L^1(\bbr)} \le C\delta L(0)$, we have
\[
\mathcal{H}_{232} \le  CC\delta^{1/4} \overline{\mathcal{D}}  + C\delta^{3/4}  + C \delta^{3/4}  \sum_{i\in \mathcal{R}} G^R_i + L(0) \nu D .
\]

Thus,
\[
\mathcal{H}_{23} \le C \delta^{1/4} \overline{\mathcal{D}}  + C \delta^{3/4}  + C  \delta^{3/4} \sum_{i\in \mathcal{R}} G^R_i + L(0) \nu D .
\]

Hence, combining the above estimates, and taking $L(0)<\delta_*$, we have
\begin{align}
\begin{aligned}\label{fh2}
\mathcal{H}_2^\nu &\le  -C^{-1} \sum_{i\in \mathcal{R}}  G_i^R + \delta_* \nu D + C \int_{\bbr} |h-\bar h|^2 dx +  C \delta^{1/4} \overline{\mathcal{D}}  + C \delta^{3/4}   + C \frac{\nu^{1/3}}{\delta}.
\end{aligned}
\end{align}
where 
\begin{align*}
\begin{aligned}
&G_i^R:=  \int_{\bbr} |(U^R_i)_x| p(v|\bar v) dx,\\
&D : = \int_{\bbr} a \mu_1 (v) v^\gamma |(p(v)-p(\bar v))_x|^2 dx.
\end{aligned}
\end{align*}

\subsection{Estimates of minor terms}
Before performing the normalization, we here estimate the remaining minor terms such as $\mathcal{H}_3^{\nu}$ and the third term of $\mathcal{H}_1^{\nu}$ in  \eqref{ybg-first}:
 \begin{align*}
\begin{aligned}
&\mathcal{H}_3^\nu := -  \sum_{i\in \mathcal{NP}}\int_{\bbr} a (h-\bar h)  p'(\bar v) \partial_x v^P_i dx,\\
& \mathcal{H}_{13}^\nu := - \int_{\bbr}  a (h-\bar h) \sum_{i\in \mathcal{S}} (p'(\bar v)-p'(\tilde v_i)) (\tilde v_i)_x dx,
 \end{aligned}
\end{align*}
First, we estimate $\mathcal{H}_3^\nu$:
\[
\mathcal{H}_3^\nu \le    \sum_{i\in \mathcal{NP}}\int_{\bbr} a |h-\bar h| |(v^P_i)_x| dx \le  \int_\bbr a |h-\bar h|^2 dx + C \int_\bbr \Big(\sum_{i\in \mathcal{NP}} |(v^P_i)_x| \Big)^2 dx
\]
Using Lemmas \ref{lem:waves}, \ref{lemma_key2},
\[
 \int_\bbr \Big(\sum_{i\in \mathcal{NP}} |(v^P_i)_x| \Big)^2 dx =  \int_\bbr \sum_{i\in \mathcal{NP}} |(v^P_i)_x|^2 dx \le C \sum_{i\in \mathcal{NP}} \frac{|\s_i|^2}{\sqrt\rn} \le C\sqrt\rn .
\]
Thus,
\beq\label{fh3}
\mathcal{H}_3^\nu \le  \int_\bbr a |h-\bar h|^2 dx + C\sqrt\rn .
\eeq
We use Lemma \ref{lem:finsout} to estimate 
\begin{align}
\begin{aligned} \label{fh13}
\mathcal{H}_{13}^\nu &\le C  \int_{\bbr}  a |h-\bar h| \sum_{i\in \mathcal{S}} |\bar v - \tilde v_i| |(\tilde v_i)_x| dx \\
&\le \int_\bbr a |h-\bar h|^2 dx + C \int_\bbr  \Big( \sum_{i\in \mathcal{S}}   |\bar v - \tilde v_i| | (\tilde v_i)_x| \Big)^2 dx \\
&\le \int_\bbr a |h-\bar h|^2 dx +  \frac{C\nu^{1/3}}{\delta^3} .
\end{aligned}
\end{align}

\section{Decomposition and estimates for parabolic term} \label{sec:para}
\setcounter{equation}{0}

Since the diffusion term $D$ introduced in \eqref{diffs} should be extracted from the parabolic term $\mathcal{P}^\nu(U)$ of \eqref{ybg-first}, we will decompose $\mathcal{P}^\nu(U)$ as follows. First, since $\mu_2(\tilde v_i)=0$ for all $i$ by \eqref{mu2} and $\tilde v_i <2v_*$, we have
\begin{align}
\begin{aligned} \label{deco-para}
\mathcal{P}^\nu(U) &= \mathcal{P}^b(U) +\mathcal{P}^s(U), \\
\mathcal{P}^b(U) &:= \nu \int_{\bbr} a (p(v)-p(\bar v) ) \partial_{x}\big(\mu_2(v ) v^\gamma \partial_x p(v )\big) dx, \\
\mathcal{P}^s(U) & :=  \nu \int_{\bbr} a (p(v)-p(\bar v) )\partial_{x}\Big(\mu_1(v ) v^\gamma \partial_x p(v ) - \sum_{i\in \mathcal{S}} \mu_1(\tilde v_i) \tilde v_i^{\gamma} \partial_x p(\tilde v_i)  \Big) dx .
\end{aligned}
\end{align}

\subsection{ Estimate of $\mathcal{P}^b$ }
First, integrate by parts as follows:
\begin{align*}
\begin{aligned}
\mathcal{P}^b(U) &= \nu \int_{\bbr} a p(v) \partial_{x}\big(\mu_2(v ) v^\gamma \partial_x p(v )\big) dx
- \nu \int_{\bbr} a p(\bar v)  \partial_{x}\big(\mu_2(v ) v^\gamma \partial_x p(v )\big) dx \\
& =: - \nu \mathcal{D}_b +  \mathcal{P}^b_1+\mathcal{P}^b_2+\mathcal{P}^b_3,
\end{aligned}
\end{align*}
where we have a good term 
\beq\label{defdb}
\mathcal{D}_b := \int_\bbr a  \mu_2(v ) v^\gamma |\partial_x p(v) |^2 dx,
\eeq
and
\begin{align*}
\begin{aligned}
\mathcal{P}^b_1 & := - \nu  \sum_{i\in \mathcal{S}}   \int_\bbr   (a_i)_x p(v) \mu_2(v ) v^\gamma \partial_x p(v ) dx, \\
\mathcal{P}^b_2 & := \nu \int_\bbr a  p'(\bar v) \bar v_x \mu_2(v ) v^\gamma p'(v ) v_x dx ,\\
\mathcal{P}^b_3 & :=  \nu  \sum_{i\in \mathcal{S}}   \int_\bbr  (a_i)_x p(\bar v) \mu_2(v ) v^\gamma p'(v ) v_x  dx.
\end{aligned}
\end{align*}

Since $\mu_2(v) \le C\mathbf{1}_{v>2v_*}(v)$ by \eqref{mu2}, and so $p(v)^2 \mu_2(v) v^\gamma \le C \mathbf{1}_{v>2v_*}(v) \le C p(v|\bar v)$, we have
\begin{align*}
\begin{aligned}
\mathcal{P}^b_1 
  & \le \frac{\nu}{2}\mathcal{D}_b + C\nu \int_\bbr \Big(\sum_{i\in \mathcal{S}} |(a_i)_x| \Big)^2 p(v|\bar v) dx.
\end{aligned}
\end{align*}
Using \eqref{avr} and Lemma \ref{lem:waves}, we have
\begin{align}
\begin{aligned} \label{p21}
&\nu   \int_\bbr \Big(\sum_{i\in \mathcal{S}} |(a_i)_x| \Big)^2  p(v|\bar v)  dx  \\
&\le  \frac{C\nu}{\delta_*^2}  \int_\bbr \Big(\sum_{i\in \mathcal{S}} |(\tilde v_i)_x|\Big)^2 p(v|\bar v)  dx \\
&\le \frac{C\nu}{\delta_*^2}  \int_\bbr   \left( \sum_{i\in \mathcal{S}} \frac{ |\s_i|^2 }{\nu} |(\tilde v_i)_x|  +  \sum_{i\in \mathcal{S}} |(\tilde v_i)_x| \bigg( \sum_{i'\in \mathcal{S}-\{i\}}  \frac{ |\s_{i'}|^2 }{\nu}  \bigg) \right) p(v|\bar v)  dx \\
&\le  \frac{CL(0)}{\delta_*^2}   \sum_{i\in \mathcal{S}}  \int_\bbr |(\tilde v_i)_x| p(v|\bar v) dx .
\end{aligned}
\end{align}
Thus, using $L(0)<\delta_*^3$,
\[
\mathcal{P}^b_1  \le  \frac{\nu}{2}\mathcal{D}_b +  C\delta_*   \sum_{i\in \mathcal{S}}  \int_\bbr |(\tilde v_i)_x| p(v|\bar v) dx.
\]

To control $\mathcal{P}^b_2$, as done before, we use \eqref{phif}
to have
\begin{align*}
\begin{aligned}
\mathcal{P}^b_2  &= -  \nu \int_\bbr a  p'(\bar v) \bar v_x \partial_x \Phi(v) dx \\
 & =  \nu \sum_{i\in \mathcal{S}} \int_\bbr (a_i)_x  p'(\bar v) \bar v_x  \Phi(v) dx +  \nu \int_\bbr a  (p'(\bar v) \bar v_x )_x  \Phi(v) dx.
\end{aligned}
\end{align*}
Using \eqref{avr} and \eqref{estphi}, we have
\begin{align*}
\begin{aligned}
\mathcal{P}^b_2 & \le C \delta_*^{-1} \nu  \int_{v>2v_*} \Big| \sum_{i\in \mathcal{S}} (\tilde v_i)_x \Big| \Big|  \sum_{i\in \mathcal{S}}(\tilde v_i)_x+ \sum_{i\in \mathcal{R}}(v^R_i)_x+ \sum_{i\in \mathcal{NP}}(v^P_i)_x \Big| p(v|\bar v) dx\\
&\quad\quad + C  \nu  \int_{v>2v_*}  \Big|  \sum_{i\in \mathcal{S}}(\tilde v_i)_x+ \sum_{i\in \mathcal{R}}(v^R_i)_x+ \sum_{i\in \mathcal{NP}}(v^P_i)_x \Big|^2 p(v|\bar v) dx\\
&\quad\quad + C  \nu  \int_{v>2v_*} \Big|  \sum_{i\in \mathcal{S}}(\tilde v_i)_{xx}+ \sum_{i\in \mathcal{R}}(v^R_i)_{xx}+ \sum_{i\in \mathcal{NP}}(v^P_i)_{xx} \Big| p(v|\bar v) dx,
\end{aligned}
\end{align*}
Observe that using 1) and 2) of  Lemma \ref{lem:waves},
\begin{align*}
\begin{aligned}
& \frac{C\nu}{ \delta_*} \Big| \sum_{i\in \mathcal{S}} (\tilde v_i)_x \Big| \Big|  \sum_{i\in \mathcal{S}}(\tilde v_i)_x+ \sum_{i\in \mathcal{R}}(v^R_i)_x+ \sum_{i\in \mathcal{NP}}(v^P_i)_x \Big| \\
&+  C\nu \bigg( \Big|  \sum_{i\in \mathcal{S}}(\tilde v_i)_x+ \sum_{i\in \mathcal{R}}(v^R_i)_x+ \sum_{i\in \mathcal{NP}}(v^P_i)_x \Big|^2 
+\Big|  \sum_{i\in \mathcal{S}}(\tilde v_i)_{xx}+ \sum_{i\in \mathcal{R}}(v^R_i)_{xx}+ \sum_{i\in \mathcal{NP}}(v^P_i)_{xx} \Big| \bigg)\\
&\le \frac{C\nu}{ \delta_*}  \bigg[ \sum_{i\in \mathcal{S}}  \frac{ |\s_i|^2 }{\nu} |(\tilde v_i)_x|  +  \sum_{i\in \mathcal{S}} |(\tilde v_i)_x| \bigg( \sum_{i'\in \mathcal{S}-\{i\}}|(\tilde v_{i'})_x| + \sum_{i'\in \mathcal{R}}|(v^R_{i'})_x|+ \sum_{i'\in \mathcal{NP}}|(v^P_{i'})_x| \bigg)\bigg] \\
&\quad + C\nu    \bigg[ \sum_{i\in \mathcal{R}}  \frac{ |\s_i|^2}{\nu}|(v^R_i)_x| + \sum_{i\in \mathcal{NP}} |(v^P_i)_x|^2  \bigg] \\
&\quad + C\nu    \bigg[  \sum_{i , i' \in \mathcal{R}, ~ i\neq i'} |(v^R_i)_x|  |(v^R_{i'})_x|  + \sum_{i , i' \in \mathcal{NP}, ~ i\neq i'} |(v^P_i)_x|  |(v^P_{i'})_x|+ \sum_{i \in \mathcal{R}, ~ i'\in \mathcal{NP} } |(v^R_i)_x|  |(v^P_{i'})_x|  \bigg] \\
&\quad +C\nu    \Big(  \sum_{i\in \mathcal{S}} \frac{ |\s_i|}{\nu} |(\tilde v_i)_{x}| +\sum_{i\in \mathcal{R}} \frac{ |\s_i|}{\nu} |(v^R_i)_{x}| + \sum_{i\in \mathcal{NP}} |(v^P_i)_{xx}| \Big) .
\end{aligned}
\end{align*}
Then, using 3) and 5) of Lemma \ref{lem:waves}, 
we have
\begin{align*}
\begin{aligned}
\mathcal{P}^b_2 &\le \frac{C}{\delta_*}  \sum_{i\in \mathcal{S}}  \int_{v>2v_*}  |\s_i| |(\tilde v_i)_x| p(v|\bar v) dx
 + C\delta \sum_{i\in \mathcal{R}}   \int_{v>2v_*} |(v^R_i)_x| p(v|\bar v) dx
+  \frac{C\sqrt\nu L(0)}{ \delta_*} \int_{v>2v_*} p(v|\bar v) dx .
\end{aligned}
\end{align*}
In addition, for the last term, using
\beq\label{pqbig}
p(v|\bar v) \le C Q(v|\bar v) \quad \mbox{for } v>2v_*,
\eeq
we have
\begin{align*}
\begin{aligned}
\mathcal{P}^b_2& \le \frac{C}{\delta_*}  \sum_{i\in \mathcal{S}}  \int_\bbr  |\s_i| |(\tilde v_i)_x| p(v|\bar v) dx
 + C\delta \sum_{i\in \mathcal{R}}   \int_\bbr |(v^R_i)_x| p(v|\bar v) dx
+  \frac{C\sqrt\nu L(0)}{ \delta_*} \int_\bbr Q(v|\bar v) dx .
\end{aligned}
\end{align*}
Especially, using $|\s_i|\le L(0)< \delta_*^3$, we have
\[
\mathcal{P}^b_2 \le C \delta_*  \sum_{i\in \mathcal{S}}  \int_\bbr |(\tilde v_i)_x| p(v|\bar v) dx
 + C\delta \sum_{i\in \mathcal{R}}   G_i^R
+  C \int_\bbr Q(v|\bar v) dx .
\]
Next, as above, we use  \eqref{phif} and  \eqref{estphi} to have
\begin{align*}
\begin{aligned}
\mathcal{P}^b_3 &=  -\nu  \sum_{i\in \mathcal{S}}   \int_\bbr  (a_i)_x p(\bar v) \partial_x \Phi(v)  dx \\
& \le C\nu   \int_{v>2v_*} \left( \sum_{i\in \mathcal{S}} | (a_i)_{xx}| + \sum_{i\in \mathcal{S}} | (a_i)_{x}| \Big|  \sum_{j\in \mathcal{S}}(\tilde v_i)_x+ \sum_{j\in \mathcal{R}}(v^R_i)_x+ \sum_{j\in \mathcal{NP}}(v^P_i)_x \Big| \right) p(v|\bar v) dx \\
&  \le \frac{C\nu}{\delta_*} \int_{v>2v_*} \left( \sum_{i\in \mathcal{S}}  \frac{ |\s_i|}{\nu} |(\tilde v_i)_x|  + \sum_{i\in \mathcal{S}} |(\tilde v_i)_x|  \Big|  \sum_{j\in \mathcal{S}}(\tilde v_i)_x+ \sum_{j\in \mathcal{R}}(v^R_i)_x+ \sum_{j\in \mathcal{NP}}(v^P_i)_x \Big| \right) p(v|\bar v) dx \\
& \le \frac{C}{\delta_*}  \sum_{i\in \mathcal{S}}  \int_{v>2v_*}  |\s_i| |(\tilde v_i)_x| p(v|\bar v) dx
+  \frac{C\sqrt\nu L(0)}{ \delta_*} \int_{v>2v_*} Q(v|\bar v) dx \\
&\le C \delta_* \sum_{i\in \mathcal{S}}  \int_\bbr |(\tilde v_i)_x| p(v|\bar v) dx
+  C \int_\bbr Q(v|\bar v) dx .
\end{aligned}
\end{align*}

Therefore, we have
\begin{align}
\begin{aligned} \label{fpb}
\mathcal{P}^b
  \le -\frac{\nu}{2}\mathcal{D}_b + C \delta_*  \sum_{i\in \mathcal{S}}  \int_\bbr |(\tilde v_i)_x| p(v|\bar v) dx
 + C\delta \sum_{i\in \mathcal{R}}   G_i^R
+  C \int_\bbr Q(v|\bar v) dx .
\end{aligned}
\end{align}

\subsection{ Estimate of $\mathcal{P}^s$ }
First, integrate by parts as follows:
\begin{align*}
\begin{aligned}
\mathcal{P}^s(U) &= \nu \int_{\bbr} a (p(v)-p(\bar v) )\partial_{x}\Big(\mu_1(v ) v^\gamma \partial_x p(v ) - \sum_{i\in \mathcal{S}} \mu_1(\tilde v_i) \tilde v_i^{\gamma} \partial_x p(\tilde v_i)  \Big) dx \\
& =:  \mathcal{P}^s_1+\mathcal{P}^s_2+\mathcal{P}^s_3+\mathcal{P}^s_4,
\end{aligned}
\end{align*}
where
\begin{align*}
\begin{aligned}
& \mathcal{P}^s_1 : = -\nu \int_{\bbr} a \partial_x (p(v)-p(\bar v) ) \mu_1(v ) v^\gamma \Big( \partial_x p(v ) - \sum_{i\in \mathcal{S}} \partial_x p(\tilde v_i)  \Big) dx \\
& \mathcal{P}^s_2 : =  -\nu \int_{\bbr} a \partial_x (p(v)-p(\bar v) )  \sum_{i\in \mathcal{S}}\Big(\mu_1(v ) v^\gamma -\mu_1(\tilde v_i) \tilde v_i^{\gamma}\Big)  \partial_x p(\tilde v_i) dx \\
&\mathcal{P}^s_3 : = -\nu \int_{\bbr} \Big(\sum_{i\in \mathcal{S}} |(a_i)_x| \Big)  (p(v)-p(\bar v) ) \mu_1(v ) v^\gamma \Big( \partial_x p(v ) - \sum_{i\in \mathcal{S}} \partial_x p(\tilde v_i)  \Big) dx\\
&\mathcal{P}^s_4 : = -\nu \int_{\bbr}\Big(\sum_{i\in \mathcal{S}} |(a_i)_x| \Big)  (p(v)-p(\bar v) )  \sum_{i\in \mathcal{S}}\Big(\mu_1(v ) v^\gamma -\mu_1(\tilde v_i) \tilde v_i^{\gamma}\Big)  \partial_x p(\tilde v_i) dx .
\end{aligned}
\end{align*}

\noindent $\bullet$ {\bf Estimate on $ \mathcal{P}^s_1$ : }
First, we obtain the desired good term
\[D(U) = \int_\bbr a \mu_1(v ) v^\gamma |( p(v) -p(\bar v))_x |^2 dx,\]
from 
\begin{align*}
\begin{aligned} 
\mathcal{P}^s_1 &= - \nu D  \underbrace{-\nu \int_{\bbr} a \partial_x (p(v)-p(\bar v) ) \mu_1(v ) v^\gamma \Big( \partial_x p(\bar v ) - \sum_{i\in \mathcal{S}} \partial_x p(\tilde v_i)  \Big) dx  }_{=:\mathcal{P}^s_{11}}.
\end{aligned}
\end{align*}
Rewrite
\begin{align*}
\begin{aligned} 
\mathcal{P}^s_{11} &= - \nu \int_\bbr a  \partial_x (p(v)-p(\bar v)) \mu_1(v ) v^\gamma  p'(\bar v ) \Big( \sum_{i\in \mathcal{R}} (v^R_i)_x + \sum_{i\in \mathcal{NP}} (v^P_i)_x \Big) dx \\
& -\nu \int_{\bbr} a \partial_x (p(v)-p(\bar v) ) \mu_1(v ) v^\gamma \sum_{i\in \mathcal{S}} \Big(  p'(\bar v ) -  p'(\tilde v_i)  \Big) (\tilde v_i)_x dx \\
&=: \mathcal{P}^s_{111} + \mathcal{P}^s_{112}.
\end{aligned}
\end{align*}
Since  $\mu_1(v ) v^\gamma\le C v^\beta $ by the conditions \eqref{mu1},  
we have
\[
 \mathcal{P}^s_{111}  
\le \delta_* \nu D + C \underbrace{\frac{\nu}{\delta_*}  \int_\bbr \Big(\sum_{i\in \mathcal{R}} |(v^R_i)_x| +\sum_{i\in \mathcal{NP}} |(v^P_i)_x| \Big)^2 v^\beta\,dx}_{=:J}.
\]
Then, using $v^{\beta} \le C+ C p(v|\bar v) \mathbf{1}_{v>2v_*}$ by $\beta:=\gamma-\alpha \in (0,1)$, and Lemma \ref{lem:waves} and \eqref{pqbig} to have
\begin{align}
\begin{aligned} \label{jest}
J & \le  C \frac{\nu}{\delta_*}  \left( \sum_{i\in \mathcal{R}} \frac{\s_i^2}{\nu}  \int_\bbr  |(v^R_i)_x| dx +\sum_{i\in \mathcal{R}, ~ i'\in \mathcal{NP}}  \int_{|x-y_{i'}|\le\sqrt\rn}  |(v^R_i)_x| \frac{|\s_{i'}|}{\rn}  dx + \sum_{i\in \mathcal{NP}} \frac{|\s_i|^2}{\rn\sqrt\rn}   \right) \\
&\quad +  C \frac{\nu}{\delta_*} \int_{v>2v_*}  \left( \sum_{i\in \mathcal{R}} \frac{\s_i^2}{\nu}   |(v^R_i)_x|  +\sum_{i\in \mathcal{R}, ~ i'\in \mathcal{NP}}    |(v^R_i)_x| \frac{|\s_{i'}|}{\rn}  + \sum_{i\in \mathcal{NP}} \frac{|\s_i|^2}{\rn\sqrt\rn}   \right)p(v|\bar v)  dx\\
& \le  C  \frac{\nu}{\delta_*}  \left(  \frac{\delta^2}{\nu} \sum_{i\in \mathcal{R}} \s_i +\sum_{i\in \mathcal{R}, ~ i'\in \mathcal{NP}}   \frac{\s_i |\s_{i'}|}{\sqrt\rn} + \sum_{i\in \mathcal{NP}} \frac{|\s_i|^2}{\rn\sqrt\rn} \right) \\
&\quad +  C \frac{\delta^2+ \nu}{\delta_*} \sum_{i\in \mathcal{R}}  \int_\bbr   |(v^R_i)_x|  p(v|\bar v) dx + C\frac{\sqrt\nu}{\delta_*}  \int_\bbr   Q(v|\bar v) dx\\
&\le   C   \frac{\delta^2}{\delta_*}  +  C   \frac{\sqrt\nu}{\delta_*} +  C \frac{\delta^2+ \nu}{\delta_*} \sum_{i\in \mathcal{R}} G_i^R + C  \int_\bbr   Q(v|\bar v) dx.
\end{aligned}
\end{align}
Thus, especially by $\nu<\delta^2$,
\[
 \mathcal{P}^s_{111} \le  \delta_* \nu D +  C   \frac{\delta^2}{\delta_*}  +  C   \frac{\sqrt\nu}{\delta_*} +  C \frac{\delta^2}{\delta_*} \sum_{i\in \mathcal{R}} G_i^R + C  \int_\bbr   Q(v|\bar v) \,dx.
\]
Likewise, using Lemma \ref{lem:finsout}, we have
\begin{align*}
\begin{aligned}
 \mathcal{P}^s_{112}  &\le \nu \int_{\bbr} a |\partial_x (p(v)-p(\bar v) )| \mu_1(v ) v^\gamma \sum_{i\in \mathcal{S}} \Big|  \bar v  -  \tilde v_i  \Big| |(\tilde v_i)_x| dx \\
&\le \delta_* \nu D + C \frac{\nu}{\delta_*}  \int_\bbr \Big( \sum_{i\in \mathcal{S}} \Big|  \bar v  -  \tilde v_i  \Big| |(\tilde v_i)_x|  \Big)^2 \Big(1+ v^\beta \mathbf{1}_{v>2v_*} \Big)\, dx \\
&\le  \delta_* \nu D + C \frac{\nu^{4/3}}{\delta\delta_*} +  \frac{C\nu^{4/3}}{\delta^3\delta_*} \sum_{i\in \mathcal{S}} \int_\bbr |(\tilde v_i)_x| p(v|\bar v)  dx +  C  \frac{C\nu^{4/3}}{\delta^3\delta_*} \int_\bbr Q(v|\bar v)  dx.
\end{aligned}
\end{align*}
Thus, taking $\nu\ll1$ such that $\frac{\nu^{4/3}}{\delta^3\delta_*} \le C\delta_*$, we have
\begin{align*}
\begin{aligned} 
\mathcal{P}^s_1 &= - \nu(1-2\delta_*) D    +  C \delta \sum_{i\in \mathcal{R}} G_i^R + C  \int_\bbr   Q(v|\bar v) \,dx + C\delta_* \sum_{i\in \mathcal{S}} \int_\bbr |(\tilde v_i)_x| p(v|\bar v)  dx \\
&\quad +  C   \frac{\delta^2}{\delta_*}  +  C   \frac{\sqrt\nu}{\delta_*} + C \frac{\nu^{4/3}}{\delta\delta_*} .
\end{aligned}
\end{align*}

\noindent $\bullet$ {\bf Estimate on $ \mathcal{P}^s_2$ : }
\begin{align*}
\begin{aligned}
\mathcal{P}^s_2 & \le  \nu \int_{\bbr} a |\partial_x (p(v)-p(\bar v) )| \Big|\mu_1(v ) v^\gamma -\mu_1(\bar v) \bar v^{\gamma}\Big|   \sum_{i\in \mathcal{S}}
|(\tilde v_i)_x| dx \\
&\quad +C \nu \int_{\bbr} a |\partial_x (p(v)-p(\bar v) )|  \sum_{i\in \mathcal{S}}\big|\bar v -\tilde v_i \big|  
|(\tilde v_i)_x| dx \\
&=: \mathcal{P}^s_{21} + \mathcal{P}^s_{22}.
\end{aligned}
\end{align*}
Observe that since $0\le \beta:=\gamma-\alpha \le 1 < \gamma$ and  $\mu_1(v ) v^\gamma \sim v^\beta$ by \eqref{mu1},
\[
 \frac{| \mu_1(v ) v^\gamma-\, \mu_1(\bar v) \bar v^\gamma |^2}{\mu_1(v ) v^\gamma } \le C v^{-\beta}\mathbf{1}_{v\le v_*/2} + C|v^\beta - \bar v^\beta|^2 \mathbf{1}_{v\ge v_*/2} \le C p(v|\bar v).
\]
This together with using the same estimates as \eqref{p21} implies
\begin{align*}\begin{aligned}
 \mathcal{P}^s_{21}  &\le \nu C \sqrt{ D } \sqrt{ \int_\bbr \Big(\sum_{i\in \mathcal{S}} |(\tilde v_i)_x|\Big)^2  \frac{| \mu_1(v ) v^\gamma- \,\mu_1(\bar v) \bar v^\gamma |^2}{\mu_1(v ) v^\gamma } \, dx } \\
& \le \delta_* \nu D + C \delta_*^{-1} \nu  \int_\bbr \Big(\sum_{i\in \mathcal{S}} |(\tilde v_i)_x|\Big)^2 p(v|\bar v) \, dx \\
& \le \delta_* \nu D  +  \frac{CL(0)}{\delta_*}  \sum_{i\in \mathcal{S}}  \int_\bbr |(\tilde v_i)_x| p(v|\bar v) dx .
\end{aligned}\end{align*}
Likewise, using \eqref{p21} and Lemma \ref{lem:finsout}, we have
\begin{align*}\begin{aligned}
 \mathcal{P}^s_{22}  &\le \nu C \sqrt{ D } \sqrt{ \int_\bbr \Big( \sum_{i\in \mathcal{S}} \Big|  \bar v  -  \tilde v_i  \Big| |(\tilde v_i)_x|  \Big)^2  v^{-\beta} \, dx } \\
& \le \delta_* \nu D + \frac{C\nu}{\delta_*}   \int_{\bbr} \Big( \sum_{i\in \mathcal{S}} \Big|  \bar v  -  \tilde v_i  \Big| |(\tilde v_i)_x|  \Big)^2  \Big( p(v|\bar v) \mathbf{1}_{v\le v_*/2} +  \mathbf{1}_{v< v_*/2} \Big) \, dx \\
& \le \delta_* \nu D + \frac{C\nu}{\delta_*}   \int_{v\le v_*/2} \Big( \sum_{i\in \mathcal{S}} |(\tilde v_i)_x|  \Big)^2  p(v|\bar v) \, dx + \frac{C\nu}{\delta_*}   \int_{\bbr} \Big( \sum_{i\in \mathcal{S}} \Big|  \bar v  -  \tilde v_i  \Big| |(\tilde v_i)_x|  \Big)^2\, dx \\
& \le \delta_* \nu D  +  \frac{CL(0)}{\delta_*}  \sum_{i\in \mathcal{S}}  \int_\bbr |(\tilde v_i)_x| p(v|\bar v) dx +  C \frac{\nu^{4/3}}{\delta\delta_*}.
\end{aligned}\end{align*}
Thus,
\[
\mathcal{P}^s_2 \le 2\delta_* \nu D  +  C\delta_*  \sum_{i\in \mathcal{S}}  \int_\bbr |(\tilde v_i)_x| p(v|\bar v) dx + +  C \frac{\nu^{4/3}}{\delta\delta_*}.
\]

\noindent $\bullet$ {\bf Estimate on $ \mathcal{P}^s_3$ : }

\begin{align*}
\begin{aligned}
\mathcal{P}^s_3 & = -\nu \int_{\bbr} \Big(\sum_{i\in \mathcal{S}} |(a_i)_x| \Big)  (p(v)-p(\bar v) ) \mu_1(v ) v^\gamma  \partial_x \big( p(v ) - p(\bar v)  \big) dx\\
 &- \nu \int_\bbr \Big(\sum_{i\in \mathcal{S}} |(a_i)_x| \Big)  (p(v)-p(\bar v) ) \mu_1(v ) v^\gamma p'(\bar v) \Big( \sum_{i\in \mathcal{R}} (v^R_i)_x + \sum_{i\in \mathcal{NP}} (v^P_i)_x \Big) dx \\
 & -\nu \int_{\bbr} \Big(\sum_{i\in \mathcal{S}} |(a_i)_x| \Big) (p(v)-p(\bar v) ) \mu_1(v ) v^\gamma \sum_{i\in \mathcal{S}} \Big(  p'(\bar v ) -  p'(\tilde v_i)  \Big) (\tilde v_i)_x dx \\
 & =:  \mathcal{P}^s_{31} +  \mathcal{P}^s_{32} +  \mathcal{P}^s_{33} .   
\end{aligned}
\end{align*}

\begin{align*}\begin{aligned}
 \mathcal{P}^s_{31}  &\le \nu C \sqrt{ D } \sqrt{ \int_\bbr \Big(\sum_{i\in \mathcal{S}} |(a_i)_x| \Big)^2 v^\beta |p(v)-p(\bar v) |^2 \, dx } \\
& \le \delta_* \nu D +  \frac{C\nu}{\delta_*} \int_\bbr \Big(\sum_{i\in \mathcal{S}} |(a_i)_x| \Big)^2 v^\beta |p(v)-p(\bar v) |^2 \, dx ,
\end{aligned}\end{align*}
where the last term will be controlled later.\\
Using \eqref{jest},
\begin{align*}\begin{aligned}
 \mathcal{P}^s_{32}  
& \le C\nu \int_\bbr \Big(\sum_{i\in \mathcal{S}} |(a_i)_x| \Big)^2 v^\beta |p(v)-p(\bar v) |^2 \, dx + C\nu  \int_\bbr \Big(\sum_{i\in \mathcal{R}} |(v^R_i)_x| +\sum_{i\in \mathcal{NP}} |(v^P_i)_x| \Big)^2 v^\beta\,dx \\
&\le C\nu \int_\bbr \Big(\sum_{i\in \mathcal{S}} |(a_i)_x| \Big)^2 v^\beta |p(v)-p(\bar v) |^2 \, dx\\
&\quad + C  \delta^2  +  C\sqrt\nu +  C (\delta^2+ \nu) \sum_{i\in \mathcal{R}} G_i^R + C  \int_\bbr   Q(v|\bar v) \,dx.
\end{aligned}\end{align*}
Likewise, we use Lemma \ref{lem:finsout} to have
\begin{align*}\begin{aligned}
 \mathcal{P}^s_{33}  
& \le C\nu \int_\bbr \Big(\sum_{i\in \mathcal{S}} |(a_i)_x| \Big)^2 v^\beta |p(v)-p(\bar v) |^2 \, dx \\
&\quad +C\nu \int_{v\le 2v_*}  \Big( \sum_{i\in \mathcal{S}}   |\bar v - \tilde v_i| | (\tilde v_i)_x| \Big)^2 dx +C\nu \int_{v>2v_*}  \Big( \sum_{i\in \mathcal{S}}   |\bar v - \tilde v_i| | (\tilde v_i)_x| \Big)^2 v^\beta dx \\
& \le C\nu \int_\bbr \Big(\sum_{i\in \mathcal{S}} |(a_i)_x| \Big)^2 v^\beta |p(v)-p(\bar v) |^2 \, dx \\
&\quad +C \frac{\nu^{1/3}}{\delta} + \frac{C\nu^{1/3}}{\delta^3} \sum_{i\in \mathcal{S}} \int_\bbr |(\tilde v_i)_x| p(v|\bar v)  dx +   \frac{C\nu^{1/3}}{\delta^3}  \int_\bbr Q(v|\bar v)  dx .
\end{aligned}\end{align*}
Thus,  taking $\nu\ll1$ as $\frac{\nu^{1/3}}{\delta^3} <\delta_*$,
\begin{align*}\begin{aligned}
 \mathcal{P}^s_{3}  
& \le \delta_* \nu D +  \frac{C\nu}{\delta_*} \int_\bbr \Big(\sum_{i\in \mathcal{S}} |(a_i)_x| \Big)^2 v^\beta |p(v)-p(\bar v) |^2 \, dx +  C \delta^2 \sum_{i\in \mathcal{R}} G_i^R + C  \int_\bbr   Q(v|\bar v) \,dx  \\
&\quad  +C\delta_* \sum_{i\in \mathcal{S}} \int_\bbr |(\tilde v_i)_x| p(v|\bar v)  dx + C  \delta^2  +  C\sqrt\nu+C \frac{\nu^{1/3}}{\delta} .
\end{aligned}\end{align*}

\noindent $\bullet$ {\bf Estimate on $ \mathcal{P}^s_4$ : }
\begin{align*}
\begin{aligned}
\mathcal{P}^s_4 & \le 
 \nu \int_{\bbr} \Big(\sum_{i\in \mathcal{S}} |(a_i)_x| \Big)  |p(v)-p(\bar v)| \Big|\mu_1(v ) v^\gamma -\mu_1(\bar v) \bar v^{\gamma}\Big|   \sum_{i\in \mathcal{S}}
|(\tilde v_i)_x| dx \\
&\quad +C \nu \int_{\bbr} \Big(\sum_{i\in \mathcal{S}} |(a_i)_x| \Big)  |p(v)-p(\bar v)|   \sum_{i\in \mathcal{S}}\big|\bar v -\tilde v_i \big|  
|(\tilde v_i)_x| dx \\
&=: \mathcal{P}^s_{41} + \mathcal{P}^s_{42}.
\end{aligned}
\end{align*}

Since 
\[
 |p(v)-p(\bar v)| \Big|\mu_1(v ) v^\gamma -\mu_1(\bar v) \bar v^{\gamma}\Big| \le C v^{-\alpha} \mathbf{1}_{v\le v_*/2} + C p(v|\bar v)\mathbf{1}_{v>v_*/2} \le Cp(v|\bar v),
\]
we use \eqref{p21} to have
\[
 \mathcal{P}^s_{41} \le C\nu\delta_* \int_\bbr \Big(\sum_{i\in \mathcal{S}} |(a_i)_x| \Big)^2  p(v|\bar v)  dx \le  \frac{CL(0)}{\delta_*}   \sum_{i\in \mathcal{S}}  \int_\bbr |(\tilde v_i)_x| p(v|\bar v) dx.
\]
Since 
\[
 |p(v)-p(\bar v)|  \le Cp(v|\bar v) + C,
\]
we use Lemma \ref{lem:finsout} to have
\begin{align*}
\begin{aligned}
 \mathcal{P}^s_{42} 
 &\le C\frac{L(0)}{\delta_*} \int_{\bbr}  (p(v|\bar v) + 1)\sum_{i\in \mathcal{S}}\big|\bar v -\tilde v_i \big|  
|(\tilde v_i)_x| dx \\
 & \le 
 C\frac{L(0)}{\delta_*} \int_{\bbr} \sum_{i\in \mathcal{S}}
|(\tilde v_i)_x| p(v|\bar v) dx +  C\frac{L(0)}{\delta_*} \int_{\bbr} \sum_{i\in \mathcal{S}}\big|\bar v -\tilde v_i \big|  
|(\tilde v_i)_x| dx \\
& \le 
 C\frac{L(0)}{\delta_*} \int_{\bbr} \sum_{i\in \mathcal{S}}
|(\tilde v_i)_x| p(v|\bar v) dx + \frac{C\nu^{1/2}}{\delta\delta_*}.
\end{aligned}
\end{align*}

Hence we have
\begin{align}\begin{aligned} \label{fpara}
 \mathcal{P}
& \le -\frac{\nu}{2}\mathcal{D}_b -\nu(1-C\delta_*)  D +  \frac{C\nu}{\delta_*} \int_\bbr \Big(\sum_{i\in \mathcal{S}} |(a_i)_x| \Big)^2 v^\beta |p(v)-p(\bar v) |^2 \, dx    \\
&\quad+  C \delta \sum_{i\in \mathcal{R}} G_i^R  + C  \int_\bbr   Q(v|\bar v) \,dx +C\delta_* \sum_{i\in \mathcal{S}} \int_\bbr |(\tilde v_i)_x| p(v|\bar v)  dx +C   \frac{\delta^2}{\delta_*}+\mathcal{C} (\delta, \nu),
\end{aligned}\end{align}
where $\mathcal{C} (\delta, \nu)$ is the constant that vanishes when $\nu\to0$ for any fixed $\delta>0$.

\section{Estimates for main terms localized by derivatives of weights and shocks}\label{sec:shock}
\setcounter{equation}{0}

\subsection{Normalization} 
First, combining \eqref{ineq-0}, \eqref{ybg-first}, \eqref{fh2}, \eqref{fpara}, \eqref{fh3} and \eqref{fh13}, and recalling the original notations \eqref{abbu}, \eqref{abb}, \eqref{abba} depending on $\nu$, we obtain the following estimates:  for all $ t\in (t_j,t_{j+1}) $,
\begin{align}
\begin{aligned}\label{ineq-mid}
&\frac{d}{dt} \int_{\bbr} a^\nu \eta(U^\nu |\overline U_{\nu,\delta} ) dx \\
&\le \mathcal{R}^\nu(U^\nu)   + C \int_\bbr a^\nu \eta(U^\nu|\bar U_{\nu,\delta}) dx +C \delta^{1/4}  \overline{\mathcal{D}}  -\frac{\nu}{2}\mathcal{D}_b -C^{-1} \sum_{i\in \mathcal{R}}  G_i^R \\
&\quad  + C \delta^{3/4}  +   C   \frac{\delta^2}{\delta_*}  +\mathcal{C} (\delta, \nu),
\end{aligned}
\end{align}
where
 \begin{align*}
\begin{aligned}
 \mathcal{R}^\nu(U^\nu) &:= \sum_{i\in \mathcal{S}}\dot X_{ij}^\nu(t) Y_i^\nu  +\overline{\mathcal{H}_{1}^\nu}  - \mathcal{J}^\nu_{good}   -  \nu (1-C\delta_*)  D^\nu +  C\delta_*  \sum_{i\in \mathcal{S}}  \int_\bbr |(\tilde v^\nu_{ij})_x| p(v^\nu|\bar v_{\nu,\delta}) dx \\
 &\quad +  \frac{C\nu}{\delta_*} \int_\bbr \Big(\sum_{i\in \mathcal{S}} |(a_i^\nu)_x| \Big)^2 (v^\nu)^\beta |p(v^\nu)-p(\bar v_{\nu,\delta}) |^2 \, dx.
 \end{aligned}
\end{align*}
Here, $G_i^R$, $\mathcal{D}_b$ and $\overline{\mathcal{D}}$ are as in \eqref{gir}, \eqref{defdb} and \eqref{odiff} respectively, and
\begin{align}
\begin{aligned}\label{ybg}
&Y^\nu_i:= -  \int_{\bbr} (a_i^\nu)_x  \eta(U^\nu|\overline U_{\nu,\delta} ) dx + \int_{\bbr} a \nabla^2\eta(\overline U_{\nu,\delta}) (U^\nu- \overline U_{\nu,\delta})   \partial_x \tilde U_{ij}^\nu dx ,\\
&\overline{\mathcal{H}_{1}^\nu}:= \sum_{i\in \mathcal{S}}  \int_\bbr (a_i^\nu)_x \big(p(v^\nu)-p(\bar v_{\nu,\delta})\big) \big(h-\bar h_{\nu,\delta} \big) dx +\sum_{i\in \mathcal{S}}  \tilde\lam_i \int_\bbr a (\tilde v_{ij}^\nu)_x  p(v^\nu| \bar v_{\nu,\delta}) dx \\
& \mathcal{J}^\nu_{good}:=  \sum_{i\in \mathcal{S}} \int_{\bbr} \tilde\lambda_i (a_i^\nu)_x \eta(U^\nu|\overline U_{\nu,\delta} ) dx  ,\\
&D^\nu := \int_\bbr a^\nu\, \mu_1(v^\nu ) (v^\nu)^\gamma |( p(v^\nu) -p(\bar v_{\nu,\delta}))_x |^2 dx .
\end{aligned}
\end{align}
We will estimate $ \mathcal{R}^\nu(U^\nu)$.
 For that, as done in \cite{Kang-V-NS17,KV-Inven,KV-2shock}, it would be easier to normalize $\mathcal{R}^\nu$  as $\nu=1$ by using the following scaling 
: 
\begin{align}
\begin{aligned} \label{newquan}
&U(x,t):=U^{\nu}(\nu x, \nu t),\quad \overline U(x,t):= \overline U_{\nu,\delta} (\nu x, \nu t) , \\
& X_i(t):= \frac{1}{\nu} X^\nu_{ij} (\nu t) ,\quad \Lambda_i(t):= \frac{1}{\nu} \Lambda^\nu_{ij} (\nu t) ,\quad a(x,t):= a^\nu (\nu x, \nu t) ,\quad a_i(x,t):= a_i^\nu (\nu x, \nu t),
\end{aligned}
\end{align}
and so,
\[
\tilde U_i(x,t):= \tilde U^\nu_{ij} (\nu x, \nu t) ,\quad U^R_{i}(x,t):=U^{R,\nu}_{ij}(\nu x, \nu t) ,\quad U^P_{i}(x,t):=U^{P,\nu}_{ij}(\nu x, \nu t) .
\]
Note that $\overline U(x,t)$ still depends on $\nu$ after the above scaling, since the pseudo-shock $U^{P,\nu}_{ij}$ has a transition zone of width $\sqrt{\rho_\nu}$ different from $\nu$. Nonetheless, for simplicity, we omit the dependence of $\overline U$ on the $\nu$, and also the dependence on the $j$-th time interval $(t_j,t_{j+1})$.

Then, using the above scaling, we can rewrite  
\begin{align}
\begin{aligned} \label{normalrr}
 \mathcal{R}^\nu(U^\nu) (t) &= \bigg[\sum_{i\in \mathcal{S}}\dot X_{ij}^\nu  Y_i^\nu (U^\nu)  +\overline{\mathcal{H}_{1}^\nu} (U^\nu)  - \mathcal{J}^\nu_{good} (U^\nu)  -  \nu (1-C\delta_*)  D^\nu  (U^\nu) \\
 &\quad +  C\delta_*  \sum_{i\in \mathcal{S}}  \int_\bbr |(\tilde v_{ij}^\nu)_x| p(v^\nu|\bar v_{\nu,\delta}) dx +  \frac{C\nu}{\delta_*} \int_\bbr \Big(\sum_{i\in \mathcal{S}} |(a_i^\nu)_x| \Big)^2 (v^\nu)^\beta |p(v^\nu)-p(\bar v_{\nu,\delta}) |^2 \, dx \bigg] (t) \\
&= \mathcal{R} (U) (t/\nu),
\end{aligned}
\end{align}
where
\begin{align}
\begin{aligned}\label{mathru}
 \mathcal{R} (U) (t/\nu)&:= \bigg[\sum_{i\in \mathcal{S}}\dot X_{i}  Y_i (U)  +\overline{\mathcal{H}_{1}}(U)  - \mathcal{J}_{good} (U)-  (1-C\delta_*)  D  (U)  \\
 &\quad + C\delta_* \sum_{i\in \mathcal{S}}  \int_\bbr |(\tilde v_{i})_x| p(v|\bar v) dx+  \frac{C}{\delta_*} \int_\bbr \Big(\sum_{i\in \mathcal{S}} |(a_i)_x| \Big)^2 v^\beta |p(v)-p(\bar v) |^2 \, dx \bigg] \Big(\frac{t}{\nu}\Big).
\end{aligned}
\end{align}
Here, the functionals $Y_i, \overline{\mathcal{H}_{1}}, \mathcal{J}_{good}, D$ are the same as in \eqref{ybg}, but replaced by the above quantities \eqref{newquan} independent of $\nu$.

\subsection{Maximization in terms of $h-\bar h$}\label{sec:mini}
First, notice that 
\[
\eta(U|\overline U)=\frac{|h- \bar h |^2}{2} + Q(v|\bar v) .
\]
As in \cite[Lemma 5.3]{KV-2shock}, we have the following representation for the maximization w.r.t. $h-\bar h$.
For any constant $\bar\delta>0$, 
\begin{align}
\begin{aligned}\label{ineq-1}
 \mathcal{J}_{bad}(U) - \mathcal{J}_{good}(U)  =B_{\bar\delta} (U)- G_{\bar\delta}(U),
\end{aligned}
\end{align}
where
\beq\label{jbad}
 \mathcal{J}_{bad}(U) := \overline{\mathcal{H}_{1}} (U) +  C\delta_*  \sum_{i\in \mathcal{S}}  \int_\bbr |(\tilde v_{i})_x| p(v|\bar v) dx+  \frac{C}{\delta_*} \int_\bbr \Big(\sum_{i\in \mathcal{S}} |(a_i)_x| \Big)^2 v^\beta |p(v)-p(\bar v) |^2 \, dx ,
\eeq
\begin{align}
\begin{aligned}\label{bad}
{B}_{\bar\delta}(U)&:=  \sum_{i\in \mathcal{S}} \bigg[ \int_\bbr \Big( |\tilde\lam_i|  a + C\delta_* \Big) |(\tilde v_i)_x|  p(v| \bar v) dx  + \frac{1}{2\tilde\lambda_i} \int_\bbr (a_i)_x |p(v)-p(\bar v) |^2 {\mathbf 1}_{\{p(v)- p(\bar v) \leq{\bar\delta}\}}  dx \\
&\quad\quad\quad\quad + \int_\bbr (a_i)_x \big(p(v)-p(\bar v)\big) \big(h-\bar h \big)  {\mathbf 1}_{\{p(v)- p(\bar v)  > {\bar\delta}\}}  dx \bigg] \\
&\quad +  \frac{C}{\delta_*} \int_\bbr \Big(\sum_{i\in \mathcal{S}} |(a_i)_x| \Big)^2 v^\beta |p(v)-p(\bar v) |^2 \, dx,
\end{aligned}
\end{align}
and
\begin{align}
\begin{aligned}\label{good}
{G}_{\bar\delta}(U) &:=  \sum_{i\in \mathcal{S}} \bigg( \frac{ \tilde\lambda_i}{2}\int_\bbr (a_i)_x  \left|  h-\bar h -\frac{p(v)-p(\bar v)}{ \tilde\lambda_i}\right|^2  {\mathbf 1}_{\{p(v)-p(\bar v)  \leq{\bar\delta}\}}  dx \\
&+\frac{ \tilde\lambda_i}{2}\int_\bbr (a_i)_x \left| h-\bar h \right|^2  {\mathbf 1}_{\{p(v)-p(\bar v)  >{\bar\delta}\}}  dx  + \tilde\lambda_i  \int_\bbr  (a_i)_x  Q(v|\bar v)  dx\bigg) .
\end{aligned}
\end{align}
Here, we maximized the first term of $\overline{\mathcal{H}_{1}}$ in terms of $h-\bar h$ only on $\{p(v)- p(\bar v) \leq{\bar\delta}\}$, as in \cite[Lemma 5.3]{KV-2shock}. Notice that since $\delta_* \ll |\tilde\lam_i|  a$, the second term of \eqref{jbad} would be negligible by the last term $ \sum_{i\in \mathcal{S}}|\tilde\lam_i| \int_\bbr   a  |(\tilde v_i)_x|  p(v| \bar v) dx$ of $\overline{\mathcal{H}_{1}} $.

From \eqref{mathru} and the above maximization, we will estimate
\begin{align}
\begin{aligned} \label{real-R}
 \mathcal{R} (U) 
&= \sum_{i\in \mathcal{S}}\dot X_{i}  Y_i (U) + \mathcal{J}_{bad}(U)  - \mathcal{J}_{good} (U)  -  (1-C\delta_*) D (U)  \\
 &= \sum_{i\in \mathcal{S}}\dot X_{i}  Y_i (U) + {B}_{\bar\delta}(U) -{G}_{\bar\delta}(U)  -  (1-C\delta_*) D (U) . \\
\end{aligned}
\end{align}

\subsection{Construction of shifts for shocks}\label{subsec-shock}
As in  \cite[Section 5.5]{KV-2shock}, consider a Lipschitz function $\Phi_\eps$ defined by (given $\eps>0$)
\[
\Phi_\eps (y)=
\left\{ \begin{array}{ll}
      0,\quad &\mbox{if}~ y\le 0, \\
      -\frac{1}{\eps^4}y,\quad &\mbox{if} ~ 0\le y\le \eps^2, \\
       -\frac{1}{\eps^2},\quad &\mbox{if}  ~y\ge \eps^2, \end{array} \right.
\]
For a fix $t_j$, construct a family of shift functions $\{X_i \}_{i\in \mathcal{S}}$ as a solution to the system of nonlinear ODEs:
\beq\label{X-def}
\left\{ \begin{array}{ll}
       \dot X_i(t/\nu) = \Phi_{\s_i} (Y_i(U)) \Big(2| \mathcal{J}_{bad}(U)|+1 \Big) +\s_i^2\frac{\tilde\lambda_i}{2}  \Phi_{\s_i} (-Y_i(U)) , \quad  \mbox{if $i$ is 1-wave, i.e., } k_i=1  \\
        \dot X_i(t/\nu) = -\Phi_{\s_i} (-Y_i(U)) \Big(2| \mathcal{J}_{bad}(U)| +1 \Big) +\s_i^2 \frac{\tilde\lambda_i}{2}  \Phi_{\s_i} (Y_i(U)), \quad  \mbox{if  $i$ is 2-wave, i.e., } k_i=2 ,\\
       X_i(t_j/\nu)=0, \quad  \mbox{for all } i\in \mathcal{S},  \end{array} \right.
\eeq
where $\mathcal{J}_{bad}$ as in \eqref{jbad}.
Then, as in  \cite[Appendix C]{KV-2shock}, the system \eqref{X-def} has a unique absolutely continuous solution $\{X_i \}_{i\in \mathcal{S}}$ on $[t_j/\nu,t_{j+1}/\nu)$. \\

Since it follows from \eqref{X-def} that for each $i\in \mathcal{S}$,
\beq\label{explicit-X}
\dot X_i (t/\nu) =
\left\{ \begin{array}{ll}
      (-1)^{k_i} \s_i^{-2} \big(2| \mathcal{J}_{bad}(U)|+1 \big),\quad &\mbox{if}~ (-1)^{{k_i}-1} Y_i(U) > \s_i^2, \\
  -\s_i^{-4} Y_i(U) \big(2| \mathcal{J}_{bad}(U)|+1 \big),\quad &\mbox{if} ~ 0\le  (-1)^{{k_i}-1} Y_i(U)\le \s_i^2, \\
       (-1)^{{k_i}-1} \frac{1}{2} \tilde\lambda_i \s_i^{-2}  Y_i(U),\quad &\mbox{if}  ~ -\s_i^2  \le  (-1)^{{k_i}-1} Y_i(U)\le 0 ,\\
       -\frac{1}{2}\tilde\lambda_i , \quad &\mbox{if}  ~ (-1)^{{k_i}-1} Y_i(U) \le -\s_i^2 ,
       \end{array} \right.
\eeq
the shifts satisfy the bounds: $\forall t\in (t_j,t_{j+1})$,
\begin{align*}
\begin{aligned}
&k_i=1 \quad \Rightarrow\quad \dot X_i(t/\nu) \le -\frac{\tilde\lambda_i}{2},\\
&k_i=2 \quad \Rightarrow\quad \dot X_i(t/\nu) \ge -\frac{\tilde\lambda_i}{2}.
\end{aligned}
\end{align*}
Thus, we have
\begin{align}
\begin{aligned}\label{sx12}
&k_i=1 \quad \Rightarrow\quad X_i(t/\nu) + \tilde\lambda_i (t-t_j)  \le \frac{\tilde\lambda_i}{2} (t-t_j) <0 ,\quad\forall t\in (t_j,t_{j+1}) ,\\
&k_i=2 \quad \Rightarrow\quad X_i(t/\nu) + \tilde\lambda_i (t-t_j)  \ge \frac{\tilde\lambda_i}{2} (t-t_j) >0 ,\quad\forall t\in (t_j,t_{j+1}) .
\end{aligned}
\end{align}

\subsection{Proof of Proposition \ref{prop:main}}
First, it follows from \eqref{explicit-X} that for each  $i\in \mathcal{S}$,
\beq\label{xyest}
 \dot X_i(t/\nu) Y_i(U) \le
\left\{ \begin{array}{ll}
    -2| \mathcal{J}_{bad}(U)|,\quad &\mbox{if}~ (-1)^{{k_i}-1} Y_i(U) > \s_i^2, \\
  -\s_i^{-4} |Y_i(U)|^2 ,\quad &\mbox{if} ~ 0\le  (-1)^{{k_i}-1} Y_i(U)\le \s_i^2, \\
       (-1)^{{k_i}-1} \frac{1}{2} \tilde\lambda_i \s_i^{-2}  |Y_i(U)|^2, \quad &\mbox{if}  ~ -\s_i^2  \le  (-1)^{{k_i}-1} Y_i(U)\le 0 ,\\
       -\frac{1}{2}\tilde\lambda_i  Y_i(U), \quad &\mbox{if}  ~ (-1)^{{k_i}-1} Y_i(U) \le -\s_i^2 .
       \end{array} \right.
\eeq
Then, we first find from \eqref{real-R} that whenever $U\in \cup_{i\in \mathcal{S}} \{U~|~ (-1)^{{k_i}-1} Y_i(U) > \s_i^2 \} $, 
\beq\label{asimest}
 \mathcal{R} (U)   \le - |\mathcal{J}_{bad}(U)| - \mathcal{J}_{good} (U)  -  (1-C\delta_*)  D  (U)  \le 0.
\eeq
On the other hand, for  $U\in \cap_{i\in \mathcal{S}} \{U~|~ (-1)^{{k_i}-1} Y_i(U) \le \s_i^2 \}$, we will apply the following main proposition.
\begin{proposition}\label{prop:smain}
There exists a positive constants $ \delta_1 \in(0,1/2)$ such that
  for all $U$ satisfying  $(-1)^{{k_i}-1} Y_i(U) \le \s_i^2$ for all  $i\in \mathcal{S}$, the following holds: 
\begin{align*}
\begin{aligned}
\mathcal{R}(U)
&\le \sum_{i\in \mathcal{S}}\bigg( -\frac{1}{\s_i^4} |Y_i(U)|^2 {\mathbf 1}_{\{0\le  (-1)^{{k_i}-1} Y_i(U)\le \s_i^2\}} 
 -  \frac{|\tilde\lambda_i|}{2 \s_i^2}   |Y_i(U)|^2 {\mathbf 1}_{\{ -\s_i^2  \le  (-1)^{{k_i}-1} Y_i(U)\le 0 \}} \\
&\quad -\frac{|\tilde\lambda_i| }{2} |Y_i(U)| {\mathbf 1}_{\{(-1)^{{k_i}-1} Y_i(U) \le -\s_i^2 \}} \bigg)+  {B}_{\delta_1}(U)+\delta_*  \sum_{i\in \mathcal{S}}  \int_\bbr |(\tilde v_{i})_x| p(v|\bar v) dx \\
&\quad
- \sum_{i\in \mathcal{S}} {G}_{1i}^{-}(U)- \sum_{i\in \mathcal{S}}{G}_{1i}^{+}(U)  - \sum_{i\in \mathcal{S}} {G}_{2i}(U)  - \Big(1-\sqrt\ds\Big) D (U) \\
&\le  C\ds^{\frac{1}{8}}  \sum_{i\in\mathcal{R}} \int_\bbr |(v^R_i)_x| p(v|\bar v) dx  + C\ds^{\frac18}  \sum_{i\in\mathcal{NP}} \int_\bbr |(v^P_i)_x| Q(v|\bar v) dx \\
&\quad+\mathcal{C} (\delta, \nu) +  \frac{C\nu^{4/3}}{\delta\ds}  \int_\bbr a \eta(U|\overline U) dx  ,
\end{aligned}
\end{align*}
where ${G}_{1i}^{-},{G}_{1i}^{+}, {G}_{2i}, {D}$ denote the good terms of ${G}_{\delta_1}$ in  \eqref{good}, and the diffusion in \eqref{ybg} as follows: for each $i\in \mathcal{S}$,
\begin{align}
\begin{aligned}\label{ggd}
&{G}_{1i}^{-}(U):=\frac{ \tilde\lambda_i}{2}\int_\bbr (a_i)_x \left| h-\bar h \right|^2  {\mathbf 1}_{\{p(v)-p(\bar v)  >{\delta_1}\}}  dx ,\\
&{G}_{1i}^{+}(U):=  \frac{ \tilde\lambda_i}{2}\int_\bbr (a_i)_x  \left|  h-\bar h -\frac{p(v)-p(\bar v)}{ \tilde\lambda_i}\right|^2  {\mathbf 1}_{\{p(v)-p(\bar v)  \leq{\delta_1}\}}  dx,\\
&{G}_{2i}(U):=\tilde\lambda_i  \int_\bbr  (a_i)_x  Q(v|\bar v)  dx,\\
&D (U):= \int_\bbr a\, \mu_1(v ) v^\gamma |( p(v) -p(\bar v))_x |^2 dx,
\end{aligned}
\end{align} 
and $\mathcal{C} (\delta, \nu)$ is the constant that vanishes when $\nu\to0$ for any fixed $\delta>0$. 
\end{proposition}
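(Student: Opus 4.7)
The plan is to establish the two inequalities of the proposition separately. The first inequality is essentially bookkeeping: on the region $\cap_{i\in\mathcal S}\{(-1)^{k_i-1}Y_i(U)\le \sigma_i^2\}$, I substitute into $\mathcal R(U) = \sum_i \dot X_i Y_i + B_{\delta_1} - G_{\delta_1} + \mathcal J_{bad} - \mathcal J_{bad}$ from \eqref{real-R} and \eqref{ineq-1}, and invoke the case-by-case bound \eqref{xyest} for $\dot X_i Y_i$ coming directly from the explicit shift formula \eqref{explicit-X}. This produces the three indicator-weighted negative terms $-\sigma_i^{-4}|Y_i|^2$, $-|\tilde\lambda_i|\sigma_i^{-2}|Y_i|^2/2$, and $-|\tilde\lambda_i||Y_i|/2$, along with $B_{\delta_1}-\sum_i(G_{1i}^\pm + G_{2i}) - (1-\sqrt{\delta_*}) D$ after absorbing the $C\delta_*\cdot D$ piece into the coefficient of $D$.

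For the second inequality, the strategy is to reduce to a family of decoupled single-shock estimates, one per $i\in\mathcal S$. The key geometric input is that by 2) of Lemma \ref{lem:waves} together with \eqref{avr}, both $(a_i)_x$ and $(\tilde v_i)_x$ are exponentially concentrated at scale $\nu/|\sigma_i|$ around the trajectory $y_i$, so every integrand in $B_{\delta_1}$, $G_{1i}^\pm$ and $G_{2i}$ is morally supported on $I_i=\{|x-y_i|\le\sqrt{\rho_\nu}\}$; items 4) and 5) of Lemma \ref{lem:waves} bound all the tails and all cross-products by quantities of order $\mathcal C(\delta,\nu)$ or absorbable in $\delta_*^{1/8}\sum_{\mathcal R}\int|(v^R_i)_x|p(v|\bar v)$ and $\delta_*^{1/8}\sum_{\mathcal{NP}}\int|(v^P_i)_x|Q(v|\bar v)$. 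On $I_i$, item 4) of Lemma \ref{lem:waves} also gives $|\bar v-\tilde v_i|\le C\nu/\sqrt{\rho_\nu}$, which lets me substitute $\tilde v_i$ for $\bar v$ inside $p(v|\bar v)$, $Q(v|\bar v)$ and $\partial_x(p(v)-p(\bar v))$ with an error that is quadratic in $\nu/\sqrt{\rho_\nu}=\nu^{5/6}$ and thus contributes at most to $\mathcal C(\delta,\nu)+\frac{C\nu^{4/3}}{\delta\delta_*}\int a\eta(U|\overline U)\,dx$.

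Once the localization and $\bar v\mapsto\tilde v_i$ substitution are carried out, the bound on $I_i$ is exactly the one-shock contraction problem settled in \cite{KV-Inven, KV-2shock}: the bad quadratic form $B_{\delta_1}^{(i)}$ built from $(a_i)_x$, together with the extra $\delta_*$-weighted hyperbolic term, is controlled by $G_{1i}^{-}+G_{1i}^{+}+G_{2i}+D_i$ (where $D_i$ is the localization of $D$ on $I_i$) via a nonlinear Poincaré/maximization argument. Crucially, the shift terms $\sigma_i^{-4}|Y_i|^2$ etc.\ remain on the good side and provide the $O(|\sigma_i|^{-4})$ slack needed to close the estimate on the low-$|Y_i|$ regime, exactly as in \cite[Prop.~5.2]{KV-2shock}. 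Summing over $i\in\mathcal S$ gives the claimed inequality modulo the residual cross-terms.

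The main obstacle I anticipate is the uniform-in-$\nu$ control of the transition from $\bar v$ to $\tilde v_i$ inside the truncation $\{p(v)-p(\bar v)\le\delta_1\}$ that defines $G_{1i}^\pm$ and $B_{\delta_1}$. The measure of the symmetric difference between $\{p(v)-p(\bar v)\le\delta_1\}$ and $\{p(v)-p(\tilde v_i)\le\delta_1\}$ must be absorbed into the diffusion $D$ and the weighted entropy $\int a\eta(U|\overline U)\,dx$, which is precisely what the residual term $\frac{C\nu^{4/3}}{\delta\delta_*}\int a\eta(U|\overline U)\,dx$ in the conclusion encodes. Ensuring this absorption is compatible with the Poincaré estimate of \cite{KV-2shock} in the near-shock regime (and does not degrade the $(1-\sqrt{\delta_*})$ prefactor of $D$) is where the bulk of the technical work will lie; the far-field and cross-wave contributions are comparatively routine applications of Lemmas \ref{lem:waves}, \ref{lem:finsout} and \ref{lem:num}.
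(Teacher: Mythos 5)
Your treatment of the first inequality is correct and is exactly the paper's: substitute \eqref{xyest} into \eqref{real-R} with $\bar\delta=\delta_1$ and absorb the $C\delta_* D$ piece. The gap is in the second inequality. Your plan is to localize spatially to $I_i$, replace $\bar v$ by $\tilde v_i$, and then invoke the single-shock Poincar\'e estimate of \cite{KV-2shock} shock by shock. This misses the structural step that actually makes the argument close: the paper's dichotomy on the index set $I_*=\{i\in\mathcal{S}\,:\,\sqrt{\delta_*}D_i(t)+\frac{\nu|\s_i|}{\delta_*}D(U)\le 2C^*|\s_i|\ds\}$. The key quadratic bound \eqref{finyest} on $|Y_i(U)-Y^g_i(\uu)|^2+|Y^b_i|^2+|Y^l_i|^2+|Y^s_i|^2$ — which is what lets the good terms $-\s_i^{-4}|Y_i|^2$ absorb the difference between $Y_i(U)$ and the truncated functional $\mathcal{Y}^g_i(\uv)$ fed into Lemma \ref{lem:inside} — is \emph{conditional} on \eqref{yicond}. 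Without first isolating the indices where the localized diffusion is large, you cannot apply it; and for those indices the paper abandons the Poincar\'e argument entirely and instead uses the unconditional bound \eqref{roughbad}, $|B_{1i}(\uu)|+|B_{2i}^+(\uu)|\le C^*\ds|\s_i|$, which for $i\in I_*^c$ is dominated by $\sqrt{\delta_*}D(U)$, so the global diffusion directly swallows the bad terms there. Your proposal treats every shock by the same mechanism and therefore has no way to handle the regime where a given shock's share of the dissipation is too large for the quadratic $Y_i$-estimates yet too essential to discard.

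A second, smaller but genuine, discrepancy: the paper never substitutes $\tilde v_i$ for $\bar v$ inside $p(v|\bar v)$, $Q(v|\bar v)$ or the truncation set. The control of large values of $|p(v)-p(\bar v)|$ is achieved by truncation in the \emph{state} variable ($\uv=\uv_{\delta_1}$, the sets $\Omega$, $\Omega^c$, and the one-sided truncations $\uv_s,\uv_b$), the decomposition $Y_i=Y^g_i+Y^b_i+Y^l_i+Y^s_i$, and the pointwise estimates \eqref{pob}--\eqref{pos} which trade the outside-truncation mass for $D_i+\widetilde G_i$ (plus $\nu$-small global remainders). The spatial localization is done with the cutoffs $\phi_i$ on $\mathcal{J}_i$ and only enters through the commutator term $|(\phi_i)_x|^2$, which is where the residual $\frac{C\nu^{4/3}}{\delta\ds}\int_\bbr a\,\eta(U|\overline U)\,dx$ actually comes from — not from the symmetric difference of truncation sets as you suggest. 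As written, your $\bar v\mapsto\tilde v_i$ substitution would also need a quantitative comparison of $p(v|\bar v)$ and $p(v|\tilde v_i)$ for unbounded $v$, which is not controlled by $|\bar v-\tilde v_i|\le C\nu/\sqrt{\rn}$ alone without again invoking the relative-entropy machinery you are trying to set up.
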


\subsubsection{Proof of Proposition \ref{prop:main} from Proposition \ref{prop:smain}}

First, from \eqref{real-R} with $\bar\delta=\deltao$, and \eqref{xyest}, we have
\begin{align*}
\begin{aligned}
\mathcal{R}(U)
&\le \sum_{i\in \mathcal{S}}\bigg( -\frac{1}{\s_i^4} |Y_i(U)|^2 {\mathbf 1}_{\{0\le  (-1)^{{k_i}-1} Y_i(U)\le \s_i^2\}} 
 -  \frac{|\tilde\lambda_i|}{2 \s_i^2}   |Y_i(U)|^2 {\mathbf 1}_{\{ -\s_i^2  \le  (-1)^{{k_i}-1} Y_i(U)\le 0 \}} \\
&\quad -\frac{|\tilde\lambda_i| }{2} |Y_i(U)| {\mathbf 1}_{\{(-1)^{{k_i}-1} Y_i(U) \le -\s_i^2 \}} \bigg)+  {B}_{\delta_1}(U)-{G}_{\deltao}(U)  -  (1-C\delta_*) D (U).
\end{aligned}
\end{align*}
Then, Proposition \ref{prop:smain} and \eqref{good} with $\bar\delta=\deltao$ implies that\\for all $U\in  \cap_{i\in \mathcal{S}} \{U~|~ (-1)^{{k_i}-1} Y_i(U) \le \s_i^2 \}$,
\begin{align*}
\begin{aligned}
&\mathcal{R}(U) +\delta_*  \sum_{i\in \mathcal{S}}  \int_\bbr |(\tilde v_{i})_x| p(v|\bar v) dx+ (\sqrt\ds-C\ds) D(U) \\
&\le C\ds^{\frac{1}{8}}  \sum_{i\in\mathcal{R}} \int_\bbr |(v^R_i)_x| p(v|\bar v) dx  + C\ds^{\frac18}  \sum_{i\in\mathcal{NP}} \int_\bbr |(v^P_i)_x| Q(v|\bar v) dx + \mathcal{C} (\delta, \nu) +  \frac{C\nu^{4/3}}{\delta\ds}  \int_\bbr a \eta(U|\overline U) dx.
\end{aligned}
\end{align*}
This with \eqref{asimest} and the definition of $\mathcal{J}_{bad}$ implies that for all $U$,
\begin{align*}
\begin{aligned}
&\mathcal{R}(U) +\delta_*  \sum_{i\in \mathcal{S}}  \int_\bbr |(\tilde v_{i})_x| p(v|\bar v) dx+ \sqrt\ds D(U) \\
&\le C\ds^{\frac{1}{8}}  \sum_{i\in\mathcal{R}} \int_\bbr |(v^R_i)_x| p(v|\bar v) dx  + C\ds^{\frac18}  \sum_{i\in\mathcal{NP}} \int_\bbr |(v^P_i)_x| Q(v|\bar v) dx + \mathcal{C} (\delta, \nu)+  \frac{C\nu^{4/3}}{\delta\ds}  \int_\bbr a \eta(U|\overline U) dx.
\end{aligned}
\end{align*}
To complete the proof of Proposition \ref{prop:main}, we first rescale the above estimate as in \eqref{newquan} and \eqref{normalrr} with \eqref{mathru}, as follows:
\begin{align*}
\begin{aligned}
&\mathcal{R}^\nu(U^\nu) +\delta_*  \sum_{i\in \mathcal{S}}  \int_\bbr |(\tilde v^\nu_{ij})_x| p(v^\nu|\bar v_{\nu,\delta}) dx+ \sqrt\ds \nu D^\nu(U^\nu) \\
&\le C\ds^{\frac{1}{8}}  \sum_{i\in\mathcal{R}} \int_\bbr |(v^{R,\nu}_i)_x| p(v^\nu|\bar v_{\nu,\delta}) dx   + C\ds^{\frac18}  \sum_{i\in\mathcal{NP}} \int_\bbr |(v^{P,\nu}_i)_x| Q(v^\nu|\bar v_{\nu,\delta}) dx \\
&\quad + \mathcal{C} (\delta, \nu)+  \frac{C\nu^{1/3}}{\delta\ds}  \int_\bbr a^\nu \eta(U^\nu|\overline U_{\nu,\delta}) dx.
\end{aligned}
\end{align*}
Using the functional $G_i^R(U^\nu)$ and \eqref{vpest} with Lemma \ref{lem:num} and taking $\nu$ small enough as $\nu^{1/6}\ll \delta\ds$, we have
\begin{align*}
\begin{aligned}
&\mathcal{R}^\nu(U^\nu) +\delta_*  \sum_{i\in \mathcal{S}}  \int_\bbr |(\tilde v^\nu_{ij})_x| p(v^\nu|\bar v_{\nu,\delta})  dx+ \sqrt\ds \nu D^\nu(U^\nu) \\
&\quad\le C\ds^{\frac{1}{8}}  \sum_{i\in\mathcal{R}} G_i^R(U^\nu)   + \mathcal{C} (\delta, \nu) + C  \int_\bbr a^\nu \eta(U^\nu|\overline U_{\nu,\delta}) dx.
\end{aligned}
\end{align*}
Therefore, it holds from  \eqref{ineq-mid}, \eqref{normalrr} and the above estimate that for all $ t\in (t_j,t_{j+1}) $,
\begin{align*}
\begin{aligned}
\frac{d}{dt} \int_{\bbr} a^\nu \eta(U^\nu |\overline U_{\nu,\delta} ) dx + G(U^\nu)(t) &\le  C \int_\bbr a^\nu \eta(U^\nu|\bar U_{\nu,\delta}) dx +C \delta^{1/4}  \overline{\mathcal{D}}  \\
&\quad  + C \delta^{3/4}  +   C   \frac{\delta^2}{\delta_*}  +\mathcal{C} (\delta, \nu),
\end{aligned}
\end{align*}
which completes the proof of Proposition \ref{prop:main}.\\

The rest of this section is dedicated to the proof of the main Proposition \ref{prop:smain}.

\subsection{Decomposition of the domain} 
Since we have only one diffusion, but many bad terms as the number of shock fronts, we may localize the diffusion by decomposing the domain $\bbr$.
For that, we recall  \eqref{distij} as follows on the separation of trajectories $y_{ij}^\nu(t)$ satisfying \eqref{ycurve}:
\[
|y_{ij}^\nu(t)-y_{i'j}^\nu(t)|\ge  2\sqrt{\rho_\nu} = 2 \nu^{1/6}, \quad\mbox{for any $i,i'$ with $i\neq i'$ and any } t\in (t_j,t_{j+1}).
\]
This implies the following inequality at the level of $\nu=1$ by the normalization $t\mapsto \nu t$, $x\mapsto \nu x$ under consideration: (setting $y_i(t ;x_{ij},t_j):=\frac{1}{\nu}y_{ij}^\nu(\nu t ;\nu x_{ij},\nu t_j)$ by \eqref{newquan} and \eqref{ycurve})
\beq\label{sepy}
|y_{i}(t)-y_{i'}(t)|\ge  2 \nu^{-5/6}, \quad\mbox{for any $i,i'$ with $i\neq i'$ and any } t\in (t_j,t_{j+1}).
\eeq 
Then, we decompose $\bbr$ into a finite number of subintervals: 
\beq\label{defji}
\mathcal{J}_i(t):= \left\{ \begin{array}{ll}
    \bigg(-\infty, \frac{y_{i+1}(t)+y_i(t)}{2}\bigg] \quad &\mbox{if}~ y_i(t)=\min_{i'} y_{i'}(t), \\
      \bigg(\frac{y_{i-1}(t)+y_i(t)}{2}, +\infty\bigg) \quad &\mbox{if}~ y_i(t)=\max_{i'} y_{i'}(t),\\
     \bigg( \frac{y_{i-1}(t)+y_i(t)}{2},\frac{y_{i+1}(t)+y_i(t)}{2} \bigg]  \quad &\mbox{otherwise,}  \end{array} \right.
\eeq
Notice that
\[
\bbr =  \cup_{i\in \mathcal{S}\cup \mathcal{R}\cup \mathcal{NP}} \mathcal{J}_i(t).
\]
This and \eqref{sepy} imply
\beq\label{estji}
|\mathcal{J}_i(t) | \ge 2\nu^{-5/6},
\eeq
and 
\beq\label{wellom}
|x-y_i(t)|\ge  \nu^{-5/6},\quad \forall x\in \mathcal{J}_i^c = \cup_{i'\neq i} \mathcal{J}_{i'},\quad \forall i.
\eeq

\subsection{Sharp estimates inside truncation}
For a fixed $t$, and for each $i\in\mathcal{S}$, we define a cutoff function $\phi_i$ on $\bbr$ as a non-negative Lipschitz function : 
\beq\label{phii}
\phi_i(x) = \left\{ \begin{array}{ll}
    1\quad &\mbox{if}~ x \in \mathcal{J}_i, \\
   \mbox{linear}\quad &\mbox{if} ~ x\in \Big(y_{i-1},  \frac{y_{i-1}+y_i}{2}\Big)\cup \Big(\frac{y_{i+1}(t)+y_i(t)}{2}, y_{i+1}\Big) , \\
   0  \quad \quad &\mbox{otherwise}. 
       \end{array} \right.
\eeq
Then, consider the following functionals: 
\begin{align}
\begin{aligned}\label{note-in}
&\mathcal{Y}^g_i(v):=-\frac{1}{2 \tilde\lambda_i^2}\int_{\bbr} (a_i)_x   \phi_i^2|p(v)-p(\bar v)|^2 dx -\int_{\bbr} (a_i)_x  \phi_i^2 Q(v|\bar v) dx\\
&\qquad\qquad -\int_{\bbr} a \partial_x p(\tilde v_i)   \phi_i (v-\bar v)dx+\frac{1}{ \tilde\lambda_i}\int_{\bbr} a  (\tilde h_i)_x  \phi_i \big(p(v)-p(\bar v)\big)dx,\\
&\mathcal{I}_{1i}(v):=  \tilde\lambda_i\int_{\bbr} a (\bar v)_x  \phi_i^2  p(v|\bar v) dx,\\
&\mathcal{I}_{2i}(v):= \frac{1}{2\tilde\lambda_i} \int_{\bbr} (a_i)_x   \phi_i^2 |p(v)-p(\bar v)|^2dx,\\
&\mathcal{G}_{2i} (v):= \tilde\lambda_i  \int_{\bbr}  (a_i)_x  \bigg( \frac{1}{2\gamma}  p(\tilde v_i)^{-\frac{1}{\gamma}-1} \phi_i^2 \big(p(v)-p(\bar v)\big)^2\\
&\qquad\qquad - \frac{1+\gamma}{3\gamma^2} p(\tilde v_i)^{-\frac{1}{\gamma}-2} \phi_i^3 \big(p(v)-p(\bar v)\big)^3 \bigg) dx, \\
&\mathcal{D}_i (v):=\int_{\bbr} a\, \mu_1(v ) v^\gamma \big| \big( \phi_i^2 ( p(v) -p(\bar v))\big)_x \big|^2 dx.
\end{aligned}
\end{align}

\begin{lemma}\label{lem:inside}
For any constant $C_2>0$, there exists $\delta_1,\delta_0>0$ such that for any $\theta, \ds\in(0,\delta_1]$ and any $L(0)<\delta_0$, and for each $i\in\mathcal{S}$,
 the following holds.\\
For any function $v:\bbr\to \bbr^+$ such that $\mathcal{D}_i(v)+\mathcal{G}_{2i}(v)$
 is finite, if
\beq\label{assYp}
|\mathcal{Y}^g_i(v)|\leq C_2 |\s_i|\ds,\qquad  \|p(v)-p(\bar v)\|_{L^\infty(\bbr)}\leq \delta_1,
\eeq
then
\begin{align}
\begin{aligned}\label{redelta}
\mathcal{R}^i_{\theta,\ds}(v)&:=-\frac{1}{|\s_i|\theta}|\mathcal{Y}^g_i(v)|^2 +(1+\theta) |\mathcal{I}_{1i}(v)|\\
&\quad\quad+(1+\theta \ds) |\mathcal{I}_{2i}(v)|-\left(1-\theta\ds \right)\mathcal{G}_{2i}(v)-(1-\theta)\mathcal{D}_i(v)\le 0,
\end{aligned}
\end{align}
where note that $\mathcal{I}_{1i}, \mathcal{I}_{2i} \ge 0$.
\end{lemma}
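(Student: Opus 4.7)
\medskip

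\noindent\textbf{Proof proposal.}
The plan is to reduce this multi-shock, multi-wave Poincar\'e inequality to the single-shock nonlinear Poincar\'e estimate developed in \cite{KV-Inven,KV-2shock}. The cutoff $\phi_i$ defined in \eqref{phii} localizes everything to the region around the $i$-th trajectory $y_i$, and on the support of $\phi_i$ (which contains the wave $\tilde U_i$) all other waves contribute only through their far-field tails. Using the separation estimate \eqref{sepy} together with Lemma \ref{lem:waves} (parts 4--5), any other wave $\tilde U_{i'}$, $U_{i'}^R$, or $U_{i'}^P$ differs from its constant end state by an amount that is exponentially small in $\nu^{-5/6}$ (or bounded by $\nu/\sqrt{\rho_\nu}$) throughout the support of $\phi_i$. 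Hence to leading order one may replace $\bar v$ by $\tilde v_i$ on $\operatorname{supp}\phi_i$, with errors absorbed using $L(0)<\delta_0$ and the existence of the good terms $\mathcal{G}_{2i}$, $\mathcal{D}_i$.

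After this reduction, the functionals in \eqref{note-in} become precisely the quantities analyzed in the single-shock Poincar\'e estimate of Kang--Vasseur (see \cite[Proposition 4.1]{KV-Inven} and its refinement in \cite[Section 4 and Appendix]{KV-2shock}). The next step is the Taylor expansion using $\|p(v)-p(\bar v)\|_{L^\infty}\leq \delta_1$ and Lemma \ref{lem-pro}(4): one writes everything in terms of the variable $w := p(v)-p(\tilde v_i)$, and expands $p(v\mid \bar v)$, $Q(v\mid \bar v)$ and $\mu_1(v)v^\gamma$ around $\tilde v_i$. The leading order coefficients of $\mathcal{I}_{1i}$, $\mathcal{I}_{2i}$, $\mathcal{G}_{2i}$, and $\mathcal{D}_i$ are then explicit functions of $\tilde v_i$, which satisfies the viscous shock profile ODE \eqref{sen}, yielding the Rankine--Hugoniot identity $\tilde\lambda_i^2 = -(p(v_{i+1/2})-p(v_{i-1/2}))/(v_{i+1/2}-v_{i-1/2})$ and the profile relation $\tilde\lambda_i (\tilde v_i)_x = -\nu(\mu(\tilde v_i)\tilde v_i^\gamma p(\tilde v_i)_x)_x$.

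The core of the argument is then the sharp nonlinear Poincar\'e-type inequality for functions supported on the shock transition zone: using that $(a_i)_x \sim -\partial_x p(\tilde v_i)/\delta_*$ by \eqref{dera}, one reduces $\mathcal{R}^i_{\theta,\ds}$ to a quadratic functional in $w$ whose kernel is precisely spanned by the constraint $\mathcal{Y}^g_i = 0$. The quadratic part of $\mathcal{Y}^g_i$ in $w$ captures the component of $w$ along the ``shift direction,'' and the extra penalty $-|\mathcal{Y}^g_i|^2/(|\sigma_i|\theta)$ provides enough margin to absorb this mode into the dissipation $\mathcal{D}_i$ and good term $\mathcal{G}_{2i}$ (the $1/|\sigma_i|$ scaling is exactly dictated by the size of $(a_i)_x$ on a shock of strength $|\sigma_i|$). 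The cubic correction in $\mathcal{G}_{2i}$ is controlled using the $L^\infty$ smallness $\delta_1$, and the error between $\bar v$ and $\tilde v_i$ contributes terms bounded by $L(0)\,\mathcal{G}_{2i}$, absorbed for $L(0)<\delta_0$.

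The main obstacle is achieving the \emph{sharp} constants $(1-\theta)$ and $(1-\theta\delta_*)$ in front of $\mathcal{D}_i$ and $\mathcal{G}_{2i}$, rather than some $(1-C\sqrt{\theta})$ bound. This requires the exact algebraic identity for the one-shock case carried out in \cite[Section 4]{KV-2shock}: after the change of variable to $w$, the functional $-\mathcal{G}_{2i}-\mathcal{D}_i+|\mathcal{I}_{1i}|+|\mathcal{I}_{2i}|$ can be written as a manifestly negative quadratic form modulo the constraint $\mathcal{Y}^g_i=0$, and the $-|\mathcal{Y}^g_i|^2/(|\sigma_i|\theta)$ penalty closes the argument uniformly in $\sigma_i$ by choosing $\delta_1$ small (independently of $\theta, \delta_*\in(0,\delta_1]$). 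Once this single-shock estimate is in place, the multi-wave corrections from $\bar v \neq \tilde v_i$ on $\operatorname{supp}\phi_i$ are perturbative and handled by the choice $L(0)<\delta_0\ll \delta_1$.
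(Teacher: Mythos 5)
Your proposal follows essentially the same route as the paper: the paper's entire proof of this lemma is a citation to the single-shock nonlinear Poincar\'e estimate of \cite[Proposition 6.1 and Appendix B]{KV-2shock} with the parameter $\eps/\lambda$ there replaced by $\delta_*$, and your sketch (localization via $\phi_i$ using the wave-separation estimates, Taylor expansion in $p(v)-p(\tilde v_i)$ around the viscous profile, and the sharp quadratic-form analysis modulo the constraint $\mathcal{Y}^g_i=0$ with the $-|\mathcal{Y}^g_i|^2/(|\s_i|\theta)$ penalty) is precisely the content of that cited argument. No discrepancy to report.
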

\begin{proof}
We follow the same proof as in \cite[Proposition 6.1 and Appendix B]{KV-2shock} by replacing $\eps/\lambda$ by $\delta_*$, where $\eps/\lambda= \frac{\mbox{jump strength of shock}}{\mbox{jump strength of weight}} $ in \cite[Proposition 6.1 and Appendix B]{KV-2shock}.
So, we omit the details.
\end{proof}

\subsection{Smallness of localized relative entropy}
\begin{lemma}\label{lem:small}
Given a reference point $U_*$, there exist constants $C, C_0, \delta_0$ such that for any $\ds, L(0) <\delta_0$, the following holds. Whenever  $(-1)^{{k_i}-1} Y_i(U) \le \s_i^2$,
\beq\label{enesmall}
\int_\bbr |(a_i)_x| \left| h-\bar h \right|^2 dx +  \int_\bbr  |(a_i)_x|  Q(v|\bar v)  dx \le C|\s_i| \delta_*,
\eeq
and
\beq\label{yismall}
|Y_i(U)| \le C_0 |\s_i| \delta_*.
\eeq
\end{lemma}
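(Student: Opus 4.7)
The plan is to exploit the sign structure of $(a_i)_x = -\partial_x p(\tilde v_i)/\delta_*$: across a $k_i$-shock the pressure is monotone with $(-1)^{k_i-1}\partial_x p(\tilde v_i) \ge 0$, so $(-1)^{k_i-1}(a_i)_x \le 0$ pointwise. Consequently, decomposing $Y_i(U) = Y_i^{(1)} + Y_i^{(2)}$ with
\[
Y_i^{(1)} := -\int_\bbr (a_i)_x\,\eta(U|\overline U)\,dx,\qquad Y_i^{(2)} := \int_\bbr a\,\nabla^2\eta(\overline U)(U-\overline U)\,\partial_x\tilde U_i\,dx,
\]
one sees that $(-1)^{k_i-1}Y_i^{(1)} = \int_\bbr |(a_i)_x|\,\eta(U|\overline U)\,dx \ge 0$. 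The hypothesis $(-1)^{k_i-1}Y_i(U) \le \s_i^2$ then rearranges to
\[
\int_\bbr |(a_i)_x|\,\eta(U|\overline U)\,dx \;\le\; \s_i^2 \,+\, |Y_i^{(2)}|.
\]

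The core step is to bound $|Y_i^{(2)}|$ by a small multiple of $\int_\bbr|(a_i)_x|\eta(U|\overline U)\,dx$ plus $C|\s_i|\delta_*$. Since $(a_i)_x = -p'(\tilde v_i)(\tilde v_i)_x/\delta_*$ with $|p'(\tilde v_i)|$ bounded above and below on the profile (which takes values in a fixed compact subset of $(0,\infty)$ near $U_*$), and since the second shock equation in \eqref{seq2} yields $\tilde\lambda_i(\tilde h_i)_x = p(\tilde v_i)_x$, one has the pointwise estimate $|\partial_x\tilde U_i| \le C\delta_*|(a_i)_x|$. On the support of $\partial_x\tilde U_i$, the separation \eqref{sepy} together with item 4 of Lemma \ref{lem:waves} keeps $\bar v$ in the same compact set, so $|\nabla^2\eta(\overline U)|$ is bounded there. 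Splitting $|U-\overline U|\le |h-\bar h| + |v-\bar v|$ and using $(h-\bar h)^2\le 2\eta(U|\overline U)$, together with $(v-\bar v)^2\le CQ(v|\bar v)$ for $v\in(0,3v_*]$ and $|v-\bar v|\le CQ(v|\bar v)$ for $v\ge 3v_*$ (item 1 of Lemma \ref{lem-pro}), Cauchy--Schwarz yields
\[
|Y_i^{(2)}| \;\le\; C\sqrt{\delta_*\,\|\partial_x\tilde U_i\|_{L^1}}\,\sqrt{\int_\bbr|(a_i)_x|\eta(U|\overline U)\,dx} \;+\; C\delta_*\int_\bbr|(a_i)_x|\eta(U|\overline U)\,dx.
\]
Since $\|\partial_x\tilde U_i\|_{L^1}\le C|\s_i|$, Young's inequality gives $|Y_i^{(2)}| \le \tfrac12\int_\bbr|(a_i)_x|\eta(U|\overline U)\,dx + C|\s_i|\delta_*$.

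Substituting back and using $\s_i^2 \le L(0)|\s_i| \le \delta_0|\s_i| \le |\s_i|\delta_*$ (after choosing $\delta_0\le \delta_*$), one absorbs the $\tfrac12\int_\bbr|(a_i)_x|\eta(U|\overline U)\,dx$ term on the right to conclude $\int_\bbr |(a_i)_x|\eta(U|\overline U)\,dx \le C|\s_i|\delta_*$, which is exactly \eqref{enesmall}. Feeding this bound back into the estimate for $|Y_i^{(2)}|$ gives $|Y_i^{(2)}|\le C|\s_i|\delta_*$, and combined with $|Y_i^{(1)}| = \int_\bbr|(a_i)_x|\eta(U|\overline U)\,dx \le C|\s_i|\delta_*$ this yields $|Y_i(U)|\le C_0|\s_i|\delta_*$, i.e.\ \eqref{yismall}.

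The main delicate point is the unavailability of a uniform $L^\infty$ bound on the density variable $v$ coming from the Navier--Stokes solution; the cross term $\int|v-\bar v||\partial_x\tilde U_i|\,dx$ must handle the far-from-$\bar v$ regime via the linear control $|v-\bar v|\le CQ(v|\bar v)$ rather than the quadratic one, and this works only because $\bar v$ remains in a fixed compact set on the support of $\partial_x\tilde U_i$ (so that the $\nabla^2\eta(\overline U)$ prefactor stays bounded).
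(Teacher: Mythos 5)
Your argument is correct and is essentially the same as the paper's, which omits the details and defers to \cite[Proposition 6.2]{KV-2shock}: the sign of $(a_i)_x$ (namely $\tilde\lambda_i(a_i)_x\ge0$) makes $(-1)^{k_i-1}Y_i^{(1)}=\int_\bbr|(a_i)_x|\eta(U|\overline U)\,dx\ge0$, and the linear term $Y_i^{(2)}$ is absorbed by Cauchy--Schwarz together with the quadratic (for $v\le 3v_*$) and linear (for $v\ge 3v_*$) lower bounds on $Q(v|\bar v)$, exactly as in the cited argument. One small fix: the step $\s_i^2\le|\s_i|\delta_*$ should be justified by the standing assumption $|\s_i|\le L(0)\le\delta_*^2$ from \eqref{sigmai}, not by ``choosing $\delta_0\le\delta_*$,'' which is incompatible with the lemma's quantifier $\delta_*<\delta_0$.
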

\begin{proof}
We follow the same proof as in \cite[Proposition 6.2]{KV-2shock} by replacing $\eps/\lambda$ by $\delta_*$. So, we omit the details.
\end{proof}

\noindent $\bullet$ {\bf Estimates for fixing the size of truncation}
Notice that the choice of constant $\delta_1$ in Lemma \ref{lem:inside} depends on a constant $C_2$. Thus, we should find the bound of $\mathcal{Y}^g_i$ on the unconditional level.
For that, we first consider any constant $\zeta>0$ as a truncation size on $|p(v)-p(\bar v)|$, and then  the special case  $\zeta=\delta_1$ as in Lemma \ref{lem:inside}. But for the moment, consider a general case $\zeta$ to estimate the constant $C_2$ of Lemma \ref{lem:inside}.  For that, let $\psi_\zeta$ be a continuous function on $\bbr$ defined by
\beq\label{psi}
 \psi_\zeta(y)=\inf\left(k,\sup(-\zeta,y)\right),\quad \zeta>0.
\eeq
Then, we define the function $\uv_\zeta$ uniquely (since the function $p$ is one to one) by
\beq\label{trunc-def}
p(\uv_\zeta)-p(\bar v)=\psi_\zeta\big(p(v)-p(\bar v)\big).
\eeq
Notice that $\|p(\uv_\zeta)-p(\bar v)\|_\infty\le \zeta$.\\

As in \cite[Lemma 7.2]{KV-2shock}, we have the following.
\begin{lemma}\label{lem:essy}
Given a reference point $U_*$, there exists positive constants $\delta_0, C_2, \zeta_0$ such that for any $\ds, L(0) <\delta_0$, the following holds.
\[
 |\mathcal{Y}^g_i(\uv_\zeta)|\leq C_2\frac{\eps_i^2}{\lambda}, \qquad \forall \zeta\leq \zeta_0. 
\]
\end{lemma}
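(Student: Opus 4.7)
\medskip

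\noindent\textbf{Proof proposal.} The strategy is to exploit both the smallness guaranteed by the truncation ($\|p(\uv_\zeta)-p(\bar v)\|_{\infty}\le \zeta$) and the special alignment of the four integrals defining $\mathcal Y^g_i$ with the viscous shock profile $\tilde U_i$. On the support of $\phi_i$, the identity $(a_i)_x = -\partial_x p(\tilde v_i)/\ds$ together with the shock-profile ODE \eqref{seq2} make $\mathcal Y^g_i$ structurally close to a linear combination that would vanish if $\bar v$ were replaced by $\tilde v_i$ and $v$ were replaced by $\tilde v_i$. My plan is to expand around $\tilde v_i$, read off the leading cancellations, and estimate the remainders in terms of $\zeta$ and $|\s_i|$.

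First I will exploit the decomposition \eqref{defji}--\eqref{wellom}: on the support of $\phi_i$, the separation estimate \eqref{sepy} combined with parts~1)--4) of Lemma~\ref{lem:waves} implies $|\bar v - \tilde v_i|\le C\nu/\sqrt\rn$ (at the rescaled level this is $C\nu^{-5/6}$-exponentially small in the $L^\infty$ norm away from $y_i$), so replacing $\bar v$ by $\tilde v_i$ in every integrand of $\mathcal Y^g_i(\uv_\zeta)$ costs at most $\mathcal{C}(\delta,\nu)$ and can be absorbed. Next I will use Lemma~\ref{lem-pro}, part 4), together with $\|p(\uv_\zeta)-p(\tilde v_i)\|_\infty\le \zeta+O(|\s_i|)\le 2\zeta_0$, to expand
\[
Q(\uv_\zeta|\tilde v_i) = \tfrac{p(\tilde v_i)^{-1/\gamma-1}}{2\gamma}|p(\uv_\zeta)-p(\tilde v_i)|^2 + O(|p(\uv_\zeta)-p(\tilde v_i)|^3),
\]
and similarly for $\uv_\zeta - \tilde v_i$ expressed in terms of $p(\uv_\zeta)-p(\tilde v_i)$. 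Combined with $(a_i)_x = -\partial_x p(\tilde v_i)/\ds$, the first two terms of $\mathcal Y^g_i$ become a single quadratic form in $p(\uv_\zeta)-p(\tilde v_i)$ weighted by $\partial_x p(\tilde v_i)$, up to cubic remainders of order $\zeta \cdot \int |\partial_x p(\tilde v_i)||p(\uv_\zeta)-p(\tilde v_i)|^2 dx$.

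For the third and fourth terms, I will apply the viscous shock profile equation \eqref{seq2}, namely $\tilde\lambda_i (\tilde v_i)_x + (\tilde h_i)_x = \nu\,\partial_x(\mu(\tilde v_i)\tilde v_i^\gamma \partial_x p(\tilde v_i))$, to rewrite $(\tilde h_i)_x$ in terms of $\tilde\lambda_i (\tilde v_i)_x$ plus a parabolic remainder. Linearizing $v-\bar v$ to $-(p(v)-p(\bar v))/p'(\bar v)$, the leading part of the combined third-plus-fourth integral equals
\[
\int_\bbr a\,\partial_x p(\tilde v_i)\,\phi_i\left(\frac{p(\uv_\zeta)-p(\tilde v_i)}{p'(\tilde v_i)} - \frac{p(\uv_\zeta)-p(\tilde v_i)}{\tilde\lambda_i^2/(-p'(\tilde v_i))\cdot(\cdots)}\right) dx + (\text{parabolic remainder}),
\]
and the Rankine--Hugoniot identity $\tilde\lambda_i^2 = -[p]/[v]$ for the small shock guarantees that the bracket is $O(|\s_i|)\,(p(\uv_\zeta)-p(\tilde v_i))$. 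Summing everything, the $O(1)$ contributions in $\zeta$ cancel between the four integrals, and what remains is bounded by
\[
C\,\|(a_i)_x\|_{L^1} \cdot \zeta \cdot |\s_i| \;+\; C\,|\s_i|\cdot\zeta^2\cdot \|(a_i)_x\|_{L^1} \;\le\; C_2\,|\s_i|^2/\delta_*,
\]
using $\|(a_i)_x\|_{L^1}\sim |\s_i|/\ds$ and choosing $\zeta_0$ small enough.

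The main obstacle will be the bookkeeping of the cubic-in-$\zeta$ remainder and the interaction with the parabolic correction term in \eqref{seq2}: these both look, a priori, of the same size as the dominant bad quadratic form and must be controlled by a factor of $\zeta$ extracted from the $L^\infty$-smallness of $p(\uv_\zeta)-p(\bar v)$. A secondary difficulty is that the approximation $\bar v\approx \tilde v_i$ on $\mathrm{supp}\,\phi_i$ is only exponentially good in $\sqrt\rn/\nu$ away from $y_i$; on the transition annuli where $\phi_i\in(0,1)$ the $O(\nu/\sqrt\rn)$ pointwise error must be re-examined and combined with the smallness of $|(a_i)_x|$ there, which should work because $(a_i)_x$ decays at the $\nu$-shock scale while the transition annuli live at the much coarser scale $\nu^{-5/6}$, making the contribution from those annuli genuinely negligible in $\mathcal{C}(\delta,\nu)$.
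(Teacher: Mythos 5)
Your proposal takes a completely different route from the paper, and it contains a genuine gap. The paper's proof is a three-line \emph{crude} estimate: each of the four integrals in $\mathcal{Y}^g_i(\uv_\zeta)$ is bounded \emph{individually}, the quadratic ones by $C\int_\bbr |(a_i)_x|\,Q(\uv_\zeta|\bar v)\,dx$ (using $|p(\uv_\zeta)-p(\bar v)|^2\le CQ(\uv_\zeta|\bar v)$ from Lemma \ref{lem-pro}, since the truncation keeps $\uv_\zeta$ near $\bar v$), and the linear ones by Cauchy--Schwarz against $\int|(\tilde v_i)_x|\,Q(\uv_\zeta|\bar v)$. The decisive inputs are the monotonicity $Q(\uv_\zeta|\bar v)\le Q(v|\bar v)$ of the truncation and the smallness of the localized relative entropy, $\int_\bbr|(a_i)_x|Q(v|\bar v)\,dx\le C|\s_i|\ds$, i.e.\ \eqref{enesmall} of Lemma \ref{lem:small}, which is available because the standing hypothesis $(-1)^{k_i-1}Y_i(U)\le\s_i^2$ is in force where this lemma is applied. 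No cancellation between the four terms is used or needed.

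Your argument never invokes Lemma \ref{lem:small}, and this is fatal. The target bound $C_2|\s_i|\ds$ must hold with $C_2$ independent of $\ds$, while $\zeta_0$ is a \emph{fixed} constant and $\ds<\delta_0$ may be taken arbitrarily small. The pure $L^\infty$ information $\|p(\uv_\zeta)-p(\bar v)\|_\infty\le\zeta$ only gives $\int|(a_i)_x|\phi_i^2\,Q(\uv_\zeta|\bar v)\,dx\le C\zeta^2\|(a_i)_x\|_{L^1}\sim C\zeta^2|\s_i|/\ds$, which is not $O(|\s_i|\ds)$ uniformly in $\ds$; and no cancellation can rescue this, because the first two terms of $\mathcal{Y}^g_i$ actually \emph{reinforce} each other (by Lemma \ref{lem-pro}, $Q(\uv_\zeta|\bar v)\approx\frac{1}{2\gamma}p(\bar v)^{-1/\gamma-1}|p(\uv_\zeta)-p(\bar v)|^2$ and $\tilde\lambda_i^2\approx-p'(\bar v)$, so terms $1$ and $2$ are nearly equal with the same sign), while terms $3$ and $4$ are linear in $v-\bar v$ and cannot cancel quadratic terms. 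The elaborate machinery you deploy (replacing $\bar v$ by $\tilde v_i$, the profile ODE \eqref{seq2}, Rankine--Hugoniot) belongs to the proof of the sharp Poincar\'e-type estimate of Lemma \ref{lem:inside}, not here; for the present lemma the only thing required is the $\zeta$-uniformity of the constant, which the paper gets for free from $Q(\uv_\zeta|\bar v)\le Q(v|\bar v)$ combined with \eqref{enesmall}.
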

\begin{proof}
Since $| \uv_\zeta -\bar v| \ll1$ for any $\zeta\le \zeta_0\ll v_*$, we use \eqref{enesmall} to have
\begin{align*}
\begin{aligned}
 |\mathcal{Y}^g_i(\uv_\zeta)| &\leq C \int_\bbr |(a_i)_x| Q(\uv_\zeta|\bar v) dx + C \int_\bbr |(\tilde v_i)_x| |\uv_\zeta-\bar v| dx \\
 &\le C \int_\bbr |(a_i)_x| Q(v|\bar v) dx + C \sqrt{\int_\bbr |(\tilde v_i)_x| dx} \sqrt{\int_\bbr |(\tilde v_i)_x| Q(\uv_\zeta|\bar v) dx}\\
 &\le C|\s_i|\ds + C\sqrt{|\s_i|} \sqrt{|\s_i| \ds} \le  C|\s_i|\ds.
\end{aligned}
\end{align*}

\end{proof}

\subsection{Estimates outside truncation}

We now fix the constant $\delta_1$ for Lemma \ref{lem:inside} associated to the constant $C_2$ of Lemma \ref{lem:essy}.
For  simplicity, we  use the notations:
\begin{align}
\begin{aligned}\label{strunc}
&\uv:=\uv_{\delta_1}, \qquad  \uu:=(\uv, h),\\
&\Omega:=\{x~|~ (p(v)-p(\bar v))(x) \leq\delta_1\}.
\end{aligned}
\end{align}

In order to present all terms to be controlled on the region $\{|p(v)-p(\bar v)|\geq\delta_1\}$, we split $Y_i$ into four parts $Y^g_i$,  $Y^b_i$, $Y^l_i$ and $Y^s_i$ as follows: for each $i\in \mathcal{S}$,
\begin{align}
\begin{aligned}\label{defyi}
Y_i &= -\int_\bbr  (a_i)_x \Big(\frac{|h-\bar h|^2}{2}  + Q(v|\bar v) \Big) dx +\int_\bbr a \Big(-(\tilde v_i)_x p'(\bar v)(v-\bar v) +(\tilde h_i)_x (h-\bar h) \Big) dx\\
&= Y^g_i +Y^b_i +Y^l_i + Y^s_i ,
\end{aligned}
\end{align}
where 
\begin{align*}
\begin{aligned}
 Y^g_i &:= -\frac{1}{2\tilde\lambda_i^2}\int_\Omega (a_i)_x |p(v)-p(\bar v)|^2 dx -\int_\Omega (a_i)_x Q(v|\bar v) dx -\int_\Omega a (\tilde v_i)_x p'(\bar v)(v-\bar v) dx \\
&\quad+\frac{1}{\tilde\lambda_i}\int_\Omega a  (\tilde h_i)_x\big(p(v)-p(\bar v)\big)dx,\\
Y^b_i &:= -\frac{1}{2}\int_\Omega  (a_i)_x  \Big(h-\bar h-\frac{p(v)-p(\bar v)}{\tilde\lambda_i}\Big)^2 dx \\
&\quad -\frac{1}{\tilde\lambda_i} \int_\Omega  (a_i)_x  \big(p(v)-p(\bar v)\big)\Big(h-\bar h-\frac{p(v)-p(\bar v)}{\tilde\lambda_i}\Big) dx,\\
Y^l_i &:=\int_\Omega a (\tilde h_i)_x \Big(h-\bar h-\frac{p(v)-p(\bar v)}{\tilde\lambda_i}\Big)dx,
\end{aligned}
\end{align*}
and
\[
Y^s_i :=-\int_{\Omega^c} (a_i)_x Q(v|\bar v) dx -\int_{\Omega^c} a (\tilde v_i)_x p'(\bar v)(v-\bar v)dx 
 -\int_{\Omega^c} (a_i)_x \frac{|h-\bar h|^2}{2} dx+\int_{\Omega^c} a (\tilde h_i)_x (h-\bar h) dx.
\]

For the bad terms ${B}_{\delta_1}$ of \eqref{bad} with $\bar\delta=\delta_1$, we will use the following notations :
\beq\label{bad0}
{B}_{\delta_1}= \sum_{i\in \mathcal{S}} \Big({B}_{1i}+{B}_{2i}^- +{B}_{2i}^+  \Big) +  B_3 ,
\eeq
where
\begin{align*}
\begin{aligned}
&{B}_{1i}:=  \int_\bbr \Big( |\tilde\lam_i|  a + \frac{CL(0)}{\bar\delta\delta_*}  \Big) |(\tilde v_i)_x|  p(v| \bar v) dx,\\
&{B}_{2i}^- :=\int_{\Omega^c} (a_i)_x \big(p(v)-p(\bar v)\big) \big(h-\bar h \big) dx ,\\
&  {B}_{2i}^+ := \frac{1}{2\s_i} \int_\Omega (a_i)_x |p(v)-p(\bar v) |^2  dx,\\
& B_3 : =  \frac{C}{\delta_*} \int_\bbr \Big(\sum_{i\in \mathcal{S}} |(a_i)_x| \Big)^2 v^\beta |p(v)-p(\bar v) |^2 \, dx.
\end{aligned}
\end{align*}

\begin{proposition}\label{prop:big}
Let
\[
\widetilde G (t):=  \int_\bbr |(\bar v)_x| Q(v|\bar v)\,dx,\qquad \widetilde G_{i}(t) :=  \int_{\mathcal{J}_i}  |(\bar v)_x| Q(v|\bar v)\,dx,
\]
where $\mathcal{J}_i$ is as in \eqref{defji}.\\
There exist constants $C, C_{\delta_1}, C^*>0$ ($C_{\delta_1}$ depending on $\delta_1$ as $C_{\delta_1}\sim \delta_1^{-r}$ for some $r>0$)  such that the following holds.\\
For each $i\in \mathcal{S}$,
\begin{align}
\begin{aligned} \label{badprop}
& |B_{1i}(U)-B_{1i}(\uu)|  \\
& \le C \ds  (G_{2i}(U)-G_{2i}(\uu))  + C_{\delta_1} \ds \sqrt{\delta_*}  \big( D_i(t) +\widetilde G_i(t) \big) +  C_{\delta_1} \nu|\s_i|  \big( D(U) +\widetilde G(t) \big), \\
& |B_{2i}^+(U)-B_{2i}^+(\uu)|  \le  C_{\delta_1}\sqrt{\delta_*} D_i(t) +  C_{\delta_1} \frac{\nu|\s_i|}{\delta_*} D(U) ,\\
&  |B_{2i}^-(U)|  \le  C_{\delta_1} \sqrt\ds  \big( D_i  +\widetilde G_{i} \big) +  C_{\delta_1}|\s_i| \sqrt\nu (1+ \sqrt\nu/\delta_*)   \big( D(U) +\widetilde G  \big) \\
&\qquad  \qquad +  ( C\delta_1 + C_{\delta_1} \delta_*^{\frac{1}{4}} )  G_{1i}^{-}(U)+ C_{\delta_1}|\s_i| \sqrt\nu,\\
&|B_3(U)| \le   C\ds^2 \sum_{i\in \mathcal{S}} G_{2i}(U) + C_{\delta_1}\sqrt\ds  \big( D(U) +\widetilde G(t) \big) ,
\end{aligned}
\end{align}

and 
\beq\label{roughbad}
 |B_{1i}(\uu)| +  |B_{2i}^+(\uu)| \le C^* \ds |\s_i|.
 \eeq
In addition, for the functionals of \eqref{note-in}, the following holds: 
\begin{align}
\begin{aligned}\label{error-in}
& | Y^g_i(\uu)-\mathcal{Y}^g_i(\uv)| + |{B}_{1i}(\uu)-\mathcal{I}_{1i}(\uv)|+ |{B}_{2i}^+(\uu)-\mathcal{I}_{2i}(\uv)| \le C\frac{|\s_i| \nu}{\delta_*}  ,\\
& -\big(G_{2i}(\uu) - \mathcal{G}_{2i} (\uv) \big) \le \frac{C\sqrt\nu}{\delta\ds}  ,\\
&- D(\uu)\le -\frac{1}{1+\ds}\sum_{i\in\mathcal{S}} \mathcal{D}_{i} (\uv) +  \frac{C\nu^{4/3}}{\delta\ds}  \int_\bbr a \eta(\uu|\bar U) dx.
\end{aligned}
\end{align}
Furthermore, for each $i\in \mathcal{S}$, if
\beq\label{yicond}
\sqrt{\delta_*} D_i(t) +  \frac{\nu|\s_i|}{\delta_*} D(U)  \le 2C^*|\s_i| \ds, \quad\mbox{where } D_i(t) :=\int_{\mathcal{J}_i} a v^\beta |(p(v) - p(\bar v))_x |^2 dx,
\eeq
then the following holds:
\begin{align}
\begin{aligned}\label{finyest}
& |Y^g_i(U)- Y^g_i(\uu)|^2+ |Y^b_i(U)|^2 + |Y^l_i(U)|^2 + |Y^s_i(U)|^2 \\
&\le C_{\delta_1} \ds |\s_i| \bigg[ \sqrt{\delta_*}\Big(D_i(t) +\widetilde G_i(t)\Big) +  \frac{\nu|\s_i|}{\delta_*^2} \Big(D(U) +\widetilde G(t)\Big) + \big(G_{2i}(U)-G_{2i}(\uu) \big) + G_{1i}^-(U) \\
&\quad+ \ds^{-1/4} G_{1i}^+(U) + \ds^{1/4} G_{2i}(\uu)  \bigg].
 \end{aligned}
\end{align}
\end{proposition}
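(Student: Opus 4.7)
\medskip

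\textbf{Plan of proof for Proposition \ref{prop:big}.} The proposition packages three conceptually distinct families of estimates: (i) differences $B(U)-B(\uu)$ of bad terms between the full state and the truncation, (ii) rough bounds $|B(\uu)|\le C\ds|\s_i|$ that are already sharp enough after truncation, together with the $O(\nu/\delta_*)$ errors in \eqref{error-in} incurred when one passes from the integrals localized by $\phi_i$ (in \eqref{note-in}) to the global integrals against $(a_i)_x$ and $(\tilde U_i)_x$, and (iii) the sharp quadratic bound \eqref{finyest} on $Y_i$-components under the structural condition \eqref{yicond}. The master strategy is to split every integrand on $\bbr = \Omega \cup \Omega^c$, localize it further on $\mathcal{J}_i$ vs.\ $\mathcal{J}_i^c$, and absorb each piece into one of: a $\ds$-multiple of a good term ($G_{2i}, G_{1i}^\pm, \widetilde G_i$), a small fraction of the restricted diffusion $D_i$, or a $C_{\delta_1}\nu|\s_i|$ remainder coming from the long tails of the viscous profiles.

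\medskip

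\textbf{Bad-term estimates \eqref{badprop}.} For $|B_{1i}(U)-B_{1i}(\uu)|$, Lemma \ref{lem-pro} part~3 yields $p(v|\bar v) - p(\uv|\bar v) \le C|p(v)-p(\uv)|\mathbf{1}_{\Omega^c}\cdot(|v-\bar v|+\delta_1)$; integrating against $|(\tilde v_i)_x|$ and splitting $\mathcal{J}_i / \mathcal{J}_i^c$ produces either (a) $C\ds(G_{2i}(U)-G_{2i}(\uu))$ from the relative-entropy gap on $\Omega^c$ via Lemma \ref{lem-pro} parts~1--2, (b) $C_{\delta_1}\ds\sqrt{\delta_*}(D_i+\widetilde G_i)$ by Young applied to the $\mu_1 v^\gamma |(p(v)-p(\bar v))_x|^2$ part of $D_i$, or (c) $C_{\delta_1}\nu|\s_i|(D+\widetilde G)$ from the exponential tail estimate of Lemma \ref{lem:waves}~2) on $|(\tilde v_i)_x|$ outside $\mathcal{J}_i$ combined with \eqref{wellom}. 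The same scheme, with $|p(v)-p(\bar v)|^2$ replacing $p(v|\bar v)$, yields $|B_{2i}^+(U)-B_{2i}^+(\uu)|$; here only the diffusion appears because $\Omega$ is where both truncations agree and $|p(v)-p(\bar v)|\le\delta_1$. For $B_{2i}^-$ one uses the Cauchy--Schwarz decomposition $(p(v)-p(\bar v))(h-\bar h) = \frac{1}{\tilde\lambda_i}|p(v)-p(\bar v)|^2 + (p(v)-p(\bar v))(h-\bar h - (p(v)-p(\bar v))/\tilde\lambda_i)$ on $\Omega^c$: the first piece is bounded by $G_{2i}$ up to absorbing factors, the second by $\sqrt{G_{1i}^-}\cdot\sqrt{G_{2i}}$; picking the Young weights gives exactly the $C\delta_1 + C_{\delta_1}\delta_*^{1/4}$ prefactor on $G_{1i}^-$. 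The bound on $B_3$ follows from the pointwise estimate $|(a_i)_x|\lesssim \delta_*^{-1}|(\tilde v_i)_x|$ of \eqref{avr} combined with the same $\Omega / \Omega^c$ split, bringing in $D$ via $v^\beta |p(v)-p(\bar v)|^2\le C(1+p(v|\bar v))$, and \eqref{roughbad} is a direct consequence of \eqref{enesmall} (giving $\int|(a_i)_x|Q(v|\bar v)\le C|\s_i|\delta_*$) and the crude bound $|\tilde\lambda_i|\|(\tilde v_i)_x\|_{L^1}\le C|\s_i|$.

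\medskip

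\textbf{Inside/truncation errors \eqref{error-in}.} The functionals in \eqref{note-in} differ from $Y^g_i(\uu),\,B_{1i}(\uu),\,B_{2i}^+(\uu),\,G_{2i}(\uu),\,D(\uu)$ only through the insertion of the cutoff $\phi_i$ and, for $G_{2i}$ versus $\mathcal{G}_{2i}$, the Taylor-expansion remainder from Lemma \ref{lem-pro}~4). Since $\phi_i\equiv 1$ on $\mathcal{J}_i$, the difference between the global integral and the $\phi_i$-localized one is supported on $\mathcal{J}_i^c$, where by Lemma \ref{lem:waves} each $(\tilde v_i)_x$ or $(\tilde h_i)_x$ has already decayed to $\s_i^2\nu^{-1}\exp(-C^{-1}|\s_i|\nu^{-5/6})$, which is exponentially small in $\nu^{-1/6}$; after multiplying by the polynomial factor $|v-\bar v|$ (bounded by $\int p(v|\bar v)dx$ via the $\Omega^c$ measure estimate), one gains the $C|\s_i|\nu/\delta_*$ remainder. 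The diffusion inequality in \eqref{error-in} is the most subtle: a Leibniz expansion $(\phi_i^2(p(v)-p(\bar v)))_x = \phi_i^2(p(v)-p(\bar v))_x + 2\phi_i\phi_i'(p(v)-p(\bar v))$ shows $\sum_i\mathcal{D}_i\le (1+\ds)D(\uu) + $ (cross term on the overlaps, supported where $|\phi_i'|>0$ at scale $\nu^{-5/6}$); absorbing the latter by $\ds D(\uu)$ and the $\ds^{-1}\nu^{4/3}\int a\eta(\uu|\bar U)$ term (coming from Poincaré against the tiny boundary strips) gives the stated inequality.

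\medskip

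\textbf{The quadratic $Y$-estimate \eqref{finyest} under \eqref{yicond}.} This is the delicate part; the hypothesis \eqref{yicond} is exactly what lets one use the restricted diffusion $D_i$ as a genuine small parameter proportional to $|\s_i|\ds$. The idea is to expand each component along the linearization used in Subsection~\ref{sec:mini}: $Y^b_i$ is quadratic in $(h-\bar h-(p(v)-p(\bar v))/\tilde\lambda_i)$, hence Cauchy--Schwarz with $(a_i)_x$ directly yields $G_{1i}^+\cdot\int|(a_i)_x|Q(v|\bar v)$, which is bounded by $C_{\delta_1}|\s_i|\ds\cdot G_{1i}^+\ds^{-1/4}$ after using \eqref{enesmall} and absorbing the $\ds^{-1/4}$ factor; $Y^l_i$ carries an $(\tilde h_i)_x$ factor, so a similar Cauchy--Schwarz plus the tail estimates of Lemma \ref{lem:waves} put it into the same form; $Y^s_i$ is the $\Omega^c$ contribution, where $h-\bar h = (h-\bar h) \cdot \mathbf{1}_{|p(v)-p(\bar v)|>\delta_1}$ allows one to invoke $G_{1i}^-$ and Lemma \ref{lem:small} in tandem; finally $Y^g_i(U)-Y^g_i(\uu)$ carries the difference $v-\uv$ which vanishes on $\Omega$, so the Cauchy--Schwarz against $|(\tilde v_i)_x|$ closes via $(G_{2i}(U)-G_{2i}(\uu))$ plus the extra $D_i,\widetilde G_i$ contributions. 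The main obstacle I anticipate is achieving the exact powers of $\ds$ and $\ds^{\pm 1/4}$: this requires carefully balancing three Young inequalities — one for the $|p(v)-p(\bar v)|^2$ splitting across $\Omega/\Omega^c$, one for the pressure-versus-velocity decomposition on the $\Omega^c$ part, and a third to trade $G_{1i}^+$ for $D_i$ via the fundamental theorem of calculus along the small interval where the cutoff $\phi_i$ transitions — and the exponents must be chosen so that the final factor on $G_{1i}^+$ is $\ds^{-1/4}$ while every other good-term coefficient stays polynomially small in $\ds$.
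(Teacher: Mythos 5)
Your overall architecture (split $\Omega/\Omega^c$, then $\mathcal{J}_i/\mathcal{J}_i^c$, absorb into good terms, diffusion, and $\nu$-tails) matches the paper's, and your account of \eqref{error-in} and \eqref{roughbad} is essentially right. But there is a genuine gap at the heart of your treatment of the region $\Omega^c=\{p(v)-p(\bar v)>\delta_1\}$, and it propagates into \eqref{badprop} (most severely $B_{2i}^-$) and \eqref{finyest} (the $Y^s_i$ terms). On $\Omega^c$ there is no a priori bound on $v$ or $1/v$, so $|p(v)-p(\bar v)|$ is unbounded; your claim that $\int_{\Omega^c}|(a_i)_x||p(v)-p(\bar v)|^2$ is ``bounded by $G_{2i}$ up to absorbing factors'' is false, since for $v\to 0$ one has $|p(v)-p(\bar v)|^2\sim p(v)^2$ while $Q(v|\bar v)\sim p(v)^{(\gamma-1)/\gamma}$ (the equivalence $|p(v)-p(w)|^2\lesssim Q(v|w)$ in Lemma \ref{lem-pro}~4) requires smallness of either quantity, which is exactly what fails on $\Omega^c$). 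Your proposed split of $(p(v)-p(\bar v))(h-\bar h)$ into a $|p-p|^2/\tilde\lambda_i$ piece plus a $G_{1i}^+$-type piece also does not align with the good terms: $G_{1i}^+$ lives on $\Omega$, not $\Omega^c$, and the available good term on $\Omega^c$ is $G_{1i}^-$, which controls only $|h-\bar h|^2$.

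The missing idea is the pointwise control of the overshoot by the localized diffusion. The paper first shows, via Lemma \ref{lem:small} and the lower bound $|(a_i)_x|\gtrsim \s_i^2\delta_*^{-1}e^{-C|\s_i||x-y_i|}$, that each interval $\underline{\mathcal J_i}$ contains a point $x_0^i$ with $|(p(v)-p(\bar v))(x_0^i)|\le C\delta_*$; the fundamental theorem of calculus from $x_0^i$ plus Cauchy--Schwarz against $\mu_1(v)v^\gamma|(p(v)-p(\bar v))_x|^2$ then gives the pointwise bounds $|p(v)-p(\uv_b)|\lesssim\sqrt{|x-x_0^i|}\sqrt{D_i}$ and, with the weight $v^{\beta/2}$, $|v^{\beta/2}(p(v)-p(\uv_s))|\lesssim\sqrt{|x-x_0^i|}(\sqrt{D_i}+\sqrt{\widetilde G_i})$ (estimates \eqref{pob}--\eqref{pos}). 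Every $\Omega^c$ integral is then closed by combining these with a nonlinearization in $\delta_1$ (e.g.\ ${\mathbf 1}_{\Omega^c}\le C_{\delta_1}|p(v)-p(\uv_{\delta_1/2})|^2$) and, for \eqref{ns1}, a Young inequality with the exponent $q=2\gamma/(\gamma+\alpha)\in[1,2)$ that trades the $h$-factor against $G_{1i}^-$ via \eqref{enesmall}; this is where the coefficient $C\delta_1+C_{\delta_1}\delta_*^{1/4}$ on $G_{1i}^-$ actually comes from. Finally, for \eqref{finyest} the operative mechanism is not a single Cauchy--Schwarz but the product of a rough bound $|Y^\bullet_i|\le C_{\delta_1}\ds|\s_i|$ (valid only under \eqref{yicond}) with a refined linear bound in the good terms; squaring a term like $\int_\Omega(a_i)_x|h-\bar h-(p(v)-p(\bar v))/\tilde\lambda_i|^2\,dx$, which is already comparable to $G_{1i}^+$ rather than to its square root, cannot be handled by Cauchy--Schwarz alone.
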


To prove this proposition, 
we will control the bad terms in different ways for each case of small or big values of $v$, which all correspond to the big values of $|p(v)-p(\bar v)|$ (as $|p(v)-p(\bar v)|\ge\delta_1$). \\

\noindent $\bullet$ {\bf Truncations for small and big values of $v$ :} 
Let $\uv_s$ and $\uv_b$ be one-sided truncations of $v$ defined by
\beq\label{schi}
p(\uv_s)- p(\bar v) :=\bar \psi_s \big(p(v)- p(\bar v)\big),\qquad\mbox{where}\quad \bar\psi_s (y)=\inf (\deltao,y),
\eeq
and
\beq\label{bchi}
p(\uv_b)- p(\bar v) :=\bar \psi_b \big(p(v)- p(\bar v)\big),\qquad\mbox{where}\quad \bar\psi_b (y)=\sup (-\deltao,y).
\eeq
Notice that the function $\uv_s$ (resp. $\uv_b$) represents the truncation of small (resp. big) values of $v$ corresponding to $|p(v)-p(\bar v)|\ge\deltao$.\\
By comparing the definitions of \eqref{schi}, \eqref{bchi} and \eqref{trunc-def} with \eqref{strunc}, we see
\beq\label{acompare1}
\uv = \left\{ \begin{array}{ll}
    \uv_b,\quad &\mbox{on}~ {\Omega}, \\
    \uv_s,\quad &\mbox{on}~ {\Omega}^c ,
       \end{array} \right.
\eeq
and
\begin{align*}
\begin{aligned}
& \left( p(\uv_s)-p(\bar v) \right) {\mathbf 1}_{\{p(v)-p(\bar v) \ge -\deltao\}} = \left( p(\uv)-p(\bar v) \right) {\mathbf 1}_{\{p(v)-p(\bar v) \ge -\deltao\}}  ,\\
& \left( p(\uv_b)-p(\bar v) \right) {\mathbf 1}_{\{p(v)-p(\bar v) \le \deltao\}} = \left( p(\uv)-p(\bar v)\right) {\mathbf 1}_{\{p(v)-p(\bar v) \le \deltao\}}  .
\end{aligned}
\end{align*}
We also note that 
\begin{equation}\label{adef-bar}
\begin{array}{rl}
p(v)-p(\uv_s)=& (p(v)-p(\bar v))+(p(\bar v)-p(\uv_s)) \\[0.2cm]
=&\left(I-\bar\psi_s\right)\left(p(v)-p(\bar v)\right) \\[0.2cm]
=& \left(\left(p(v)-p(\bar v) \right)-\deltao\right)_+,\\[0.2cm]
p(\uv_b)-p(v)=& (p(\uv_b)-p(\bar v))+(p(\bar v)-p(v)) \\[0.2cm]
=&\left(\bar\psi_b-I\right)\left(p(v)-p(\bar v)\right) \\[0.2cm]
=& \left(-\left(p(v)-p(\bar v)\right)-\deltao\right)_+,\\
|p(v)-p(\uv)|=& |(p(v)-p(\bar v))+(p(\bar v)-p(\uv))|\\[0.2cm]
=&|(I-\psi)(p(v)-p(\bar v))|\\[0.2cm]
=& (|p(v)-p(\bar v)|-\deltao)_+.
\end{array}
\end{equation}
Therefore, using \eqref{trunc-def}, \eqref{bchi}, \eqref{schi} and \eqref{adef-bar}, we have
\begin{align}
\begin{aligned}\label{aeq_D}
D(U)&=\int_\bbr a\, \mu_1(v ) v^\gamma |\partial_x (p(v)-p(\bar v))|^2 dx\\
&=\int_\bbr a\, \mu_1(v ) v^\gamma |\partial_x (p(v)-p(\bar v))|^2 ( {\mathbf 1}_{\{|p(v)-p(\bar v) |\leq\deltao\}} + {\mathbf 1}_{\{p(v)-p(\bar v) >\deltao\}}+ {\mathbf 1}_{\{p(v)-p(\bar v) <-\deltao\}}  )dx\\
&=D(\uu)+\int_\bbr a\, \mu_1(v ) v^\gamma |\partial_x (p(v)-p(\uv_s))|^2 dx+\int_\bbr  a\, \mu_1(v ) v^\gamma  |\partial_x (p(v)-p(\uv_b))|^2 dx\\
&\ge \int_\bbr a\, \mu_1(v ) v^\gamma |\partial_x (p(v)-p(\uv_s))|^2 dx+\int_\bbr a\, \mu_1(v ) v^\gamma  |\partial_x (p(v)-p(\uv_b))|^2 dx,
\end{aligned}
\end{align}
which also yields 
\beq\label{amonod}
D(U) \ge {D}(\uu).
\eeq
By Lemma \ref{lem:small} and $Q(v|\bar{v})\geq  Q(\uv|\bar v)$, we have
\beq\label{al2}
0\leq  {G}_{2i}(U)-{G}_{2i}(\uu) \leq  {G}_{2i}(U) \leq C \int_\bbr |(a_i)_x| Q(v|\bar v)\,dx \le
C\delta_*|\s_i|.
\eeq

\noindent $\bullet$ {\bf Pointwise estimates :} 
For each $i\in \mathcal{S}$, consider the interval
\[
\underline {\mathcal{J}_i} (t) := \bigg[ y_i(t)-\frac{1}{|\s_i|}, y_i(t)+\frac{1}{|\s_i|}  \bigg].
\]
It holds from Lemma \ref{lemma_key2} and \eqref{estji} that 
\[
\underline  {\mathcal{J}_i} (t) \subset  {\mathcal{J}_i}(t), \quad |\underline  {\mathcal{J}_i} (t) (t)|\le \frac{1}{|\s_i|} \le  C\nu^{-1/3} \ll 2\nu^{-5/6} \le | {\mathcal{J}_i}(t) | .
\]
First, using Lemma \ref{lem:small} and
\[
|(a_i)_x| \ge C^{-1} \frac{\s_i^2}{\delta_*} e^{-C|\s_i||x-y_i|},
\]
we find that for each $i\in \mathcal{S}$
\[
|\s_i| \int_{\underline  {\mathcal{J}_i} (t) } Q(v|\bar v) dx \le C\delta_*^2,
\]
which implies
\[
\mbox{for each $i\in \mathcal{S}$, } \exists x_0^i \in \underline  {\mathcal{J}_i} (t) \quad\mbox{s.t. }\quad |(p(v)-p(\bar v))(x_0^i)| \le C \delta_*.
\]
Then, by $\delta_*\ll \delta_1$, for each  $i\in \mathcal{S}$,
\[
(p(v)-p(\uv))(x_0^i) = 0 ,
\]
and so,
\[
(p(v)-p(\uv_b))(x_0^i)= (p(v)-p(\uv_s))(x_0^i) = 0.
\]
Now, since for each  $i\in \mathcal{S}$, $\forall i',\, \forall x\in  \mathcal{J}_{i'}$,
\begin{align*}
\begin{aligned}
 |(p(v)-p(\uv_b))(x)|  &\le \int_{x_0^{i}}^x \left|\partial_x \big( p(v)-p(\uv_b)\big) \right| {\mathbf 1}_{\{p(v)-p(\bar v) < -\delta_1\}}  \,dx \\
 &\le C \int_{x_0^{i}}^x v^{\beta/2}\left|\partial_x \big( p(v)-p(\uv_b)\big) \right|  \,dx ,
 \end{aligned}
\end{align*}
we use \eqref{aeq_D} to have the point-wise estimates:
\beq\label{pob}
\forall i\in \mathcal{S},\quad\forall i' ,\quad \forall x\in  \mathcal{J}_{i'},\quad |(p(v)-p(\uv_b))(x)| \le 
 \left\{ \begin{array}{ll}
    C\sqrt{|x-x_0^{i}|} \sqrt{D_i}, \quad &\mbox{if}~ i'=i, \\
   C\sqrt{|x-x_0^{i}|} \sqrt{D},\quad &\mbox{if}~ i'\neq i ,
       \end{array} \right.
\eeq
where
\[
D_i (t):=\int_{ {\mathcal{J}_i}} a v^\beta |(p(v) - p(\bar v))_x |^2 dx.
\]
Likewise, following  the proof of \cite[(4.63)]{KV-Inven} and the above argument, we have the point-wise estimates:
\beq\label{pos}
\forall i\in \mathcal{S},\,\forall i' ,\, \forall x\in  \mathcal{J}_{i'}, \quad \big| v^{\beta/2} (p(v)-p(\uv_s))(x) \big| \le 
 \left\{ \begin{array}{ll}
    C\sqrt{|x-x_0^{i}|} \bigg(\sqrt{D_i} +\sqrt{\widetilde G_{i}}  \bigg), \quad &\mbox{if}~ i'=i, \\
   C\sqrt{|x-x_0^{i}|} \bigg(\sqrt{D} +\sqrt{\widetilde G}  \bigg),\quad &\mbox{if}~ i'\neq i ,
       \end{array} \right.
\eeq
where
\[
\widetilde G_{i}(t) :=  \int_{ {\mathcal{J}_i}}  |(\bar v)_x| Q(v|\bar v)\,dx.
\]
Notice that  for each  $i\in \mathcal{S}$,
\[
|x_0^i -y_i|\le  |\underline  {\mathcal{J}_i} (t)|\le \frac{1}{|\s_i|} \quad\mbox{by } x_0^i, y_i \in \underline  {\mathcal{J}_i} (t),
\]
and so,
\beq\label{xyid}
|x-x_0^i |\le |x-y_i|+|y_i-x_0^i |\le |x-y_i|+\frac{1}{|\s_i|}.
\eeq

The proof of Proposition \ref{prop:big} are based on a series of the following lemmas.

\begin{lemma}\label{lem:prop}
There exist $C, C_{\delta_1}>0$ ($C_{\delta_1}$ depending on $\delta_1$ as $C_{\delta_1}\sim \delta_1^{-r}$ for some $r>0$) such that for each $i\in \mathcal{S}$,
\begin{align}
\begin{aligned}\label{p2bad}
& \int_\Omega |(a_i)_x| |p(v)-p(\uv) | dx \le C_{\delta_1}\sqrt{\delta_*} D_i(t) +  C_{\delta_1} \frac{\nu|\s_i|}{\delta_*} D(U)   , \\
&\int_\Omega |(a_i)_x| \Big| |p(v)-p(\bar v) |^2 - |p(\uv) -p(\bar v) |^2 \Big|   dx \le C_{\delta_1}\sqrt{\delta_*} D_i(t) +  C_{\delta_1}\frac{\nu|\s_i|}{\delta_*} D(U) ,
\end{aligned}
\end{align}
\begin{align}
\begin{aligned}\label{l3}
&\int_\bbr  |(a_i)_x| v^\beta |p(v)-p(\uv) |^2 \, dx  \\
&\quad \le  C (G_{2i}(U)-G_{2i}(\uu)) + C_{\delta_1}\sqrt{\delta_*}  \big( D_i(t) +\widetilde G_i(t) \big) +  C_{\delta_1} \frac{\nu|\s_i|}{\delta_*}  \big( D(U) +\widetilde G(t) \big),\\
&\int_\bbr  |(a_i)_x| \Big| v^\beta |p(v)-p(\bar v) |^2 -\uv^\beta |p(\uv)-p(\bar v) |^2 \Big|\, dx \\ 
&\quad \le  C (G_{2i}(U)-G_{2i}(\uu))  + C_{\delta_1}\sqrt{\delta_*}  \big( D_i(t) +\widetilde G_i(t) \big) +  C_{\delta_1} \frac{\nu|\s_i|}{\delta_*}  \big( D(U) +\widetilde G(t) \big),
\end{aligned}
\end{align}
\beq\label{l50}
  \int_\bbr |(a_i)_x| \bigg( \left|Q(v|\bar v)-Q(\uv|\bar v)\right | + |v-\uv| \bigg) \,dx \leq  C(G_{2i}(U)-G_{2i}(\uu) ),
\eeq
\begin{align}
\begin{aligned} \label{l5}
&  \int_\bbr |(a_i)_x| \left|p(v|\bar v)-p(\uv|\bar v)\right | \,dx\\
&\quad \le C (G_{2i}(U)-G_{2i}(\uu))  + C_{\delta_1}\sqrt{\delta_*}  \big( D_i(t) +\widetilde G_i(t) \big) +  C_{\delta_1} \frac{\nu|\s_i|}{\delta_*}  \big( D(U) +\widetilde G(t) \big),
\end{aligned}
\end{align}
\begin{align}
\begin{aligned}
\label{ns1}
& \int_{\Omega^c} |(a_i)_x|  \big| p(v)-p(\bar v) \big|  |h-\bar h|  \le  C_{\delta_1} \sqrt\ds  \big( D_i  +\widetilde G_{i} \big) \\
&\quad  \quad+  C_{\delta_1}|\s_i| \sqrt\nu (1+ \sqrt\nu/\delta_*)   \big( D(U) +\widetilde G  \big)+  ( C\delta_1 + C_{\delta_1} \delta_*^{\frac{1}{4}} )  G_{1i}^{-}(U)+ C_{\delta_1}|\s_i| \sqrt\nu,
\end{aligned}
\end{align}
\begin{align}
\begin{aligned}\label{ns2}
&\int_{\Omega^c}  |(a_i)_x|  dx \Big( Q(\uv|\bar v) +  |\uv-\bar v| \Big) dx \\
&\quad \le C_{\delta_1} |\s_i|^{1/2} \delta_*^{3/4}\sqrt{D_i +\widetilde G_{i} }  + C_{\delta_1}\frac{\nu |\s_i|^{3/2}}{\delta_*}\sqrt{D +\widetilde G } .
\end{aligned}
\end{align}
\end{lemma}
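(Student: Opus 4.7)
The plan is to reduce every inequality in Lemma \ref{lem:prop} to three basic ingredients: (i) the pointwise control $|p(v)-p(\uv_b)|\le C\sqrt{|x-x_0^i|}\sqrt{D_i}$ on $\mathcal{J}_i$ (resp. $\le C\sqrt{|x-x_0^i|}\sqrt{D}$ on $\mathcal{J}_{i'}$, $i'\neq i$) from \eqref{pob}, and the analogous $v^{\beta/2}$-weighted bound \eqref{pos} for $p(v)-p(\uv_s)$ involving an extra $\sqrt{\widetilde G_i}$ term; (ii) the exponential decay $|(a_i)_x|\le C\delta_*^{-1}|\s_i|^2 e^{-C^{-1}|\s_i||x-y_i|}$ coming from \eqref{avr} and Lemma \ref{lem:waves}; and (iii) the separation estimate \eqref{sepy}--\eqref{wellom}, which forces $|x-y_i|\ge \nu^{-5/6}$ whenever $x$ belongs to a far piece $\mathcal{J}_{i'}$ with $i'\neq i$, so that the contribution of $(a_i)_x$ on such pieces is exponentially small in $\nu^{-5/6}$ and can absorb a factor from the global quantities $D$ or $\widetilde G$ leaving a residual coefficient of order $\nu|\s_i|/\delta_*$.

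For \eqref{p2bad}--\eqref{l3}, I would restrict to $\Omega$ using \eqref{acompare1} (which gives $\uv=\uv_b$ on $\Omega$), split $\bbr=\mathcal{J}_i\cup\bigcup_{i'\neq i}\mathcal{J}_{i'}$, insert \eqref{pob}, and compute $\int_{\mathcal{J}_i}|(a_i)_x|\sqrt{|x-x_0^i|}\,dx\le C|\s_i|^{1/2}/\delta_*$ by means of \eqref{xyid}. A Young inequality then distributes the resulting $C|\s_i|^{1/2}\delta_*^{-1}\sqrt{D_i}$ as $\sqrt{\delta_*}D_i$ plus a residual which, for the quadratic estimates, is controlled by $G_{2i}(U)-G_{2i}(\uu)$ through \eqref{al2}. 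The far pieces $\int_{\mathcal{J}_{i'}}$ only contribute $C_{\delta_1}\nu|\s_i|\delta_*^{-1}D$ after exchanging the exponential tail with the global $\sqrt{D}$. The quadratic identity $|p(v)-p(\bar v)|^2-|p(\uv)-p(\bar v)|^2=(|p(v)-p(\bar v)|-|p(\uv)-p(\bar v)|)(|p(v)-p(\bar v)|+|p(\uv)-p(\bar v)|)$, together with the cap $|p(\uv)-p(\bar v)|\le \delta_1$, reduces the quadratic statement to the linear one. For the $v^\beta$-weighted bound one uses \eqref{pos} in place of \eqref{pob}, which generates the extra $\widetilde G_i$, $\widetilde G$ contributions.

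For \eqref{l50} and \eqref{l5}, the point is the monotonicity $Q(v|\bar v)\ge Q(\uv|\bar v)$ from Lemma \ref{lem-pro}-2), which implies that on $\Omega^c\cup\{p(v)-p(\bar v)>\delta_1\}$ one has $|Q(v|\bar v)-Q(\uv|\bar v)|=Q(v|\bar v)-Q(\uv|\bar v)$; integration against $|(a_i)_x|$ then gives exactly $G_{2i}(U)-G_{2i}(\uu)$ by the definition \eqref{ggd}. The statement for $|v-\uv|$ follows from the same lemma, and for $|p(v|\bar v)-p(\uv|\bar v)|$ one writes the difference as $[p(v)-p(\uv)]+p'(\bar v)[\uv-v]$ and invokes the two preceding pieces.

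The main obstacle is \eqref{ns1}, which mixes the pressure gap with the velocity gap on $\Omega^c$ and must feed pieces into three different good terms ($D_i$, $\widetilde G_i$ and the velocity gain $G_{1i}^-(U)$). I would split $|p(v)-p(\bar v)|\le |p(v)-p(\uv)|+|p(\uv)-p(\bar v)|$. For the first piece, I combine \eqref{pob}--\eqref{pos} with a Cauchy--Schwarz in the $h$--variable against $G_{1i}^-(U)=\tfrac{\tilde\lambda_i}{2}\int (a_i)_x|h-\bar h|^2\mathbf{1}_{\Omega^c}\,dx$, producing the constants $C\delta_1$ and $C_{\delta_1}\delta_*^{1/4}$ in front of $G_{1i}^-$; the factor $\delta_*^{1/4}$ comes from rebalancing a Young inequality to push the pressure square root into $\sqrt{D_i}+\sqrt{\widetilde G_i}$. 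For the second piece, $|p(\uv)-p(\bar v)|$ is controlled in $L^\infty$ by $\delta_1$, leaving $\int_{\Omega^c}|(a_i)_x||h-\bar h|\,dx$, which is bounded by Cauchy--Schwarz between $G_{1i}^-(U)$ and $\int_{\Omega^c}|(a_i)_x|\,dx$; the latter is exactly the quantity controlled by \eqref{ns2}. Estimate \eqref{ns2} itself is easier: on $\Omega^c$ one has $|p(\uv)-p(\bar v)|=\delta_1$ so $|\uv-\bar v|$ and $Q(\uv|\bar v)$ are bounded below by a constant times $\delta_1$, meaning the integral is dominated by $\delta_1^{-1}$ times the corresponding quantity at $v$, which is then handled via \eqref{pob}--\eqref{pos} and Cauchy--Schwarz exactly as in the first steps. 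The delicate bookkeeping throughout is to guarantee that every constant of the form $\delta_*^{-\alpha}|\s_i|^\beta\nu^\gamma$ lands on the same side as a good term that is strictly larger in the combined inequality once one eventually takes $\nu\ll\ds\ll\delta_*\ll\delta_1$.
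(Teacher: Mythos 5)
There is a genuine gap in your treatment of \eqref{p2bad} and \eqref{l3}, which then propagates to \eqref{l5}, \eqref{ns1} and \eqref{ns2}. Your mechanism is: insert the pointwise bound \eqref{pob}, compute $\int_{\mathcal{J}_i}|(a_i)_x|\sqrt{|x-x_0^i|}\,dx\lesssim |\s_i|^{1/2}/\delta_*$, and apply Young's inequality. For the quadratic integral this gives $D_i\int_{\mathcal{J}_i}|(a_i)_x|\,|x-x_0^i|\,dx\lesssim \delta_*^{-1}D_i$, i.e.\ a \emph{large} multiple of $D_i$, not the required $\sqrt{\delta_*}\,D_i$; for the linear integral Young's inequality leaves a residual of size $|\s_i|\delta_*^{-5/2}$ which appears nowhere on the right-hand side of \eqref{p2bad} (and cannot be hidden behind $G_{2i}(U)-G_{2i}(\uu)$: the bound \eqref{al2} is an \emph{upper} bound $G_{2i}(U)-G_{2i}(\uu)\le C\delta_*|\s_i|$ on the good term, so it cannot absorb anything). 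Since $D$ enters the main inequality \eqref{real-R} with coefficient $-(1-C\delta_*)$, a prefactor $\delta_*^{-1}$ in front of $D_i$ destroys the whole scheme.

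The missing ideas are two. First, the integrand $|p(v)-p(\uv)|=(|p(v)-p(\bar v)|-\delta_1)_+$ is supported on $\{|p(v)-p(\bar v)|>\delta_1\}$, where Lemma \ref{lem-pro} forces $Q(v|\bar v)\ge c_{\delta_1}>0$; hence $\mathbf{1}_{\{|p(v)-p(\uv)|>0\}}\le C_{\delta_1}Q(v|\bar v)$, and Lemma \ref{lem:small} gives $\int|(a_i)_x|Q(v|\bar v)\,dx\le C\delta_*|\s_i|$. Splitting $\mathcal{J}_i$ at the scale $|x-y_i|\le (|\s_i|\sqrt{\delta_*})^{-1}$ and using this indicator bound on the near region (the far region being exponentially small) is precisely what converts the naive $\delta_*^{-1}D_i$ into $C_{\delta_1}\sqrt{\delta_*}\,D_i$. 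Second, the \emph{linear} estimates are not proved directly but reduced to quadratic ones via the nonlinearization $(y-\delta_1)_+\le \tfrac{2}{\delta_1}(y-\delta_1/2)_+^2$, i.e.\ by comparing with the truncation at level $\delta_1/2$; this is also what makes $\int_{\Omega^c}|(a_i)_x|\,dx$ in \eqref{ns1}--\eqref{ns2} controllable with the stated small prefactors. Your outline for \eqref{l50} (monotonicity of $Q(\cdot|\bar v)$ giving exactly $G_{2i}(U)-G_{2i}(\uu)$) and the general architecture (near/far splitting, $\mathcal{J}_i$ vs.\ $\mathcal{J}_{i'}$, $\uv_b$ vs.\ $\uv_s$) match the paper, but without the indicator-plus-Lemma-\ref{lem:small} mechanism and the nonlinearization the key prefactors $\sqrt{\delta_*}$, $\delta_*^{3/4}$ and $C\delta_1+C_{\delta_1}\delta_*^{1/4}$ cannot be obtained.
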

\begin{proof}
The proof follows the proofs of \cite[Proposition 6.3]{KV-2shock} (by replacing the parameter $\eps/\lambda$ of \cite[Proposition 6.3]{KV-2shock} by $\delta_*$), and \cite[Lemma 4.5]{KV-Inven}.\\

\noindent {\bf Estimate on  \eqref{p2bad}  :} 
We estimate
\begin{align*}
\begin{aligned}
 \int_\Omega |(a_i)_x| |p(v)-p(\uv) |^2  dx &= \int_\bbr |(a_i)_x| |p(v)-p(\uv_b) |^2  dx  =   K_i +K_i^c,
\end{aligned}
\end{align*}
where
\begin{align*}
\begin{aligned}
& K_i :=  \int_{ {\mathcal{J}_i}} |(a_i)_x| |p(v)-p(\uv_b) |^2  dx,\\
& K^c_i := \int_{ {\mathcal{J}_i}^c} |(a_i)_x| |p(v)-p(\uv_b) |^2  dx .
\end{aligned}
\end{align*}
First, since
\[
\frac{1}{|\s_i|\sqrt{\delta_*}} \le  \frac{C}{\nu^{1/3}\sqrt{\delta_*}} \ll \frac{2}{\nu^{5/6}} \le | {\mathcal{J}_i} |,
\]
using \eqref{pob} with \eqref{xyid}, 
we have
\begin{align*}
\begin{aligned}
 K_i  &\le \int_{|x-y_i|\le \frac{1}{|\s_i|\sqrt{\delta_*}}} |(a_i)_x| |p(v)-p(\uv_b) |^2 \,dx + \int_{|x-y_i|\ge \frac{1}{|\s_i|\sqrt{\delta_*}}} |(a_i)_x| |p(v)-p(\uv_b) |^2 \,dx\\
&\leq C D_i(t)  \int_{|x-y_i|\le \frac{1}{|\s_i|\sqrt{\delta_*}}} |(a_i)_x| |x-x_0^i | {\mathbf 1}_{\{|p(v)-p(\uv)|>0\}} \,dx \\
&\qquad+C D_i(t)  \int_{|x-y_i|\ge \frac{1}{|\s_i|\sqrt{\delta_*}}} |(a_i)_x| |x-x_0^i | dx \\
&\leq  C_{\delta_1} D_i(t)  \left(  \frac{1}{|\s_i|\sqrt{\delta_*}}
\int_\bbr  |(a_i)_x| Q(v|\bar v)\,d\xi + 2\int_{|x-y_i|\ge \frac{1}{|\s_i|\sqrt{\delta_*}}} |(a_i)_x| |x-y_i| dx \right).
\end{aligned}
\end{align*} 
Thus, using Lemma \ref{lem:small} and 
\[
|(a_i)_x| \le C \frac{\s_i^2}{\delta_*} e^{-C^{-1}|\s_i||x-y_i|},
\]
we have
\[
 K_i  \le C_{\delta_1}\sqrt{\delta_*} D_i(U).
\]
On the other hand, using \eqref{wellom}, \eqref{pob}, \eqref{xyid}, and Lemma \ref{lemma_key2} with $\nu\ll1$, we have
\begin{align}
\begin{aligned} \label{kicest}
K_i^c  &\le CD(U)  \int_{|x-y_i|\ge\nu^{-5/6} }  \frac{\s_i^2}{\delta_*} e^{-C^{-1}|\s_i||x-y_i|} \Big( |x-y_i|+\frac{1}{\nu^{1/3}} \Big)  dx \\
&\le  C D(U) \frac{1}{\delta_*} e^{-C^{-1} |\s_i|\nu^{-5/6}} \Big(|\s_i| \nu^{-5/6} + \frac{|\s_i|}{\nu^{1/3}}  \Big) \\
&\le  C|\s_i| D(U) \frac{\nu}{\delta_*} .
\end{aligned}
\end{align} 
Therefore,
\begin{align*}
\begin{aligned}
 \int_\Omega |(a_i)_x| |p(v)-p(\uv) |^2  dx \le C_{\delta_1}\sqrt{\delta_*} D_i(t) +  C_{\delta_1} \frac{\nu|\s_i|}{\delta_*} D(U).
\end{aligned}
\end{align*} 
Then,  using the non-linearization by $(|p(v)-p(\uv) |-\delta_1)_+ \le \frac{2}{\delta_1} (|p(v)-p(\uv) |-\delta_1/2)_+^2$  as in the proof of \cite[Lemma 4.5]{KV-Inven}, we have
\begin{align*}
\begin{aligned}
 \int_\Omega |(a_i)_x| |p(v)-p(\uv) |  dx &\le \frac{2}{\delta_1} \int_\Omega |(a_i)_x| |p(v)-p(\uv_{\delta_1/2}) |^2  dx \\
&\le C_{\delta_1}\sqrt{\delta_*} D_i(t) +  C_{\delta_1} \frac{\nu|\s_i|}{\delta_*} D(U),
\end{aligned}
\end{align*} 
and thus,
\begin{align*}
\begin{aligned}
& \int_\Omega |(a_i)_x| \Big| |p(v)-p(\bar v) |^2 - |p(\uv) -p(\bar v) |^2 \Big|   dx\\
& \le \int_\Omega |(a_i)_x| \Big( |p(v)-p(\uv) |^2 +2\delta_1 |p(v)-p(\uv) | \Big)   dx \\
&\le C_{\delta_1}\sqrt{\delta_*} D_i(t) +  C_{\delta_1} \frac{\nu|\s_i|}{\delta_*} D(U).
\end{aligned}
\end{align*}

\noindent {\bf Estimate on  \eqref{l3}  :} 
Decompose
\begin{align*}
\begin{aligned}
\int_\bbr  |(a_i)_x| v^\beta |p(v)-p(\uv) |^2 \, dx &= \int_\bbr  |(a_i)_x| v^\beta |p(v)-p(\uv_b) |^2 \, dx \\
&\quad + \int_\bbr  |(a_i)_x| v^\beta |p(v)-p(\uv_s) |^2 \, dx.
\end{aligned}
\end{align*} 
As in the proof of (4.49) of \cite[Lemma 4.6]{KV-Inven}, we use $\beta\le 1$ to have
\[
 \int_\bbr  |(a_i)_x| v^\beta |p(v)-p(\uv_b) |^2 \, dx \le \int_\bbr  |(a_i)_x| \Big( Q(v|\bar v) - Q(\uv|\bar v) \Big) dx \le C (G_{2i}(U)-G_{2i}(\uu)) .
\]
Using \eqref{pos} and the same estimates as in the proof of \eqref{p2bad}, we have
\begin{align*}
\begin{aligned}
 \int_\bbr  |(a_i)_x| v^\beta |p(v)-p(\uv_s) |^2 dx &=  \int_{ {\mathcal{J}_i}} |(a_i)_x| v^\beta |p(v)-p(\uv_s) |^2 dx \\
 &\quad + \int_{ {\mathcal{J}_i}^c} |(a_i)_x| v^\beta |p(v)-p(\uv_s) |^2 dx  \\
 &\le  C_{\delta_1}\sqrt{\delta_*}  \big( D_i(t) +\widetilde G_i(t) \big) +  C_{\delta_1} \frac{\nu|\s_i|}{\delta_*}  \big( D(U) +\widetilde G(t) \big).
\end{aligned}
\end{align*} 
Thus,
\begin{align*}
\begin{aligned}
\int_\bbr  |(a_i)_x| v^\beta |p(v)-p(\uv) |^2 \, dx &\le  C (G_{2i}(U)-G_{2i}(\uu))  \\
&\quad + C_{\delta_1}\sqrt{\delta_*}  \big( D_i(t) +\widetilde G_i(t) \big) +  C_{\delta_1} \frac{\nu|\s_i|}{\delta_*}  \big( D(U) +\widetilde G(t) \big).
\end{aligned}
\end{align*} 
This implies \eqref{l3} as above. \\

\noindent {\bf Estimate on \eqref{l50} and \eqref{l5}  :} 
As in the proof of \cite[Lemma 4.6]{KV-Inven}, we first have
\begin{eqnarray*}
&&\int_\bbr  |(a_i)_x| \left|Q(v|\bar v)-Q(\uv|\bar v)\right | \,d\xi+\int_\bbr |(a_i)_x| |v-\uv| \,d\xi \\
&& \qquad\le C \int_\bbr|(a_i)_x|  \left(Q(v|\bar v )-Q(\uv|\bar v)\right) \,d\xi \le C\left({G}_{2i}(U)-{G}_{2i}(\uu) \right).
\end{eqnarray*}
and
\begin{align*}
\begin{aligned}
\int_\bbr|(a_i)_x|  \left|p(v|\bar v)-p(\uv|\bar v)\right | \,d\xi &\le  \int_\bbr|(a_i)_x|  \left|p(v)-p(\uv) \right | \,d\xi +\int_\bbr|(a_i)_x|   |v-\uv|  \,d\xi
\\
& \le \underbrace{\int_\bbr|(a_i)_x|  \left|p(v)-p(\uv) \right | \,d\xi}_{=:J_i} + C\left({G}_{2i}(U)-{G}_{2i}(\uu) \right)
\end{aligned}
\end{align*} 
We separate $J_i$ into three parts:
\[
J_i =\underbrace{ \int_\bbr|(a_i)_x|  \left|p(v)-p(\uv_b) \right | \,d\xi}_{=:J_{1i}} +\underbrace{\int_{v<v_*/2} |(a_i)_x|  \left|p(v)-p(\uv_s) \right | \,d\xi}_{=:J_{2i}}+\underbrace{\int_{v\ge v_*/2} |(a_i)_x|  \left|p(v)-p(\uv_s) \right | d\xi}_{=:J_{3i}}.
\]
By \eqref{p2bad}, 
\[
 J_{1i} \le  C_{\delta_1}\sqrt{\delta_*} D_i(t) +  C_{\delta_1} \frac{\nu|\s_i|}{\delta_*} D(U).
\]
To estimate $J_{2i}$, we first observe that
\begin{align}
\begin{aligned}\label{pw1}
v^\beta \big| p(v)-p(\uv_s) \big|^2 {\mathbf 1}_{\{p(v)-p(\bar v)>2\delta_1\}} &= p(v)^{-\frac{\gamma-\alpha}{\gamma}} |p(v)-p(\uv_s)|^2  {\mathbf 1}_{\{p(v)-p(\bar v)>2\delta_1\}}\\
&= \Big(\frac{|p(v)-p(\uv_s)|}{p(v)} \Big)^{\frac{\gamma-\alpha}{\gamma}} {\mathbf 1}_{\{p(v)-p(\bar v)>2\delta_1\}}  |p(v)-p(\uv_s)|^{\frac{\gamma+\alpha}{\gamma}}\\
&\ge C\delta_1^{\frac{\gamma-\alpha}{\gamma}}  |p(v)-p(\uv_s)|^{\frac{\gamma+\alpha}{\gamma}}{\mathbf 1}_{\{p(v)-p(\bar v)>2\delta_1\}}.
\end{aligned}
\end{align}
This and \cite[(4.67)]{KV-Inven} implies
\[
\left|p(v)-p(\uv_s) \right | {\mathbf 1}_{\{v<v_-/2\}} \le  C |p(v)-p(\uv_s)|^{\frac{\gamma+\alpha}{\gamma}} \le C v^\beta \big| p(v)-p(\uv_s) \big|^2,
\]
which together with \eqref{pos} implies that $\forall i\in \mathcal{S},\,\forall i' ,\, \forall x\in  \mathcal{J}_{i'}$,
\[
\left|p(v)-p(\uv_s) \right | {\mathbf 1}_{\{v<v_*/2\}}(x)  \le 
 \left\{ \begin{array}{ll}
    C|x-x_0^{i}| \big(D_i +\widetilde G_{i}  \big), \quad &\mbox{if}~ i'=i, \\
   C|x-x_0^{i}| \big(D +\widetilde G  \big),\quad &\mbox{if}~ i'\neq i .
       \end{array} \right.
\]
Then, using the same estimates as in the proofs of \eqref{p2bad} and \eqref{l3}, we have
\[
 J_{2i}  \le   C_{\delta_1}\sqrt{\delta_*}  \big( D_i(t) +\widetilde G_i(t) \big) +  C_{\delta_1} \frac{\nu|\s_i|}{\delta_*}  \big( D(U) +\widetilde G(t) \big).
\]
As in the proof of  \cite[Lemma 4.6]{KV-Inven}, we have
\[
 J_{3i}  \le C \left({G}_{2i}(U)-{G}_{2i}(\uu) \right).
\]

\noindent {\bf Estimate on  \eqref{ns1} :} 
We first separate it into two parts: (by the definition of $\uv_s$)
\begin{align*}
\begin{aligned}
& \int_{\Omega^c} |(a_i)_x|  \big| p(v)-p(\bar v) \big|  |h-\bar h| dx \\
 &\quad\le \underbrace{ \int_{\bbr}  |(a_i)_x| \big| p(v)-p(\uv_s) \big| |h-\bar h| \,d\xi}_{=:J_{1} }  + \underbrace{ \int_{\Omega^c}  |(a_i)_x| \big| p(\uv)-p(\bar v) \big| |h-\bar h| \,d\xi}_{=:J_{2} }
\end{aligned}
\end{align*}
To estimate $J_1$, we will use the following estimates: by \eqref{pos}, \eqref{pw1},  
\begin{align}
\begin{aligned}\label{pos2}
&\forall i\in \mathcal{S},\,\forall i' ,\, \forall x\in  \mathcal{J}_{i'}, \quad 
\forall v \mbox{ with } p(v)-p(\bar v)>2\delta_1,\\
&
\big| (p(v)-p(\uv_s))(x) \big|^2 \le 
 \left\{ \begin{array}{ll}
    C_{\delta_1}|x-x_0^{i}|^q \big( D_i +\widetilde G_{i}  \big)^q, \quad &\mbox{if}~ i'=i, \\
   C_{\delta_1}|x-x_0^{i}|^q \big( D +\widetilde G  \big)^q,\quad &\mbox{if}~ i'\neq i ,
       \end{array} \right.
\end{aligned}
\end{align}
where $q:=\frac{2\gamma}{\gamma+\alpha}$ ( $1\le q<2$ by $0<\alpha\le \gamma$).\\
By  the definition of $\bar v_s$, we split $J_1$ into two parts:
\begin{align}
\begin{aligned}\label{vs2}
J_1&=\int_{\Omega^c} |(a_i)_x| \big| p(v)-p(\uv_s) \big| |h-\bar h| {\mathbf 1}_{\{\delta_1<p(v)-p(\bar v)\le 2\delta_1\}} \,d\xi \\
&\quad +\int_{\Omega^c} |(a_i)_x| \big| p(v)-p(\uv_s) \big| |h-\bar h| {\mathbf 1}_{\{p(v)-p(\bar v)>2\delta_1\}} \,d\xi \\
&=: J_{11} + J_{12}.
\end{aligned}
\end{align}
To estimate $J_{12}$, we first have as in the proof of \eqref{p2bad}:
\begin{align*}
\begin{aligned}
 J_{12}=   K_i + K_i^c,
\end{aligned}
\end{align*}
where
\begin{align*}
\begin{aligned}
& K_i :=  \int_{ {\mathcal{J}_i}} |(a_i)_x|  \big| p(v)-p(\uv_s) \big|  |h-\bar h| {\mathbf 1}_{\{p(v)-p(\bar v)>2\delta_1\}} dx,\\
&K_i^c :=  \int_{ {\mathcal{J}_i}^c} |(a_i)_x|  \big| p(v)-p(\uv_s) \big|  |h-\bar h| {\mathbf 1}_{\{p(v)-p(\bar v)>2\delta_1\}} dx.
\end{aligned}
\end{align*}
First, 
\[
K_i \le   \left(\int_{ {\mathcal{J}_i}} |(a_i)_x| \big| p(v)-p(\uv_s) \big|^2{\mathbf 1}_{\{p(v)-p(\bar v)>2\delta_1\}} d\xi \right)^{1/2}  \left(\int_\bbr |(a_i)_x| \big| h -\bar h \big|^2 d\xi \right)^{1/2}.
\]
Using \eqref{pos2}, \eqref{xyid} and Lemma \ref{lem:small}, we have
\begin{align}
\begin{aligned}\label{hstech}
&\int_{{\mathcal{J}_i}} |(a_i)_x| \big| p(v)-p(\uv_s) \big|^2 {\mathbf 1}_{\{p(v)-p(\bar v)>2\delta_1\}} d\xi\\
&\quad\le C_{\delta_1} \big( D_i +\widetilde G_{i}  \big)^q \bigg( \frac{1}{|\s_i|^q\delta_*^{q/4}} \int_{|x-y_i|\le \frac{1}{|\s_i|\delta_*^{1/4}}}  |(a_i)_x| Q(v|\bar v) dx + 2 \int_{|x-y_i|\ge \frac{1}{|\s_i|\delta_*^{1/4}}}  |(a_i)_x| |x-y_i|^q dx\bigg)\\
&\quad\leq C_{\delta_1} \frac{|\s_i|\delta_*}{|\s_i|^q\delta_*^{q/4}}  \big( D_i +\widetilde G_{i}  \big)^q .
\end{aligned}
\end{align}
Therefore, 
\[
K_i  \le C_{\delta_1} \sqrt{ \delta_*^{1-\frac{q}{4}} |\s_i|^{1-q}}  \big( D_i +\widetilde G_{i}  \big)^{q/2}  \left(\int_\bbr |(a_i)_x| \big| h -\bar h \big|^2 d\xi \right)^{1/2}.
\]
By Young's inequality with $1<q<2$, we have
\[
K_i  \le \sqrt{\delta_*}  \big( D_i +\widetilde G_{i}  \big) +  \frac{C_{\delta_1}}{\sqrt{\delta_*}} \left( \delta_*^{1-\frac{q}{4}} |\s_i|^{1-q}\right)^{\frac{1}{2-q}} \left(\int_\bbr |(a_i)_x| \big| h -\bar h \big|^2 d\xi \right)^{\frac{1}{2-q}}.
\]
Since Lemma \ref{lem:small} and $1<q<2$ imply
\begin{align*}
\begin{aligned}
&\left( \delta_*^{1-\frac{q}{4}} |\s_i|^{1-q}\right)^{\frac{1}{2-q}} \left(\int_\bbr |(a_i)_x| \big| h -\bar h \big|^2 d\xi \right)^{\frac{1}{2-q}} \\
&\le C \delta_*^{\frac{4-q}{4(2-q)}} |\s_i|^{\frac{1-q}{2-q}} \big(|\s_i|\delta_*\big)^{\frac{q-1}{2-q}}
 \int_\bbr |(a_i)_x| \big| h -\bar h \big|^2 d\xi\\
&=C \delta_*^{\frac{3q}{4(2-q)}} \int_\bbr |(a_i)_x| \big| h -\bar h \big|^2 d\xi \\
&\le C\delta_*^{\frac{3}{4}} \int_\bbr |(a_i)_x| \big| h -\bar h \big|^2 d\xi,
\end{aligned}
\end{align*}
we have
\[
K_i  \le \sqrt{\delta_*}  \big( D_i (U) +\widetilde G_{i}(U)   \big) +  C_{\delta_1}\delta_*^{\frac{1}{4}} G_{1i}^{-}(U).
\]
For $K_i^c$, using \eqref{pos2} and the same estimates as in the proof of \eqref{kicest}, we have
\begin{align*}
\begin{aligned}
K_i^c &\le   \left( \int_{ {\mathcal{J}_i}^c} |(a_i)_x| \big| p(v)-p(\uv_s) \big|^2{\mathbf 1}_{\{p(v)-p(\bar v)>2\delta_1\}} d\xi \right)^{1/2}  \left( \int_{ {\mathcal{J}_i}^c} |(a_i)_x| \big| h -\bar h \big|^2 d\xi \right)^{1/2}\\
&\le  C_{\delta_1}\left( \frac{\nu |\s_i|}{\delta_*}   \big( D +\widetilde G  \big)^q \right)^{1/2} \left( \int_\bbr |(a_i)_x| \big| h -\bar h \big|^2 d\xi \right)^{1/2}.
\end{aligned}
\end{align*}
Using Lemma \ref{lem:small} and $q<2$, we have
\[
K_i^c \le C_{\delta_1} |\s_i| \sqrt\nu  \big( D +\widetilde G  \big)^{q/2} \le C_{\delta_1} |\s_i| \sqrt\nu \Big(  D (U) +\widetilde G (U)  + 1 \Big). 
\]
Thus, 
\[
 J_{12} \le   \sqrt{\delta_*}  \big( D_i (t) +\widetilde G_{i}(t)   \big) + C_{\delta_1}\delta_*^{\frac{1}{4}} G_{1i}^{-}(U) +  C_{\delta_1} |\s_i| \sqrt\nu  \big( D(t) +\widetilde G(t) +1  \big)   .
\]

For $J_{11}$ and $J_2$, we  have
\[
J_{11}+J_2 \le C\delta_1\int_{\Omega^c}  |(a_i)_x|  |h-\bar h| dx \le C \delta_1 G_{1i}^{-}(U) + C \delta_1 \int_\bbr  |(a_i)_x|   {\mathbf 1}_{\{p(v)-p(\bar v)>\delta_1\}} dx.
\]
To control the last term,  we use the following non-linearization: 
\[
|p(v)-p((\uv_s)_{\delta_1/2})|=  (p(v)-p(\bar v)-\delta_1/2)_+ \ge \frac{\delta_1}{2} {\mathbf 1}_{\{p(v)-p(\bar v) >\delta_1\}},
\]
where $(\uv_s)_{\delta_1/2}$ denote $\uv_s$ truncated by $\delta_1/2$ instead of $\delta_1$.\\
In addition, using \eqref{pos2} with the same estimates as before, we have
\begin{align}
\begin{aligned}\label{aderout} 
&\delta_1 \int_\bbr  |(a_i)_x|   {\mathbf 1}_{\{p(v)-p(\bar v)>\delta_1\}} dx \le  C_{\delta_1} \int_{\Omega^c}  |(a_i)_x| |p(v)-p(\uv_s)|^{2/q} dx \\
&\le C_{\delta_1} \big(D_i +\widetilde G_{i} \big) \int_{ {\mathcal{J}_i}}   |(a_i)_x| |x-x_0^{i}| dx+C_{\delta_1}\big(D +\widetilde G \big) \int_{ {\mathcal{J}_i}^c}   |(a_i)_x| |x-x_0^{i}| dx \\
&\le C_{\delta_1}\sqrt\ds \big(D_i +\widetilde G_{i} \big)  + C_{\delta_1}\frac{\nu |\s_i|}{\delta_*}\big(D +\widetilde G \big) .
\end{aligned}
\end{align}
Thus, 
\[
J_{11}+J_2  \le C \delta_1 G_{1i}^{-}(U) +   C_{\delta_1}\sqrt\ds \big(D_i +\widetilde G_{i} \big)  + C_{\delta_1}\frac{\nu |\s_i|}{\delta_*}\big(D +\widetilde G \big) .
\]
Hence we obtain \eqref{ns1}.\\

\noindent {\bf Estimate on  \eqref{ns2} :} 
First, we have
\[
\int_{\Omega^c}  |(a_i)_x|   \Big( Q(\uv|\bar v) +  |\uv-\bar v| \Big) dx \le C  \int_\bbr  |(a_i)_x|   {\mathbf 1}_{\{p(v)-p(\bar v)>\delta_1\}} dx.
\]
Using \eqref{pos2} with the same argument as in \eqref{aderout}, we have
\begin{align*}
\begin{aligned} 
& \int_{\Omega^c}  |(a_i)_x| {\mathbf 1}_{\{p(v)-p(\bar v)>\delta_1\}}  dx \le  C_{\delta_1} \int_{\Omega^c}  |(a_i)_x| |p(v)-p(\uv_s)|^{1/q} dx \\
&\le C_{\delta_1}\sqrt{D_i +\widetilde G_{i} } \int_{ {\mathcal{J}_i}}   |(a_i)_x| \sqrt{|x-x_0^{i}|} dx+C_{\delta_1}\sqrt{D +\widetilde G }  \int_{ {\mathcal{J}_i}^c}   |(a_i)_x| \sqrt{|x-x_0^{i}|} dx \\
&\le C_{\delta_1} |\s_i|^{1/2} \delta_*^{3/4}\sqrt{D_i +\widetilde G_{i} }  + C_{\delta_1}\frac{\nu |\s_i|^{3/2}}{\delta_*}\sqrt{D +\widetilde G } .
\end{aligned}
\end{align*}

\end{proof}

\subsection{Proof of Proposition \ref{prop:big}} 
The proof is based on Lemma \ref{lem:prop}.\\
\noindent {\bf Estimate on  \eqref{badprop} and \eqref{roughbad} :} 
We use \eqref{avr}, \eqref{l5}, \eqref{ns1}, \eqref{p2bad} to have
\begin{align*}
\begin{aligned}
& |B_{1i}(U)-B_{1i}(\uu)| \\
&\le \ds \int_\bbr |(a_i)_x| \left|p(v|\bar v)-p(\uv|\bar v)\right | \,dx \\
 &  \le C \ds  (G_{2i}(U)-G_{2i}(\uu))  + C_{\delta_1} \ds \sqrt{\delta_*}  \big( D_i(t) +\widetilde G_i(t) \big) +  C_{\delta_1} \nu|\s_i|  \big( D(U) +\widetilde G(t) \big),
\end{aligned}
\end{align*}
\begin{align*}
\begin{aligned}
 |B_{2i}^-(U)| &\le  \int_{\Omega^c} |(a_i)_x|  \big| p(v)-p(\bar v) \big|  |h-\bar h|  \le  C_{\delta_1} \sqrt\ds  \big( D_i  +\widetilde G_{i} \big) \\
&\quad  \quad+  C_{\delta_1}|\s_i| \sqrt\nu (1+ \sqrt\nu/\delta_*)   \big( D(U) +\widetilde G  \big)+  ( C\delta_1 + C_{\delta_1} \delta_*^{\frac{1}{4}} )  G_{1i}^{-}(U)+ C_{\delta_1}|\s_i| \sqrt\nu,
\end{aligned}
\end{align*}
and
\begin{align*}
\begin{aligned}
 |B_{2i}^+(U)-B_{2i}^+(\uu)| &\le  \int_\Omega |(a_i)_x| \Big| |p(v)-p(\bar v) |^2 - |p(\uv) -p(\bar v) |^2 \Big|   dx \\
& \le  C_{\delta_1}\sqrt{\delta_*} D_i(t) +  C_{\delta_1} \frac{\nu|\s_i|}{\delta_*} D(U) .
\end{aligned}
\end{align*}
To estimate $B_3(U)$, we first observe that by $|(a_i)_x|\le C |\s_i|^2/\ds$,
\begin{align*}
\begin{aligned}
 \left( \sum_{i\in \mathcal{S}} |(a_i)_x| \right)^2
 &=  \sum_{i\in \mathcal{S}} |(a_i)_x|^2  +  \sum_{i\in \mathcal{S}} |(a_i)_x| \bigg( \sum_{i'\in \mathcal{S}-\{i\}}  |(a_{i'})_x|   \bigg)  \\
& \le C\left( \frac{L(0)^2}{\ds} +\frac{L(0)}{\ds}  \sum_{i'\in \mathcal{S}-\{i\}} |\s_{i'} | \right) \sum_{i\in \mathcal{S}} |(a_i)_x|  \le \frac{CL(0)^2}{\ds} \sum_{i\in \mathcal{S}} |(a_i)_x|.  
\end{aligned}
\end{align*}
Then, using $L(0)\le\ds^2$ and \eqref{l3} with $\sum_{i\in \mathcal{S}} \big( D_i +\widetilde G_i\big) \le C \big( D +\widetilde G\big)$, we have
\begin{align*}
\begin{aligned}
& |B_3(U) - B_3(\uu)|\\
 &\le  \frac{C}{\ds} \int_\bbr \left( \sum_{i\in \mathcal{S}} |(a_i)_x| \right)^2  \Big| v^\beta |p(v)-p(\bar v) |^2 -\uv^\beta |p(\uv)-p(\bar v) |^2 \Big|\, dx \\
& \le  \frac{CL(0)^2}{\ds^2}  \sum_{i\in \mathcal{S}} \int_\bbr  |(a_i)_x| \Big| v^\beta |p(v)-p(\bar v) |^2 -\uv^\beta |p(\uv)-p(\bar v) |^2 \Big|\, dx\\
& \le C\ds^2 \sum_{i\in \mathcal{S}} (G_{2i}(U)-G_{2i}(\uu))  + C_{\delta_1}\sqrt\ds  \big( D(U) +\widetilde G(t) \big) ,
\end{aligned}
\end{align*}
and by $C^{-1}\le \uv \le C$,
\begin{align*}
\begin{aligned}
|B_3(\uu)| &\le \frac{C}{\ds} \int_\bbr \left( \sum_{i\in \mathcal{S}} |(a_i)_x| \right)^2 \uv^\beta |p(\uv)-p(\bar v) |^2 dx
 \le \frac{CL(0)^2}{\ds^2}  \sum_{i\in \mathcal{S}}\int_\bbr  |(a_i)_x| Q(\uv|\bar v) dx \\
& \le C\ds^2 \sum_{i\in \mathcal{S}} G_{2i}(\uu) .
\end{aligned}
\end{align*}
Thus,
\[
|B_3(U)| \le   C\ds^2 \sum_{i\in \mathcal{S}} (G_{2i}(U)-G_{2i}(\uu)) +C\ds^2 \sum_{i\in \mathcal{S}} G_{2i}(\uu) + C_{\delta_1}\sqrt\ds  \big( D(U) +\widetilde G(t) \big) .
\]
Moreover, it holds from Lemma \ref{lem:small} that for each $i$,
\[
 |B_{1i}(\uu)| +  |B_{2i}^+(\uu)| \le C \int_\Omega |(a_i)_x| Q(\uv|\bar v) dx \le C\ds |\s_i|.
\]

\noindent {\bf Estimate on  \eqref{error-in} :}
First, using $|1-\phi_i|\le \mathbf{1}_{ {\mathcal{J}_i}^c}$ by \eqref{phii}, and $C^{-1}\le \uv, \bar v\le C$, we have
\begin{align*}
\begin{aligned}
 | Y^g_i(\uu)-\mathcal{Y}^g_i(\uv)| + |{B}_{1i}(\uu)-\mathcal{I}_{1i}(\uv)|+ |{B}_{2i}^+(\uu)-\mathcal{I}_{2i}(\uv)| \le C \int_{ {\mathcal{J}_i}^c} |(a_i)_x| dx.
\end{aligned}
\end{align*}
Since \eqref{wellom} with the same estimates as in \eqref{kicest} implies
\begin{align}
\begin{aligned} \label{aicest} 
 \int_{ {\mathcal{J}_i}^c} |(a_i)_x| dx  &\le C  \int_{|x-y_i|\ge\nu^{-5/6} }  \frac{\s_i^2}{\delta_*} e^{-C^{-1}|\s_i||x-y_i|}  dx \le  C \frac{|\s_i|}{\delta_*} e^{-C^{-1} \nu^{-1/2}} \le  C\frac{|\s_i| \nu}{\delta_*} ,
\end{aligned}
\end{align} 
we have the desired estimates.\\
Since \cite[Appendix A, (A.5)]{KV-2shock} yields 
\[
Q(\uv|\bar v)\ge \frac{p(\bar v)^{-\frac{1}{\gamma}-1}}{2\gamma}|p(\uv)-p(\bar v)|^2 -\frac{1+\gamma}{3\gamma^2} p(\bar v)^{-\frac{1}{\gamma}-2}(p(\uv)-
p(\bar v))^3,
\]
we use $\tilde\lambda_i (a_i)_x>0$ to have
\begin{align*}
\begin{aligned}
&-( G_{2i}(\uu) - \mathcal{G}_{2i} (\uv) ) \\
&= -\tilde\lambda_i  \int_{\bbr}  (a_i)_x \bigg[Q(\uv|\bar v) - \bigg( \frac{1}{2\gamma}  p(\tilde v_i)^{-\frac{1}{\gamma}-1} \phi_i^2 \big(p(\uv)-p(\bar v)\big)^2\\
&\qquad - \frac{1+\gamma}{3\gamma^2} p(\tilde v_i)^{-\frac{1}{\gamma}-2} \phi_i^3 \big(p(\uv)-p(\bar v)\big)^3 \bigg) \bigg]dx \\
&\le C  \int_{\bbr}  |(a_i)_x| |\bar v -\tilde v_i| dx + C\int_{ {\mathcal{J}_i}^c} |(a_i)_x|.
\end{aligned}
\end{align*}
Thus, using \eqref{aicest} and Lemma \ref{lem:finsout} with $ \int_{\bbr}  |(a^\nu_i)_x| |\bar v_{\nu,\delta} -\tilde v^\nu_i| dx= \int_{\bbr}  |(a_i)_x| |\bar v -\tilde v_i| dx$, we have
\[
-\big( G_{2i}(\uu) - \mathcal{G}_{2i} (\uv) \big) \le \frac{C\nu^{1/2}}{\ds\delta} + C\frac{|\s_i| \nu}{\delta_*} \le  \frac{C\sqrt\nu}{\ds\delta} .
\]
Using $\sum_{i\in\mathcal{S}} \phi_i \le 1$ and $0\le \phi_i^2 \le \phi_i \le 1$, we have
\begin{align*}
\begin{aligned}
D(\uu) &\ge  \int_{\bbr} a\,\Big(\sum_{i\in\mathcal{S}} \phi_i  \Big) \mu_1(\uv ) \uv^\gamma \big| \big( p(\uv) -p(\bar v)\big)_x \big|^2 dx \\
&\ge \sum_{i\in\mathcal{S}}  \int_{\bbr} a\,\phi_i^2 \mu_1(\uv ) \uv^\gamma \big| \big( p(\uv) -p(\bar v)\big)_x \big|^2 dx .
\end{aligned}
\end{align*}
In addition, since
\begin{align*}
\begin{aligned}
\mathcal{D}_{i} (\uv) &= \int_{\bbr} a\, \mu_1(\uv ) \uv^\gamma \big| \big( \phi_i^2 ( p(\uv) -p(\bar v))\big)_x \big|^2 dx \\
&\le (1+\ds) \int_{\bbr} a\, \mu_1(\uv ) \uv^\gamma\phi_i^2 \big| \big(  p(\uv) -p(\bar v)\big)_x \big|^2 dx +\frac{C}{\ds}\int_{\bbr} a\, \mu_1(\uv ) \uv^\gamma |(\phi_i)_x|^2 \big| p(\uv) -p(\bar v) \big|^2 dx,
\end{aligned}
\end{align*}
and $|(\phi_i)_x|\le C\nu^{5/6}$ from \eqref{wellom} and \eqref{phii}, and so
\[
\int_{\bbr} a\, \mu_1(\uv ) \uv^\gamma |(\phi_i)_x|^2 \big| p(\uv) -p(\bar v) \big|^2 dx \le C\nu^{5/3} \int_\bbr a \eta(\uu|\bar U) dx,
\]
we use Lemma \ref{lem:num} to have
\begin{align*}
\begin{aligned}
-D(\uu) &\le  -\frac{1}{1+\ds}\sum_{i\in\mathcal{S}} \mathcal{D}_{i} (\uv) + \frac{C\nu^{5/3}}{\ds}\sum_{i\in\mathcal{S}} \int_\bbr a \eta(\uu|\bar U) dx\\
&\le  -\frac{1}{1+\ds}\sum_{i\in\mathcal{S}} \mathcal{D}_{i} (\uv) + \frac{C\nu^{4/3}}{\delta\ds} \int_\bbr a \eta(U|\overline U) dx.
\end{aligned}
\end{align*}

\noindent {\bf Estimate on  \eqref{finyest} :} 
We use the notations $Y^s_{i1}, Y^s_{i2}, Y^s_{i3}, Y^s_{i4}$ for the terms of $Y^s_{i}$:
\[
Y^s_i = \underbrace{-\int_{\Omega^c} (a_i)_x Q(v|\bar v) dx}_{=:Y^s_{i1}} \underbrace{-\int_{\Omega^c} a (\tilde v_i)_x p'(\bar v)(v-\bar v)dx }_{=:Y^s_{i2}}
\underbrace{ -\int_{\Omega^c} (a_i)_x \frac{|h-\bar h|^2}{2} dx}_{=:Y^s_{i3}} \underbrace{+\int_{\Omega^c} a (\tilde h_i)_x (h-\bar h) dx}_{=:Y^s_{i4}}.
\]
Using \eqref{p2bad}, \eqref{l50} and \eqref{avr}, we first have
\begin{align}
\begin{aligned} \label{ygiuu}
& |Y^g_i(U)- Y^g_i(\uu)| + |Y^s_{i1}(U)-Y^s_{i1}(\uu)| + |Y^s_{i2}(U)-Y^s_{i2}(\uu)| \\
&\le C \int_\Omega |(a_i)_x| \Big| |p(v)-p(\bar v) |^2 - |p(\uv) -p(\bar v) |^2 \Big|   dx \\
&\quad +  \int_\bbr |(a_i)_x| \bigg( \left|Q(v|\bar v)-Q(\uv|\bar v)\right | + |v-\uv| \bigg) \,dx + C\ds \int_\Omega |(a_i)_x| |p(v)-p(\uv) | dx \\
&\le C_{\delta_1}\sqrt{\delta_*} D_i(t) +  C_{\delta_1}\frac{\nu|\s_i|}{\delta_*} D(U) + C(G_{2i}(U)-G_{2i}(\uu) ).
\end{aligned}
\end{align}
We use \eqref{p2bad} and Lemma \ref{lem:small} to have
\begin{align*}
\begin{aligned}
|Y^s_{i3}(U)| +|Y^b_i(U)|  
&\le C\int_{\Omega^c}  |(a_i)_x|  |h-\bar h|^2  dx +  C\int_{\Omega}  |(a_i)_x| \big( |h-\bar h|^2 +|p(v)-p(\bar v)|^2 \big) dx\\
&\le C\int_\bbr  |(a_i)_x|  |h-\bar h|^2  dx+ \int_\Omega |(a_i)_x| \Big| |p(v)-p(\bar v) |^2 - |p(\uv) -p(\bar v) |^2 \Big|   dx \\
&\quad + C \int_\Omega |(a_i)_x| Q(\uv|\bar v) dx\\
&\le C_{\delta_1}\sqrt{\delta_*} D_i(t) +  C_{\delta_1}\frac{\nu|\s_i|}{\delta_*} D(U)  + C \ds |\s_i|.
\end{aligned}
\end{align*}
First, using the assumption \eqref{yicond} and Lemma \ref{lem:small}, we have
\[
 |Y^g_i(U)- Y^g_i(\uu)| + |Y^s_{i1}(U)-Y^s_{i1}(\uu)| + |Y^s_{i2}(U)-Y^s_{i2}(\uu)| + |Y^s_{i3}(U)| +|Y^b_i(U)| \le C_{\delta_1} \ds |\s_i|.
\]
On the other hand, since
\begin{align*}
\begin{aligned}
&\left| \int_\Omega  (a_i)_x  \big(p(v)-p(\bar v)\big)\Big(h-\bar h-\frac{p(v)-p(\bar v)}{\tilde\lambda_i}\Big) dx \right| \\
&\le \ds^{-1/4} G_{1i}^+(U) + C\ds^{1/4} \int_\Omega |(a_i)_x| |p(v)-p(\bar v) |^2 dx\\
&\le \ds^{-1/4} G_{1i}^+(U) +C\ds^{1/4}  \int_\Omega |(a_i)_x| \left(Q(\uv|\bar v)  + \Big| |p(v)-p(\bar v) |^2 - |p(\uv) -p(\bar v) |^2 \Big| \right) dx \\
&\le \ds^{-1/4} G_{1i}^+(U) + C\ds^{1/4} \Big( G_{2i}(\uu) +C_{\delta_1}\sqrt{\delta_*} D_i(t) +  C_{\delta_1}\frac{\nu|\s_i|}{\delta_*} D(U)  \Big),
 \end{aligned}
\end{align*}
we have
\[
|Y^b_i(U)| \le \ds^{-1/4} G_{1i}^+(U) + C\ds^{1/4} \Big( G_{2i}(\uu) +C_{\delta_1}\sqrt{\delta_*} D_i(t) +  C_{\delta_1}\frac{\nu|\s_i|}{\delta_*} D(U)  \Big).
\]
In addition, using $|Y^s_{i3}(U)| \le G_{1i}^-(U)$ and \eqref{ygiuu}, we have
\begin{align*}
\begin{aligned}
&\Big( |Y^g_i(U)- Y^g_i(\uu)| + |Y^s_{i1}(U)-Y^s_{i1}(\uu)| + |Y^s_{i2}(U)-Y^s_{i2}(\uu)| + |Y^s_{i3}(U)| +|Y^b_i(U)| \Big)^2\\
&\le C_{\delta_1} \ds |\s_i| \Big( |Y^g_i(U)- Y^g_i(\uu)| + |Y^s_{i1}(U)-Y^s_{i1}(\uu)| + |Y^s_{i2}(U)-Y^s_{i2}(\uu)| + |Y^s_{i3}(U)| +|Y^b_i(U)| \Big)\\
&\le C_{\delta_1} \ds |\s_i| \Big( \sqrt{\delta_*} D_i(t) + \frac{\nu|\s_i|}{\delta_*} D(U)  + C(G_{2i}(U)-G_{2i}(\uu) ) + G_{1i}^-(U)\\
&\quad\quad+ \ds^{-1/4} G_{1i}^+(U) + C\ds^{1/4} G_{2i}(\uu) \Big).
 \end{aligned}
\end{align*}
For the remaining terms, using \eqref{avr},
\begin{align*}
\begin{aligned}
&|Y^s_{i4}(U)|^2 \le C\int_\bbr |(\tilde h_i)_x| dx  \int_{\Omega^c} |(\tilde h_i)_x| |h-\bar h|^2 dx \le C|\s_i| \delta_* G_{1i}^-(U),\\
&|Y^l_i(U)|^2 \le C\int_\bbr |(\tilde h_i)_x| dx \int_\Omega | (\tilde h_i)_x| \Big|h-\bar h-\frac{p(v)-p(\bar v)}{\tilde\lambda_i}\Big|^2 dx\le C|\s_i| \delta_* G_{1i}^+(U).
 \end{aligned}
\end{align*}
It holds from \eqref{ns2} that
\begin{align*}
\begin{aligned}
|Y^s_{i1}(\uu)| +|Y^s_{i2}(\uu)|  
&\le C\int_{\Omega^c}  |(a_i)_x|  \Big( Q(\uv|\bar v) +  |\uv-\bar v| \Big) dx \\
&\le C_{\delta_1} |\s_i|^{1/2} \delta_*^{3/4}\sqrt{D_i +\widetilde G_{i} }  + C_{\delta_1}\frac{\nu |\s_i|^{3/2}}{\delta_*}\sqrt{D +\widetilde G } .
\end{aligned}
\end{align*}
and so,
\[
|Y^s_{i1}(\uu)|^2 +|Y^s_{i2}(\uu)|^2 \le  C_{\delta_1} |\s_i|\delta_*^{3/2}\Big(D_i(t) +\widetilde G_i(t)\Big)+ C_{\delta_1}\frac{\nu^2 |\s_i|^3}{\delta_*}\Big(D(U) +\widetilde G(t)\Big).
\]
Hence,
\begin{align*}
\begin{aligned}
& |Y^g_i(U)- Y^g_i(\uu)|^2+ |Y^b_i(U)|^2 + |Y^l_i(U)|^2 + |Y^s_i(U)|^2 \\
&\le 2 \Big( |Y^g_i(U)- Y^g_i(\uu)| + |Y^s_{i1}(U)-Y^s_{i1}(\uu)| + |Y^s_{i2}(U)-Y^s_{i2}(\uu)| + |Y^s_{i3}(U)| +|Y^b_i(U)| \Big)^2\\
&\quad + 2 |Y^s_{i4}(U)|^2 + |Y^l_i(U)|^2 +2 |Y^s_{i1}(\uu)|^2 + 2 |Y^s_{i2}(\uu)|^2 \\
&\le C_{\delta_1} \ds |\s_i| \bigg[ \sqrt{\delta_*}\Big(D_i(t) +\widetilde G_i(t)\Big) +  \frac{\nu|\s_i|}{\delta_*^2} \Big(D(U) +\widetilde G(t)\Big) + \big(G_{2i}(U)-G_{2i}(\uu) \big) + G_{1i}^-(U) \\
&\quad+ \ds^{-1/4} G_{1i}^+(U) + \ds^{1/4} G_{2i}(\uu)  \bigg].
 \end{aligned}
\end{align*}

\subsection{Proof of Proposition \ref{prop:smain}}
First, using \eqref{real-R} and \eqref{xyest} with \eqref{bad}, \eqref{good}, \eqref{ggd}, \eqref{bad0}, and choosing $\bar\delta=\delta_1$, we find that whenever $(-1)^{{k_i}-1} Y_i(U) \le \s_i^2$ for all  $i\in \mathcal{S}$,
\begin{align*}
\begin{aligned}
\mathcal{R}(U)
&\le \sum_{i\in \mathcal{S}} G_{Y_i}(U)+ (1+\ds) \sum_{i\in \mathcal{S}} \Big(|{B}_{1i}(U)| +|{B}_{2i}^+(U)|  \Big) +2\sum_{i\in \mathcal{S}}|{B}_{2i}^-(U)| + 2 |B_3(U)|  \\
&\quad
- \sum_{i\in \mathcal{S}} {G}_{1i}^{-}(U)- \sum_{i\in \mathcal{S}}{G}_{1i}^{+}(U)  - \sum_{i\in \mathcal{S}} {G}_{2i}(U)  - \Big(1-\sqrt\ds\Big) D (U),
\end{aligned}
\end{align*}
where
\begin{align*}
\begin{aligned}
G_{Y_i}(U)&:= -\frac{1}{\s_i^4} |Y_i(U)|^2 {\mathbf 1}_{\{0\le  (-1)^{{k_i}-1} Y_i(U)\le \s_i^2\}} 
 -  \frac{|\tilde\lambda_i|}{2 \s_i^2}   |Y_i(U)|^2 {\mathbf 1}_{\{ -\s_i^2  \le  (-1)^{{k_i}-1} Y_i(U)\le 0 \}} \\
 &\qquad -\frac{|\tilde\lambda_i| }{2} |Y_i(U)| {\mathbf 1}_{\{(-1)^{{k_i}-1} Y_i(U) \le -\s_i^2 \}} .
\end{aligned}
\end{align*}
By the condition \eqref{yicond} of Proposition \ref{prop:big}, we may use two different strategies, depending on the strength of the left-hand side of \eqref{yicond} as follows. For a fixed $t$, let
\[
I_* := \{ i \in  \mathcal{S}~|~ \sqrt{\delta_*} D_i(t) +  \frac{\nu|\s_i|}{\delta_*} D(U)  \le 2C^*|\s_i|\ds \} .
\]
Notice that $I_*=\emptyset$ if and only if $\sqrt{\delta_*} D_i(t) +  \frac{\nu|\s_i|}{\delta_*} D(U) > 2C^*\ds \max_{i\in \mathcal{S}} |\s_i|$.\\
Then, we split the above estimate into two parts: using $G_{Y_i}(U)\le 0$,
\[
\mathcal{R}(U) \le \mathcal{R}_*(U) + \mathcal{R}_*^c(U),
\]
where
\begin{align*}
\begin{aligned}
\mathcal{R}_*(U) &:= \sum_{i\in I_*} G_{Y_i}(U)+ (1+\ds) \sum_{i\in I_*} \Big(|{B}_{1i}(U)| +|{B}_{2i}^+(U)|  \Big) +2\sum_{i\in I_*}|{B}_{2i}^-(U)| + 2 |B_3(U)|  \\
&\quad
- \sum_{i\in I_*} {G}_{1i}^{-}(U)- \sum_{i\in I_*}{G}_{1i}^{+}(U)  - \sum_{i\in I_*} {G}_{2i}(U)  - \Big(1-\sqrt\ds\Big) D (U),\\
\mathcal{R}_*^c(U) &:= 2 \sum_{i\in I_*^c} \Big(|{B}_{1i}(U)| +|{B}_{2i}^+(U)| +2|{B}_{2i}^-(U)|  \Big) + 2 |B_3(U)|  \\
&\quad
- \sum_{i\in I_*^c} {G}_{1i}^{-}(U)- \sum_{i\in I_*^c}{G}_{1i}^{+}(U)  -\sum_{i\in I_*^c} {G}_{2i}(U)  - \Big(1-\sqrt\ds\Big) D (U).
\end{aligned}
\end{align*}

\noindent {\bf Estimate of $\mathcal{R}_*^c$ :}
We will apply Proposition \ref{prop:big} to
\begin{align*}
\begin{aligned}
\mathcal{R}_*^c(U) &\le I_B + \bar I_B  - \sum_{i\in I_*^c} {G}_{1i}^{-}(U)- \sum_{i\in I_*^c}{G}_{1i}^{+}(U)  - \sum_{i\in I_*^c} {G}_{2i}(U)  - \Big(1-\sqrt\ds\Big)D (U),
\end{aligned}
\end{align*}
where 
\begin{align*}
\begin{aligned}
I_B &:= 2 \sum_{i\in I_*^c} \Big( |B_{1i}(U)-B_{1i}(\uu)|  +|B_{2i}^+(U)-B_{2i}^+(\uu)| +2|{B}_{2i}^-(U)|  \Big) + 2 |B_3(U)| , \\
\bar I_B & := 2 \sum_{i\in I_*^c} \Big( |B_{1i}(\uu)| +  |B_{2i}^+(\uu)|\Big) . 
\end{aligned}
\end{align*}
Using \eqref{badprop} and $\sum_{i\in \mathcal{S}} \big( D_i +\widetilde G_i\big) \le C \big( D +\widetilde G\big)$, $L(0)\le \ds$, and taking $\ds\ll \deltao$ so that $C_{\delta_1} \ds^{\frac{1}{4}} \le \ds^{\frac{1}{8}}$, we have
\begin{align*}
\begin{aligned}
I_B &\le C\ds \sum_{i\in \mathcal{S}} \Big( G_{2i}(U)-G_{2i}(\uu) \Big) +C\ds^2 \sum_{i\in \mathcal{S}} G_{2i}(\uu)+ C_{\delta_1}\sqrt\ds  \big( D(U) +\widetilde G(t) \big) \\
& +C_{\delta_1} \sum_{i\in I_*^c} \bigg[\ds^{1/4}  \big( D_i (t) +\widetilde G_{i}(t)   \big) +   C_{\delta_1}|\s_i| \sqrt\nu  \big( D(U) +\widetilde G(t)  \big)+  ( C\delta_1 + C_{\delta_1} \delta_*^{\frac{1}{4}} )  G_{1i}^{-}(U)+ C_{\delta_1}|\s_i| \sqrt\nu \bigg] \\
&\le C\ds \sum_{i\in \mathcal{S}} \Big( G_{2i}(U)-G_{2i}(\uu) \Big)  +C\ds^2 \sum_{i\in \mathcal{S}} G_{2i}(\uu) \\
&\quad + C\ds^{\frac{1}{8}}  \big( D(U) +\widetilde G(t)  \big) +  ( C\delta_1 + C_{\delta_1} \delta_*^{\frac{1}{4}} ) \sum_{i\in I_*^c}G_{1i}^{-}(U) + C\sqrt\nu.
\end{aligned}
\end{align*}
In addition, by \eqref{roughbad},
\[
\bar I_B \le 2 \sum_{i\in I_*^c} C^*\ds|\s_i| \le   \sum_{i\in I_*^c} \Big(  \sqrt{\delta_*} D_i(t) +  \frac{\nu|\s_i|}{\delta_*} D(U)\Big) \le \sqrt{\delta_*} D(U). 
\]
Thus,
\beq\label{rcs}
\mathcal{R}_*^c(U)  \le C\ds \sum_{i\in I_*} \Big( G_{2i}(U)-G_{2i}(\uu) \Big)  +C\ds^2 \sum_{i\in I_*}  G_{2i}(\uu) -\frac{1}{2}\sum_{i\in I_*^c} {G}_{2i}(U) + C\ds^{\frac{1}{8}}  \widetilde G(t)+ C\sqrt\nu.
\eeq

\noindent {\bf Estimate of $\mathcal{R}_*$ :}
We here apply Lemma \ref{lem:inside} and Proposition \ref{prop:big}.
For that, we first observe from \eqref{yismall} of Lemma \ref{lem:small} that 
\begin{align*}
\begin{aligned}
-\frac{|\tilde\lambda_i| }{2} |Y_i(U)| {\mathbf 1}_{\{(-1)^{{k_i}-1} Y_i(U) \le -\s_i^2 \}} &\le -\frac{|\tilde\lambda_i| }{2} |Y_i(U)| {\mathbf 1}_{\{-C_0|\s_i|\ds \le (-1)^{{k_i}-1} Y_i(U) \le -\s_i^2 \}} \\
&\le - \frac{|\tilde\lambda_i| }{2C_0|\s_i|\ds} |Y_i(U)|^2 {\mathbf 1}_{\{-C_0|\s_i|\ds \le (-1)^{{k_i}-1} Y_i(U) \le -\s_i^2 \}} .
\end{aligned}
\end{align*}
Then, by $|\s_i|\le L(0) \le \ds\ll \delta_1$,
\[
G_{Y_i}(U)\le -\frac{4}{|\s_i|\delta_1} |Y_i(U)|^2 {\mathbf 1}_{\{-C_0|\s_i|\ds\le  (-1)^{{k_i}-1} Y_i(U)\le \s_i^2\}} .
\]
Since it follows from \eqref{defyi}  that for each $i\in \mathcal{S}$,
$$
Y_i(U)=\mathcal{Y}^g_i(\uv) + \big(Y^g_i(U)-Y^g_i(\uu)\big) +\big(Y^g_i(\uu)-\mathcal{Y}^g_i(\uv)\big) + Y^b_i(U) +Y^l_i(U)+Y^s_i(U),
$$
we have
\begin{align*}
|\mathcal{Y}^g_i(\uv)|^2 &\le 4\Big(|Y_i(U)|^2+\big|Y^g_i(U)-Y^g_i(\uu)\big|^2+ \big|Y^g_i(\uu)-\mathcal{Y}^g_i(\uv)\big|^2 \\
&\quad+ | Y^b_i(U)|^2+| Y^l_i(U)|^2+| Y^s_i(U)|^2\Big),
\end{align*}
and so,
\begin{align*}
\begin{aligned}
-4|Y_i(U)|^2 &\leq -|\mathcal{Y}^g_i(\uv)|^2+4\Big(\big|Y^g_i(U)-Y^g_i(\uu)\big|^2+ \big|Y^g_i(\uu)-\mathcal{Y}^g_i(\uv)\big|^2 \\
&\quad+ | Y^b_i(U)|^2+| Y^l_i(U)|^2+| Y^s_i(U)|^2\Big).
\end{aligned}
\end{align*}
Thus, 
\begin{align*}
\begin{aligned}
G_{Y_i}(U)& \le -\frac{|\mathcal{Y}^g_i(\uv)|^2}{|\s_i|\deltao} 
+ \frac{4}{|\s_i|\deltao} \Big(\big|Y^g_i(U)-Y^g_i(\uu)\big|^2+ \big|Y^g_i(\uu)-\mathcal{Y}^g_i(\uv)\big|^2 \\
&\quad + | Y^b_i(U)|^2+| Y^l_i(U)|^2+| Y^s_i(U)|^2\Big).
\end{aligned}
\end{align*}
We also find from \eqref{error-in} that
\begin{align*}
\begin{aligned}
- {G}_{2i}(U)&= - \Big({G}_{2i}(U)-  {G}_{2i}(\uu)\Big) -\ds \deltao {G}_{2i}(\uu) -(1-\deltao\ds) \Big({G}_{2i}(\uu)-\mathcal{G}_{2i} (\uv) \Big) -(1-\deltao\ds)\mathcal{G}_{2i} (\uv)
\end{aligned}
\end{align*}
and by \eqref{amonod},
\begin{align*}
\begin{aligned}
- \big(1-\sqrt\ds \big) D (U) &\le  - \big(1-\deltao/2 \big) D (\uu) - \big(\deltao/2-\sqrt\ds \big) D (U) \\
&\le - \big(1-\deltao \big) \sum_{i\in \mathcal{S}}\mathcal{D}_{i} (\uv) +  \frac{C\nu^{4/3}}{\delta\ds}  \int_\bbr a \eta(U|\overline U) dx - \big(\deltao/2 -\sqrt\ds \big) D (U).
\end{aligned}
\end{align*}
Let 
\begin{align*}
\begin{aligned}
&J_Y:=  \sum_{i\in I_*} \frac{4}{|\s_i|\deltao} \Big(\big|Y^g_i(U)-Y^g_i(\uu)\big|^2+ \big|Y^g_i(\uu)-\mathcal{Y}^g_i(\uv)\big|^2 + | Y^b_i(U)|^2+| Y^l_i(U)|^2+| Y^s_i(U)|^2\Big),\\
&J_B:=2 \sum_{i\in I_*} \Big( |B_{1i}(U)-B_{1i}(\uu)|+ |{B}_{1i}(\uu)-\mathcal{I}_{1i}(\uv)| \Big)\\
&\quad\quad+ 2 \sum_{i\in I_*} \Big( |B_{2i}^+(U)-B_{2i}^+(\uu)| + |{B}_{2i}^+(\uu)-\mathcal{I}_{2i}(\uv)| \Big)+2\sum_{i\in I_*}|{B}_{2i}^-(U)| + 2 |B_3(U)| \\
&\quad\quad -(1-\deltao\ds) \sum_{i\in I_*} \Big({G}_{2i}(\uu)-\mathcal{G}_{2i} (\uv) \Big) +  \frac{C\nu^{4/3}}{\delta\ds}  \int_\bbr a \eta(U|\overline U) dx.
\end{aligned}
\end{align*}
Then, using the above estimates,
\begin{align*}
\begin{aligned}
\mathcal{R}_*(U) &\le\sum_{i\in I_*} \mathcal{R}^i_{\deltao,\ds}(\uv) + J_Y + J_B - \sum_{i\in I_*} {G}_{1i}^{-}(U)- \sum_{i\in I_*}{G}_{1i}^{+}(U) \\
& \quad -\sum_{i\in I_*} \Big({G}_{2i}(U)-  {G}_{2i}(\uu)\Big) -\ds \deltao \sum_{i\in I_*}{G}_{2i}(\uu) - \big(\deltao/2 -\sqrt\ds \big) D (U).
\end{aligned}
\end{align*}
By Lemma \ref{lem:inside} with Lemma \ref{lem:essy}, we have
\begin{align*}
\begin{aligned}
\mathcal{R}^i_{\deltao,\ds}(\uv)&=-\frac{1}{|\s_i|\deltao}|\mathcal{Y}^g_i(\uv)|^2 +(1+\deltao) |\mathcal{I}_{1i}(\uv)|\\
&\quad\quad+(1+\deltao \ds) |\mathcal{I}_{2i}(\uv)|-\left(1-\deltao\ds \right)\mathcal{G}_{2i}(\uv)-(1-\deltao)\mathcal{D}_i(\uv)\\
&\le 0.
\end{aligned}
\end{align*}
We use  Proposition \ref{prop:big} to have 
\begin{align*}
\begin{aligned}
J_Y &\le   
 C_{\deltao}\frac{\ds}{\deltao}  \sum_{i\in I_*}\bigg[ \sqrt{\delta_*}\Big(D_i(t) +\widetilde G_i(t)\Big) +  \frac{\nu|\s_i|}{\delta_*^2} \Big(D(U) +\widetilde G(t)\Big) \\
&\quad+ \big(G_{2i}(U)-G_{2i}(\uu) \big) + G_{1i}^-(U) 
+ \ds^{-1/4} G_{1i}^+(U) + \ds^{1/4} G_{2i}(\uu)  \bigg] + C\frac{\nu^2}{\deltao}  \sum_{i\in I_*}|\s_i|  .
\end{aligned}
\end{align*}
and 
\begin{align*}
\begin{aligned}
J_B &\le  C\ds^2 \sum_{i\in \mathcal{S}} G_{2i}(U) + C_{\deltao} \sum_{i\in I_*}\bigg[ \ds  (G_{2i}(U)-G_{2i}(\uu))  + \ds^{\frac14}  \big( D_i(t) +\widetilde G_i(t) \big) \\
&\quad + |\s_i| \sqrt\nu \big( D(U) +\widetilde G(t) \big) +( C\delta_1 + C_{\delta_1} \delta_*^{\frac{1}{4}} )  G_{1i}^{-}(U) \bigg] +C_{\delta_1}\sqrt\nu \sum_{i\in I_*}|\s_i| \\
&\quad +\sum_{i\in I_*} \frac{C\sqrt\nu}{\ds\delta} +  \frac{C\nu^{4/3}}{\delta\ds}  \int_\bbr a \eta(U|\overline U) dx.
\end{aligned}
\end{align*}
Thus, retaking $\ds\ll\deltao$ so that $C_{\deltao}\ds/\deltao \le \ds^{\frac78}$, and using Lemma \ref{lem:num},
\begin{align*}
\begin{aligned}
\mathcal{R}_*(U) &\le  -\big(1-\sqrt\delta_*\big)\sum_{i\in I_*}\Big({G}_{2i}(U)-  {G}_{2i}(\uu)\Big) -\ds \big(\deltao-C\ds^{\frac18}\big)\sum_{i\in I_*} {G}_{2i}(\uu) + C\ds^2 \sum_{i\in I_*^c} G_{2i}(U)\\
& \quad  - \big(\deltao-C\ds^{\frac18} \big) D (U) + C\ds^{\frac18}\widetilde G(t) + C\sqrt\nu  + \frac{C\nu^{\frac14}}{\ds\delta^2} +  \frac{C\nu^{4/3}}{\delta\ds} \int_\bbr a \eta(U|\overline U) dx.
\end{aligned}
\end{align*}

\noindent {\bf Conclusion :}
The above estimate and \eqref{rcs} imply
\begin{align*}
\begin{aligned}
\mathcal{R}(U) &\le   -\ds \big(\deltao-C\ds^{\frac18}\big)\sum_{i\in I_*} {G}_{2i}(U) -\frac{1}{4}\sum_{i\in I_*^c} {G}_{2i}(U) + C\ds^{\frac18}\widetilde G(t) \\
&+\mathcal{C} (\delta, \nu)+  \frac{C\nu^{4/3}}{\delta\ds}  \int_\bbr a \eta(U|\overline U) dx,
\end{aligned}
\end{align*}
where $\mathcal{C} (\delta, \nu)$ is the constant that vanishes when $\nu\to0$ for any fixed $\delta>0$. \\
For $\widetilde G(t)$, we use \eqref{avr}, $Q(v|\bar v)\le C p(v|\bar v)$ for all $v>0$, 
to have
\begin{align}
\begin{aligned} \label{tilgt}
\widetilde G (t) &=\int_\bbr |(\bar v)_x| Q(v|\bar v)\,dx \\
&\le \sum_{i\in\mathcal{S}} \int_\bbr |(\tilde v_i)_x| Q(v|\bar v) dx + \sum_{i\in\mathcal{R}} \int_\bbr |(v^R_i)_x| Q(v|\bar v) dx + \sum_{i\in\mathcal{NP}} \int_\bbr |(v^P_i)_x| Q(v|\bar v) dx \\
&\le C\ds\sum_{i\in\mathcal{S}} {G}_{2i}(U) + C \sum_{i\in\mathcal{R}} \int_\bbr |(v^R_i)_x| p(v|\bar v) dx + \sum_{i\in\mathcal{NP}} \int_\bbr |(v^P_i)_x| Q(v|\bar v) dx.
\end{aligned}
\end{align}
Therefore, 
\begin{align*}
\begin{aligned}
\mathcal{R}(U) 
&\le C\ds^{\frac{1}{8}}  \sum_{i\in\mathcal{R}} \int_\bbr |(v^R_i)_x| p(v|\bar v) dx  +C\ds^{\frac18}  \sum_{i\in\mathcal{NP}} \int_\bbr |(v^P_i)_x| Q(v|\bar v) dx \\
&\quad +\mathcal{C} (\delta, \nu)+  \frac{C\nu^{4/3}}{\delta\ds}  \int_\bbr a \eta(U|\overline U) dx .
\end{aligned}
\end{align*}

\section{Proof of Theorem \ref{thm:uniform}} \label{sec:pfthm}
\setcounter{equation}{0}

We here complete the proof of Theorem \ref{thm:uniform}.

\subsection{The decay of weight $a(t,x)$ at interaction time}\label{subsec-time}
By \eqref{defweight1}, we have that 
for each interaction time $t_j$, 
\[
a^\nu(x,t_j+)-a^\nu(x,t_j-) = \frac{1}{\ds} \bigg[ \Big( \Delta \bar L_j+\kappa \Delta \bar Q_j \Big) + \int_{-\infty}^x \sum_{i\in \mathcal{S}_j} \partial_x \Big( -p(\tilde v_{i}+) +p(\tilde v_{i}-) \Big) dy \bigg],
\]
where $\Delta \bar L_j := L(t_j+)-L(t_j-)$, $ \Delta \bar Q_j := Q(t_j+)-Q(t_j-)$, and
\[
\tilde v_{i}+:= \tilde v_{ij}^\nu(x,t_j+),\quad \tilde v_{i}-:=\tilde v_{i,j-1}^\nu(x,t_j-).
\]
Let $i_0$ and $i_0+1$ be two adjacent indices, for which only two waves moving along the trajectories $y_{i_0}$ and $y_{i_0+1}$ interact at the time $t_j$, that is, no interaction of waves along other trajectories $y_i$ for $i\notin\{i_0, i_0+1\}$. We still use the same indices $i_0$ and $i_0+1$ for the two outgoing waves generated by the interaction. \\
Then, since no interaction of waves along other trajectories $y_i$ for $i\notin\{i_0, i_0+1\}$, we have

\beq\label{shsumz}
\int_{-\infty}^x \sum_{i\in \mathcal{S}_j\cap\{i_0, i_0+1\}^c} \partial_x \Big( -p(\tilde v_{i}+) +p(\tilde v_{i}-) \Big) dy =0,
\eeq
and so,
\beq \label{aequal}
a^\nu(x,t_j+)-a^\nu(x,t_j-) = \frac{1}{\ds} \bigg[\Big( \Delta \bar L_j+\kappa \Delta \bar Q_j \Big)+ \int_{-\infty}^x \Big( g_+^S(y) -g_-^S(y) \Big) dy \bigg],
\eeq
where
\begin{align*}
\begin{aligned}
& g_+^S(y) := \sum_{i\in \mathcal{S}_j\cap\{i_0, i_0+1\}} \partial_x \Big( -p(\tilde v_{i}+) \Big),\\
 & g_-^S(y) := \sum_{i\in \mathcal{S}_j\cap\{i_0, i_0+1\}} \partial_x \Big( -p(\tilde v_{i}-) \Big).
\end{aligned}
\end{align*}
Notice that $\partial_x \big( -p(\tilde v_{i}) \big)$ and $\partial_x \tilde v_i$ have the same sign, and so we find that $\partial_x \Big( -p(\tilde v_i^\nu(x,t)) \Big)<0$ for $k_{ij}=1$, whereas $\partial_x \Big( -p(\tilde v_i^\nu(x,t)) \Big)>0$ for $k_{ij}=2$. 
To reflect this monotonicity,  in what follows, we sometimes use the following notations for simplicity:
\beq\label{barsi-0}
\tilde\s_i := 
\left\{ \begin{array}{ll}
        \int_{-\infty}^\infty \partial_x \big(-  p(\tilde v_i) \big) dx,\quad\mbox{if } i\in \mathcal{S}_j,  \\
       \int_{-\infty}^\infty \partial_x \big(-  p(v^R_i) \big)  dx,\quad\mbox{if } i\in \mathcal{R}_j, \\
        \int_{-\infty}^\infty \partial_x\big(-  p(v^P_i) \big) dx,\quad\mbox{if } i\in \mathcal{NP}_j . \end{array} \right.
\eeq
Note that $\tilde\s_i = (-1)^{k_{ij}+1} \bar\s_i$ for the sizes $\bar\s_i$ defined in \eqref{barsi}.  Also, we use the notations: $\bar\s', \bar\s''$ for the sizes of outgoing waves,  $\bar\s_1, \bar\s_2$ for the sizes of incoming waves as in \eqref{barsi}; and as the associated monotonic functions defined in \eqref{barsi-0}, use the notations: $\tilde\s', \tilde\s''$ for the outgoing waves; $\tilde\s_1, \tilde\s_2$ for the incoming waves.\\

\begin{figure}
	\centering
		\includegraphics
	[scale=.6]
	{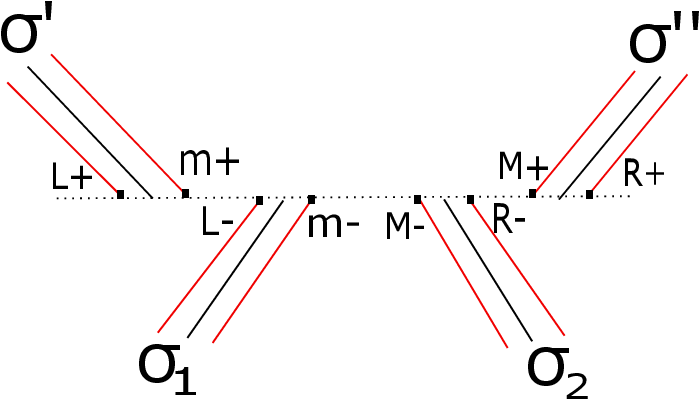}
		\caption{$|x_{m\pm}-x_{L\pm}|=|x_{R\pm}-x_{M\pm}|=\sqrt{\nu}$}\label{Fig1}
\end{figure} 
\begin{figure}
	\centering
		\includegraphics
	[scale=.6]
	{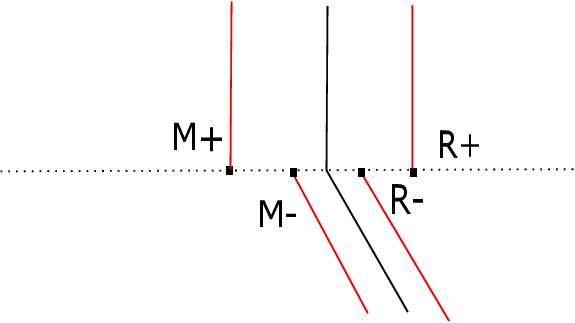}
		\caption{$|x_{R-}-x_{M-}|=\sqrt{\nu}$, \quad$|x_{R+}-x_{M+}|=\sqrt{\rho_\nu}$}\label{Fig2}
\end{figure} 

To estimate the right-hand side of \eqref{aequal}, we will use the modified front tracking algorithm as before. For that, we construct approximations of $\int_{-\infty}^x g_\pm^S(y) dy$ by discretizing them as follows. As in Figures \ref{Fig1} and \ref{Fig2}, let $x_{L\pm}, x_{m\pm}, x_{M\pm}, x_{R\pm}$ be the boundary points of the zones of width $\sqrt\nu$ scale for physical waves, and of width $\rho_\nu$ scale for nonphysical waves. In addition, $v_L$ and $v_R$ be respectively the left and right states of the (inviscid) Riemann solvers as in the proof of Proposition \ref{prop:deca}.    Note that $p(v_R)=p(v_L)+ \tilde\s'+\tilde\s'' = \tilde\s_1+\tilde\s_2$.
We now define an approximation of $ \int_{-\infty}^x  g_+^S(y) dy$ by 
\beq\label{app-ome}
\Omega^S_+(x) :=
\left\{ \begin{array}{ll}
       p(v_L),\quad\mbox{if } x\le x_{L+},  \\
       p(v_L),\quad\mbox{if } x_{L+}< x \le x_{M+} \mbox{, and } i_0\notin \mathcal{S}_j  \\
        \int_{x_{L+}}^x \partial_x \Big( -p(\tilde v_{i_0}+) \Big) dy  \mathbf{1}_{x_{L+}< x<x_{m+}}  + \big( p(v_L) + \tilde \s' \big) \mathbf{1}_{x_{m+}\le x\le x_{M+}}  ,  \quad\mbox{if }  i_0\in \mathcal{S}_j  \\
        p(v_L) ,\quad\mbox{if } x > x_{M+} \mbox{, and } i_0\notin \mathcal{S}_j  \mbox{, and } i_0+1\notin \mathcal{S}_j \\
        p(v_L)+ \tilde \s'  ,\quad\mbox{if } x > x_{M+} \mbox{, and } i_0\in \mathcal{S}_j  \mbox{, and } i_0+1\notin \mathcal{S}_j \\
         \int_{x_{M+}}^x \partial_x \Big( -p(\tilde v_{i_0+1}+) \Big) dy  \mathbf{1}_{x_{M+}< x<x_{R+}}  + \big( p(v_R) - \tilde \s' \big)  \mathbf{1}_{x\ge x_{R+}} , \mbox{if }  i_0\notin \mathcal{S}_j ,\, i_0+1\in \mathcal{S}_j  \\
          \int_{x_{M+}}^x \partial_x \Big( -p(\tilde v_{i_0+1}+) \Big) dy  \mathbf{1}_{x_{M+}< x<x_{R+}}  +p(v_R)  \mathbf{1}_{x\ge x_{R+}} , \mbox{if }  i_0\in \mathcal{S}_j ,\, i_0+1\in \mathcal{S}_j . \end{array} \right.
\eeq
Notice that the above approximation is constructed by discretizing outside each transition zone. 
Likewise, we define $\Omega^S_-(x)$  approximation of $ \int_{-\infty}^x  g_-^S(y) dy$. Notice that
\[
\Omega^S_-(x_{L-})=\Omega^S_+(x_{L+}).
\]

Notice that since $|x_{m\pm}-x_{L\pm}|=|x_{R\pm}-x_{m\pm}|=\sqrt\nu \gg O(\nu) :$ the width of transition zone of viscous shocks, we have
\beq\label{errortail}
\Big|\Omega^S_\pm(x) -  \int_{-\infty}^x  g_\pm^S(y) dy \Big| \le C\exp{(-\nu^{-1/2})} ,\quad\forall x\in\bbr.
\eeq
We will show the negativity of $\Delta \bar L_j+\kappa \Delta \bar Q_j + \Delta\Omega^S(x)$ for all $x$ where 
\[
\Delta\Omega^S(x):= \Omega^S_+(x) - \Omega^S_-(x) .
\]

\begin{proposition}\label{prop:deca}
There exists $\eps_0>0$ such that for all $\eps<\eps_0$, the following holds: there exists $\kappa_0>0$ such that for all $\kappa>\kappa_0$,
\beq\label{bbyfr}
  \Delta \bar L_j+\kappa \Delta \bar Q_j + \Delta\Omega^S(x) <0,\quad \forall x\in\bbr.
\eeq
\end{proposition}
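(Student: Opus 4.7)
The plan is to separate the analysis into an outer region, where $x$ lies outside every transition zone of the incoming and outgoing shocks, and an inner region, where $x$ lies inside one of them. On the outer region, $\Omega^S_\pm(x)$ is piecewise constant with its values listed in \eqref{app-ome}, so $\Delta\Omega^S(x)$ evaluates to one of a handful of explicit constants; each such constant is either identically zero or a difference $\tilde\s_{\mathrm{out}}-\tilde\s_{\mathrm{in}}$ of shock strengths that Lemma \ref{prop1} (together with the $O(\rn)$ corrections coming from pseudo-shocks in the simplified and adjusted solvers) bounds by $O(|\bar\s_1||\bar\s_2|+\rn)$. Combined with the decay $\Delta\bar L_j+\kappa\Delta\bar Q_j\leq -\tfrac{\kappa}{2}|\bar\s_1||\bar\s_2|$ from Lemma \ref{lemma_key1}, taking $\kappa$ large enough (and $\eps$ small enough so that $\rn\ll\eps^2$) yields $\Delta\bar L_j+\kappa\Delta\bar Q_j+\Delta\Omega^S(x)<0$ in the outer region. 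This step is case-by-case bookkeeping along the classification of Riemann solvers, and the only genuinely delicate subcases come from the adjusted solver, where one must check that the enforced lower bound $|\tilde\s'|\geq\rn$ on outgoing shocks does not destabilize this estimate.

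On the inner region the picture is more subtle because $\Omega^S_\pm(x)$ interpolates monotonically through a range of size $|\tilde\s|=O(\eps)$, much larger than $|\Delta\bar L_j+\kappa\Delta\bar Q_j|=O(\eps^2)$. The main idea is to exploit the constant sign of $-\partial_x p(\tilde v_i)$ along each viscous shock (positive for $1$-shocks, negative for $2$-shocks) together with the choice of initial positions of outgoing waves described in Remark \ref{rem:conf}. Specifically, when an outgoing shock has the same family as an incoming shock, it is placed with the same center up to the $\sqrt\nu$ adjustment of Figures \ref{Fig1}--\ref{Fig2} and \ref{pic_a_0}; their monotone pressure profiles then nearly coincide pointwise, so throughout the overlapping transition zone $\Omega^S_+(x)-\Omega^S_-(x)$ deviates from its outer boundary value only by $O(|\bar\s_1||\bar\s_2|)$ plus an exponentially small tail $\exp(-c\nu^{-1/2})$ coming from \eqref{errortail}. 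This reduces the inner inequality to the outer one just established.

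The main obstacle is the overtaking interaction between a shock and a rarefaction of the same family (e.g.\ $\bar S\bar R$ and its mirrored cases), handled by the adjusted or simplified solvers (Figures \ref{Simp2}--\ref{Simp9}). In that scenario the surviving outgoing shock has no same-family shock counterpart on the incoming side, so the matching-profile argument of the previous paragraph does not apply; furthermore, a pseudo-shock of size up to $O(\rn)$ may appear, further polluting $\Delta\Omega^S(x)$ on the inner region. The key resolution is the sharper estimate $\Delta\bar L\leq -|\bar\s_{\min}|$ already recorded for these overtaking interactions in Section \ref{subsection_4.2} (the bounds $\Delta L\leq -\s_2<0$ and $\Delta L\leq \s_1<0$ in the two cases of the adjusted $\bar S\bar R$ solver, and the analogous $\Delta L\leq -2\s_1$ in Case 2 of the simplified solver). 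This $|\bar\s_{\min}|$-sized deficit, absent from head-on and same-type overtaking cases, dominates both the $O(|\bar\s_1||\bar\s_2|)$ outer value and the $O(|\bar\s_{\min}|)$ interior oscillation of $\Delta\Omega^S$, once the positions of the outgoing waves are readjusted as in Figure \ref{pic_a_0}, and thereby secures $\Delta\bar L_j+\kappa\Delta\bar Q_j+\Delta\Omega^S(x)<0$ uniformly in $x$.
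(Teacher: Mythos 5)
Your two-step skeleton (outer/inner decomposition, the decay $\Delta\bar L_j+\kappa\Delta\bar Q_j\le-\tfrac{\kappa}{2}|\bar\s_1||\bar\s_2|$, special treatment of shock--rarefaction overtaking via the first-order drop of $\Delta\bar L$, and the repositioning of Figure \ref{pic_a_0}) broadly matches the paper's, but the central mechanism you use to control $\Delta\Omega^S(x)$ has a genuine gap. First, your outer-region claim that $\Delta\Omega^S(x)$ is always $O(|\bar\s_1||\bar\s_2|+\rn)$ there is false: in the constant middle region between the outgoing and incoming $1$-shocks of a $\ba S\ba R$ interaction one gets exactly $\Delta\Omega^S=\bar\s'-\bar\s_1$, which is of first order ($\approx\bar\s_2$), since Lemma \ref{prop1} only gives $\s'\approx\s_1+\s_2$. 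Second, and more seriously, your inner-region mechanism --- that a same-family outgoing shock is ``placed with the same center'' as the incoming one so that ``their monotone pressure profiles nearly coincide pointwise'' up to $O(|\bar\s_1||\bar\s_2|)$ --- is both a mischaracterization of the construction and false in precisely the problematic case: when $|\bar\s'|$ and $|\bar\s_1|$ differ at first order, two monotone profiles cannot agree pointwise to second order no matter how they are centered, and Figure \ref{pic_a_0} in fact places the outgoing shock's transition zone \emph{disjoint from and entirely to the left of} the incoming one's (separated by $\sqrt\nu$ gaps), not concentric with it. The paper's actual mechanism is a pair of one-sided monotonicity bounds: $\Omega^S_+(x)\le\Omega^S_+(x_{L+})$ for all $x\le x_{M+}$ (since $\Omega^S_+$ only decreases through the outgoing $\ba S$) and $\Omega^S_-(x)\ge\Omega^S_-(x_{L-})+\bar\s_1$ for all $x\le x_{M-}$. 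With the left-offset placement these give $\Delta\Omega^S(x)\le0$ for $x\le x_{L-}$ and $\Delta\Omega^S(x)\le\bar\s'-\bar\s_1$ for $x>x_{L-}$, and the second alternative is absorbed by $\Delta\bar L\le-2(\bar\s'-\bar\s_1)+C|\bar\s_1\bar\s_2|$ (the factor $2$ is essential). No pointwise profile comparison enters anywhere.

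Two smaller points. You misidentify the obstruction as ``the surviving outgoing shock has no same-family shock counterpart on the incoming side'': in $\ba S\ba R$ the outgoing backward shock is exactly the same-family continuation of the incoming $\ba S$; the difficulty is only that its strength changed at first order. Also, pseudo-shocks never enter $\Omega^S$ (the sums defining it run over $\mathcal{S}_j$ only), and the condition ``$\eps$ small enough so that $\rn\ll\eps^2$'' is vacuous since $\rn=\nu^{1/3}$ is independent of $\eps$; the relevant comparison $\rn\le|\s_1\s_2|$ for the adjusted solver is guaranteed by the solver-selection rule, not by shrinking $\eps$.
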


\begin{figure}
	\centering
		\includegraphics
	[scale=.4]
	{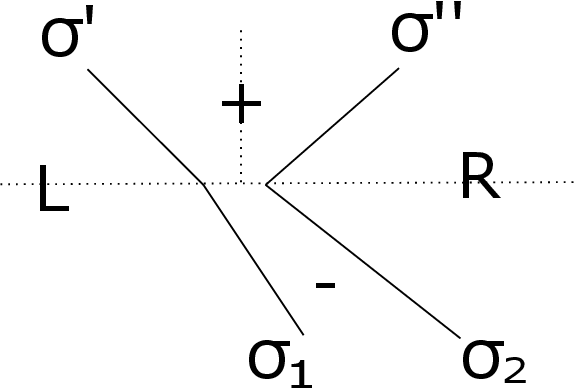}\hfill
		\includegraphics
	[scale=.4]
	{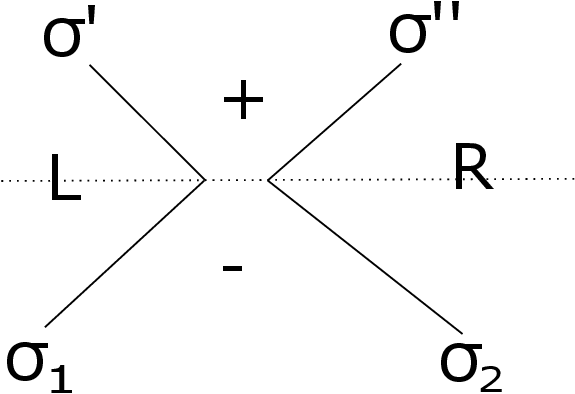}\hfill
		\includegraphics
	[scale=.4]
	{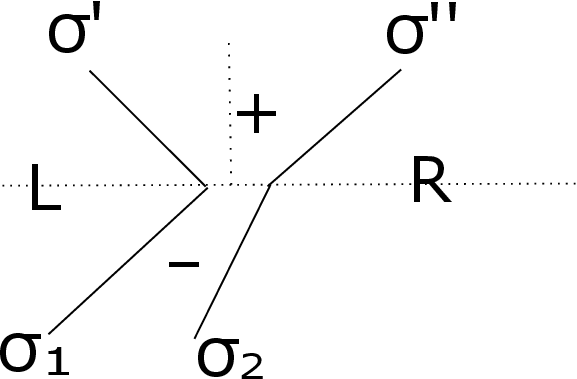}
		\caption{Decay of $a(t,x)$ at interaction point $(t,x)$ (accurate and adjusted solvers). Dot line represents possible non-physical shock.  $t_+$, $t_-$ denote the time before and after interaction, $x_L,\ x_m,\ x_R$ denote the point to the left, between and right of two incoming waves, respectively. For simplicity, we do not add bar on $\sigma$. All waves have width.}\label{pic_a}
\end{figure} 

\begin{figure}
			\includegraphics
	[scale=.38]
	{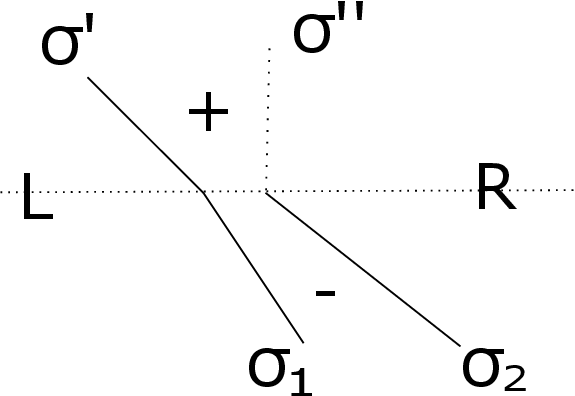}
					\includegraphics
	[scale=.38]
	{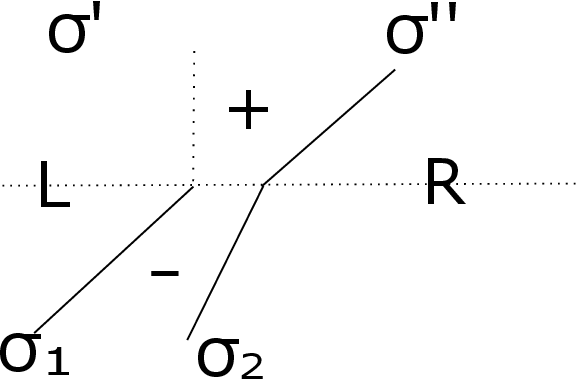}
		\includegraphics
	[scale=.38]
	{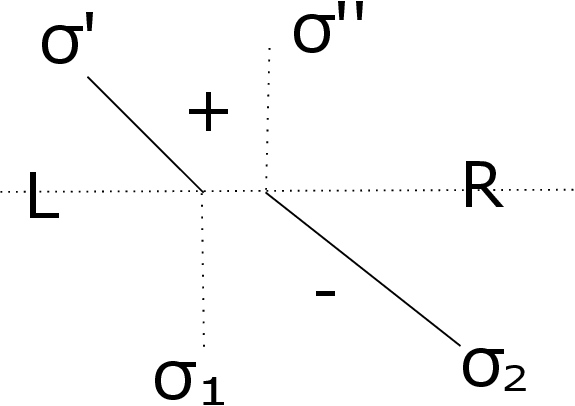}
		\includegraphics
	[scale=.38]
	{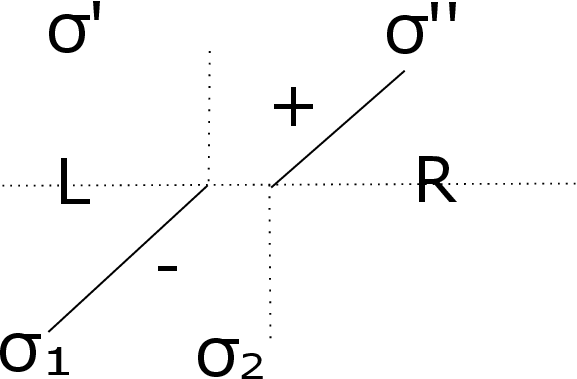}
	\caption{Decay of $a(t,x)$ at interaction point $(t,x)$ (simplified solver). Dot line represents non-physical shock.  $t_+$, $t_-$ denote the time before and after interaction, $x_L,\ x_m,\ x_R$ denote the point to the left, between and right of two incoming waves, respectively. For simplicity, we do not add bar on $\sigma$. All waves have width.} \label{pic_a2}
\end{figure} 

\vskip0.2cm

\begin{figure}
	\centering
		\includegraphics
	[scale=.5]
	{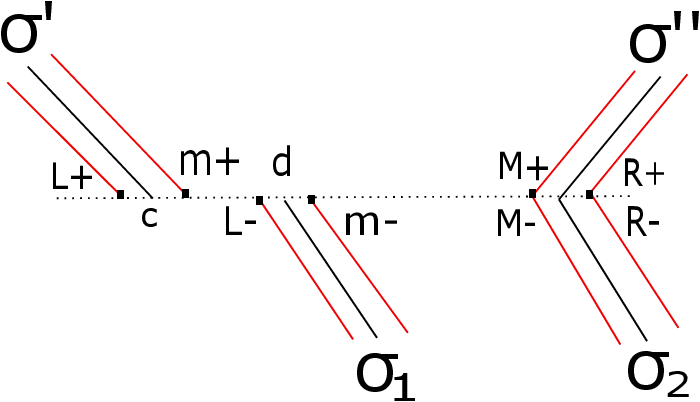}\qquad\qquad
			\includegraphics
	[scale=.5]
	{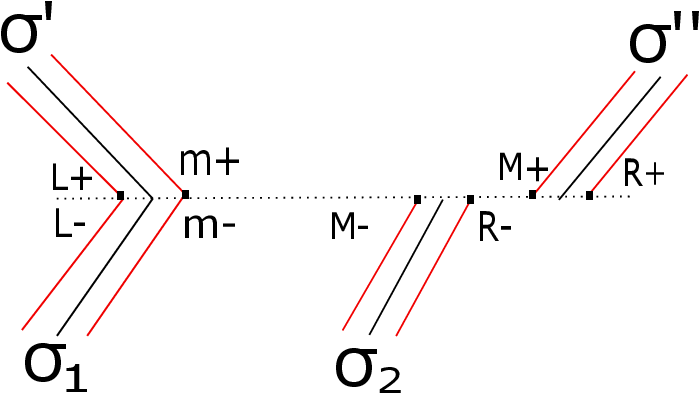}
		\caption{Left: When $\sigma_1$ and $\sigma'$ has same sign, the wave for $\sigma'$ is to the left of the wave for $\sigma_1$, in the sense that $x_{m-}-x_{L-}=x_{L-}-x_{m+}=x_{m+}-x_{L+}=\sqrt{\nu}$. The wave for $\sigma''$ can be a forward wave or a non-physical shock.  Right: A symmetric forward interaction.}\label{pic_a_0}
\end{figure} 
\vskip0.1cm

\begin{proof}

%
%
%
We will show the desired estimates when there is an interaction in Figure \ref{pic_a} or \ref{pic_a2} at $t_j$.

\paragraph{\bf Step 1} First, as before, we have the following decreasing property of $ \Delta \bar L_j+\kappa \Delta \bar Q_j $. The proof is quite standard, and will be given in the appendix.
There exists $\kappa_0>0$ such that for all $\kappa>\kappa_0$,
\beq\label{qdec}
 \Delta \bar Q_j  \le -\frac{3}{4} |\bar\s_1| |\bar\s_2| ,
\eeq
\beq\label{bbyfr1}
 \Delta \bar L_j+\kappa \Delta \bar Q_j  \le -\frac{\kappa}{2} |\bar\s_1| |\bar\s_2|.
\eeq

\paragraph{\bf Step 2}
We will show  that for all $x \in\bbr$,
\beq\label{adecay}
\hbox{either}\quad \Delta \bar L_j + \Delta\Omega^S(x) \leq O(|\bar\sigma_1| |\bar\sigma_2|) \quad \hbox{or}\quad  \Delta\Omega^S(x) \leq O(|\bar\sigma_1| |\bar\sigma_2|)\quad\mbox{holds} .
\eeq
Then, using \eqref{qdec} and \eqref{adecay} with retaking $\kappa_0\gg1$ if necessary, we have  \eqref{bbyfr}.\\
To prove \eqref{adecay}, we will use the rule on configuration of waves as in Figure \ref{pic_a} and Figure \ref{pic_a2}, except for the following adjustment:\\

{\emph{If $\sigma_1$ and $\sigma'$ have the same sign for the first figure of Figure \ref{pic_a}and for the first figure of Figure \ref{pic_a2}, 
then the wave $\sigma_1$ is to the right of the wave $\sigma'$ as in Figure \ref{pic_a_0}.}}

{\emph{If $\sigma''$ and $\sigma_2$ have the same sign for the last figure of Figure \ref{pic_a} and for the second figure of Figure \ref{pic_a2},
then the wave $\sigma''$ is to the right of the wave $\sigma_2$, see Figure \ref{pic_a_0}.}} \\

For example, see the interaction in Figure \ref{pic_a_0}.

For other cases, regardless of the relative positions of waves $\sigma'$ and $\sigma_1$,  and of waves $\sigma''$ and $\sigma_2$, we have the same result \eqref{adecay}. So, for the other cases, we will identify the centers of incoming and outgoing waves, that is, for example, $x_{L-}=x_{L+}, x_{m-}=x_{m+}, x_{M-}=x_{M+}, x_{R-}=x_{R+}$ for the head-on interaction. 
\\

We will prove \eqref{adecay} for each Riemann solver. We always assume two interacting wave fronts are $\bar \sigma_1$ $\bar \sigma_2$ ($\bar \sigma_1$ to the left) leading to outgoing waves $\bar \sigma\rq{}$, $\bar \sigma\rq{}\rq{}$ ($\bar \sigma\rq{}$ to the left). See Figure  \ref{pic_a} and \ref{pic_a2}.

We will crucially use the following monotonicity property of $ \Omega^S_\pm(x)$:
\vskip0.1cm
{\emph{$\Omega^S_+(x)$ increases as it passes $\fa S$ on $[x_{M+}, x_{R+}]$, and decreases as it passes $\ba S$ on $[x_{L+}, x_{m+}]$, but is constant on $(-\infty, x_{L+}]$, $[x_{m+}, x_{M+}]$, $[x_{M+}, \infty)$ and as it passes rarefaction or non-physical shock. This property is also satisfied by $\Omega^S_-(x)$. }}

\vskip0.1cm

\vskip0.3cm
\paragraph{\bf (i) Accurate solvers}
We only prove \eqref{adecay} up to the middle state of the intersection, i.e. for $x\le x_{M\pm}$, since the proof of  \eqref{adecay} on the right state for $x> x_{M\pm}$ is symmetric as in \cite{CKV20}.

\vskip0.1cm

First, consider all cases with $\bar\sigma_1>0$ (i.e. incoming rarefaction). Then, 
\[
\Omega^S_-(x) = \Omega^S_-(x_{L-})=\Omega^S_+(x_{L+}), \quad\forall x\le x_{M-}=x_{M+}.
\]
But, since $\bar\s'$ is always 1-family as in Figure \ref{pic_a}, the monotonicity property implies that
\beq\label{deplus}
\Omega^S_+(x) \le \Omega^S_+(x_{L+}) \quad\forall x\le x_{M\pm}.
\eeq
Thus, \eqref{adecay} holds as
\beq\label{negholds}
 \Delta\Omega^S(x) \le 0, \quad\forall x\le x_{M\pm}.
\eeq

On the other hand, for the head-on interaction (as the second figure in Figure \ref{pic_a}) with $\bar\sigma_1<0$ (i.e. incoming $2-$shock), we have  
\[
\Omega^S_-(x) > \Omega^S_\pm(x_{L\pm}), \quad\forall x\le x_{M\pm}.
\]
This with \eqref{deplus} implies \eqref{negholds}.\\

%
%
\vskip0.1cm


For the overtaking interactions, it is enough to consider  the interactions between two backward waves with $\bar\sigma_1<0$ as in the left picture of Figure  \ref{pic_a}, since the case of forward waves can be proved by a  symmetric argument.\\ 
For $\ba S\ba S$ interaction, by Lemma \ref{prop1}, the strength of outgoing $\ba S$ is bigger than that of any incoming shocks, so \eqref{negholds} holds.\\
We now consider the case: $\ba S \ba R$, i.e. $\bar \sigma_1<0$ and $\bar \sigma_2>0$ as follows. If $\bar \sigma'> 0$, it follows from Lemma \ref{prop1} that 
$|\bar \s' -( \bar \sigma_1+\bar \sigma_2)|+|\bar\s''|\le C|\bar \sigma_1\bar \sigma_2|\ll1$, and so $\bar\s_1+\bar\s_2>0$ and
\[
\Delta \bar L = |\bar\s'|+|\bar\s''|  - (|\bar\s_1|+|\bar\s_2| )  \le |\bar\s_1+\bar\s_2| + C|\bar \sigma_1\bar \sigma_2| - (|\bar\s_1|+|\bar\s_2| ) \le 2 \bar\s_1 +  C|\bar \sigma_1\bar \sigma_2| <0.
\]
Since  $\Omega^S_+(x) = \Omega^S_+(x_{L+})$ for all $x\le x_{M+}$, and  
\beq\label{shestm}
\Omega^S_-(x) \ge \Omega^S_-(x_{L-}) +\bar\s_1,\quad\forall x\le x_{M-}, 
\eeq
we have
$\Delta\Omega^S(x) \le -\bar\s_1$ for all $x\le x_{M\pm}$. Thus, \eqref{adecay} holds as
\[
\Delta \bar L+\Delta\Omega^S(x) \le \bar\s_1 +  C|\bar \sigma_1\bar \sigma_2| \le C|\bar \sigma_1\bar \sigma_2|.
\]
If $\bar \sigma\rq{}< 0$, we use the  adjustment on configuration of waves as in Figure  \ref{pic_a_0}. First, the monotonicity property implies
\beq\label{spinst}
\Omega^S_+(x) \le \Omega^S_\pm(x_{L\pm}) = \Omega^S_-(x),\quad\forall x\le x_{L-},
\eeq
and so $\Delta\Omega^S(x) \le 0$ for $x\le x_{L-}$.
For $x_{L-} < x\le x_{M-}$, observe that since \eqref{shestm} holds and $\Omega^S_+(x) = \Omega^S_+(x_{L+}) +\bar\s'$ for all $x_{L-} < x\le x_{M-}$, we have
\beq\label{omegaline}
\Delta\Omega^S(x) \le \bar\s' -\bar\s_1, \quad x_{L-} < x\le x_{M-}.
\eeq
To control it, we use the bound of $\Delta \bar L$ as follows. Using the same argument as above, we have from Lemma \ref{prop1} that  $\bar\s_1+\bar\s_2<0$ and 
\[ 
\Delta \bar L \le |\bar\s_1+\bar\s_2| + C|\bar \sigma_1\bar \sigma_2| - (|\bar\s_1|+|\bar\s_2| )  \le -2 \bar\s_2 +  C|\bar \sigma_1\bar \sigma_2| \le -2 (\bar\s' -\bar\s_1) +  C|\bar \sigma_1\bar \sigma_2|,
\]
which yields $\Delta \bar L+\Delta\Omega^S(x) \le  C|\bar \sigma_1\bar \sigma_2|$ for $x_{L-} < x\le x_{M-}$.

\vskip0.3cm
\paragraph{\bf (ii) Adjusted solver} To define the adjusted solver, our adjustment on wave strength from the accurate solver is up to the quadratic order, i.e. $O(\bar \sigma_1\bar \sigma_2)$. Clearly \eqref{adecay} still holds for the adjusted solver, since it does for the accurate solver.

\vskip0.3cm
\paragraph{\bf(iii) Simplified Solver}
We now consider simplified solvers in Figure \ref{pic_a2} as a reference. We will prove \eqref{adecay} for all $x$. 

For the interaction between a physical wave and a non-physical wave as in the right two pictures of Figure \ref{pic_a2}, we have
 \[
\Omega^S_-(x) \ge \Omega^S_-(x_{L-}) ,\quad\forall x\le x_{M-}, 
\]
and
 \[
\Omega^S_+(x) \le \Omega^S_+(x_{L+}) ,\quad\forall x\le x_{M+}.
\]
Indeed, if the wave $\bar\s_1$ is non-physical and  $\bar\s'$ is physical as 1-family, then
\[
\Omega^S_-(x) = \Omega^S_-(x_{L-}),\quad \Omega^S_+(x) \le \Omega^S_+(x_{L+}) ,\quad\forall x\le x_{M\pm}.
\] 
If $\bar\s_1$ is physical as 2-family and  $\bar\s'$ is non-physical, then
\[
\Omega^S_-(x) \ge \Omega^S_-(x_{L-}),\quad \Omega^S_+(x) = \Omega^S_+(x_{L+}) ,\quad\forall x\le x_{M\pm}.
\] 
Thus, $ \Delta\Omega^S(x) \le 0$ for all $ x\le x_{M\pm}$.\\
For $x>x_{M\pm}$, it is enough to consider the interaction of a shock and a non-physical wave, since $\Omega^S_\pm$ does not change on rarefaction.
In this case, as before, we use the fact that the shock strength changes in the quadratic order, that is, $|\bar\s'-\bar\s_2|=O(|\bar\sigma_1| |\bar\sigma_2|)$ for $\bar\s', \bar\s_2$ shocks; $|\bar\s''-\bar\s_1|=O(|\bar\sigma_1| |\bar\sigma_2|)$ for $\bar\s'', \bar\s_1$ shocks.
In addition, since for $ \bar\s', \bar\s_2$ shocks of 1-family, 
\[
\Omega^S_+(x) = \Omega^S_+(x_{L+}) +\bar\s',\quad\forall x > x_{M+},
\]
and $\Omega^S_-(x) \ge \Omega^S_-(x_{L-}) +\bar\s_2$ for all $x > x_{M-}$,
we have
\[
\Delta\Omega^S(x) \le \bar\s' - \bar\s_2 = O(|\bar\sigma_1| |\bar\sigma_2|),\quad\forall x > x_{M\pm}.
\]
Likewise, for $\bar\s'', \bar\s_1$ shocks of 2-family, since $\Omega^S_+(x) \le \Omega^S_+(x_{L+}) -\bar\s''$ for all $x > x_{M+}$, 
and $\Omega^S_-(x) = \Omega^S_-(x_{L-}) -\bar\s_1$ for all $x > x_{M-}$, we have
\[
\Delta\Omega^S(x) \le -\bar\s'' + \bar\s_1 = O(|\bar\sigma_1| |\bar\sigma_2|),\quad\forall x > x_{M\pm}.
\]
Thus, we show \eqref{adecay} for all $x$.

\vskip0.3cm

Next, we consider the interactions of two backward physical waves as in the first picture of Figure \ref{pic_a2} (see Figures \ref{Simp1}-\ref{Simp5} as references). 

For the $\ba S\ba S$ interaction as in Figure \ref{Simp1}, using the fact that $\bar\s' = \bar\s_1+\bar\s_2$, we have $\Delta\Omega^S(x) \le 0$ for all $x$.

For the $\ba S\ba R$ interaction as in Figure \ref{Simp2}, 
we use the  adjustment as in Figure \ref{pic_a_0}, since $\bar\s'<0, \bar\s_1<0$. 
First, $\Delta\Omega^S(x) \le 0$ for $x\le x_{L-}$ as in \eqref{spinst}. Also, as in \eqref{omegaline}, we have 
\[
\Delta\Omega^S(x) \le \bar\s' -\bar\s_1, \quad x_{L-} < x\le x_{M+}.
\]
To control it, we use the bound of $\Delta\bar L$ as
\[
\Delta\bar L < |\bar\s'| - |\bar\s_1| = \bar\s_1 - \bar\s'. 
\]
Indeed, recalling the fact that $|\s''|=r_R-r_b<r_R-r_a=\s_2$ and so, $z_a-z_R> z_b-z_R>0$, together with the fact that the pressure $p$ increases in $z$, 
we have $ 0<\bar\s'' = p_b-p_R < p_a -p_R =  \bar\s_2$. Thus, we have the above bound, and so $\Delta \bar L+\Delta\Omega^S(x) \le  0$ for $x_{L-} < x\le x_{M+}$.
Moreover, since $\Omega^S_-, \Omega^S_+$ do not change as they pass the rarefaction $\bar\s_2$ and the non-physical shock $\bar\s''$ respectively, the above estimate $\Delta\Omega^S(x) \le \bar\s' -\bar\s_1$ still holds for $ x> x_{M+}$. Thus, we prove \eqref{adecay} for all $x$.

\vskip0.1cm
For the $\ba S\ba R$ interaction as in Figure \ref{Simp3}, using  $\Delta\Omega^S(x) \le -\bar\s_1$ for all $x$, and $\Delta \bar L =2\bar\s_1$ as before,
we have
\[
\Delta \bar L +\Delta\Omega^S(x) \le \bar\s_1<0, \quad\forall x.
\]
Likewise, for the $\ba R\ba S$ interaction as in Figure \ref{Simp5}, since $\Delta\Omega^S(x) \le -\bar\s_2$ for all $x$, and $\Delta \bar L =2\bar\s_2$, we have
\[
\Delta \bar L +\Delta\Omega^S(x) \le \bar\s_2<0, \quad\forall x.
\]
For the $\ba R\ba S$ interaction in Figure \ref{Simp4}, we first have $\Delta\Omega^S(x)\le 0$ for all $x\le x_{M-}$ by the monotonicity property. Using the following estimates
\beq\label{app_est}
-\bar \sigma\rq{}+\bar \sigma_2\leq C|\bar \sigma_1\bar \sigma_2|,\qquad \bar \sigma\rq{}\rq{}-\bar \sigma_1\leq C|\bar \sigma_1\bar \sigma_2|,
\eeq
which will be proved in the appendix, we have $\Delta \bar L \le -\bar \sigma'+\bar \sigma_2 +C|\bar \sigma_1\bar \sigma_2|$. In addition, since
$\Delta\Omega^S(x)\le \bar\s' - \bar\s_2$ for all $x> x_{M-}$, we find $\Delta \bar L +\Delta\Omega^S(x)\le C|\bar \sigma_1\bar \sigma_2|$ for all $x> x_{M-}$.\\

For the interactions between two forward physical waves as in the second picture of Figure \ref{pic_a2}, the monotonicity property implies
\[
\Omega^S_-(x) = \Omega^S_-(x_{L-}) \le \Omega^S_+(x),\quad\forall x\le x_{M\pm}.
\]
The proof of \eqref{adecay} for $x> x_{M\pm}$ is symmetric to the above proof on the interactions between backward physical waves for $x< x_{m\pm}$. So we omit it.\\
Hence we complete the proof.
\end{proof}

\vskip0.3cm

\subsection{The estimate of weighted norm at the interaction time}
First, we use \eqref{aequal}, \eqref{errortail} and Proposition \ref{prop:deca} to have
\begin{align}
\begin{aligned} \label{aresult}
&a^\nu(x,t_j+)-a^\nu(x,t_j-) \\
&= \frac{1}{\ds} \bigg[\Big( \Delta \bar L_j+\kappa \Delta \bar Q_j +\Delta\Omega^S(x) \Big) -\left(\Delta\Omega^S(x) - \int_{-\infty}^x \Big( g_+^S(y) -g_-^S(y) \Big) dy \right) \bigg]\\
&\le \frac{1}{\ds}f(x)  + \frac{C}{\ds} \exp{(-\nu^{-1/2})},\\
&\mbox{where } f(x):= \Delta \bar L_j+\kappa \Delta \bar Q_j +\Delta\Omega^S(x) <0.
\end{aligned}
\end{align}
Now, we will estimate
 \begin{align*}
\begin{aligned}
\int_{\bbr} \Big( a^\nu(x,t_j+)\eta(U^\nu (x,t)|\overline U_{\nu,\delta}(x,t_j+) ) -a^\nu(x,t_j-) \eta(U^\nu(x,t)|\overline U_{\nu,\delta}(x,t_j-) ) \Big) dx. 
\end{aligned}
\end{align*}
Using \eqref{aresult} with putting $a_\pm := a^\nu(x,t_j\pm)$ and $ \overline U_\pm :=\overline U_{\nu,\delta}(x,t_j\pm)$, we have
 \begin{align*}
\begin{aligned}
&\int_{\bbr} \Big( a_+ \eta(U^\nu |\overline U_+ ) -  a_- \eta(U^\nu |\overline U_- ) \Big) dx\\
&= \int_{\bbr} \big(a_+ - a_-\big) \eta(U^\nu |\overline U_+ ) dx + \int_\bbr  a_- \Big(\eta(U^\nu |\overline U_+ ) - \eta(U^\nu |\overline U_- ) \Big) dx\\
&\le \frac{1}{\ds} \int_{\bbr} f(x) Q(v^\nu| \bar v_+) dx + \frac{C}{\ds} \exp{(-\nu^{-1/2})}  \int_{\bbr} Q(v^\nu| \bar v_+) dx \\
&\quad+ \frac{C}{\ds} \exp{(-\nu^{-1/2})}  \int_\bbr (|h^\nu- \bar h_-|^2 + |\bar h_- - \bar h_+|^2) dx  \\
&\quad +  \int_\bbr  a_- \Big(\nabla\eta(\overline U_-) -\nabla\eta(\overline U_+)  \Big) \cdot U^\nu dx + C \|\overline U_\pm\|_{L^\infty}   \int_\bbr |\overline U_+-\overline U_-|dx.
\end{aligned}
\end{align*}
Using the estimates that by Lemma \ref{lem:tri},
 \begin{align*}
\begin{aligned}
Q(v^\nu| \bar v_+) &= Q(v^\nu| \bar v_-) - Q(\bar v_+| \bar v_-) + \big( Q'(\bar v_+)-Q'(\bar v_-) \big) (\bar v_+ -v^\nu)\\
&\le Q(v^\nu| \bar v_-) + \big( p(\bar v_-)-p(\bar v_+) \big) (\bar v_+ -v^\nu),
\end{aligned}
\end{align*}
and
 \begin{align*}
\begin{aligned}
&\int_\bbr  a_- \Big(\nabla\eta(\overline U_-) -\nabla\eta(\overline U_+)  \Big) \cdot U^\nu dx\\
&=\int_\bbr  a_- \Big(p(\bar v_+) -p(\bar v_-) \Big)  v^\nu dx +  \int_\bbr  a_- \Big(\bar h_- - \bar h_+  \Big)  (h^\nu- \bar h_-) dx +  \int_\bbr  a_- \Big(\bar h_- - \bar h_+  \Big) \bar h_- dx \\
&\le \int_\bbr  a_- \Big(p(\bar v_+) -p(\bar v_-) \Big)  v^\nu dx +C\nu^{\frac{1}{12}}  \int_\bbr |h^\nu-\bar h_-|^2 dx +C\nu^{-\frac{1}{12}}  \int_\bbr ( |\bar h_- - \bar h_+|^2 + |\bar h_- - \bar h_+| ) dx,
\end{aligned}
\end{align*}
and using Lemma \ref{lem:ubdd} (presented at the end of this proof) with taking $\nu\ll1$ so that $ \frac{1}{\ds} \exp{(-\nu^{-1/2})} \le C\nu^{\frac{1}{12}}$, we have
 \begin{align*}
\begin{aligned}
&\int_{\bbr} \Big( a_+ \eta(U^\nu |\overline U_+ ) -  a_- \eta(U^\nu |\overline U_- ) \Big) dx\\
&\le \frac{1}{\ds} \int_{\bbr} f(x) Q(v^\nu| \bar v_+) dx + C\nu^{\frac{1}{12}}  \int_{\bbr} \Big(Q(v^\nu| \bar v_-) + \big( p(\bar v_+)-p(\bar v_-) \big) (v^\nu-\bar v_+) \Big)dx \\
&\quad+ \int_\bbr  a_- \Big(p(\bar v_+) -p(\bar v_-) \Big)  v^\nu dx +C\nu^{\frac{1}{12}}  \int_\bbr |h^\nu- \bar h_-|^2 dx  + C\nu^{\frac{1}{12}}  \\
&\le   \frac{1}{\ds} \int_{\bbr} f(x) Q(v^\nu| \bar v_+) dx  +  \int_\bbr  \Big( a_- + C\nu^{\frac{1}{12}} \Big) \Big(p(\bar v_+) -p(\bar v_-) \Big)  v^\nu dx \\
&\quad + C\nu^{\frac{1}{12}}   \int_\bbr  \eta(U^\nu |\overline U_- )  dx + C\nu^{\frac{1}{12}} .
\end{aligned}
\end{align*}
Observe that by $|\bar v_\pm-v_*|\ll1$, there exists a constant $c_*>2 v_*$ such that
\beq\label{newq}
 Q(v^\nu| \bar v_\pm) \ge p(\bar v_\pm) v^\nu - Q(\bar v_\pm) - p(\bar v_\pm)\bar v_\pm \ge  p(\bar v_\pm) (v^\nu - c_*)_+,
\eeq
 and using Lemma \ref{lem:ubdd},
 \begin{align*}
\begin{aligned}
&  \int_\bbr  \Big( a_- + C\nu^{\frac{1}{12}} \Big) \Big(p(\bar v_+) -p(\bar v_-) \Big)  v^\nu dx \\
&=  \int_\bbr  \Big( a_- + C\nu^{\frac{1}{12}} \Big) \Big(p(\bar v_+) -p(\bar v_-) \Big)\Big( (v^\nu - c_*)_+ + v \mathbf{1}_{v\le c_*} + c_*\mathbf{1}_{v> c_*} \Big)dx \\
&\le  \int_\bbr  \Big( a_- + C\nu^{\frac{1}{12}} \Big) \Big(p(\bar v_+) -p(\bar v_-) \Big)(v^\nu - c_*)_+ dx +  C\nu^{\frac{1}{12}} .
\end{aligned}
\end{align*}
By \eqref{newq} and $f(x)= \Delta \bar L_j+\kappa \Delta \bar Q_j +\Delta\Omega^S(x) <0$,
 \begin{align*}
\begin{aligned}
&\int_{\bbr} \Big( a_+ \eta(U^\nu |\overline U_+ ) -  a_- \eta(U^\nu |\overline U_- ) \Big) dx\\
&\le  \int_{\bbr} \bigg( \frac{1}{\ds}\Big( \Delta \bar L_j+\kappa \Delta \bar Q_j +\Delta\Omega^S(x)\Big) p(\bar v_+) +  \Big( a_- + C\nu^{\frac{1}{12}} \Big) \Big( p(\bar v_+) -p(\bar v_-) \Big) \bigg)  (v^\nu - c_*)_+ dx  \\
&\quad + C\nu^{\frac{1}{12}}   \int_\bbr  \eta(U^\nu |\overline U_- )  dx + C\nu^{\frac{1}{12}} .
\end{aligned}
\end{align*}
It remains to estimate the first term above.
For that, we use
 \begin{align*}
\begin{aligned}
p(\bar v) &= p(v_*)+ \int_{-\infty}^x \partial_x p(\bar v) dy\\
&= p(v_*)+  \int_{-\infty}^x \sum_{i\in \mathcal{S}} \big(p'(\bar v) - p'(\tilde v_i) \big) \partial_x \tilde v_i dy +  \int_{-\infty}^x \sum_{i\in \mathcal{R}} \big(p'(\bar v) - p'(v^R_i) \big) \partial_x v^R_i dy \\
&\quad +  \int_{-\infty}^x \sum_{i\in \mathcal{NP}} \big(p'(\bar v) - p'(v^P_i) \big) \partial_x v^P_i dy + \int_{-\infty}^x \sum_{i\in \mathcal{S}}  \partial_x p(\tilde v_i) dy \\
&\quad + \int_{-\infty}^x \sum_{i\in \mathcal{R}}  \partial_x p(v^R_i) dy+ \int_{-\infty}^x \sum_{i\in \mathcal{NP}}  \partial_x p(v^P_i) dy.
\end{aligned}
\end{align*}
Using the same argument as above with \eqref{shsumz}, we have
 \begin{align}\label{pvpm}
\begin{aligned}
p(\bar v_+) -p(\bar v_-) &\le  \underbrace{C\int_\bbr \sum_{i\in \mathcal{S}}   |\bar v - \tilde v_i| | (\tilde v_i)_x| dx + C \int_\bbr \sum_{i\in \mathcal{R}}   |\bar v - v^R_i| | (v^R_i)_x| dx +C\sum_{i\in \mathcal{NP}}  \int_\bbr |(v^P_i)_x | dx}_{=:J_1}\\
&\quad  - \int_{-\infty}^x \Big( g_+^S(y) -g_-^S(y) \Big) dy  - \int_{-\infty}^x \Big( g_+^R(y) -g_-^R(y) \Big) dy
\underbrace{+ \int_{-\infty}^x \Big( g_+^P(y) -g_-^P(y) \Big) dy }_{=:J_2},
\end{aligned}
\end{align}
where
\begin{align*}
\begin{aligned}
& g_{\pm}^R(y) := \sum_{i\in \mathcal{R}_j\cap\{i_0, i_0+1\}} \partial_x \Big( -p(v^R_{i}\pm) \Big),\\
& g_{\pm}^P(y) := \sum_{i\in \mathcal{NP}_j\cap\{i_0, i_0+1\}} \partial_x \Big( -p(v^P_{i}\pm) \Big).
\end{aligned}
\end{align*}
By Lemmas \ref{lem:waves} and \ref{lem:finsout}, we have 
 \begin{align}\label{j12f}
\begin{aligned}
&J_1 \le  \frac{C\nu^{1/2}}{\delta} +C\sum_{i\in \mathcal{NP}} |\s_i| \le  \frac{C\nu^{1/6}}{\delta},\\
&J_2 \le  \int_{-\infty}^\infty \Big(|g_+^P(y)| + |g_-^P(y)| \Big)  dy \le C\sum_{i\in \mathcal{NP}} |\s_i| \le \frac{C\nu^{1/6}}{\delta}.
\end{aligned}
\end{align}
As in \eqref{app-ome}, we define $\Omega^R_\pm(x)$ approximations of $ \int_{-\infty}^x  g_\pm^R(y) dy$, and define the difference:
\[
\Delta\Omega^R(x):= \Omega^R_+(x) - \Omega^R_-(x) .
\]
By the same fact as in \eqref{errortail} that the width of transition zone of rarefaction has $\nu$ scale,
\beq\label{errortail-r}
\Big|\Omega^R_\pm(x) -  \int_{-\infty}^x  g_\pm^R(y) dy \Big| \le C\exp{(-\nu^{-1/2})} ,\quad\forall x\in\bbr.
\eeq
To control $\Delta\Omega^R(x)$, following the same argument of the proof as in Proposition \ref{prop:deca}, we have 
\beq\label{app_last}
 \Delta \bar L_j+\kappa \Delta \bar Q_j -\Delta\Omega^R(x) \le  0,
\eeq
for all interactions except for the simplified solver in Figure \ref{Simp4} and its symmetric simplified solver between $\fa S\fa R$, where it holds for this special case that
\beq\label{app_last_2}
 \Delta \bar L_j+\kappa \Delta \bar Q_j -\Delta\Omega^R(x) \le C\rho_\nu.
\eeq
Indeed, the proof of \eqref{app_last} is symmetric to that of \eqref{bbyfr}, because $p$ has opposite monotonicity on shock and rarefaction in each family, and so $ \partial_x \Big( -p(\tilde v_{i}) \Big)$ and  $\partial_x p(v^R_{i})$ have the same sign for the same wave.
 Thus, to prove \eqref{app_last}, basically, we only need to change  $\ba S, \ba R,\fa S, \fa R,\bar\sigma$ in the proof of \eqref{bbyfr} to 
$\ba R, \ba S,\fa R, \fa S,-\bar\sigma$, respectively, and so change $\Delta\Omega^S(x)$ to $-\Delta\Omega^R(x)$. We omit the detail.
However, for the simplified solver in Figure \ref{Simp4}, since $\Delta L<\bar\sigma''- \bar\sigma_1$ by \eqref{app_est}, and $-\Delta\Omega^R(x) \leq  \bar\sigma_1\, \forall x\in\bbr$, we have
\[
 \Delta \bar L_j+\kappa \Delta \bar Q_j -\Delta\Omega^R(x) \le \s'' \le C\rho_\nu.
\]
Therefore, since it holds from \eqref{pvpm} and \eqref{j12f} that
 \begin{align*}
\begin{aligned}
p(\bar v_+) -p(\bar v_-) &\le  \frac{C\nu^{1/6}}{\delta} -\Delta\Omega^S(x) -\Delta\Omega^R(x) +\Big[ \Delta\Omega^S(x)- \int_{-\infty}^x \Big( g_+^S(y) -g_-^S(y) \Big) dy \Big] \\
&\quad + \Big[ \Delta\Omega^R(x) - \int_{-\infty}^x \Big( g_+^R(y) -g_-^R(y) \Big) dy \Big],
\end{aligned}
\end{align*}
using  \eqref{errortail}, \eqref{bbyfr}, \eqref{errortail-r}, and \eqref{app_last_2}, we have
\begin{align*}
\begin{aligned}
& \frac{ p(\bar v_+)}{\ds}\Big( \Delta \bar L_j+\kappa \Delta \bar Q_j +\Delta\Omega^S(x)\Big)  +  \Big( a_- + C\nu^{\frac{1}{12}} \Big) \Big( p(\bar v_+) -p(\bar v_-) \Big) \\
&\le  \frac{p(\bar v_+)}{\ds} \left(1-\frac{\ds }{p(\bar v_+)} \Big( a_- + C\nu^{\frac{1}{12}} \Big)  \right) \Big( \Delta \bar L_j+\kappa \Delta \bar Q_j +\Delta\Omega^S(x) \Big) \\
&\quad
+ \Big( a_- + C\nu^{\frac{1}{12}} \Big) \Big( \Delta \bar L_j+\kappa \Delta \bar Q_j -\Delta\Omega^R(x) \Big) +C\exp{(-\nu^{-1/2})} + \frac{C\nu^{1/6}}{\delta} \\
&\le C\rho_\nu +C\exp{(-\nu^{-1/2})} + \frac{C\nu^{1/6}}{\delta} \le C\nu^{\frac{1}{12}},
\end{aligned}
\end{align*}
where the last inequality is obtained by the smallness of $\nu$.\\
Hence,
 \begin{align*}
\begin{aligned}
&\int_{\bbr} \Big( a_+ \eta(U^\nu |\overline U_+ ) -  a_- \eta(U^\nu |\overline U_- ) \Big) dx\\
&\quad \le C\nu^{\frac{1}{12}}  \int_\bbr (v^\nu - c_*)_+ dx + C\nu^{\frac{1}{12}}   \int_\bbr |h^\nu-\bar h_-|^2 dx + C\nu^{\frac{1}{12}} ,
\end{aligned}
\end{align*}
which together with \eqref{newq} yields 
 \begin{align}
\begin{aligned} \label{instest}
&\int_{\bbr} \Big( a_+ \eta(U^\nu |\overline U_+ ) -  a_- \eta(U^\nu |\overline U_- ) \Big) dx\\
&\le  C\nu^{\frac{1}{12}} \int_{\bbr}   \eta(U^\nu| \overline U_-)  dx  + C\nu^{\frac{1}{12}}.
\end{aligned}
\end{align}

\begin{lemma}\label{lem:ubdd}
Let $ \overline U_\pm :=\overline U_{\nu,\delta}(x,t_j\pm)$.
There exists $C>0$ such that
\[
\quad \|\overline U_\pm\|_{L^\infty} \le C,\quad \|\overline U_+-\overline U_-\|_{L^1} \le C\nu^{1/6},\quad \|\overline U_+-\overline U_-\|_{L^2}^2 \le C\nu^{1/6}.
\]
\end{lemma}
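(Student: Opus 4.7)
\textbf{Plan for the proof of Lemma \ref{lem:ubdd}.} The $L^\infty$ bound is an immediate consequence of the uniform BV estimate \eqref{BVbarU} established in the course of Theorem \ref{thm:uniform}: since the constructed approximation satisfies $\|\overline U_{\nu,\delta}(\cdot,t)-U_*\|_{L^\infty(\bbr)} \le C\eps$ for every $t\ge 0$, evaluating at $t=t_j\pm$ gives $\|\overline U_\pm\|_{L^\infty} \le C$.

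For the $L^1$ bound, my starting point is that by the modified front tracking construction of Section \ref{sub:conapp} only a single pairwise interaction takes place at $t_j$, between the two adjacent waves traveling along the trajectories $y_{i_0,j-1}$ and $y_{i_0+1,j-1}$; every other viscous wave component of $\overline U_{\nu,\delta}$ is continuous across $t_j$, since its profile and its shift depend continuously on time away from its own interaction times. Consequently $\overline U_+ - \overline U_-$ is supported in a neighborhood $\Omega_{\mathrm{int}}$ of the interaction point $\bar x$ that contains the two incoming viscous profiles at $t_j-$ together with the outgoing viscous profiles emitted by the selected Riemann solver at $t_j+$ (accurate, adjusted or simplified). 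I would then bound $|\Omega_{\mathrm{int}}|$ geometrically from the construction, combining: (i) the trajectories satisfy $|y_{i_0+1,j-1}(t_j)-y_{i_0,j-1}(t_j)| = d_j \le \rho_\nu^{1/4}$ by \eqref{deftjg}; (ii) the initial points of the outgoing waves are positioned within $O(\sqrt{\rho_\nu})$ of the incoming arrival points by the placement rules \eqref{ssx1}, \eqref{ssx2}, \eqref{firstdis}, \eqref{assinicon} and Remark \ref{rem:conf}; and (iii) the transition widths of the viscous waves are $O(\sqrt\nu)$ for shocks and rarefactions and $O(\sqrt{\rho_\nu})=O(\nu^{1/6})$ for pseudo-shocks. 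Taking into account that, in the gap between the two incoming trajectories, $\overline U_-$ equals a constant middle state $M$ while $\overline U_+$ equals the new middle state $M'$ with $|M-M'| = O(|\bar\s_1\bar\s_2|)$ (by Lemma \ref{prop1} and the corresponding quadratic interaction estimates for the adjusted/simplified solvers in Section \ref{subsection_4.2}), this middle-gap contribution is a higher-order correction and the effective support scales like $\sqrt{\rho_\nu}=\nu^{1/6}$, so $|\Omega_{\mathrm{int}}|\le C\nu^{1/6}$. Combined with the $L^\infty$ bound, this gives
\[
\|\overline U_+-\overline U_-\|_{L^1(\bbr)} = \|\overline U_+-\overline U_-\|_{L^1(\Omega_{\mathrm{int}})} \le \|\overline U_+-\overline U_-\|_{L^\infty}\,|\Omega_{\mathrm{int}}| \le C\nu^{1/6}.
\]

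The $L^2$ bound then follows by the elementary interpolation
\[
\|\overline U_+-\overline U_-\|_{L^2}^2 \le \|\overline U_+-\overline U_-\|_{L^\infty}\,\|\overline U_+-\overline U_-\|_{L^1} \le C\cdot C\nu^{1/6}.
\]
The main technical obstacle is precisely the geometric bookkeeping for $|\Omega_{\mathrm{int}}|$: one has to go through each of the three Riemann solvers (accurate, adjusted, simplified) and verify that, in the middle gap, either the outgoing middle state matches the incoming one up to a quadratic correction $O(|\bar\s_1\bar\s_2|)$, or the difference is confined to the transition zones at scale $\sqrt\nu$ or $\sqrt{\rho_\nu}$, so that the $d_j$-scale contribution is subsumed by the $\sqrt{\rho_\nu}$-scale contribution; this case analysis parallels (and uses) the one already carried out for $\Delta\Omega^S(x)$ in Proposition \ref{prop:deca}.
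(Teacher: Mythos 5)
Your overall strategy is the same as the paper's (localize the difference near the single interaction, bound it by $\|\cdot\|_{L^\infty}\times$ measure, and get $L^2$ from $L^1$ and $L^\infty$), and the $L^\infty$ part is fine. But two of your intermediate claims are false as stated.

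First, $\overline U_+-\overline U_-$ is \emph{not} compactly supported: the viscous shocks $\tilde U^\nu_{i}$ and the smooth rarefactions $U^{R,\nu}_i$ have profiles defined on all of $\bbr$ with only exponentially decaying tails, so the difference does not vanish outside any bounded set $\Omega_{\mathrm{int}}$. The paper handles this by writing $\overline U_+-\overline U_-$ as $\int_{-\infty}^x(\cdots)_x\,dz$ (or $-\int_x^\infty$) over the two changed waves only, splitting $\bbr$ into $I:=(y_{i_0}-\sqrt{\rho_\nu},\,y_{i_0+1}+\sqrt{\rho_\nu})$ and $I^c$, using $L^\infty\cdot|I|$ on $I$, and estimating the tails on $I^c$ explicitly via the pointwise bounds of Lemma \ref{lem:waves}, which yield a contribution $\le C\nu$ (the pseudo-shock terms vanish there since $U^{P,\nu}$ really is supported in $|x-y_i|\le\sqrt{\rho_\nu}$). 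Your argument needs this tail estimate; it cannot be waved away by a support claim.

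Second, your assertion that the middle states satisfy $|M-M'|=O(|\bar\s_1\bar\s_2|)$ is wrong. Lemma \ref{prop1} controls the change in the \emph{strengths} of the outgoing waves, not the distance between the intermediate states. For a head-on interaction the waves exchange families, so the pre-interaction middle state lies on (say) the $2$-curve through $U_L$ while the post-interaction one lies on the $1$-curve through $U_L$, and they differ by $O(|\s_1|+|\s_2|)$; similarly for $\ba S\ba S$ the outgoing $1$-shock has strength $\approx\s_1+\s_2$, so the new middle state sits near $U_R$, at distance $O(|\s_2|)$ from $M$. The paper does not attempt any such cancellation in the gap: it simply includes the whole segment between $y_{i_0}$ and $y_{i_0+1}$ inside $I$ and bounds that piece by $\|\overline U_\pm\|_{L^\infty}\,|I|$. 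You should do the same and drop the middle-state comparison entirely; the case analysis over the three Riemann solvers you propose is not needed for this lemma. The interpolation $\|f\|_{L^2}^2\le\|f\|_{L^\infty}\|f\|_{L^1}$ for the last estimate is fine and equivalent to the paper's treatment of $q=2$.
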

\begin{proof}
Since $\|\overline U_\pm-U_*\|_{L^\infty}\ll1$, the first estimate holds.\\
It holds from \eqref{apprr} that
\begin{align*}
\begin{aligned}
\overline U_+-\overline U_- &= \int_{-\infty}^x \bigg(\sum_{i\in \mathcal{S}_j} \partial_x \Big(\tilde U_{i+} -\tilde U_{i-} \Big) +  \sum_{i\in \mathcal{R}_j} \partial_x \Big(U^R_{i+}-U^R_{i-} \Big)+  \sum_{i\in \mathcal{NP}_j} \partial_x \Big(U^P_{i+}-U^P_{i-} \Big) \bigg) dz ,
\end{aligned}
\end{align*}
where 
\begin{align*}
\begin{aligned}
&\tilde U_{i+}:= \tilde U_{ij}^\nu(x,t_j+),\quad \tilde U_{i-}:=\tilde U_{i,j-1}^\nu(x,t_j-),\\
&U^R_{i+}:= U_{ij}^R(x,t_j+),\quad U^R_{i-}:= U_{i,j-1}^R(x,t_j-),\\
&U^P_{i+}:= U_{ij}^P(x),\quad U^P_{i-}:= U_{i,j-1}^P(x) .
\end{aligned}
\end{align*}
Since $\overline U_+-\overline U_- \to 0$ as $x\to+\infty$, we also have
\begin{align*}
\begin{aligned}
\overline U_+-\overline U_- &= -\int_x^{\infty} \bigg(\sum_{i\in \mathcal{S}_j} \partial_x \Big(\tilde U_{i+} -\tilde U_{i-} \Big) +  \sum_{i\in \mathcal{R}_j} \partial_x \Big(U^R_{i+}-U^R_{i-} \Big)+  \sum_{i\in \mathcal{NP}_j} \partial_x \Big(U^P_{i+}-U^P_{i-} \Big) \bigg) dz .
\end{aligned}
\end{align*}
As done before, we have
\begin{align*}
\begin{aligned}
\overline U_+-\overline U_- &= \int_{-\infty}^x \bigg( \sum_{i\in \mathcal{S}_j\cap\{i_0, i_0+1\}} \partial_x \Big(\tilde U_{i+} -\tilde U_{i-} \Big) +   \sum_{i\in \mathcal{R}_j\cap\{i_0, i_0+1\}} \partial_x \Big(U^R_{i+}-U^R_{i-} \Big)\\
&\quad +   \sum_{i\in \mathcal{NP}_j\cap\{i_0, i_0+1\}} \partial_x \Big(U^P_{i+}-U^P_{i-} \Big) \bigg) dz,
\end{aligned}
\end{align*}
or 
\begin{align*}
\begin{aligned}
\overline U_+-\overline U_- &=  -\int_x^{\infty}  \bigg( \sum_{i\in \mathcal{S}_j\cap\{i_0, i_0+1\}} \partial_x \Big(\tilde U_{i+} -\tilde U_{i-} \Big) +   \sum_{i\in \mathcal{R}_j\cap\{i_0, i_0+1\}} \partial_x \Big(U^R_{i+}-U^R_{i-} \Big)\\
&\quad +   \sum_{i\in \mathcal{NP}_j\cap\{i_0, i_0+1\}} \partial_x \Big(U^P_{i+}-U^P_{i-} \Big) \bigg) dz.
\end{aligned}
\end{align*}
Thus,
\begin{align*}
\begin{aligned}
|\overline U_+-\overline U_-| &\le \sum_{i\in \mathcal{S}_j\cap\{i_0, i_0+1\}} \int_{-\infty}^x\Big( |\partial_x \tilde U_{i+}|+|\partial_x \tilde U_{i-}|\Big) dz \\ 
&+ \sum_{i\in \mathcal{R}_j\cap\{i_0, i_0+1\}} \int_{-\infty}^x\Big( |\partial_x U^R_{i+}|+|\partial_x U^R_{i-}|\Big) dz
\\ 
&
+ \sum_{i\in \mathcal{NP}_j\cap\{i_0, i_0+1\}} \int_{-\infty}^x\Big( |\partial_x U^P_{i+}|+|\partial_x U^P_{i-}|\Big) dz\\
&=: J^S_1 + J^R_1 +J^P_1 ,
\end{aligned}
\end{align*}
or
\begin{align*}
\begin{aligned}
|\overline U_+-\overline U_-| &\le \sum_{i\in \mathcal{S}_j\cap\{i_0, i_0+1\}} \int_x^{\infty} \Big( |\partial_x \tilde U_{i+}|+|\partial_x \tilde U_{i-}|\Big) dz \\ 
&+ \sum_{i\in \mathcal{R}_j\cap\{i_0, i_0+1\}} \int_x^{\infty}\Big( |\partial_x U^R_{i+}|+|\partial_x U^R_{i-}|\Big) dz
\\ 
&
+ \sum_{i\in \mathcal{NP}_j\cap\{i_0, i_0+1\}}  \int_x^{\infty}\Big( |\partial_x U^P_{i+}|+|\partial_x U^P_{i-}|\Big) dz\\
&=: J^S_2 + J^R_2 +J^P_2 ,
\end{aligned}
\end{align*}

Consider an interval $I:=\big(y_{i_0}-\sqrt{\rn}, y_{i_0+1}+\sqrt\rn \big)$. Since $|I|\le 3\sqrt\rn$ and $ \|\overline U_\pm\|_{L^\infty(\bbr)} \le C$,  we first find that for each $q=1,2$,
\[
 \|\overline U_+-\overline U_-\|_{L^q(\bbr)}^q \le C\sqrt\rn + C \|\overline U_+-\overline U_-\|_{L^q(I^c)}^q .
\]
Using the fact from Lemma \ref{lem:waves} that for each $i=i_0, i_0+1$,
\[
\int_{-\infty}^x |(U^P_i)_x| dz \le C\int_{-\infty}^x \frac{|\s_i|}{\sqrt\rn} \mathbf{1}_{|z-y_i|\le \sqrt\rn} dz =0,\quad \forall x \le y_{i_0}-\sqrt{\rn},
\]
and
\[
\int_x^{\infty} |(U^P_i)_x| dz \le C\int_x^{\infty} \frac{|\s_i|}{\sqrt\rn} \mathbf{1}_{|z-y_i|\le \sqrt\rn} dz =0,\quad \forall x \ge y_{i_0+1}+\sqrt\rn,
\]
we have
\[
 \|J^P_1\|_{L^q(I^c)}= \|J^P_2\|_{L^q(I^c)} =0.
\]
On the other hand, observe that since 
\begin{align*}
\begin{aligned}
\int_{-\infty}^x |\partial_x \tilde U_{i}^\nu(z,t)| dz& \le \int_{-\infty}^x  \frac{\s_i^2}{\nu} \exp\Big(-\frac{C^{-1}|\s_{i}||z-y_i|}{\nu} \Big) dz \\
&\le C |\s_i| \exp\Big(-\frac{C^{-1}|\s_{i}||x-y_i|}{\nu} \Big),
\end{aligned}
\end{align*}
we have
\begin{align*}
\begin{aligned}
\int_{-\infty}^x \partial_x \tilde U_{i}^\nu(z,t) dz& \le \int_{-\infty}^x  \frac{\s_i^2}{\nu} \exp\Big(-\frac{C^{-1}|\s_{i}||z-y_i|}{\nu} \Big) dz \\
&\le C |\s_i| \exp\Big(-\frac{C^{-1}|\s_{i}||x-y_i|}{\nu} \Big).
\end{aligned}
\end{align*}
Then,
\begin{align*}
\begin{aligned}
 \|J^S_1\|_{L^q(I^c)}^q, \|J^S_2\|_{L^q(I^c)}^q   \le C\nu,
\end{aligned}
\end{align*}
Likewise, since
\[
\int_{-\infty}^x \partial_x U^R_{i}(z,t) dz \le C |\s_i| \exp\Big(-\frac{|\s_{i}||x-y_i|}{\nu} \Big).
\]
\[
 \|J^R_1\|_{L^q(I^c)}^q, \|J^R_2\|_{L^q(I^c)}^q   \le C\nu .
\]
Therefore, we have the desired estimates by $\rn=\nu^{1/3}$.
\end{proof}

\subsection{Global-in-time uniform estimates}
Let 
\[
F_\nu(t):=  \int_{\bbr} a^\nu \eta(U^\nu |\overline U_{\nu,\delta} ) dx.
\]
Note that $F_\nu$ depends also on $\delta$. However, we will fix $\delta$ as a function of $\nu$ below. Therefore we drop the $\delta$ in the notation.
\vskip0.3cm

Given $T>0$, consider the sequence of the interaction times $0<t_1<t_2<\cdots<t_N$, and let $t_0:=0, T:=t_{N+1}$.
By Proposition \ref{prop:main} and \eqref{instest}, we find that for each $t_j$ ($j=0,1,2,...,N$),
\beq\label{fdineq}
\forall  t\in (t_j,t_{j+1}), \quad F'_\nu(t) + G_\nu(t) \le C F_\nu(t) +C \delta^\frac{1}{4}\, \overline{\mathcal{D}}(t) +C \delta^\frac{3}{4} +C   \frac{\delta^2}{\delta_*} + \mathcal{C} (\delta, \nu) ,
\eeq
where $\mathcal{C} (\delta, \nu)$ is the constant that vanishes when $\nu\to0$ for any fixed $\delta>0$, and
\beq\label{fdjump}
F_\nu(t_{j+1}+) - F_\nu(t_{j+1}-) \le C\nu^{\frac{1}{12}} F_\nu(t_{j+1}-)  +C\nu^{\frac{1}{12}}.
\eeq
Notice that by Remark \ref{rem:origin} with the assumption $\int_\bbr \eta\big(U_0^\nu|(v_*,u_*)\big) dx<\infty$,
\[
\int_0^T \overline{\mathcal{D}}(t) dt \le C.
\]

We will apply the following Lemma \ref{lem:gron} to the above estimates by putting
\[
f_\nu(t):= F_\nu(t) + \delta^\frac{1}{8},\quad g_\nu(t):= G_\nu(t),\quad h_\nu(t):=C\, \overline{\mathcal{D}}(t) +C \delta^\frac{1}{2} +C   \frac{\delta}{\delta_*} +  \frac{ \mathcal{C} (\delta, \nu)}{\delta^\frac{1}{4}} .
\]  
First, since $\delta\ll\ds$ and $\mathcal{C} (\delta, \nu)\to 0$ as $\nu\to0$ for any fixed $\delta>0$, we have
\[
C \delta^\frac{1}{2} +C   \frac{\delta}{\delta_*} +  \frac{ \mathcal{C} (\delta, \nu)}{\delta^\frac{1}{4}} \le C,
\]
and so,
\[
\int_0^T h_\nu(t)dt \le C(1+T).
\]
It holds from \eqref{fdineq} and \eqref{fdjump} that
\[
f_\nu'(t)+ g_\nu(t) \le C f_\nu(t) + \delta^\frac{1}{4} \Big(C \overline{\mathcal{D}}(t) +C \delta^\frac{1}{2} +C   \frac{\delta}{\delta_*} +  \frac{ \mathcal{C} (\delta, \nu)}{\delta^\frac{1}{4}} \Big) = C f_\nu(t) + \delta^\frac{1}{4}  h_\nu(t),
\]
and by $\delta^\frac{1}{8}\le f_\nu(t_j-)$
\[
f_\nu(t_j+)-f_\nu(t_j-) \le  C\nu^{\frac{1}{12}} F_\nu(t_{j}-)  +C\nu^{\frac{1}{12}} \frac{f_\nu(t_j-)}{\delta^\frac{1}{8}} \le   \frac{C\nu^{\frac{1}{12}}}{\delta^\frac{1}{8}}f_\nu(t_j-).
\]
Now, using Lemma \ref{lem:gron} with $c_0:= \delta^\frac{1}{8}$ and $c_1:=\frac{C\nu^{\frac{1}{12}}}{\delta^\frac{1}{8}}$, and together with $N\le C\Big(\frac{1}{\delta}\ln(\rho_\nu^{-1}) \Big)^3$ by Lemma \ref{lem:num}, we have
 \begin{align*}
\begin{aligned}
F_\nu(t)&<f_\nu(t) \le  (F_\nu(0)+\delta^\frac{1}{8}) \exp\left(CT+C\delta^\frac{1}{8} + C\Big(\frac{1}{\delta}\ln(\rho_\nu^{-1}) \Big)^3\frac{\nu^{\frac{1}{12}}}{\delta^\frac{1}{8}}\right)\\
&\le  (F_\nu(0)+\delta^\frac{1}{8}) \exp\left(CT+C\delta^\frac{1}{8} + C\frac{\nu^{\frac{1}{24}}}{\delta^5} \right).
\end{aligned}
\end{align*}
and
\[
\int_0^T G_\nu(t) dt \le  (F_\nu(0)+\delta^\frac{1}{8}) \left(1+CT + C\frac{\nu^{\frac{1}{24}}}{\delta^5} \right) \exp\left( CT+C\delta^\frac{1}{8} +  C\frac{\nu^{\frac{1}{24}}}{\delta^5}  \right) + C \delta^\frac{1}{4}.
\]

Finally, we choose $\delta$ as $\delta_\nu$ wisely (for example $\delta_\nu=\nu^\frac{1}{240}$) such that $F_\nu(0)\to 0$ and the above r.h.s. converge to $0$ as $\nu\to 0$. Indeed, $F_\nu(0)\to 0$ as $\nu\to 0$ by \eqref{initialNS} and the construction in Section \ref{sub:conapp}. \\
Hence,
\beq\label{conthm}
\mbox{$ \sup_{t\in (0,T)} F_\nu(t)$ and $\int_0^TG_\nu(t)\,dt$ converge to $0$ as $\nu\to 0$}.
\eeq
This completes the proof of Theorem \ref{thm:uniform}. 

\begin{lemma}\label{lem:gron}
Given $T>0$, consider a increasing sequence $0=:t_0<t_1<\cdots<t_N<t_{N+1}:= T$. Let $f, g, h:[0,T]\to\bbr_+$ be locally integrable nonnegative functions such that for some positive constants $c_0, c_1, C$, the following holds: $\int_0^T h(t)dt \le C$, and
 \begin{align*}
\begin{aligned}
&\mbox{for each } j=0,1,...,N,\quad\forall t\in (t_j, t_{j+1}),\\
 & f'(t) + g(t) \le  Cf(t) + c_0^2 h(t),\quad f(t)\ge c_0, \quad \lim_{t\to 0} f(t) =f(0), \\
& f(t_j+)  \le (1+ c_1)  f(t_j-).
\end{aligned}
\end{align*}
Then, there exists $C>0$ (independent of the constants $c_0, c_1$) such that for a.e. $t\in [0,T]$, 
 \begin{align*}
\begin{aligned}
& f(t) \le  f(0) \exp(CT+Cc_0 + Nc_1),\\
& \int_0^T g(t) dt \le  f(0)(1+CT +Nc_1) \exp( CT+Cc_0 + Nc_1) + Cc_0^2.
\end{aligned}
\end{align*}
\end{lemma}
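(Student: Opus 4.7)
\textbf{Proof proposal for Lemma \ref{lem:gron}.} The plan is to combine a standard continuous Gronwall estimate on each inter-jump interval $(t_j,t_{j+1})$ with the multiplicative jump inequality, and then to integrate the differential inequality once more to extract the bound on $\int_0^T g$. The small constant $c_0$ is handled by exploiting the assumption $f(t)\ge c_0$, which turns the inhomogeneous term $c_0^2 h(t)$ into a term linear in $f(0)$.

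First, on each open interval $(t_j,t_{j+1})$, from $f'(t)\le Cf(t)+c_0^2 h(t)$ the classical Gronwall lemma yields
\[
f(t)\le f(t_j+)\,e^{C(t-t_j)}+c_0^2\int_{t_j}^{t}e^{C(t-s)}h(s)\,ds,\qquad t\in(t_j,t_{j+1}).
\]
Combining this with the jump bound $f(t_j+)\le(1+c_1)f(t_j-)$ and iterating from $j=0$ to the index corresponding to $t$, I would obtain, using $1+c_1\le e^{c_1}$,
\[
f(t)\le e^{Nc_1}\Big(f(0)\,e^{CT}+c_0^2\,e^{CT}\!\!\int_0^T h(s)\,ds\Big)\le e^{CT+Nc_1}\bigl(f(0)+C c_0^2\bigr).
\]
Now I use the hypothesis $c_0\le f(t)$ (in particular at $t=0$, $c_0\le f(0)$), so that $c_0^2\le c_0 f(0)$, and hence by $1+Cc_0\le e^{Cc_0}$,
\[
f(t)\le f(0)\,(1+Cc_0)\,e^{CT+Nc_1}\le f(0)\,\exp\bigl(CT+Cc_0+Nc_1\bigr),
\]
which is the first claim.

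For the bound on $\int_0^T g$, I would integrate the differential inequality on each $(t_j,t_{j+1})$ and sum:
\[
\sum_{j=0}^{N}\bigl[f(t_{j+1}-)-f(t_j+)\bigr]+\int_0^T g(t)\,dt\le C\!\int_0^T\!f(t)\,dt+c_0^2\!\int_0^T\!h(t)\,dt.
\]
The telescoping sum equals $f(T-)-f(0)-\sum_{j=1}^{N}\bigl(f(t_j+)-f(t_j-)\bigr)$, and each jump is estimated by $f(t_j+)-f(t_j-)\le c_1 f(t_j-)\le c_1\sup_{[0,T]}f$. Since $f\ge 0$, this gives
\[
\int_0^T g(t)\,dt\le f(0)+\bigl(Nc_1+CT\bigr)\sup_{[0,T]}f+Cc_0^2.
\]
Substituting the pointwise bound on $f$ just obtained yields
\[
\int_0^T g(t)\,dt\le f(0)\bigl(1+CT+Nc_1\bigr)\exp\bigl(CT+Cc_0+Nc_1\bigr)+Cc_0^2,
\]
which is the second claim.

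There is no serious obstacle here: the argument is entirely standard Gronwall-iteration-with-jumps. The only mildly subtle point — and the reason the statement is phrased with the otherwise mysterious constraint $f(t)\ge c_0$ — is the need to reabsorb the inhomogeneous term $c_0^2\int h$ into $f(0)$; this is done precisely by $c_0^2\le c_0 f(0)$ followed by $1+Cc_0\le e^{Cc_0}$, which is what allows the $Cc_0$ term to enter the exponent in the final bound rather than appearing as an additive error.
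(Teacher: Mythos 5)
Your proof is correct, and the overall structure (interval-by-interval Gronwall combined with the multiplicative jump bound, then a second integration of the differential inequality to extract $\int_0^T g$) is the same as the paper's; the second half is essentially identical. The one genuine difference is where the lower bound $f\ge c_0$ is used: the paper passes to $J=\ln f$ and uses $f\ge c_0$ \emph{pointwise} to get $J'\le C+c_0^2 h/f\le C+c_0 h$, so that $Cc_0$ enters the exponent directly from $c_0\int_0^T h\le Cc_0$; you instead run the standard integrating-factor Gronwall, keep the inhomogeneous term $c_0^2\int_0^T h\le Cc_0^2$ additively, and only at the end invoke $f(0)=\lim_{t\to0^+}f(t)\ge c_0$ to absorb it via $c_0^2\le c_0 f(0)$ and $1+Cc_0\le e^{Cc_0}$. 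Both uses of the hypothesis are legitimate (the assumption $\lim_{t\to0}f(t)=f(0)$ is exactly what justifies your $c_0\le f(0)$), and the two routes yield the same constants; the logarithmic version avoids tracking the inhomogeneous term through the iteration, while yours avoids dividing by $f$.
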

\begin{proof}
Let $J(t):=\ln f(t)$.\\
Since $f'(t) \le  Cf(t) + c_0^2 h(t)$ and $f\ge c_0$, we have
\[
\mbox{for each } j=0,1,...,N,\quad\forall t\in (t_j, t_{j+1}),\quad J'(t)\le C +  c_0^2 \frac{h(t)}{f(t)} \le C+ c_0 h(t).
\]
Thus, for each $j=0,1,...,N$,  for all $t\in (t_k, t_{k+1})$,
\[
J(t) - J( t_{j}+) \le C( t_{j+1}- t_{j}) + c_0 \int_{t_{j}}^{t_{j+1}} h(t)dt.
\]
and so,
\[
J( t_{j+1}-) - J( t_{j}+) \le C( t_{j+1}- t_{j}) + c_0 \int_{t_{j}}^{t_{j+1}} h(t)dt.
\]
We also have
\[
J( t_{j+1}+) - J( t_{j+1}-) = \ln\left( \frac{f(t_{j+1}+)}{f(t_{j+1}-)} \right) \le \ln ( 1+ c_1)\le c_1.
\]
For a.e. $t\in [0, T]$, there exists $0\le k\le N$ such that $t\in (t_k, t_{k+1})$. Then,
 \begin{align*}
\begin{aligned}
J(t)-J(0) &=  J(t)-J(t_k+) +  \sum_{0\le j \le k-1} (J(t_{j+1}-)-J(t_j+)) +  \sum_{0\le j \le k-1} (J(t_{j+1}+)-J(t_{j+1}-))\\
&\le CT+ c_0\int_0^T h(t)dt + N c_1 \le C(T+c_0) + N c_1. 
\end{aligned}
\end{align*}
Thus,
\[
 \ln\left( \frac{f(t)}{f(0)} \right) \le C(T+c_0) + N c_1,
\]
and so
\[
f(t) \le f(0) \exp(CT+Cc_0 + Nc_1), \quad \mbox{for a.e. } t\in [0,T].
\]
Using the above estimate and
\[
f'(t) + g(t) \le  Cf(t) + c_0^2 h(t),\quad\forall t\in (t_j, t_{j+1}), 
\]
we have
 \begin{align*}
\begin{aligned}
 \int_0^T g(t) dt &= \sum_{0\le j \le N} \int_{t_{j}}^{t_{j+1}} g(t)dt \\
 & =  \sum_{0\le j \le N} \left(\int_{t_{j}}^{t_{j+1}} g(t)dt  + f(t_{j+1}-) - f(t_j+) \right) -  \sum_{0\le j \le N} \left( f(t_{j+1}-) - f(t_j+) \right) \\
 &\le C\int_0^T f(t) dt + c_0^2 \int_0^T h(t) dt -  \sum_{0\le j \le N} \left( f(t_{j+1}-) - f(t_j+) \right) \\
&\le C T f(0) \exp(CT+Cc_0 + Nc_1) + C c_0^2 +  \sum_{0\le j \le N} \left(f(t_j+)- f(t_{j+1}-)   \right).
\end{aligned}
\end{align*}
Since
 \begin{align*}
\begin{aligned}
  \sum_{0\le j \le N} \left(f(t_j+)- f(t_{j+1}-)   \right) &=  \sum_{0\le j \le N}\left(f(t_j+)- f(t_{j}-)   \right) + f(0) - f(t_{N}-) \\
  &\le c_1 \sum_{0\le j \le N}  f(t_{j}-) +f(0) \\
  &\le Nc_1f(0) \exp(CT+Cc_0 + Nc_1) + f(0),
\end{aligned}
\end{align*}
we have the desired estimates.
\end{proof}
\section{From Lagrange to Euler}\label{sec:main}
This section is dedicated to the proof of Proposition \ref{main_prop} from Theorem \ref{thm:uniform}.
Let us write $x$ for the Eulerian variable and $y$ for the Lagrangian variable. We have for every time $t$ fixed
$$
y=\int_0^x \rho(z,t)\,dz, \qquad \frac{dy}{dx}=\rho.
$$
Notice that for any (smooth enough) function $F$, the change of variables gives:
\begin{equation}\label{eqBVE}
\int_\RR |\partial_x F|\,dx=\int_\RR |\partial_y F|\,dy.
\end{equation}
The quantities related to the Euler equation, $U^0$ and $\bar{U}_\nu$, are transformed from Lagrangian to Eulerian as:
\begin{equation*}
\begin{array}{ll}
\displaystyle{v^0(y,t)=1/\rho^0(x,t)},&\displaystyle{h^0(y,t)=u^0(x,t)},\\[0.2cm]
\displaystyle{\bar{v}_\nu(y,t)=1/\bar \rho_\nu(x,t)},&\displaystyle{\bar{h}_\nu(y,t)=\bar u_\nu(x,t).}
\end{array}
\end{equation*}
Note that these quantities do not verify specific equations. This is different for the solutions to the Navier-Stokes equation. In the Eulerian variables $(\rho^\nu, \rho^\nu u^\nu)$ is solution to \eqref{1NS}, while in the Lagrange variables, $(v^\nu,h^\nu)$ verifies the modified Navier-Stokes equation \eqref{NS}. They are linked via the BD effective velocity with $\phi'(\rho)=\bar \mu(\rho)/\rho^{3/2}$:
\begin{eqnarray*}
&&v^\nu(y,t)=1/\rho^\nu(x,t),\\
&&  h^\nu(y,t)=u^\nu(x,t)-\nu\frac{\bar \mu_L(v^\nu)}{v^\nu}v^\nu_y(y,t)=u^\nu(x,t)+\nu\frac{1}{(\rho^\nu(x,t))^{1/2}}[\phi(\rho^\nu(x,t))]_x.
\end{eqnarray*}
Thanks to \eqref{eqBVE} and the fact that $v^0$ and $\bar v_\nu$ are bounded away from 0, the smallness $BV$ condition on the initial value of the hypotheses of Theorem \ref{thm:uniform} derives from the BV condition of the hypotheses of Proposition \ref{main_prop} (with possibly a different $\eps_0>0$).
\vskip0.3cm
We now show the following lemma.
\begin{lemma}\label{lemin}
Fix $(\rho_*,\rho_* u_*)\in \RR^+\times\RR$ with $\rho_*>0$ and let $U^0=(\rho^0,\rho^0u^0)\in E^0.$ Then we have 
$$
\int_\RR \eta^L((v^0,h^0)|(v_*,u_*))\,dy <\infty.
$$
Moreover, if  $U^\nu_0=(\rho^\nu_0,\rho^\nu_0u^\nu_0)$ verifies \eqref{initialNS}, then 
$$
\lim_{\nu\to0} \int_\RR\eta^L((v^\nu_0,h^\nu_0)| (v^0,h^0))\,dy=0.
$$
\end{lemma}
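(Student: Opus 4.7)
The plan is to reduce the Lagrangian integrals to Eulerian ones via the change of variable $dy=\rho\,dx$, exploit the pointwise algebraic identity
\[
\rho\,Q(v|\bar v)=\frac{p(\rho|\bar\rho)}{\gamma-1}\qquad(v=1/\rho,\ \bar v=1/\bar\rho)
\]
to convert the ``$Q$-part'' of the Lagrangian relative entropy into the ``pressure part'' of the Eulerian one, and finally absorb the contribution coming from the Bresch--Desjardins correction in the effective velocity using the term $\nu^2\int_\RR |\partial_x\phi(\rho_0^\nu)|^2\,dx$ already present in \eqref{initialNS}.

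For the first assertion I would perform the change of variable $y=\int_0^x\rho^0(z)\,dz$, which gives
\[
\int_\RR \eta^L((v^0,h^0)|(v_*,u_*))\,dy = \int_\RR \Big[\rho^0\frac{|u^0-u_*|^2}{2}+\rho^0 Q(v^0|v_*)\Big]\,dx.
\]
The algebraic identity above yields $\rho^0 Q(v^0|v_*)=p(\rho^0|\rho_*)/(\gamma-1)$, so the right-hand side equals $\int_\RR\eta(U^0|U_*)\,dx$, which is finite by \eqref{einiq}.

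For the second assertion, the first thing I need to clarify is which Lagrangian coordinate to use, since $v^0$ and $v_0^\nu$ are a priori associated to distinct ones. The natural choice is the coordinate attached to $\rho_0^\nu$, namely $y=\int_0^x \rho_0^\nu(z)\,dz$ (this is the coordinate in which $U^\nu$ solves \eqref{NS}), with $v^0$ and $h^0$ read off as pullbacks through the same change of variable. Then $dy=\rho_0^\nu\,dx$ and the same algebraic identity gives
\[
\int_\RR \rho_0^\nu\,Q(v_0^\nu|v^0)\,dx = \int_\RR \frac{p(\rho_0^\nu|\rho^0)}{\gamma-1}\,dx \le \int_\RR \eta(U_0^\nu|U^0)\,dx,
\]
which tends to $0$ by \eqref{initialNS}.

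The only remaining piece is the kinetic term. Since $h^0=u^0$ carries no viscous correction whereas $h_0^\nu=u_0^\nu+\nu(\rho_0^\nu)^{-1/2}\partial_x\phi(\rho_0^\nu)$, the elementary inequality $|a+b|^2\le 2|a|^2+2|b|^2$ yields
\[
\rho_0^\nu\,\frac{|h_0^\nu-h^0|^2}{2} \le \rho_0^\nu|u_0^\nu-u^0|^2 + \nu^2 |\partial_x\phi(\rho_0^\nu)|^2 \le 2\eta(U_0^\nu|U^0) + \nu^2|\partial_x\phi(\rho_0^\nu)|^2,
\]
both terms of which vanish in $L^1$ as $\nu\to 0$ by \eqref{initialNS}. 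I do not anticipate any real obstacle here; the only subtlety is the bookkeeping of the Lagrangian coordinate, which only needs to be fixed consistently once.
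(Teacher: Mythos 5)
Your proposal is correct and follows essentially the same route as the paper: change to the mass Lagrangian coordinate, use the identity $\rho\,Q(v|\bar v)=p(\rho|\bar\rho)/(\gamma-1)$ to match the Lagrangian and Eulerian relative entropies, and absorb the Bresch--Desjardins correction in $h_0^\nu$ via the term $\nu^2\int_\RR|\partial_x\phi(\rho_0^\nu)|^2\,dx$ from \eqref{initialNS}. Your explicit fixing of the Lagrangian coordinate attached to $\rho_0^\nu$ is a point the paper leaves implicit, but it is the intended reading.
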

\begin{proof}
First, we have:
$$
\int_\RR \eta^L((v^0,h^0)|(v_*,u_*))\,dy=\int_\RR \eta((\rho^0,\rho^0u^0)|(\rho_*,\rho_*u_*))\,dx.
$$
Therefore, the first statement of Lemma \ref{lemin} comes directly from \eqref{einiq}. Because of the BD effective velocity,
\begin{eqnarray*}
\int_\RR\eta^L((v^\nu_0,h^\nu_0)| (v^0,h^0))\,dy&=&\int_\RR\eta\left(\left(\rho^\nu_0,\rho^\nu_0\left(u^\nu_0+\nu\frac{\mu(\rho^\nu_0)}{(\rho^\nu_0)^2}(\rho_0^\nu)_x\right)\right)|(\rho^0,\rho^0u^0)\right)\,dx\\
&=&\int_\RR\rho^\nu_0\left|u_0^\nu-u^0-\nu\frac{\mu(\rho^\nu_0)}{(\rho^\nu_0)^2}(\rho^\nu_0)_x\right|^2+\frac{p(\rho^\nu_0|\rho^0)}{\gamma-1}\,dx\\
&\leq&\int_\RR\eta\left((\rho^\nu_0,\rho^\nu_0u^\nu_0)|(\rho^0,\rho^0u^0)\right)\,dx+\nu^2\int_\RR |\partial_x \phi(\rho^\nu_0)|^2\,dx,
\end{eqnarray*}
which converges to 0 thanks to the last statement of \eqref{initialNS}.
\end{proof}

From this lemma, the hypotheses of Proposition \ref{main_prop} imply the remaining hypotheses  of Theorem \ref{thm:uniform}. We can then use its statement.
Thanks to \eqref{eqBVE} and  the fact that $\bar v_\nu$ is bounded below, the small $BV$ condition \eqref{BVest} derives from the small $BV$ condition \eqref{BVbarU} (possibly with a different $C>0$).
\vskip0.3cm
Moreover,  for all $t>0$,  
\begin{equation}\label{eqlem}
\int\eta^L(U^\nu|\bar U_\nu)\,dy=\int_\RR \left(\frac{|h^\nu-\bar h_{\nu}|^2}{2} + Q(v^\nu| \bar v_{\nu})\right) dy \to 0,\quad \mbox{as $ \nu\to 0$}.
\end{equation}
Especially, setting $\bar \rho_{\nu} := 1/\bar v_{\nu}$ and using the change of coordinates, 
\[
\int_\RR \frac{p(\rho^\nu |\bar \rho_{\nu})}{\gamma-1} dx = \int_\RR Q(v^\nu| \bar v_{\nu}) dy \to 0,
\]
which implies
\begin{equation}\label{eqlim}
\rho^\nu - \bar \rho_{\nu}  \to 0 \quad \mbox{in $L^1_{\mathrm{loc}}((0,T)\times\bbr)$ as $ \nu\to 0$}.
\end{equation}
For the limit of $\rho^\nu u^\nu$, we first notice that 
\begin{equation}\label{eqontient}
\rho^\nu u^\nu-\bar\rho_\nu \bar u_\nu=(\rho^\nu-\bar\rho_\nu)\bar u_\nu+\rho^\nu(h^\nu-\bar u_\nu)-\nu \sqrt{\rho^\nu}\phi(\rho^\nu)_x =: R_1 + R_2 +R_3.
\end{equation}
The rest of this section shows that the three terms $R_1, R_2, R_3$ in \eqref{eqontient} converge to 0, which will finish the proof. 
\vskip0.3cm\noindent{\bf Convergence of $R_1$:} This term converges to 0 in $L^1_{\mathrm{loc}}$ thanks to \eqref{eqlim}, and the uniform bound \eqref{BVest} on $\bar u_\nu$.

\vskip0.3cm\noindent{\bf Convergence of $R_2$:} We use the change of coordinates  and \eqref{eqlem} to find that for any compact $K$ in $(0,T)\times\bbr$,
\[
\int_K \rho^\nu |h^\nu-\bar u_{\nu}|^2 dxdt = \int_\bbr |h^\nu-\bar h_{\nu}|^2 dydt \to 0,\quad \mbox{as $ \nu\to 0$}.
\]
Together with the uniform  bound of $\rho^\nu-\bar \rho_\nu$ in $L^1$, this  implies
\[
\rho^\nu (h^\nu-\bar u_{\nu})  \to 0 \quad \mbox{in $L^1_{\mathrm{loc}}((0,T)\times\bbr)$ as $ \nu\to 0$}.
\]
\vskip0.3cm\noindent{\bf Convergence of $R_3$:} The difficulty comes from the small values of $\rho$. For any $\kappa>0$, we consider the  function $\phi_\kappa$ such that $\phi_\kappa(\kappa)=0$, and
$$
\phi'_\kappa(\rho)=\phi'(\rho) {\bf 1}_{\{\rho\leq\kappa\}}.
$$
Then, we decompose $R_3$ as
\[
\nu \sqrt{\rho^\nu}\phi(\rho^\nu)_x = \nu \sqrt{\rho^\nu} \partial_x \phi_k(\rho^\nu) +  \nu \sqrt{\rho^\nu} \partial_x \big( \phi(\rho^\nu) -\phi_k(\rho^\nu) \big) =: d_1 + d_2.
\]
Since $\nu\sqrt{\rho^\nu}|\partial_x\phi_\kappa(\rho^\nu)|\leq \nu\sqrt{\kappa} |\partial_x\phi(\rho^\nu)|$,
we use Lemma \ref{lem:derho} to have
\[
\|d_1\|^2_{L^2_{\mathrm{loc}}((0,T)\times\bbr)} \leq \kappa \|\nu \partial_x\phi(\rho^\nu)\|^2_{L^2_{\mathrm{loc}}((0,T)\times\bbr)} \leq C\kappa.
\]
For $d_2$, we first observe that 
\[
d_2=\nu\sqrt{\rho^\nu}\partial_x(\phi(\rho^\nu)-\phi_\kappa(\rho^\nu))=\nu\sqrt{\rho^\nu} \phi'(\rho^\nu) {\bf 1}_{\{\rho^\nu>\kappa\}} \rho^\nu_x = \nu \frac{\bar\mu(\rho^\nu)}{\rho^\nu} {\bf 1}_{\{\rho^\nu>\kappa\}} \rho^\nu_x.
 \]
Consider the function $\psi_k$ such that $\psi_k'(\rho)= \nu \frac{\bar\mu(\rho)}{\rho} {\bf 1}_{\{\rho>\kappa\}}$. Since it holds from the assumption \eqref{hyp:mu}
that 
\[
|\psi_\kappa(\rho^\nu)|\leq \nu C+ \nu C_\kappa \,p(\rho^\nu| \rho_*),
\] 
Lemma \ref{lem:origin} implies
\[
\|\psi_\kappa(\rho^\nu)\|_{L^1_{\mathrm{loc}}((0,T)\times\bbr)} \le \nu C_\kappa.
\]  
Thus, $d_2 = \partial_x \psi_\kappa(\rho^\nu)$ vanishes weakly in $W^{-1,1}_{\mathrm{loc}}((0,T)\times\bbr)$ as $\nu\to 0$.\\
Since $\kappa>0$ is arbitrary,  $R_3$ converges to $0$ weakly in $(L^2+W^{-1,1}_{\mathrm{loc}}) ((0,T)\times\bbr)$ as $\nu\to 0$.

\section{Inviscid limit via Compensated compactness}\label{sec:13}
\setcounter{equation}{0}
This section is dedicated to the proof of Proposition \ref{prop:cc}. 
We basically follow the similar argument as in Chen-Perepelitsa paper \cite{CP} to show that solutions to \eqref{1NS} converge to entropy solutions to the Euler system. 
However, since the viscous coefficient is not constant and some part of \cite{CP} need to be improved, we here present the main parts of \cite{CP} with possible variants of the desired estimates.\\
As in the case of  Chen and Perepelitsa in \cite{CP}, the most challenging part of the proof is to obtain the convergence on the flux of entropy and the decay of the entropy dissipation in Section \ref{sub:lim}.
For this part, our proof is different from the one of Chen and Perepelitsa in \cite{CP}. 
We  use crucially our  uniform bounds obtained in Theorem \ref{thm:uniform} as in Section \ref{sub:lim}. This allows us to use an entropy pair $(\eta^M, q^M)$ corresponding to a truncated density function $\psi_M$, and to pass into the limit when $M$ goes to infinity.

\subsection{Uniform estimates for higher integrability of solutions to \eqref{1NS}} 
As in \cite[Section 3]{CP}, in addition to the uniform bounds as in Lemmas \ref{lem:origin} and \ref{lem:derho}, we here present the estimates for higher integrability.

\begin{lemma}\label{lem:high1}
Assume \eqref{inif1} and
\beq\label{asshigh1}
\gamma>1,\quad \alpha=\gamma-1.
\eeq
Then, given $T>0$, for all $ t\in (0,T)$ and any compact set $K\subset\bbr$, there exists $C>0$ (independent of $\nu$) such that 
\[
 \int_0^t \int_K  (\rho^\nu)^{\gamma+1}  dx ds \le C.
\]
\end{lemma}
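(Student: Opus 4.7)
The argument is a variant of the classical Lions--Bogovskii test-function method adapted to the 1D compressible Navier--Stokes equations, in the spirit of \cite{CP}. Fix a non-negative cutoff $w \in C_c^\infty(\bbr)$ with $w \equiv 1$ on $K$ and support contained in $[-R,R]$ for some $R>0$, and define the $x$-primitive
\[
\Phi(x,t) := \int_{-\infty}^{x} w(y)\,\big(\rho^\nu(y,t) - \rho_*\big)\,dy.
\]
The first observation is that $\Phi$ is bounded uniformly in $\nu$, $t$ and $x$. Indeed, by Lemma \ref{lem-pro}, $p(\rho^\nu|\rho_*) \ge C^{-1}(|\rho^\nu-\rho_*|^2 \wedge |\rho^\nu-\rho_*|)$, so Lemma \ref{lem:origin} yields a uniform bound on $\|\rho^\nu - \rho_*\|_{(L^1+L^2)(\bbr)}$. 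Since $w$ is compactly supported, Cauchy--Schwarz then gives $\|\Phi\|_{L^\infty([0,T]\times\bbr)} \le C_{w,T}$.

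Next, test the momentum equation in \eqref{1NS} against $\Phi$ and integrate on $[0,t]\times\bbr$. Integrating by parts, the pressure term produces the desired $\rho^{\gamma+1}$ contribution:
\begin{align*}
\int_0^t\!\!\int_\bbr \Phi\,\partial_x p(\rho^\nu)\,dx\,ds &= -\int_0^t\!\!\int_\bbr w\,(\rho^\nu-\rho_*)\,(\rho^\nu)^\gamma\,dx\,ds \\
&= -\int_0^t\!\!\int_\bbr w\,(\rho^\nu)^{\gamma+1}\,dx\,ds \;+\; \rho_*\int_0^t\!\!\int_\bbr w\,(\rho^\nu)^\gamma\,dx\,ds.
\end{align*}
The second piece is controlled uniformly because $\int w\,(\rho^\nu)^\gamma\,dx \le C + C\int p(\rho^\nu|\rho_*)\,dx$ is bounded by Lemma \ref{lem:origin}. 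It remains to show that all other terms in the identity are bounded uniformly in $\nu$, which will force $\int_0^t\!\!\int_K (\rho^\nu)^{\gamma+1}$ to be bounded.

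The time term, after using the continuity equation to compute $\partial_t \Phi = -w\,(\rho^\nu u^\nu - \rho_* u_*) + \text{boundary}$, reduces to boundary traces in $t$ and to $\int_0^t\!\int \rho^\nu u^\nu \cdot w \cdot(\rho^\nu u^\nu)\,dx\,ds$-type expressions, all controlled by the uniform $L^\infty_t L^1_x$ bound on $\rho^\nu|u^\nu-u_*|^2$ together with the $L^\infty$ bound on $\Phi$. The convective term $\int_0^t\!\int \Phi\,\partial_x(\rho^\nu(u^\nu)^2)\,dx\,ds$ is similar after one integration by parts. The main difficulty is the viscous term
\[
\nu\int_0^t\!\!\int_\bbr \Phi\,\partial_x\big(\bar\mu(\rho^\nu)\,\partial_x u^\nu\big)\,dx\,ds = -\nu\int_0^t\!\!\int_\bbr w\,(\rho^\nu - \rho_*)\,\bar\mu(\rho^\nu)\,\partial_x u^\nu\,dx\,ds.
\]
Here is where the hypothesis $\alpha=\gamma-1$ enters decisively: writing $\bar\mu(\rho^\nu) \le C(1 + (\rho^\nu)^{\gamma-1})$ and applying Cauchy--Schwarz as
\[
\nu\,(\rho^\nu)^{\gamma-1}|\rho^\nu-\rho_*||\partial_x u^\nu| \le \frac{\nu}{2}\bar\mu(\rho^\nu)|\partial_x u^\nu|^2 + C\nu (\rho^\nu)^{\gamma-1}(\rho^\nu-\rho_*)^2,
\]
the first term is absorbed by the diffusion bound in Lemma \ref{lem:origin}, and the second is controlled by interpolating between the energy bound and the BD gradient bound from Lemma \ref{lem:derho} (which with $\alpha=\gamma-1$ gives $\nu\int\!\int (\rho^\nu)^{2\gamma-4}|\rho^\nu_x|^2 \le C$, equivalently uniform $L^2$-control on $\sqrt{\nu}\,\partial_x(\rho^\nu)^{\gamma-1}$ away from vacuum).

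\textbf{Main obstacle.} The delicate point is precisely the viscous integral: without the BD-type bound, the product $\nu\bar\mu(\rho^\nu)\partial_x u^\nu$ does not pair naturally with $(\rho^\nu-\rho_*)$ in a way that closes the estimate. The choice $\alpha=\gamma-1$ makes the exponent in Lemma \ref{lem:derho} exactly match the growth of $\bar\mu(\rho)(\rho-\rho_*)$, so the remaining quadratic-in-$\rho$ piece can be split into a small multiple of $\int w\,(\rho^\nu)^{\gamma+1}$ (absorbed in the left-hand side) plus quantities already bounded by Lemmas \ref{lem:origin} and \ref{lem:derho}. All other contributions (including the boundary-in-$t$ traces, which use $\int \eta(U^\nu_0|U_*)\le C$) are standard.
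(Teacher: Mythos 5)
Your overall strategy --- a Bogovskii-type primitive of the localized density as a test function for the momentum equation, with the pressure term producing $\int w\,(\rho^\nu)^{\gamma+1}$ and the viscous term closed via $\alpha=\gamma-1$ together with absorption of $C\nu\int w\,(\rho^\nu)^{\gamma+1}$ into the left-hand side --- is the same as the paper's, which follows \cite{CP}. But your treatment of the time and convection terms contains a genuine gap. After integrating by parts in $t$ and using the continuity equation, the time term produces $+\int_0^t\!\int w\,(\rho^\nu u^\nu)^2\,dx\,ds$, and you assert that such expressions are ``controlled by the uniform $L^\infty_t L^1_x$ bound on $\rho^\nu|u^\nu-u_*|^2$ together with the $L^\infty$ bound on $\Phi$.'' They are not: $w(\rho^\nu u^\nu)^2=w\,\rho^\nu\cdot\rho^\nu(u^\nu)^2$, and since there is no uniform-in-$\nu$ $L^\infty$ bound on $\rho^\nu$ (that lack is the very reason this lemma is needed), the available bounds $\rho^\nu(u^\nu-u_*)^2\in L^\infty_tL^1_{\mathrm{loc}}$ and $(\rho^\nu)^\gamma\in L^\infty_tL^1_{\mathrm{loc}}$ do not control $\int w\,(\rho^\nu)^2(u^\nu)^2$. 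The estimate only closes because this term cancels \emph{exactly} against the $-\int_0^t\!\int w\,(\rho^\nu)^2(u^\nu)^2$ produced by the convective flux $\int\Phi\,\partial_x(\rho^\nu(u^\nu)^2)$ after one integration by parts. That cancellation between the time-derivative and convection contributions is the structural heart of the Chen--Perepelitsa identity (in the paper's version no $(\rho^\nu)^2(u^\nu)^2\omega^2$ term survives), and without identifying it your argument does not close.

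A secondary problem is that $\Phi$ tends to the nonzero constant $\int w(\rho^\nu-\rho_*)\,dy$ as $x\to+\infty$ while $\rho^\nu u^\nu\to\rho_*u_*\neq0$, so the time-boundary traces $\int_\bbr\Phi\,\rho^\nu u^\nu\,dx\big|_0^t$ (and the cross term $\int(\int_{-\infty}^x w'\rho^\nu u^\nu)\,\rho^\nu u^\nu\,dx$) are divergent integrals as written. The paper avoids both difficulties by first integrating the momentum equation once in $x$ against $\omega$ (so every resulting term carries a factor $\omega$ or $\omega_x$) and then pairing with $\rho^\nu\omega$: the primitive $\int_{-\infty}^x\rho^\nu u^\nu\omega\,dy$ is uniformly bounded by Cauchy--Schwarz and the energy, and the dangerous quadratic term cancels automatically. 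Finally, your appeal to Lemma \ref{lem:derho} for the viscous term is unnecessary: after Young's inequality the leftover $C\nu\int w\,(\rho^\nu)^{\gamma+1}$ is simply absorbed into the left-hand side for $\nu$ small, exactly as in the paper, with no BD-entropy input.
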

\begin{proof}
The proof basically follows \cite[Lemma 3.3]{CP}. By \eqref{1NS} and the proof of \cite[Lemma 3.3]{CP}, we find that 
for any $\omega\in C_c^\infty(\bbr)$ with $\omega=1$ on $K$,
\begin{align*}
\begin{aligned}
\int_0^t \int_\bbr \rho^{\gamma+1} \omega^2 dx ds 
&=\underbrace{\nu \int_0^t \int_\bbr \rho \bar\mu(\rho)u_x \omega^2 dx ds}_{=:J_1} \underbrace{- \nu \int_0^t \int_\bbr \rho \omega \bigg( \int^x_{-\infty}  \bar\mu(\rho) u_x  \omega_x \bigg) dx ds}_{=:J_2} \\
&\quad -  \int_\bbr \rho \omega \bigg( \int^x_{-\infty}  \rho u \omega \bigg) dx + \int_\bbr \rho_0 \omega \bigg( \int^x_{-\infty}  \rho_0 u_0 \omega \bigg) dx \\
&\quad +\int_0^t \int_\bbr \rho u \omega_x \bigg( \int^x_{-\infty}  \rho u  \omega \bigg) dx ds + \int_0^t \int_\bbr \rho \omega \bigg( \int^x_{-\infty}  (\rho u^2+ p)  \omega_x \bigg) dx ds.
\end{aligned}
\end{align*}
Since the viscous coefficient $\bar\mu$ is the only difference of \eqref{1NS} and the Navier-Stokes system \cite[(1.1)]{CP},
it is enough to estimate the viscosity terms $J_1, J_2$ as follows.\\
Since it holds from \eqref{assmu} that
\[
C^{-1}(1+\rho^\alpha) \le \bar\mu(\rho) \le C(1+\rho^\alpha),
\]
using Lemma \ref{lem:origin}, we have
\begin{align*}
\begin{aligned}
J_1 &\le \nu \int_0^t \int_\bbr  \bar\mu(\rho) |u_x|^2 dx ds + \nu \int_0^t \int_\bbr \rho^2 \bar\mu(\rho) \omega^4 dx ds\\
&\le C+ C\nu \int_0^t \int_\bbr \big(\rho^2 + \rho^{2+\alpha}\big) \omega^2 dx ds. 
\end{aligned}
\end{align*}
Thus, by \eqref{asshigh1},
\[
J_1\le C(1+T) + C\nu \int_0^t \int_\bbr  \rho^{\gamma+1} \omega^2 dx ds. 
\]
Likewise, since it holds from Lemma \ref{lem:origin} that
\beq\label{locq1}
\|\rho\|_{L^\infty(0,T;L^1_{\mathrm{loc}})} \le CT+ \|\rho\|_{L^\infty(0,T;L^\gamma_{\mathrm{loc}})} \le CT + C\|p(\rho|\rho_*)\|_{L^\infty(0,T;L^1)}  \le C(1+T),
\eeq
we have
\begin{align*}
\begin{aligned}
J_2 &\le \|\rho\omega\|_{L^\infty(0,T;L^1)} \sqrt{ \nu \int_0^t \int_\bbr  \bar\mu(\rho) |u_x|^2 dx ds} \sqrt{ \nu \int_0^t \int_\bbr \bar\mu(\rho) |\omega_x|^2 dx ds} \\
&\le C(1+T)^2 + C\nu \int_0^t \int_{\mbox{supp}(\omega)} \rho^\alpha dx ds \\
&\le C(1+T)^2 + C\nu \int_0^t \int_{\mbox{supp}(\omega)} \rho^\gamma dx ds \le C(1+T)^2.
\end{aligned}
\end{align*}
Hence we have the desired result.
\end{proof}

Notice that the following assumption \eqref{asshigh2} holds by \eqref{initialNS} and \eqref{initialEuler}.

\begin{lemma}\label{lem:high2}
Assume $\gamma>1$ with $\alpha\le\gamma$, and \eqref{inif1}, \eqref{inif2} and there exists $C>0$ (independent of $\nu$) such that 
\beq\label{asshigh2}
\int_\bbr \rho_0^\nu | u_0^\nu- u_*| dx \le C.
\eeq
Then, given $T>0$, for all $ t\in (0,T)$ and any compact set $K\subset\bbr$, there exists $C>0$ (independent of $\nu$) such that 
\[
 \int_0^t \int_K  \big(\rho^\nu |u^\nu|^3 + (\rho^\nu)^{\gamma+\theta} \big)  dx ds \le C,\quad \theta:=\frac{\gamma-1}{2}.
\]
\end{lemma}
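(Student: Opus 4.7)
\medskip
\noindent\textbf{Proof plan for Lemma \ref{lem:high2}.}
The plan is to imitate the Chen--Perepelitsa higher-integrability argument \cite{CP} using a carefully chosen weak entropy pair, but adapted to the density-dependent viscosity $\bar\mu(\rho)$ of \eqref{1NS}. Specifically, I would fix the weak entropy pair $(\hat\eta,\hat q)$ generated via the kernel representation by the test function $\psi(s)=\tfrac12 s|s|$; that is, $\hat\eta(\rho,u)=\int_\RR \chi(\rho,u;s)\psi(s)\,ds$ and analogously for $\hat q$, where $\chi$ is the usual entropy kernel for the $\gamma$-law $p(\rho)=\rho^\gamma$. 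By the now-standard pointwise bounds for these kernels (see \cite[Lemma 2.1--2.2]{CP}) one has the two-sided estimate
\[
|\hat\eta(\rho,u)|\le C\bigl(\rho u^2+\rho^\gamma\bigr),\qquad
\hat q(\rho,u)\ge c\bigl(\rho|u|^3+\rho^{\gamma+\theta}\bigr)-C\bigl(\rho u^2+\rho^\gamma\bigr),
\]
together with the derivative bounds $|\hat\eta_m(\rho,u)|\le C(|u|+\rho^\theta)$ and $|\hat\eta_{mm}(\rho,u)|+\rho|\hat\eta_{m\rho}(\rho,u)|\le C(1+\rho^{\theta-1})$, where $m=\rho u$.

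Next, I would multiply \eqref{1NS} by $\nabla\eta^\#$ to obtain the entropy equation
\[
\partial_t \hat\eta(\rho^\nu,u^\nu)+\partial_x \hat q(\rho^\nu,u^\nu)
=\nu\,\partial_x\!\bigl(\bar\mu(\rho^\nu) u^\nu_x\,\hat\eta_m(\rho^\nu,u^\nu)\bigr)
-\nu\,\bar\mu(\rho^\nu)u^\nu_x\,\partial_x\hat\eta_m(\rho^\nu,u^\nu),
\]
test it against $\omega^2(x)$ for a cutoff $\omega\in C_c^\infty(\RR)$ with $\omega\equiv 1$ on $K$, and integrate over $(0,t)\times\RR$. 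The flux contribution yields $\int_0^t\!\!\int_\RR\hat q(\rho^\nu,u^\nu)\,\partial_x(\omega^2)\,dx\,ds$, which is controlled by $\rho^\nu|u^\nu|^2$ and $(\rho^\nu)^\gamma$ terms on $\operatorname{supp}\omega_x$, hence by Lemmas \ref{lem:origin} and \ref{lem:high1}. The time-boundary terms $\int \omega^2\hat\eta(\rho^\nu,u^\nu)\,dx|_0^t$ are dominated by the total energy using $|\hat\eta|\le C(\rho u^2+\rho^\gamma)$ and the initial data bound \eqref{inif1}.

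The main obstacle is the viscous remainder
\[
\mathcal V_\nu:=\nu\int_0^t\!\!\int_\RR \bar\mu(\rho^\nu) u^\nu_x\,\partial_x\hat\eta_m(\rho^\nu,u^\nu)\,\omega^2\,dx\,ds,
\]
together with its boundary--viscous counterpart. Expanding $\partial_x\hat\eta_m=\hat\eta_{mm}\partial_x m+\hat\eta_{m\rho}\partial_x\rho$, Cauchy--Schwarz reduces the estimate to controlling
\[
\nu\!\int\!\!\int \bar\mu(\rho^\nu)|u^\nu_x|^2\omega^2
+\nu\!\int\!\!\int \bar\mu(\rho^\nu)(1+\rho^{2\theta-2})|u^\nu_x|^2\omega^2
+\nu\!\int\!\!\int \bar\mu(\rho^\nu)\rho^{2\theta-2}|\rho^\nu_x|^2\omega^2,
\]
plus cross-terms. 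The first is bounded directly by Lemma \ref{lem:origin}; the third, after invoking the growth $\bar\mu(\rho)\le C(1+\rho^\alpha)$ and the hypothesis $\alpha\le\gamma$, is rewritten as $\nu\int\mu(\rho^\nu)(\rho^\nu)^{\gamma-3}|\rho^\nu_x|^2$ times a factor of order $\rho^{2\theta-2+\alpha-\gamma+3}=\rho^{\gamma}$ and is absorbed via Lemma \ref{lem:derho} together with Lemma \ref{lem:high1}; the second is handled by splitting $\{\rho^\nu\le 1\}$ and $\{\rho^\nu>1\}$ and interpolating against the already-controlled $(\rho^\nu)^{\gamma+1}$ bound from Lemma \ref{lem:high1} (when $\alpha=\gamma-1$) or by directly using $\alpha\le \gamma$. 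Collecting the estimates and using the lower bound on $\hat q$ produces
\[
c\!\int_0^t\!\!\int_K\!\bigl(\rho^\nu|u^\nu|^3+(\rho^\nu)^{\gamma+\theta}\bigr)\,dx\,ds
\le C(T)+C\!\int_0^t\!\!\int_{\operatorname{supp}\omega}\!\bigl(\rho^\nu|u^\nu|^2+(\rho^\nu)^\gamma\bigr)\,dx\,ds,
\]
which closes the argument after the prior uniform bounds are invoked; a standard smoothing/approximation step justifies the entropy identity for the regularity \eqref{hyp2} available on $U^\nu$.
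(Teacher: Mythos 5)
There is a genuine structural gap in your argument: the way you localize cannot produce the quantity you want to bound. Testing the entropy balance against $\omega^2$ makes the flux enter only through $\int_0^t\int_\RR \hat q\,\partial_x(\omega^2)\,dx\,ds$, and $\partial_x(\omega^2)\equiv 0$ on $K$; so the good term $\int_0^t\int_K(\rho|u|^3+\rho^{\gamma+\theta})$ never appears on the left-hand side, and your final display does not follow from ``the lower bound on $\hat q$.'' Worse, on $\operatorname{supp}\omega_x$ the flux generated by $\psi(s)=\tfrac12 s|s|$ satisfies $|\hat q|\sim \rho|u|^3+\rho^{\gamma+\theta}$ (cubic in $u$, not quadratic), so your claim that this term is ``controlled by $\rho|u|^2$ and $\rho^\gamma$'' is false, and bounding it would require exactly the estimate you are trying to prove — the argument is circular. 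The paper avoids both problems by the Chen--Perepelitsa potential device: one integrates the entropy balance for $(\check\eta,\check q)$ (generated by $\psi(s-u_*)$) in $x$ over $(-\infty,x)$ and in time, so that $\check q$ appears \emph{pointwise} in $x$ integrated only in $s\in(0,t)$; integrating then $x$ over $K$ yields $\int_0^t\int_K \check q$ directly with no cutoff derivative hitting the flux. The price is the terms $\int_{-\infty}^x\check\eta\,dz$ (this is where \eqref{asshigh2} is needed) and the pointwise-in-$x$ viscous boundary term $\nu\int_0^t\check\eta_m\,\bar\mu(\rho)u_x\,ds$, which the paper handles by a Poincar\'e inequality combined with Lemma \ref{lem:derho} for the $\rho^{2\theta}$ part, and by an $L^\infty_x$ bound on $u$ obtained from a lower bound on $|K\cap\{\rho\ge\rho_*/2\}|$ for the $|u|^2$ part.

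A secondary but real issue: you invoke Lemma \ref{lem:high1} twice, but that lemma is proved only under \eqref{asshigh1}, i.e.\ $\alpha=\gamma-1$, whereas Lemma \ref{lem:high2} assumes only $\alpha\le\gamma$. The paper's proof of Lemma \ref{lem:high2} does not use Lemma \ref{lem:high1} at all; it relies only on Lemmas \ref{lem:origin} and \ref{lem:derho} together with $\alpha\le\gamma$ (e.g.\ via $\bar\mu(\rho)\rho^{\gamma-1}\le C(\rho^{\gamma-1}+\rho^{\alpha+\gamma-1})$ with $\tfrac{\alpha+\gamma-1}{2}<\gamma$, absorbed through Poincar\'e and the $\nu\int\!\!\int\bar\mu(\rho)\rho^{\gamma-3}|\rho_x|^2$ bound). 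Your estimate of the third viscous term is in the right spirit, but the first two defects mean the proof as written does not close.
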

\begin{proof}
The proof is the same as in \cite[Lemma 3.4]{CP}, except for the estimates on the viscous terms as follows.  \\
Consider the weak entropy pair $(\eta^\#, q^\#)$ corresponding to the $\psi_\#(s)=\frac{s|s|}{2}$, and consider the entropy pair $(\check\eta,\check q)$ corresponding to the $\psi(s)=\psi_\#(s-u_*)$. Then, as in \cite[(3.25)]{CP}, it holds from \eqref{1NS} that 
\begin{align*}
\begin{aligned}
&\int_{-\infty}^x (\check\eta(\rho,m)-\check\eta(\rho_0,\rho_0 u_0)) dz - t q^\#(\rho_*,0)
+\int_0^t q^\#(\rho,\rho(u-u_*)) - u_* \eta^\# (\rho,\rho(u-u_*)) ds \\
&\quad \underbrace{-\nu\int_0^t \check\eta_m  \bar\mu(\rho) u_x ds}_{=:J_1} +\underbrace{ \nu \int_0^t \int_{-\infty}^x \check\eta_{mu} \bar\mu(\rho) |u_x|^2 dz ds}_{=:J_2}
+\underbrace{ \nu \int_0^t \int_{-\infty}^x \check\eta_{m\rho} \rho_x  \bar\mu(\rho) u_x dz ds}_{=:J_3} =0.
\end{aligned}
\end{align*}
First, using Lemmas \ref{lem:origin}, \ref{lem:derho} with
\[
|\check\eta_{mu}| \le C, \quad |\check\eta_{m\rho}| \le C\rho^{\theta-1} = C \rho^{\frac{\gamma-3}{2}} ,
\]
we estimate $J_2, J_3$ as
\begin{align*}
\begin{aligned}
\left|\nu \int_0^t \int_{-\infty}^x \check\eta_{mu} \bar\mu(\rho) |u_x|^2 dz ds \right| &\le C\nu \int_0^t \int_\bbr \bar\mu(\rho) |u_x|^2 dx ds \le C;\\
\left| \nu \int_0^t \int_{-\infty}^x \check\eta_{m\rho} \rho_x  \bar\mu(\rho) u_x \right| &\le \sqrt{ \nu \int_0^t \int_\bbr \bar\mu(\rho) |u_x|^2} \sqrt{  \nu \int_0^t \int_\bbr \bar\mu(\rho) | \check\eta_{m\rho}|^2 |\rho_x|^2}\\
&\le C \sqrt{ \nu \int_0^t \int_\bbr \bar\mu(\rho) \rho^{\gamma-3} |\rho_x|^2 } \le C.
\end{aligned}
\end{align*}
Then, integrating the above equality over a compact set $K$, it remains to estimate $\int_K J_1 dx$.\\
Using $|\check\eta_m| \le C (|u|+\rho^\theta)$, we have
\begin{align*}
\begin{aligned}
\left|\int_K J_1 dx \right| &\le C \nu\int_0^t \int_K   \bar\mu(\rho) |u_x|^2  + C \nu\int_0^t \int_K  \bar\mu(\rho) (|u|+\rho^\theta)^2\\
&\le C + C \underbrace{\nu\int_0^t \int_K  \bar\mu(\rho) \rho^{2\theta} }_{=:J_{11}} + C \underbrace{\nu\int_0^t \int_K  \bar\mu(\rho) |u|^2}_{=:J_{12}}  .
\end{aligned}
\end{align*}
Since $2\theta=\gamma-1$, $ C^{-1}(1+\rho^\alpha)\le \bar\mu(\rho) \le C(1+\rho^\alpha)$ and $\frac{\alpha+\gamma-1}{2}<\gamma$, we use \eqref{locq1} and the Poincare inequality to have 
\begin{align*}
\begin{aligned}
J_{11} &\le C\nu\int_0^t \int_K \rho^{\gamma-1} + C\nu\int_0^t \int_K \left(\rho^\frac{\alpha+\gamma-1}{2} \right)^2 \\
&\le C(1+T) + C\nu \int_0^t \left(\frac{1}{|K|} \int_K \rho^\frac{\alpha+\gamma-1}{2} \right)^2 + C\nu \int_0^t \int_K |\partial_x \rho^\frac{\alpha+\gamma-1}{2} |^2 \\
&\le C(1+T)^2 +C\nu \int_0^t \int_K \rho^\alpha \rho^{\gamma-3} |\rho_x|^2  \\
&\le C(1+T)^2 +C\nu \int_0^t \int_K \bar\mu(\rho) \rho^{\gamma-3} |\rho_x|^2 \le C(1+T)^2.
\end{aligned}
\end{align*}
To control $J_{12}$, for a given $C_0:=\int_\bbr \eta\big((\rho_0^\nu,u_0^\nu)|(\rho_*,u_*)\big) dx$, retake $K$ if necessary such that $|K|\ge \frac{2C_0}{C_*}$ where $C_*:=\frac{p(\rho_*/2 | \rho_*)}{\gamma-1}$.
Since it holds from Lemma \ref{lem:origin} that 
\[
C_0 \ge \int_K \frac{p(\rho | \rho_*)}{\gamma-1} dx \ge \int_{K\cap\{\rho<\rho_*/2\}} \frac{p(\rho_*/2 | \rho_*)}{\gamma-1} dx,
\]
we have
\[
|K\cap\{\rho<\rho_*/2\}| \le C_0/C_*,
\]
which implies
\[
|A(t)| \ge  C_0/C_*, \quad A(t):=K\cap\{\rho\ge\rho_*/2\}.
\]
Using (as in \eqref{locq1})
\[
\frac{1}{|A(t)|} \int_{A(t)} |u| \le \frac{2C_*}{\rho_*C_0} \int_K \rho|u| \le C \left(|K|+\int \eta((\rho,u)|(\rho_*,u_*)) \right) \le C,
\]
we find that for all $y\in K$,
\[
|u(t,y)| \le \frac{1}{|A(t)|} \int_{A(t)} |u| + \int_K |u_x| \le C+ C\int_K \sqrt{\bar\mu(\rho)} |u_x| \le C+ \int_K \bar\mu(\rho) |u_x|^2.
\]
Then, using the fact that $\alpha\le \gamma$ and so, as in \eqref{locq1},
\[
\|\bar\mu(\rho)\|_{L^\infty(0,T;L^1(K))} \le CT + \|\rho\|_{L^\infty(0,T;L^\gamma(K))} \le C(T+1),
\]
we have
\[
J_{12} \le  \|\bar\mu(\rho)\|_{L^\infty(0,T;L^1(K))}  \int_0^t \nu\|u(t)\|_{L^\infty(K)}^2 \le C(T+1) \left(T+ \nu  \int_0^t\int_K \bar\mu(\rho) |u_x|^2 \right) \le C.
\]
\end{proof}

\subsection{$H^{-1}$-compactness of entropy dissipation measures}
We here present \cite[Proposition 4.1]{CP} on $H^{-1}$-compactness of dissipation measures of weak entropy, which is a key part for the compensated compactness. However, we need an addition assumption on $\gamma, \alpha$ for the desired compactness as follows.

\begin{lemma}\label{lem:h1}
 Assume \eqref{inif1}, \eqref{inif2} and \eqref{asshigh2}. Assume $\alpha\le\gamma$ and also that either \eqref{asshigh1} or $\gamma>5/3$ holds.
Let $\psi:\bbr\to\bbr$ be any compactly supported $C^2$ function, and $(\eta^\psi, q^\psi)$ be a weak entropy pair generated by $\psi$. Then, for the solution $(\rho^\nu,m^\nu)$ with $m^\nu=\rho^\nu u^\nu$ to \eqref{1NS}, the sequence of dissipation measures
\[
\{\partial_t\eta^\psi(\rho^\nu,m^\nu) + \partial_x q^\psi(\rho^\nu,m^\nu) \}_{\nu>0}
\]
is in a compact subset of $H^{-1}_{\mathrm{loc}}(\bbr^2_+)$.
\end{lemma}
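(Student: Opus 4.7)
The strategy follows the standard compensated compactness framework of DiPerna, adapted as in Chen--Perepelitsa \cite{CP}, with modifications required by the nonconstant viscosity $\bar\mu(\rho^\nu)$. Applying the PDE \eqref{1NS} to the weak entropy pair $(\eta^\psi,q^\psi)$ yields the identity
\[
\partial_t \eta^\psi(\rho^\nu,m^\nu)+\partial_x q^\psi(\rho^\nu,m^\nu)
 = \nu\,\partial_x\!\bigl(\eta^\psi_m\,\bar\mu(\rho^\nu)\,u^\nu_x\bigr)
 \;-\; \nu\,\bar\mu(\rho^\nu)\,u^\nu_x\,\partial_x\eta^\psi_m(\rho^\nu,m^\nu)
 \;=:\; T^\nu_1+T^\nu_2.
\]
The plan is to show that $T^\nu_1\to 0$ strongly in $W^{-1,2}_{\mathrm{loc}}(\bbr^2_+)$ and that $\{T^\nu_2\}$ is bounded in $L^1_{\mathrm{loc}}(\bbr^2_+)$, which in turn is bounded (as a measure) in $W^{-1,p}_{\mathrm{loc}}$ for some $p>2$. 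Combined with the obvious $W^{-1,\infty}_{\mathrm{loc}}$-boundedness of the full dissipation measure (read off from the conservation form and the uniform energy inequality), Murat's interpolation lemma then yields $H^{-1}_{\mathrm{loc}}$-compactness.

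For $T^\nu_1$, I would simply estimate, using Cauchy--Schwarz and the fact that $\eta^\psi_m$ is bounded (since $\psi$ is compactly supported and $\eta^\psi_m$ has known growth controlled by the support of $\psi$ in $u=m/\rho$),
\[
\bigl\|\nu\,\eta^\psi_m\,\bar\mu(\rho^\nu)\,u^\nu_x\bigr\|_{L^2_{\mathrm{loc}}}^2
\;\le\; C\,\nu\,\Bigl(\sup_{\mathrm{supp}\,\eta^\psi_m} \bar\mu(\rho^\nu)\Bigr) \int\!\!\int \nu\,\bar\mu(\rho^\nu)|u^\nu_x|^2\,dxds,
\]
and conclude via Lemma \ref{lem:origin} together with the local integrability of $\bar\mu(\rho^\nu)\le C(1+(\rho^\nu)^\alpha)$ for $\alpha\le\gamma$ (which follows from the uniform $L^\gamma_{\mathrm{loc}}$-bound of $\rho^\nu$ coming from the relative entropy).

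For $T^\nu_2$, I would expand $\partial_x \eta^\psi_m = \eta^\psi_{m\rho}\rho^\nu_x + \eta^\psi_{mm} m^\nu_x$ and use $m^\nu_x = \rho^\nu_x u^\nu + \rho^\nu u^\nu_x$. The diagonal piece $\eta^\psi_{mm}\,\rho^\nu\,u^\nu_x$ gives $\nu\bar\mu(\rho^\nu)|u^\nu_x|^2\cdot\rho^\nu$, bounded in $L^1_{\mathrm{loc}}$ by Lemma \ref{lem:origin} together with $\rho^\nu\in L^\infty_t L^1_{x,\mathrm{loc}}$. The crossed pieces, involving $\nu\,\bar\mu(\rho^\nu)u^\nu_x\rho^\nu_x$ multiplied by $\eta^\psi_{m\rho}$ or by $u^\nu\eta^\psi_{mm}$, are handled by Cauchy--Schwarz in the form
\[
\bigl|\nu\,\bar\mu(\rho)u_x\rho_x\cdot w(\rho,u)\bigr|
\;\le\; \tfrac12\,\nu\,\bar\mu(\rho)|u_x|^2 \;+\; \tfrac12\,\nu\,\bar\mu(\rho)\rho^{\gamma-3}|\rho_x|^2\cdot w(\rho,u)^2\rho^{3-\gamma},
\]
where $w$ stands for $\eta^\psi_{m\rho}$ or $u\,\eta^\psi_{mm}$. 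The first factor is bounded in $L^1$ by Lemma \ref{lem:origin}; for the second, the BD-type bound $\nu\int\mu(\rho)\rho^{\gamma-3}|\rho_x|^2\le C$ of Lemma \ref{lem:derho} reduces the question to the local integrability of the weight $w^2\rho^{3-\gamma}$. Using the entropy estimates $|\eta^\psi_{m\rho}|\le C\rho^{\theta-1}$ with $\theta=(\gamma-1)/2$ and $|\eta^\psi_{mm}|\le C$ (on the support of $\psi$), together with the kinetic bounds on $u$ in that support, the weight reduces to a combination of powers of $\rho^\nu$ whose local integrability is exactly delivered by Lemma \ref{lem:high1} (giving $\rho\in L^{\gamma+1}_{\mathrm{loc}}$ when $\alpha=\gamma-1$) or Lemma \ref{lem:high2} (giving $\rho|u|^3,\rho^{\gamma+\theta}\in L^1_{\mathrm{loc}}$ when $\gamma>5/3$).

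The main obstacle is therefore the matching of Cauchy--Schwarz exponents in $T^\nu_2$: the difficulty caused by the nonconstant viscosity is that the natural weight $\rho^{3-\gamma}$ inherited from the BD entropy clashes with the growth $\bar\mu(\rho)\sim\rho^\alpha$, and it is only under the dichotomy ``$\gamma>5/3$'' or ``$\alpha=\gamma-1$'' that the resulting exponents of $\rho$ fall below the higher-integrability thresholds proved in Lemmas \ref{lem:high1}--\ref{lem:high2}. Once these $L^1_{\mathrm{loc}}$ bounds are secured, a direct application of Murat's lemma closes the argument.
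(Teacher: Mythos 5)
Your overall architecture coincides with the paper's: the same decomposition of the dissipation measure into a divergence term plus two quadratic viscous terms, the same input lemmas (Lemmas \ref{lem:origin}, \ref{lem:derho}, \ref{lem:high1}, \ref{lem:high2}), and Murat's interpolation at the end. But two steps as written would fail. First, the claim that $T^\nu_1\to0$ strongly in $W^{-1,2}_{\mathrm{loc}}$ does not follow from your estimate: the bound $\|\nu\eta^\psi_m\bar\mu(\rho^\nu)u^\nu_x\|_{L^2_{\mathrm{loc}}}^2\le C\nu\,(\sup\bar\mu(\rho^\nu))\iint\nu\bar\mu(\rho^\nu)|u^\nu_x|^2$ needs a uniform $L^\infty$ control of $\bar\mu(\rho^\nu)$ where $\eta^\psi_m\neq0$, and the support condition $u+(\rho)^\theta s\in\mathrm{supp}\,\psi$ constrains $u$ relative to $\rho^\theta$ but does not bound $\rho$; no uniform $L^\infty$ bound on $\rho^\nu$ is available (this is one of the announced difficulties of the whole paper). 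The correct conclusion, and the one the paper draws, is only confinement in a compact of $W^{-1,q_1}_{\mathrm{loc}}$ for some $1<q_1<2$, obtained by writing $|\nu\eta^\psi_m\bar\mu u^\nu_x|\le C\sqrt\nu\sqrt{\nu\bar\mu|u^\nu_x|^2}\sqrt{\bar\mu}$ and using that $\bar\mu(\rho^\nu)^{(\gamma+\theta)/\gamma}$ is bounded in $L^1_{\mathrm{loc}}$ (via Lemma \ref{lem:high2}).

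Second, and more seriously, the second leg of Murat's interpolation is missing. Boundedness of the dissipation measures in $W^{-1,\infty}_{\mathrm{loc}}$ is not ``obvious'': it would require uniform $L^\infty_{\mathrm{loc}}$ bounds on $\eta^\psi(\rho^\nu,m^\nu)$ and $q^\psi(\rho^\nu,m^\nu)$, which fail since $|\eta^\psi|\le C\rho$, $|q^\psi|\le C(\rho+\rho^{1+\theta})$ and $\rho^\nu$ is not uniformly bounded. What is actually needed (and proved in the paper) is uniform boundedness of $(\eta^\psi,q^\psi)$ in $L^{q_2}_{\mathrm{loc}}$ for some $q_2>2$, and this is precisely where the dichotomy enters: $\alpha=\gamma-1$ yields $\rho^\nu\in L^{\gamma+1}_{\mathrm{loc}}$ (Lemma \ref{lem:high1}), so $q_2=\gamma+1$ works for $\gamma\le3$, while $\gamma>5/3$ yields $q_2=\gamma+\theta>2$ (Lemma \ref{lem:high2}). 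You instead locate the role of the dichotomy in the cross term $\nu\,\eta^\psi_{m\rho}\bar\mu\rho^\nu_xu^\nu_x$, but there the exponents match exactly without any higher integrability: with $|\eta^\psi_{m\rho}|\le C\rho^{(\gamma-3)/2}$ (the $\rho$-derivative taken at fixed $u$), Cauchy--Schwarz pairs $\nu\bar\mu|u^\nu_x|^2$ with $\nu\bar\mu(\rho^\nu)^{\gamma-3}|\rho^\nu_x|^2$, both controlled by Lemmas \ref{lem:origin} and \ref{lem:derho} with constant weight. Your $(\rho,m)$-variable splitting introduces spurious factors $|u|/\rho$ in the separate pieces, which cancel only upon recombining to $\eta^\psi_{m\rho}$ at fixed $u$; and in any case the reduction to ``local integrability of $w^2\rho^{3-\gamma}$'' is insufficient, since applying the BD bound of Lemma \ref{lem:derho} requires that weight to be bounded, not merely integrable.
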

\begin{proof}
It holds from \eqref{1NS} that
\[
\partial_t\eta^\psi(\rho^\nu,m^\nu) + \partial_x q^\psi(\rho^\nu,m^\nu)=\nu(\eta^\psi_m \bar\mu u_x)_x - \nu \eta^\psi_{mu} \bar\mu |u_x|^2  -\nu \eta^\psi_{m\rho} \bar\mu \rho_x u_x
\]
First, by the definition of the weak entropy:
\[
\eta^\psi_m(\rho,\rho u) = \rho \int_{-1}^1 \psi(u+\rho^\theta s) (1-s^2)^\lambda ds,\quad \theta:=\frac{\gamma-1}{2},\quad\lambda:=\frac{3-\gamma}{2(\gamma-1)},
\]
we have
\[
|\eta^\psi_m(\rho,\rho u)|\le C,\quad |\eta^\psi_{mu}(\rho,\rho u)|\le C, \quad |\eta^\psi_{m\rho}(\rho,\rho u)|\le C\rho^{\frac{\gamma-3}{2}}.
\]
Then, since
\[
\big|\nu \eta^\psi_m \bar\mu u_x \big| \le C \nu\bar\mu |u_x| \le C\sqrt\nu\sqrt{\nu\bar\mu |u_x|^2}\sqrt{\bar\mu},
\]
using Lemma \ref{lem:origin} and the fact that 
\beq\label{intmu}
\mbox{$\bar\mu(\rho)\le C+C\rho^\gamma$ and so, $\bar\mu^{\frac{\gamma+\theta}{\gamma}}$ is bounded in $L^1_{\mathrm{loc}}(\bbr^2_+)$ uniformly in $\nu$ by Lemma \ref{lem:high2}, } 
\eeq
we find that
\[
\nu ( \eta^\psi_m(\rho^\nu,m^\nu) \bar\mu(\rho^\nu) u^\nu_x )_x
\]
are confined in a compact set of $W^{-1,q_1}_{\mathrm{loc}}(\bbr^2_+)$ for some $1<q_1<2$.\\
Likewise, since
\[
 \nu \eta^\psi_{mu} \bar\mu |u_x|^2 \le C\nu \bar\mu |u_x|^2,
\]
and
\[
\big|\nu \eta^\psi_{m\rho} \bar\mu \rho_x u_x\big| \le C\sqrt{\nu\bar\mu \rho^{\gamma-3} |\rho_x|^2} \sqrt{\nu\bar\mu |u_x|^2},
\]
Lemmas \ref{lem:origin}, \ref{lem:derho} imply that
\[
 \nu \eta^\psi_{mu}(\rho^\nu,m^\nu)\bar\mu(\rho^\nu) |u^\nu_x|^2 \quad\mbox{and}\quad  \nu \eta^\psi_{m\rho}(\rho^\nu,m^\nu) \bar\mu(\rho^\nu) \rho^\nu_x u^\nu_x
\]
are confined in a compact set of $W^{-1,q_1}_{\mathrm{loc}}(\bbr^2_+)$.\\
Thus, 
\beq\label{compactq2}
\partial_t\eta^\psi(\rho^\nu,m^\nu) + \partial_x q^\psi(\rho^\nu,m^\nu)\quad\mbox{are confined in a compact set of $W^{-1,q_1}_{\mathrm{loc}}(\bbr^2_+)$}.
\eeq

On the other hand, observe that (by \cite[Lemma 2.1]{CP})
\[
|\eta^\psi(\rho,\rho u)|\le C\rho,\quad |q^\psi(\rho,\rho u)| \le 
\left\{ \begin{array}{ll}
       C\rho,\quad \gamma\in(1,3],\\
        C\rho+C\rho^{1+\theta},\quad \gamma>3  . \end{array} \right.
\]
Then, if \eqref{asshigh1} holds, it follows from Lemmas \ref{lem:high1}, \ref{lem:high2} that
\beq\label{compactq1}
\eta^\psi(\rho^\nu,m^\nu), \quad q^\psi(\rho^\nu,m^\nu)\quad\mbox{are uniformly bounded in $L^{q_2}_{\mathrm{loc}}(\bbr^2_+)$ for some $q_2>2$},
\eeq
where $q_2=\gamma+1>2$ when $\gamma\in(1,3]$ and $q_2=\frac{\gamma+\theta}{1+\theta}>2$ when $\gamma>3$. \\
If $\gamma>5/3$, \eqref{compactq1} with $q_2=\gamma+\theta>2$ holds by Lemma \ref{lem:high2}.\\
Thus, \eqref{compactq1} implies
\[
\partial_t\eta^\psi(\rho^\nu,m^\nu) + \partial_x q^\psi(\rho^\nu,m^\nu)\quad\mbox{are uniformly bounded in  $W^{-1,q_2}_{\mathrm{loc}}(\bbr^2_+)$}.
\]
This and \eqref{compactq2} with the interpolation compactness (as in \cite{CP}) imply the desired result.
\end{proof}

\subsection{Inviscid limit} \label{sub:lim}
We here complete the proof of Proposition \ref{prop:cc}. 
First, by the method of compensated compactness based on Lemma \ref{lem:h1}, there exists a pair of measurable functions $(\rho, u):\bbr^2_+\to\bbr^2_+$ such that (up to subsequence)
\beq\label{pwconv}
\rho^\nu\to \rho,\quad u^\nu\to u \quad\mbox{a.e. on } \bbr^2_+.
\eeq
This with Lemma \ref{lem:high2} especially implies that
\[
\rho^\nu \to \rho,\quad \rho^\nu u^\nu \to \rho u \quad \mbox{in } L^q_{\mathrm{loc}} (\bbr_+\times\bbr) \quad\mbox{for some } q>1,
\]
and $(\rho, u)$ solves the Euler system in the sense of distributions.
Therefore, it remains to show that the limit $(\rho,u)$ satisfies the entropy inequality, that is,
\[
\partial_t\eta(\rho,\rho u) + \partial_y q(\rho,\rho u) \le 0,\qquad \mbox{in the sense of distributions}.
\]
To this end, consider a cutoff function $\psi_M(s)$ for each integer $M>0$, and the corresponding entropy pair $(\eta^M, q^M)$ as follows:
\[
\psi_M(s)=\int_0^s\int_0^w \psi_M''(\tau)d \tau dw,\qquad  
\psi_M''(s)= \left\{ \begin{array}{ll}
       0\quad \mbox{if } |s|\ge M\\
       1-\frac{|s|}{M} \quad\mbox{if } |s|\le M, \end{array} \right.
\]
and
\begin{align*}
\begin{aligned}
&\eta^M(\rho,\rho u) = \rho \int_{-1}^1 \psi_M(u+\rho^\theta s) (1-s^2)^\lambda ds,\\
&q^M(\rho,\rho u) = \rho \int_{-1}^1 (u+\theta\rho^\theta s )  \psi_M(u+\rho^\theta s) (1-s^2)^\lambda ds.
\end{aligned}
\end{align*}
Then, since $|\psi_M(s)|\le \psi_*(s):=|s|^2/2$ and  the entropy pair $(\eta,q)$ corresponds to $\psi_*$,
\begin{align}
\begin{aligned} \label{uniformm}
&|\eta^M(\rho,\rho u)| \le  \rho \int_{-1}^1 \psi_*(u+\rho^\theta s) (1-s^2)^\lambda ds=\eta(\rho,\rho u),\\
&|q^M(\rho,\rho u)| \le C(|u|+\rho^\theta) \rho \int_{-1}^1 \psi_*(u+\rho^\theta s) (1-s^2)^\lambda ds \le C(\rho|u|^3+\rho^{\theta+\gamma}).
\end{aligned}
\end{align}
Since $\eta(\rho^\nu,\rho^\nu u^\nu)$ and $\rho^\nu|u^\nu|^3+(\rho^\nu)^{\theta+\gamma}$ are uniformly bounded in $L^1_{\mathrm{loc}}$ by Lemmas \ref{lem:origin} and \ref{lem:high2}, it holds from \eqref{pwconv} and \eqref{uniformm} that for a fixed $M$,
\begin{align}
\begin{aligned}\label{temmcon1}
&\eta^M(\rho^\nu,\rho^\nu u^\nu) \to \eta^M(\rho,\rho u) ,\\
&q^M(\rho^\nu,\rho^\nu u^\nu) \to q^M(\rho,\rho u),\quad \mbox{ in $L^1_{\mathrm{loc}}(\bbr^2_+)$ as $\nu\to0$}.
\end{aligned}
\end{align}
Furthermore, using
\[
|\psi_M(s) - \psi_*(s)| \le \int_0^s\int_0^w |\psi_M''(\tau)-1| d \tau dw \le C|s|^3/M,\quad |s|\le M,
\]
we have
\begin{align*}
\begin{aligned}
&| \eta^M(\rho,\rho u)- \eta(\rho,\rho u)| \le C\frac{\rho (|u|+\rho^\theta)^3}{M},\\
& |q^M(\rho,\rho u)- q(\rho,\rho u)| \le C\frac{\rho (|u|+\rho^\theta)^4}{M}, \quad \mbox{for any $\rho, u$ with $|u|+\rho \le M$},
\end{aligned}
\end{align*}
which together with \eqref{uniformm} and $\eta(\rho,\rho u), \rho|u|^3+\rho^{\theta+\gamma} \in L^1_{\mathrm{loc}}$ implies
\beq\label{temmcon2}
\eta^M(\rho,\rho u) \to \eta(\rho,\rho u),\quad q^M(\rho,\rho u)\to q(\rho,\rho u),\quad \mbox{in $L^1_{\mathrm{loc}}(\bbr^2_+)$ as $M\to\infty$ }.
\eeq
We will apply the above convergences to the entropy inequality: (by \eqref{1NS})
\[
\partial_t\eta^M(\rho^\nu,m^\nu) + \partial_x q^M(\rho^\nu,m^\nu)=\nu(\eta^M_m  \bar\mu(\rho^\nu)  u_x)_x - \nu \eta^M_{mu}  \bar\mu(\rho^\nu)  |u_x|^2  -\nu \eta^M_{m\rho}  \bar\mu(\rho^\nu)  \rho_x u_x.
\]
Notice that it holds from $\psi_M''\ge 0$ that
\[
\eta^M_{mu}(\rho,\rho u) =  \int_{-1}^1 \psi_M''(u+\rho^\theta s) (1-s^2)^\lambda ds \ge 0,
\]
we have
\beq\label{lastentin}
\partial_t\eta^M(\rho^\nu,m^\nu) + \partial_x q^M(\rho^\nu,m^\nu) \le \nu(\eta^M_m  \bar\mu(\rho^\nu)  u^\nu_x)_x-\nu \eta^M_{m\rho}  \bar\mu(\rho^\nu)  \rho^\nu_x u^\nu_x.
\eeq
It remains to control the two dissipation terms.\\
Using $|\psi_M'|\le M$ and so $|\eta^M_m| \le CM$, and using also Lemma \ref{lem:origin} and \eqref{intmu}, we have 
\[
\|\nu \eta^M_m  \bar\mu(\rho^\nu)  u^\nu_x\|_{L^1_{\mathrm{loc}}(\bbr^2_+)} \le CM\sqrt\nu \|\sqrt{\nu  \bar\mu(\rho^\nu) } |u^\nu_x|\|_{L^2(\bbr^2_+)} \|  \bar\mu(\rho^\nu)  \|_{L^1_{\mathrm{loc}}(\bbr^2_+)}^{1/2} \le CM\sqrt\nu,
\]
which implies that for a fixed $M$,
\beq\label{nuconv1}
 \nu(\eta^M_m  \bar\mu(\rho^\nu)  u^\nu_x)_x \to 0\quad \mbox{ in $\mathcal{D}'$ as $\nu\to0$}. 
\eeq
To estimate the last dissipation term, we first use Lemma  \ref{lem:origin} to have
\begin{align}
\begin{aligned} \label{lacon1}
\|\nu \eta^M_{m\rho}  \bar\mu(\rho^\nu)  \rho^\nu_x u^\nu_x\|_{L^1_{\mathrm{loc}}(\bbr^2_+)} 
&\le \sqrt{\nu \int_0^t \int_\bbr  \bar\mu(\rho^\nu)  |u^\nu_x|^2}  \sqrt{\nu \int_0^t \int_\bbr  \bar\mu(\rho^\nu) |\eta^M_{m\rho}(\rho^\nu,m^\nu) |^2 |\rho^\nu_x|^2} \\
&\le  \sqrt{\nu \int_0^t \int_\bbr  \bar\mu(\rho^\nu) |\eta^M_{m\rho}(\rho^\nu,m^\nu) |^2 |\rho^\nu_x|^2}.
\end{aligned}
\end{align}
To make the last term vanish at the limit, we will use Lemma \ref{lem:derho} and Theorem \ref{thm:uniform}, due to the observation:
\[
\eta^M_{m\rho}(\rho,\rho u) = \theta\rho^{\theta-1} \int_{-1}^1 \psi_M''(u+\rho^\theta s)  s(1-s^2)^\lambda ds.
\]
Indeed, since $\int_{-1}^1 s(1-s^2)^\lambda ds=0$ and so
\[
|\eta^M_{m\rho}(\rho,\rho u)| \mathbf{1}_{\rho\in (\rho_*/2, 2\rho_*)} \le  \theta\rho^{\theta-1} \int_{-1}^1 \big|\psi_M''(u+\rho^\theta s) -\psi_M''(u)\big|\mathbf{1}_{\rho\in (\rho_*/2, 2\rho_*)} (1-s^2)^\lambda ds \le \frac{C}{M}\rho^{\theta-1} ,
\]
it holds from Lemma   \ref{lem:derho}  that
\beq\label{lacon2}
\nu \int_0^t \int_\bbr  \bar\mu(\rho^\nu) |\eta^M_{m\rho}(\rho^\nu,m^\nu) |^2 \mathbf{1}_{\rho^\nu\in (\rho_*/2, 2\rho_*)}  |\rho^\nu_x|^2
\le \frac{C\nu}{M} \int_0^t \int_\bbr  \bar\mu(\rho^\nu)  (\rho^\nu)^{\gamma-3} |\rho^\nu_x|^2 \le  \frac{C}{M}.
\eeq
On the other hand, using $|\eta^M_{m\rho}(\rho,\rho u)|\le C\rho^{\theta-1}$ and the change of coordinates, we have
\begin{align*}
\begin{aligned}
&\nu \int_0^t \int_\bbr  \bar\mu(\rho^\nu) |\eta^M_{m\rho}(\rho^\nu,m^\nu) |^2 |\rho^\nu_x|^2\mathbf{1}_{\rho^\nu\notin (\rho_*/2, 2\rho_*)} dx d\tau\\
& \le \nu  \int_0^t \int_\bbr  \bar\mu(v^\nu) (v^\nu)^\gamma |p(v^\nu)_y|^2 \mathbf{1}_{v^\nu\notin (v_*/2, 2v_*)} dy d\tau \\
&\le C \nu  \int_0^t \int_\bbr  \mu_2(v^\nu) (v^\nu)^\gamma |p(v^\nu)_y|^2  + C\nu  \int_0^t \int_\bbr \mu_1(v^\nu) (v^\nu)^{\gamma} |(p(v^\nu)-p(\bar v_{\nu,\delta}))_y|^2  \\
&\quad + C \nu  \int_0^t \int_\bbr  \mu_1(v^\nu) (v^\nu)^{\gamma} |p(\bar v_{\nu,\delta})_y|^2 \mathbf{1}_{v^\nu\notin (v_*/2, 2v_*)}  ,
\end{aligned}
\end{align*}
where $\bar v_{\nu,\delta}$ denotes the approximation as in Theorem \ref{thm:uniform} .\\
Moreover, since $\mu_1(v^\nu) (v^\nu)^{\gamma} \le C (v^\nu)^{\gamma-\alpha} \le C v^\nu$ by $\gamma-\alpha\in [0,1]$, and so  
\[
\mu_1(v^\nu) (v^\nu)^{\gamma} \mathbf{1}_{v^\nu\notin (v_*/2, 2v_*)} \le Q(v^\nu| \bar v_{\nu,\delta}) \le C p(v^\nu| \bar v_{\nu,\delta}),
\]
using  \eqref{apprr} on the definition of $\bar v_{\nu,\delta}$, with Lemmas \ref{lem:waves} and \ref{lem:num}, we have
\begin{align*}
\begin{aligned}
& \nu  \int_0^t \int_\bbr  \mu_1(v^\nu) (v^\nu)^{\gamma} |p(\bar v_{\nu,\delta})_y|^2 \mathbf{1}_{v^\nu\notin (v_*/2, 2v_*)} dyd\tau \\
& \le C \nu  \int_0^t \int_\bbr \bigg[\bigg( \sum_{i\in \mathcal{S}} |(\tilde v_{i}^\nu)_y|^2 + \sum_{i\in \mathcal{R}} |(v^{R,\nu}_{i})_y|^2 \bigg)  p(v^\nu| \bar v_{\nu,\delta}) +\sum_{i\in \mathcal{NP}} |(v^{P,\nu}_{i})_y|^2  Q(v^\nu| \bar v_{\nu,\delta})  \bigg] \\
&\le C  \int_0^t \int_\bbr \bigg( \sum_{i\in \mathcal{S}} |(\tilde v_{i}^\nu)_y| + \sum_{i\in \mathcal{R}} |(v^{R,\nu}_{i})_y| \bigg)  p(v^\nu| \bar v_{\nu,\delta}) 
+C\frac{\nu^{1/6}}{\delta^2}  \int_0^t \int_\bbr   Q(v^\nu| \bar v_{\nu,\delta}).
\end{aligned} 
\end{align*}
Thus, it holds from the result \eqref{conthm} of Theorem \ref{thm:uniform} that for a given $T>0$,
\begin{align*}
\begin{aligned}
&\nu \int_0^t \int_\bbr  \bar\mu(\rho^\nu) |\eta^M_{m\rho}(\rho^\nu,m^\nu) |^2 |\rho^\nu_x|^2\mathbf{1}_{\rho^\nu\notin (\rho_*/2, 2\rho_*)} dx d\tau \\
&\le \int_0^T G_\nu(t) dt + CT \frac{\nu^{1/6}}{\delta^2}  F_\nu(t) \to 0 \quad\mbox{as } \nu\to 0.
\end{aligned}
\end{align*}
This together with \eqref{lacon1} and \eqref{lacon2} implies
\[
\|\nu \eta^M_{m\rho}  \bar\mu(\rho^\nu)  \rho^\nu_x u^\nu_x\|_{L^1_{\mathrm{loc}}(\bbr^2_+)} \to 0 \quad\mbox{as $\nu\to 0$}.
\]
Hence, using the above convergence and \eqref{temmcon1}, \eqref{temmcon2}, \eqref{lastentin}, \eqref{nuconv1} and passing to limit as $\nu\to0$, and then $ M\to \infty$,  we have the desired result.

\appendix
\section{A short introduction to wave interactions in p-system} \label{app:algorithm}
To introduce wave interactions for p-system, we follow \cite{ChenJenssen} by Chen and Jenssen, with a little extension to give some estimates needed for our modified front tracking scheme.

\subsection{Wave curves}
The characteristic speeds of \eqref{eq:Euler} are $\lambda_1$ and $\lambda_2$. The Riemann invariants we use are
\beq\label{s_r_def}
	r:=h-z\qquad \text{and}\qquad s:=h+z, \qquad z=\frac{2\sqrt{\gamma}}{\gamma-1}v^{-\frac{\gamma-1}{2}}.
\eeq

We next give the parametrizations in the $({z},h)$-plane of the backward and forward 
wave curves of the ``first type.'' That is, given a base point $(\bar {z},\bar h)$, we consider
the curves of points $({z},h)$ such that the Riemann problem with left state $(\bar {z},\bar h)$
and right state $({z},h)$ yields a single backward or forward wave (rarefaction or entropic 
shock). Letting $b$ and $f$ denote the ${z}$-ratios ${z}_\text{right}/{z}_\text{left}$ across 
backward and forward waves, respectively, the parametrizations are given by:
\begin{align}
	&\text{Backward waves:}\qquad
	{\ba{V}}(b;\bar {z},\bar h) = \left(\!\!\begin{array}{c}
		b\bar {z} \\
		\bar h -{\ba{\phi}}(b)\bar {z}\\
	\end{array} \!\!\right)
	\qquad\left\{\begin{array}{ll}
	\ba R:\quad 0<b<1 \\
	\ba S:\quad b>1
	\end{array}\right.\label{bkwd_wave}\\
	&\text{Forward waves:}\,\,\,\qquad
	{\fa{V}}(f;\bar {z},\bar h) = \left(\!\!\begin{array}{c}
		f\bar {z} \\
		\bar h +{\fa{\phi}}(f)\bar {z}\\
	\end{array} \!\!\right)
	\qquad\left\{\begin{array}{ll}
	\fa R:\quad f>1 \\
	\fa S:\quad 0<f<1,
	\end{array}\right.\label{frwd_wave}
\end{align}
where the auxiliary functions $\ba\phi$ and $\fa\phi$ are given by
\[{\ba{\phi}}(x) = \left\{\begin{array}{ll}
	 (x-1) \quad & 0<x<1\\\\
	\frac{\gamma-1}{2\sqrt{\gamma}}\sqrt{(1-x^{-\frac{1}{\alpha}})(x^{\frac{\gamma}{\alpha}}-1)}\quad & x>1
\end{array}\right\}\] 
\[{\fa{\phi}}(x) = \left\{\begin{array}{ll}
	-\frac{\gamma-1}{2\sqrt{\gamma}}\sqrt{(1-x^{-\frac{1}{\alpha}})(x^{\frac{\gamma}{\alpha}}-1)} \quad & 0<x<1\\\\
	  (x-1) \quad  & x>1.
\end{array}\right\}\] 
Note that the auxiliary functions $\ba\phi$ and $\fa\phi$ are both strictly increasing and satisfy
\beq\label{phi_reln}
	\fa \phi(x)=-x{\ba{\phi}}\big(\textstyle\frac{1}{x}\big)\,.
\eeq
A calculation shows that the map
$x\mapsto \sqrt{(1-x^{-\frac{1}{\alpha}})(x^{\frac{\gamma}{\alpha}}-1)}$
is convex up for $x>1$. It follows that $\ba\phi$ is convex up, while
$\fa\phi$ is convex down. Furthermore, it is immediate to verify that 
\beq\label{phi_prime_infty}
	\lim_{x\to+\infty}\ba\phi '(x)=+\infty\qquad\text{and}\qquad 
	\lim_{x\to0^+}\fa\phi(x)=-\infty.
\eeq
The wave curves in the $(r,s)$-plane are illustrated in Figure \ref{riem_coords}. Note that the no-vacuum
region $\{\,v>0\,\}$ corresponds to the half-plane $\{\,s>r\,\}$ in $(r,s)$-coordinates.

By the definition of $\ba \phi$ and $\fa \phi$, it is easy to show that  there exist $K_1$ and $K_2$, such that
	\beq\label{si-p}\left.\begin{array}{ll}
		0\leq K_1(p_R-p_L)\leq-\sigma=r_L-r_R\leq K_2(p_R-p_L) & \hbox{for $\ba S$},\\
		0\leq K_1(p_L -p_R)\leq-\sigma=s_L-s_R\leq K_2(p_L -p_R)  &\hbox{for $\fa S$},
		\end{array}\right.
	\eeq 
	where the subscriptions $R$ and $L$ denote the right and left states, respectively. 
%
\begin{figure}\label{riem_coords}
	\centering
	\includegraphics[scale=.3]{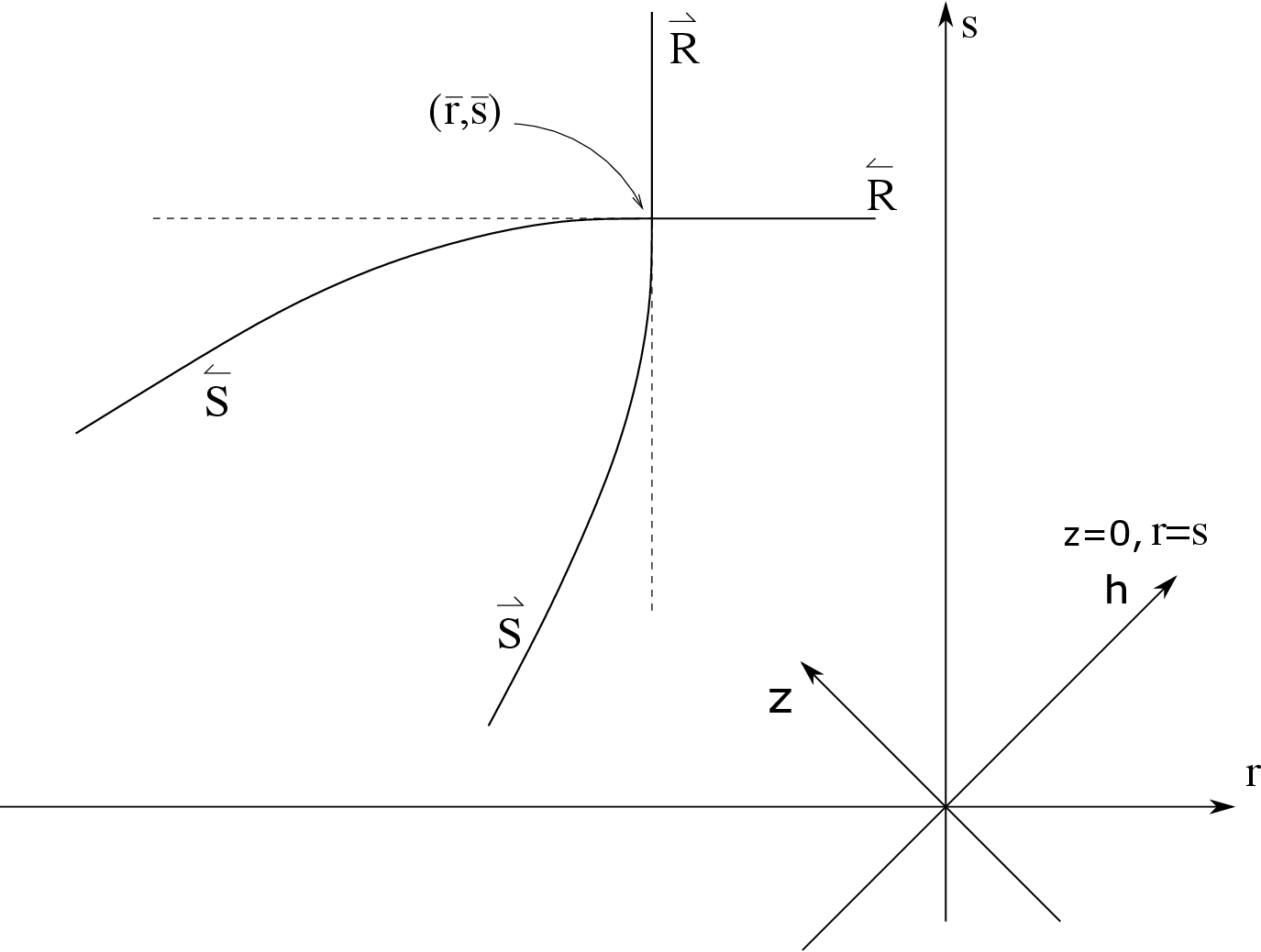}
	\caption{The wave curves in the Riemann coordinate plane. }
\end{figure}

\subsection{Riemann problems and vacuum criterion}
Consider the Riemann problem with left state $(\bar{z},\bar h)$ and right state $({z},h)$,
and let $b$ and $f$ denote the ${z}$-ratios of the resulting backward and forward waves,
respectively. A short calculation shows that $b$ is given as the root of
\beq\label{b_eqn}
	\bar{z}\ba\phi(b)+{z}\ba\phi\left(\textstyle\frac{b\bar{z}}{{z}}\right)=\bar h- h,
\eeq
and $f=\frac{z}{b\bar{z}}$. As the left-hand side of \eqref{b_eqn} is increasing with respect to $b$, it follows that
the Riemann problem $\big((\bar{z},\bar h), \, ({z},h)\big)$ has a unique solution without vacuum provided
\beq\label{no_vac}
	h-\bar h< {z}+\bar{z}\qquad\qquad\text{(no vacuum)}.
\eeq
When the initial data are away from the vacuum, and for our small data problem with sufficiently small $\eps$, \eqref{no_vac} is always satisfied.

\subsection{Pairwise interactions in isentropic flow}\label{interactions}
There are six essentially distinct types of pairwise wave interactions:
\beq\label{gr1}
	\left.
	\begin{array}{ll}
		\mbox{Ia}: & {\fa{R}}{\ba{R}}\\
		\mbox{Ib}: & {\fa{R}}{\ba{S}}\\
		\mbox{Ic}: & {\fa{S}}{\ba{S}}
	\end{array}	
	\qquad\right\}\qquad \mbox{head-on interactions}
\eeq
\beq\label{gr2}
	\left.
	\,\,\,\begin{array}{ll}
		\mbox{IIa}: & {\ba{S}}{\ba{S}}\\
		\mbox{IIb}: & {\ba{S}}{\ba{R}}\\
		\mbox{IIc}: & {\ba{R}}{\ba{S}}
	\end{array}	
	\qquad\right\}\qquad \mbox{overtaking interactions.}
\eeq
The head-on interaction 
\[\mbox{Ib'}: \fa{S}\ba{R},\]
and the overtaking interactions 
\beq\label{gr2'}
	\begin{array}{ll}
		\mbox{IIa'}: & {\fa{S}}{\fa{S}}\\
		\mbox{IIb'}: & {\fa{R}}{\fa{S}}\\
		\mbox{IIc'}: & {\fa{S}}{\fa{R}},
	\end{array}	
\eeq
are qualitatively the same as those in Ib and IIa-IIc, respectively. 
Finally, for cases IIb, IIc, IIb', and IIc', there are two possible outcomes with always reflected shock depending 
on the relative strengths of the incoming waves.

\subsection{The shock curve}
Let\rq{}s only discuss $\ba S$. The analysis on the $\fa S$ is symmetric. Set
\[
\eta=|\Delta r|,\qquad \beta=|\Delta s| 
\]
along $\ba S$ from left state $(\bar h,\bar z)$ to the right state $(h,z)$. Let $G(x)$ denotes the unique $d$ root of 
\[
\ba \phi(d)+(d-1)=x,
\]
then it is easy to show that 
\beq\label{betaeta1}
\beta=\bar {z} H(b):=\bar {z} (b+2(1-G(b)))
\eeq
with $b=\frac{\eta}{\bar{z}}$. It is easy to check that $H(b)= k b^3 +o(b^3)$ for some $k>0$ near $b=0$. On the other hand, if we set 
\[
\beta=|\Delta s|,\qquad \eta=|\Delta r|,
\]
then
\beq\label{betaeta2}
\eta=\bar {z} K(\frac{\beta}{\bar {z}}),\quad \hbox{with}\  K(b)= k b^3 +o(b^3)
\eeq
 for some $k>0$ near $b=0$.

\subsection{Some useful results}

We first prove \eqref{key_est}. In fact,
$z_L>z_a$ and $0<s_L-s_{a\rq{}}<s_a-s_R$, so 
$$r_L-r_{a\rq{}}=z_L K(\frac{s_L-s_{a\rq{}}}{z_L})<z_a K(\frac{s_a-s_R}{z_L})=r_a-r_R$$ by the monotonicity of $K$ in \eqref{betaeta2}, when $\eps$ is sufficiently small. Then clearly $r_R-r_{a\rq{}}<r_a-r_L$.
\vskip0.3cm

Using Taylor expansion, it is not hard to prove Case 2 and 3 in Lemma \ref{lemma_key3}. One can also find the proof in \cite{BCZ1,BCZ2}, or check other references as \cite{Young,sm,ChenJenssen}. 

\vskip0.3cm
Finally, we prove \eqref{qdec} , \eqref{bbyfr1} and \eqref{app_est}.

\begin{proof}

For accurate solvers, Lemma \ref{prop1} still holds for the new strength $\bar \sigma$, see \cite{Bressan, BCZ2,Glimm}, so \eqref{qdec} and \eqref{bbyfr1} still hold after we change $L$, $Q$ and $\sigma$ to
$\bar L$, $\bar Q$ and $\bar \sigma$. When we compare the adjusted solver and the corresponding accurate solver, there is at most a quadratic difference on each outgoing wave strength, so \eqref{qdec} and \eqref{bbyfr1} hold for all adjusted solvers, too.
 \vskip0.1cm
 
Then we consider the simplified solvers. We will only prove \eqref{qdec} and \eqref{bbyfr1} for the interactions in Figure \ref{Simp1}-\ref{Simp_non}. Other symmetric interactions can be treated similarly.
\vskip0.1cm

For the $\ba S\ba S$ interaction in Figure \ref{Simp1}, since the only change in the outgoing wave is on the wave speed of the right outgoing wave:  change from a $\fa R$ to non-physical shock, while all wave strengths are preserved. Then \eqref{qdec} and \eqref{bbyfr1} hold by Lemma \ref{prop1}.
\vskip0.1cm

For interactions in Figure \ref{Simp3} and \ref{Simp5}, the outgoing pressure function is monotonic in $x$, i.e. decays in $x$. So $\bar L$ decays. On the other hand, since the sum of strengths for outgoing $\ba R$ and non-physical shock is less than the strength of incoming $\ba R$, $Q$ decays. For the interaction in Figure \ref{Simp2}, $\bar L$ and $\bar Q$ also decays.
So it is easy to see that for all these three interactions, \eqref{qdec} and \eqref{bbyfr1} hold.
\vskip0.1cm

For the interaction in Figure \ref{Simp4}, we only have to prove \eqref{app_est} when $\kappa$ is sufficiently large enough. 

In fact, to show the first inequality 
$-\bar \sigma\rq{}+\bar \sigma_2\leq K_1|\bar \sigma_1\bar \sigma_2|$, in \eqref{app_est}, we only have to prove that
\beq\label{app_1}
(z_c-z_L)-(z_R-z_a)\leq K_1|\bar \sigma_1\bar \sigma_2|,
\eeq
for some constant $K_1$, because $z_c-z_L\geq z_b-z_L$ and $p$ is increasing on $z$. Since $z=\frac{1}{2}(s-r)$, we only have to show the corresponding variation on $-s$ and $r$ are bounded by $O(|\bar \sigma_1\bar \sigma_2|)$.

Recall, on Figure \ref{Simp4}, the state $b$ is between $a\rq{}$ and $c$, where the shock $L$-$c$ has the same strength of the incoming shock $a$-$R$, i.e. 
\beq\label{key_last1}
r_L-r_c=r_a-r_R\geq \rho_\nu.
\eeq 
So we know
\[-\sigma_2=r_R-r_a=r_c-r_L.\] By Taylor expansion, it is easy to show that when $\eps$ is small enough, the variation on $-s$ is 
\beq\label{app_2}
-s_c+s_L-(-s_R+s_a)=
z_L H(\frac{\sigma_2}{z_L})-z_a H(\frac{\sigma_2}{z_a})\leq K_0|\sigma_1\sigma_2|\leq K_1|\bar \sigma_1\bar \sigma_2|
\eeq
for some $K_0,K_1>0$, where we also use the fact that the incoming shock dominates the incoming rarefaction, $|z_L-z_a|$ is equivalent to $|\sigma_1|$ and \eqref{si-p}. So we prove \eqref{app_1}. And we also prove the second inequality $\bar \sigma\rq{}\rq{}-\bar \sigma_1\leq K_1|\bar \sigma_1\bar \sigma_2|$ in \eqref{app_est}, since $-\bar \sigma\rq{}+\bar \sigma_2= \bar \sigma\rq{}\rq{}-\bar \sigma_1.$
\vskip0.1cm

Using the similar method we can prove \eqref{qdec} and \eqref{bbyfr1}  for interactions between a physical wave and a non-physical shock, where in fact the growth of $\bar L$ is at most in the order of $|\bar \sigma_1\bar \sigma_2|$ which is smaller than the decay of $\kappa \bar Q$, when $\kappa$ is sufficiently large.
\vskip0.1cm

So we complete the proof of \eqref{qdec} , \eqref{bbyfr1} and \eqref{app_est}.
\end{proof}

\section{Adjustment} \label{app:mod}
For some interactions, there are more than two outgoing waves. So we need to shift the position of waves to give enough room for additional outgoing waves.
We will show the $L^2$ error caused by these shifts approaches zero as $\nu\to 0$.
\bigskip

\paragraph{\bf Case 1.}
When the accurate shock-shock overtaking interaction produces more than two reflected rarefactions, we need more rooms to put more than two outgoing waves and leave them at least $d_j$ apart. In this case, after stopping clock, we need to shift all waves.


Since the total number of additional outgoing rarefaction wave in all time is bounded by $O(1/\delta)$.
Correspondingly, we totally order up to $O(1/\delta)$ shifts with width $d_j$ for some $j\geq 1$, and each order impacts all waves at that time.\\
By Lemma \ref{lem:num}, the total number of physical waves is bounded by 
\[
C\Big(\frac{1}{\delta}\ln \rho_\nu^{-1}\Big),
\]
so clearly, at each time, there are at most this many physical waves.
On the other hand, the total number of non-physcial waves is bounded by 
\[
C\Big(\frac{1}{\delta}\ln \rho_\nu^{-1}\Big)^2.
\]
Note, if we perturb a single physical wave horizontally by $d_j$, then the $L^2$ error is bounded by $\eps \sqrt d_j$,
since the wave strength is always less than $\eps$.\\
If we perturb a single non-physical wave horizontally by $d_j$, then the $L^2$ error is bounded by
$C\rn \sqrt d_j$,
since the wave strength of non-physical wave is always less than $C\rn$.

Therefore, the total $L^2$ error caused by the shifts in this case is bounded by 
\[
\frac{C}{\delta}\cdot \Big((\frac{1}{\delta}\ln \rho_\nu^{-1}) \cdot \eps \sqrt d_j +
\big(\frac{1}{\delta}\ln \rho_\nu^{-1} \big)^2\rn \sqrt d_j\Big).
\]

Here to make rooms for additional rarefaction waves, we first  shift waves right after interactions, then shift waves right before interactions if needed, and if necessary, trace further back along time, to keep the relative position for waves and configuration of front tracking solutions, i.e. we do not change the order of wave interactions. In this case, some wave may interact with a wave which is not the closest one to interact with.

More precisely, we describe more details on how to shift. See Figure \ref{fig_last}. Suppose there are $n$ rarefaction fronts reflected from shock-shock overtaking interaction, at time $t_j$. with $n>1$. Without loss of generality, assume the reflected rarefactions are 1-rarefactions. At $t=t_j^+$, if there is a 2-wave or a non-physical wave to the left and within $2d_j$ distance of the reflected rarefactions (for example, the left red wave in Figure \ref{fig_last}), then there are no enough room to keep reflected rarefactions $d_j$ away from each other, then avoid them to be too close to the approaching wave to the left. So we need to shift the outgoing waves of interaction, except the left-most reflected 1-rarefaction. These waves will be shifted up to $(n-1)d_j$ distance horizontally to the right of its original position. If the new position(s) of some shifted wave(s) is to the right of other waves  at $t=t_j^+$, we need to also move those waves to right, up to  $(n-1)d_j$.


Next, we check the position of waves when $t\in[t_{j-1},t_j)$. To keep the relative positions for waves in the way of Section 11.1, which guarantees that $a(t,x)$ decays in time, we may need to shift waves in this time interval by up to  $(n-1)d_j$. Then we trace back to even earlier time intervals if needed.

Note any wave not involved in interaction is considered as one wave, even if it exists for multiple connected time intervals $t\in\sum_{k=n_1}^{n_2}[t_{k-1},t_k).$ If we need to shift such a wave in any time interval, we will shift the wave in all time intervals where it exists, to keep the relative position of waves. 

\begin{figure}
	\centering
	\includegraphics[scale=1]{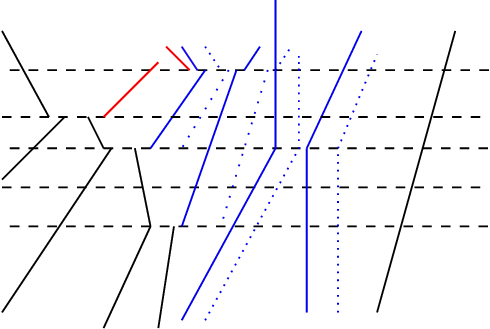}
	\caption{The red 2-wave is too close to the reflected 1-rarefaction fan (including two wave fronts one red and one blue), so we shift the right 1-rarefaction front and other blue solid waves to blue dot waves. After shifts, we keep the configuration of front tracking solution and relative position of waves. All shifted waves are shifted horizontally with distance in the order of $d_j$ for some $j$.\label{fig_last}}
\end{figure} 

To guarantee the inequality
\[
d_{j+1}\ge  \rn^{1/4}  - C\left(\frac{1}{\delta} \log \frac{1}{\rho_\nu}\right)^3 \rn^{1/3} \gg 2\sqrt\rn,
\]
in Remark \ref{rem:djtj}, we need
\[
\delta\gg \rn^{\frac{1}{36}} = \nu^{\frac{1}{108}}.
\]
That is verified by the choice $\delta=\nu^{\frac{1}{240}}$ in Section 11.3. 
Then the total $L^2$ error satisfies
\[
\frac{C}{\delta}\cdot \Big((\frac{1}{\delta}\ln \rho_\nu^{-1}) \cdot \epsilon \sqrt d_j +
\big(\frac{1}{\delta}\ln \rho_\nu^{-1}  \big)^2\rn \sqrt d_j\Big) \ll  \rn^{\frac{1}{16}}\rightarrow 0
\]
as $\nu\rightarrow 0$. The $L^1$ error, if needed, is even smaller.
\bigskip

\paragraph{\bf Case 2.} Another case is when there are more than two outgoing waves produced is the adjusted solver, where an additional non-physical wave is produced.
After stopping clock at this interaction, we shift each wave by $d_j$ for some $j\geq 1$, to give rooms for the additional outgoing wave.\\
By Lemma \ref{lem:num}, the total number of physical wave is bounded by 
\[
C\Big(\frac{1}{\delta}\ln \rho_\nu^{-1}\Big).
\]
Since two incoming waves for adjusted solver are both physical waves, the total number of adjusted solver is at most in the order of
\[
C\Big(\frac{1}{\delta}\ln \rho_\nu^{-1}\Big)^2.\]
So the total $L^2$ error caused by the shifts in this case is bounded by 
\[
C\Big(\frac{1}{\delta}\ln \rho_\nu^{-1}\Big)^2\cdot 
 \Big((\frac{1}{\delta}\ln \rho_\nu^{-1}) \cdot \epsilon \sqrt d_j +
\big(\frac{1}{\delta}\ln \rho_\nu^{-1}  \big)^2\rn \sqrt d_j\Big) \ll  \rn^{\frac{1}{24}-b}\rightarrow 0
\]
as $\nu\rightarrow 0$, for any $b>0$. 

The way of shifting is similar to Case 1, so we omit the details.

\bibliography{Euler_NS_IVP_final}
\end{document}